\newcolumntype{M}[1]{>{\centering\arraybackslash}m{#1}} %define dimension for long stable
\DeclareFontFamily{OMS}{rsfs}{\skewchar\font'60}
\DeclareFontShape{OMS}{rsfs}{m}{n}{<-5>rsfs5 <5-7>rsfs7 <7->rsfs10 }{}
\DeclareSymbolFont{rsfs}{OMS}{rsfs}{m}{n}
\DeclareSymbolFontAlphabet{\scr}{rsfs}
\DeclareSymbolFontAlphabet{\scr}{rsfs}
\newcommand\cF{{\mathcal F}}
\DeclareMathOperator*{\red}{red}
\theoremstyle{plain}
\newtheorem{thm}{Theorem}[section]
\newtheorem{lemma}[thm]{Lemma}
\newtheorem{prop}[thm]{Proposition}
\newtheorem{cor}[thm]{Corollary}
\newtheorem{defn}[thm]{Definition}
\newtheorem{setup}[thm]{Setup}
\theoremstyle{remark}
\newtheorem{example}[thm]{Example}
\newtheorem{remark}[thm]{Remark}
\def\mod{\operatorname{mod}}
\def\dim{\operatorname{dim}}
\def\Ex{\operatorname{Ex}}
\def\min{\operatorname{min}}
\def\max{\operatorname{max}}
\def\hor{\operatorname{\hor}}
\def\ver{\operatorname{\ver}}
\def\sm{\operatorname{\textsubscript{\rm sm}}}
\def\sing{\operatorname{\textsubscript{\rm sing}}}
\def\ver{\operatorname{\textsubscript{\rm ver}}}
\def\hor{\operatorname{\textsubscript{\rm hor}}}
\def\red{\operatorname{\textsubscript{\rm red}}}
\setlist[itemize]{leftmargin=*}
\setlist[enumerate]{leftmargin=*}
\numberwithin{equation}{section} %numbering of equations
\title{Title} 
\subjclass[2010]{}
\keywords{}
\author{Wenhao Ou}
\address{Wenhao Ou, Institute of Mathematics, Academy of Mathematics and Systems Science, Chinese Academy of Sciences, Beijing, 100190, China}
\email{wenhaoou@amss.ac.cn}
\begin{document}

\begin{abstract}
We prove that if  $X$ is a compact complex analytic variety, which has quotient singularities in codimension 2, then there is a projective bimeromorphic morphism $f\colon Y\to X$, such that $Y$ has quotient singularities, and that the indeterminacy locus of $f^{-1}$ has codimension at least 3 in $X$.  
\end{abstract}

\title{Orbifold modifications of complex analytic varieties}

\maketitle

\tableofcontents

%\vspace{-0.2cm} 

\section{Introduction}

The theory of  holomorphic vector bundles is a central object in complex algebraic geometry and complex analytic geometry. 
The notion of stable vector bundles on complete curves was introduced by Mumford  in  \cite{Mumford1963}. 
Such notion of stability was then extended to torsion-free sheaves on any projective manifolds (see \cite{Takemoto1972}, \cite{Gieseker1977}), and is now known as the slope stability. 
An  important property of stable vector bundles is the following Bogomolov-Gieseker inequality, involving the Chern classes of the vector  bundle.
\begin{thm}
\label{thm:BG-inequality-intro}
Let $X$ be a projective manifold of dimension $n$,   let  $H$ be  an ample divisor, and let $\cF$ be a $H$-stable vector bundle of rank $r$ on $X$. 
Then 
\[   \Big(c_2(\cF)-\frac{r-1}{2r}c_1(\cF)^2 \Big)  \cdot  H^{n-2} \ge  0. \]
\end{thm}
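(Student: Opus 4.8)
The plan is to prove the Bogomolov--Gieseker inequality for stable vector bundles on a projective manifold by the classical restriction-to-curves argument combined with the Mehta--Ramanathan theorem, supplemented by a Higgs-bundle trick on the surface case. I will first reduce to the case where $n = \dim X \leqslant 2$.

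\textbf{Step 1: Reduction to surfaces via Mehta--Ramanathan.} Let $\cF$ be $H$-stable of rank $r$. The quantity $\Delta(\cF) \defeq 2r\, c_2(\cF) - (r-1)c_1(\cF)^2 \in H^4(X,\bbR)$ is the discriminant, and the inequality reads $\Delta(\cF) \cdot H^{n-2} \geqslant 0$. Choose a general complete intersection surface $S = D_1 \cap \dots \cap D_{n-2}$ with each $D_i \in |mH|$ for $m \gg 0$. By the Mehta--Ramanathan restriction theorem, for $m$ sufficiently large and $S$ general, the restriction $\cF|_S$ is again $(H|_S)$-stable. Since Chern classes commute with restriction, $\Delta(\cF) \cdot H^{n-2} = \frac{1}{m^{n-2}} \Delta(\cF|_S) \cdot (H|_S)^0 = \frac{1}{m^{n-2}}\int_S \Delta(\cF|_S)$, so it suffices to treat the surface case: if $S$ is a smooth projective surface, $H$ ample on $S$, and $\cE$ an $H$-stable vector bundle, then $\int_S \Delta(\cE) \geqslant 0$.

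\textbf{Step 2: The surface case via Bogomolov's theorem / the Hitchin--Kobayashi correspondence.} Here I would invoke the semistability of the symmetric powers: if $\cE$ is $H$-semistable on a surface, then so are all tensor/symmetric/exterior powers of $\cE$ (this follows from the Kobayashi--Hitchin correspondence, realizing $\cE$ as an approximate Hermite--Einstein bundle, or from Bogomolov's purely algebraic restriction argument). Then one runs the standard argument: if $\Delta(\cE) \cdot H < 0$ (or $\int_S \Delta(\cE) < 0$), a Riemann--Roch computation on $S$ shows $\chi(S, \Sym^k \cE \otimes \Sym^k \cE^\vee)$ grows like $k^{2r}$ with positive leading coefficient $-\frac{\text{(const)}}{r} \Delta(\cE)$, forcing either $H^0$ or $H^2$ of these bundles to be large; by Serre duality and semistability of the symmetric powers both vanish or are controlled, giving a contradiction with the slope inequalities. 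The cleanest route is the analytic one: an $H$-stable bundle carries a Hermite--Einstein metric (Donaldson, Uhlenbeck--Yau), and then the pointwise Chern--Weil inequality $|F|^2 \geqslant \frac{1}{r}|\Tr F|^2$ plus the Hermite--Einstein condition integrates to exactly $\int_S \Delta(\cE) \geqslant 0$.

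\textbf{Main obstacle.} The genuine content is entirely in Step 2 in the surface case; Step 1 is a black-box application of Mehta--Ramanathan. Within Step 2, the crux is establishing that Hermite--Einstein (or, algebraically, that semistability is preserved under taking tensor powers in characteristic zero) — this is the deep input (Donaldson--Uhlenbeck--Yau, or Bogomolov). Given that, the integration of the Chern--Weil form is a short computation. I expect to present the analytic proof since the paper's later application works over K\"ahler varieties, where the Hermite--Einstein machinery is the natural tool and will generalize to the orbifold setting after the modification theorem of the abstract is in place.
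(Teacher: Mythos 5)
Your proposal is correct and follows essentially the same route the paper indicates: reduce to a general complete intersection surface via the Mehta--Ramanathan restriction theorem, then invoke Bogomolov's surface case (equivalently, the Hermite--Einstein/Chern--Weil argument via Donaldson or Uhlenbeck--Yau). The paper treats this theorem as known and cites exactly these two ingredients rather than giving a detailed proof, so there is nothing further to compare.
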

When  $X$ is a surface, the inequality was proved in \cite{Bogomolov1978}. 
In higher dimensions, one may apply Mehta-Ramanathan   theorem in  \cite{MehtaRamanathan1981/82}   to  reduce to the case of surfaces, by taking hyperplane  sections. 
Later in \cite{Kawamata1992}, as a part of the proof for the three-dimensional abundance theorem, Kawamata extended the inequality to orbifold Chern  classes of  reflexive sheaves on projective surfaces with quotient singularities. 
The technique of taking hypersurface sections then  allows us to deduce the Bogomolov-Gieseker inequality for reflexive sheaves on projective varieties which have quotient singularities in codimension 2.

On the analytic side, let $(X,\omega)$ be a compact  K\"ahler manifold, and $(\cF, h)$ a Hermitian  holomorphic vector bundle on $X$.  
L\"ubke proved that  if $h$ satisfies the Einstein condition, then the following inequality holds (see \cite{Lub82}),
\[   \int_X \Big(c_2(\cF,h)-\frac{r-1}{2r}c_1(\cF,h)^2 \Big)  \wedge  \omega^{n-2} \ge 0. \] 
It is now well understood that if $\cF$ is slope stable, then it admits a Hermitian-Einstein metric. 
The case when $X$ is a complete curve was proved by Narasimhan-Seshadri  in \cite{NarasimhanSeshadri1965}, the case of projective surfaces was proved by Donaldson in \cite{Donaldson1985}, and the case of arbitrary compact K\"ahler manifolds was proved by Uhlenbeck-Yau in \cite{UhlenbeckYau1986}.   
%An advantage of the analytic method is to give an insight to the equality condition. 
Simpson extended the existence of  Hermitian-Einstein  metric to stable Higgs bundles, on compact and certain non compact K\"ahler manifolds, see \cite{Simpson1988}. 
Furthermore,  in \cite{BandoSiu1994},  Bando-Siu introduced the notion of admissible metrics and proved the existence of admissible  Hermitian-Einstein  metrics on stable reflexive sheaves. 

Comparing with the algebraic version, it is natural to expect a Bogomolov-Gieseker  type inequality,  for  stable coherent reflexive sheaves  on a  compact K\"ahler variety, 
which has at most quotient singularities  in codimension 2, see for example \cite{CHP23}.  
When the underlying space has quotient singularities only, for example when it is a surface, an orbifold version of Donaldson-Uhlenbeck-Yau theorem was   proved by Faulk in \cite{Faulk2022}.   
As a consequence, a Bogomolov-Gieseker type inequality holds in this case. 
However, for a general K\"ahler variety, we are not able to take hyperplane sections. 
So the algebraic method does not apply.  
When the underlying space is smooth in codimension 2, 
Bogomolov-Gieseker type inequalities have also been established, 
see for example \cite{CHP16}, \cite{Chen2022}, \cite{ChenWentworth2024} and \cite{Wu21}. 
In general, it was suggested in \cite{CGNPPW} that the existence of orbifold modifications would imply the inequality. 
The main objective of this paper is to study  this problem.  
We prove the following theorem.

\begin{thm}
\label{thm:main-thm}
Let $X$ be a compact complex analytic variety. 
Assume that $X$ has quotient singularities in codimension 2. 
In other words, there is a closed analytic  subset $V$ of codimension at least 3, such that $X\setminus V$ has quotient singularities. 
Then there is a projective bimeromorphic morphism $f\colon Y \to X$ such that $Y$ has quotient singularities, and that the indeterminacy locus of $f^{-1}$ has codimension at least 3 in $X$. 
\end{thm}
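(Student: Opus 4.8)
The plan is to resolve $X$, run a relative minimal model program over $X$ to come back down to a variety with quotient singularities, and patch the construction using functoriality of resolution. Since quotient singularities are normal, $X$ is normal outside the codimension $\geq 3$ set $V$, so the normalization $\nu\colon X^{\nu}\to X$ is an isomorphism outside $V$; composing with $\nu$, it suffices to treat $X^{\nu}$, and we may assume $X$ normal. As $f$ is to be an isomorphism away from $V$, the content of the theorem is local around $V$, and working over small Stein open subsets of $X$ we are free to cut by general holomorphic functions — the substitute, unavailable globally in the compact K\"ahler setting, for taking hyperplane sections.

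Fix a functorial log resolution $\pi\colon\widetilde X\to X$: a projective bimeromorphic morphism with $\widetilde X$ smooth, an isomorphism over the regular locus of $X$, and with reduced exceptional divisor $E$ simple normal crossing, so $(\widetilde X,E)$ is log smooth. Along a general point of a codimension $2$ component $D$ of $\Sing X$, the hypothesis gives $X\cong(\mathbb C^{2}/G)\times\Delta^{n-2}$ with $\pi$ restricting to the minimal resolution times $\Delta^{n-2}$; in particular $\pi$ is toroidal there and $X$ is simplicial toric. I would then run a $(K_{\widetilde X}+(1-\varepsilon)E)$-MMP over $X$ for a small rational $\varepsilon>0$, invoking the relative minimal model program for projective morphisms over an analytic base. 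The point is that the discrepancies of the minimal resolution of a quotient surface singularity exceed $-1$, so $\sum(a_i+1-\varepsilon)E_i$ is an effective $\pi$-exceptional divisor over $D$, and an MMP over $X$ contracts its whole support there, recovering $X$; hence the output $f\colon Y\to X$ is an isomorphism over the complement of a closed analytic set $W\supseteq V$ of codimension $\geq 3$, which already gives the statement about the indeterminacy of $f^{-1}$.

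It remains to see that $Y$ has quotient singularities. Over $X\setminus W$ this is the hypothesis, via the isomorphism $f$. Over $W$ one must analyse $Y$ through the toroidal picture: locally over $X$, the steps of the program are toroidal contractions of the log smooth pair $(\widetilde X,E)$, so $Y$ inherits a toroidal structure for which it is locally toric, and one further modification \emph{over $W$ only} — subdividing the relevant fans to make them simplicial, which leaves the already simplicial codimension $2$ part untouched — renders $Y$ locally a simplicial toric variety, i.e. with quotient singularities, outside a smaller codimension $\geq 3$ set. Iterating the whole construction on $(Y,W)$, using termination and an induction on $\dim W$ (near a general point of a top-dimensional component of $W$ one reduces, by slicing, to a transverse germ with an isolated bad point, which one simply resolves), one reaches after finitely many steps a $Y$ with quotient singularities everywhere and $f$ still an isomorphism in codimension $2$. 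Finally, since the resolution is functorial and the subsequent operations are carried out locally over $X$ and, away from $V$, trivially, the local constructions glue to a global $f\colon Y\to X$ with the desired properties. I expect the main obstacle to be precisely the claim of this last paragraph — showing that the variety produced has genuine \emph{quotient} singularities and not merely klt or $\mathbb Q$-factorial ones, which forces one to keep the toroidal/orbifold structure under control throughout the program; a secondary technical hurdle is running the relative MMP over an arbitrary compact analytic base for a total space that is not klt relatively over $X$.
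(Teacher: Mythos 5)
The first half of your argument is sound and coincides with the paper's Step 1: after normalizing, one resolves and runs a relative $(K_{\widetilde X}+(1-\varepsilon)E)$-MMP over $X$ (this is Lemma \ref{lemma:QcDV-modification}, using the analytic relative MMP of Das--Hacon--P\u{a}un/Fujino), and the negativity lemma shows the output is $\mathbb{Q}$-factorial klt and an isomorphism over $X_{\sm}$ and over general points of each codimension-$2$ singular component. But at that point you have only reduced the theorem to its actual content, and the last paragraph of your proposal does not close the gap. In dimension $\geq 3$ klt (even $\mathbb{Q}$-factorial klt) is strictly weaker than quotient, and your toroidal argument is unsupported: the functorial resolution is toroidal only at \emph{general} points of the codimension-$2$ strata, there is no toroidal structure on $X$ over $V$ or over the special points of the strata where the transverse Du Val type degenerates, and the extremal contractions of the $(K+(1-\varepsilon)E)$-MMP are not toroidal morphisms of the log smooth pair $(\widetilde X,E)$ — so the output does not ``inherit a toroidal structure.'' Likewise the closing induction (``slice to a transverse germ with an isolated bad point, which one simply resolves'') cannot work: resolving an isolated point produces exceptional divisors, not quotient singularities, and slicing is exactly the tool that is unavailable here.

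What is missing is the paper's entire mechanism for producing quotient singularities over the bad locus. Concretely: one first passes to local index-one covers of $\omega_X$ (packaged as an orbi-variety) to reduce to canonical, hence hypersurface-in-codimension-$2$, singularities; then a long sequence of functorial blowups (Sections \ref{section:double-point}--\ref{section:equation}) puts the local defining equation into a standard Du-Val-in-family form relative to a snc divisor $H$ on the stratum where the singularity type jumps; then (Section \ref{section:local-cover}) one builds an explicit tubular neighborhood of $S\setminus H$, computes $\pi_1$ of its smooth locus as $\pi_1(Y_{\sm})\times\mathbb{Z}^k$, and uses the canonical finite-index subgroup of Lemma \ref{lemma:standard-subgroup} to construct a finite Galois cover that is smooth in codimension $2$; finally (Section \ref{section:modification}) a dlt modification plus a cyclic base change along the branch divisor (Lemma \ref{lemma:local-dlt-cover}) makes this cover quasi-\'etale, so that functorial resolution of the cover and descent give the orbifold charts. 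None of this is replaceable by the MMP step, which is why your own closing caveat identifies precisely the unproved core of the theorem.
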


We   note that when $X$ is projective, a stronger version of Theorem \ref{thm:main-thm} was proved by Xu in \cite{LiTian2019}. 
The proof approximately proceeds as follows. 
We let $X^\circ$ be the largest open subset on which $X$ has quotient singularities. 
By considering the frame bundle on the local orbifold charts, we can obtain a smooth quasi-projective variety $ Y^\circ$, such that $X^\circ= Y^\circ/G$, where $G=\mathrm{GL}_n(\mathbb{C})$ and $n$ is the dimension of $X$ (see \cite{Kresch2009}). 
The action of $G$ on $Y^\circ$ only has finite stabilizers. 
There exists a  $G$-equivariant projective compactification $ {Y}$ of $Y^\circ$, such that $G$ acts linearly on $ {Y}$. 
Then up to applying Kirwan's partial resolution in \cite{Kirwan1985}, 
one may assume that  the strictly semi-stable locus in $ {Y}$ is empty.  
Then the GIT quotient $ {Y} \sslash G$ is a projective compactification of $X^\circ$ which has quotient singularities only. 
Finally, by using equivariant resolution of singularities to the birational map $ {Y} \sslash G \dashrightarrow X$, 
we can obtain  a projective compactification $Z$ of $X^\circ$ such that $Z$ has quotient singularities and the natural birational map $Z\dashrightarrow X$ is a morphism. 
The main difficulty to adapt this method in the setting of general complex analytic varieties is that, 
the theory of GIT relies essentially on the existence of an ample line bundle, which is not the case if $X$ is not projective.

In this paper, we take a different approach and do not consider actions of  positive dimensional groups. 
It consists of several steps as we will sketch them as follows.  
Let $X$ be a compact complex analytic variety which has quotient singularities in codimension 2. 
Let $S$ be the codimension 2 part of the singular locus and let $S_1,...,S_r$ be its irreducible components. 

In the first step, by blowing up $X$ at the intersections of the components of $S$, 
and then by taking an appropriate bimeromorphic model using the Minimal Model Program (MMP), 
we reduce to the case when  $S_1 ,..., S_r$  are pairwise disjoint. 
Furthermore, $X$ has  klt singularities.  

In the second step, around every point  $x\in X$, we take the index-one cover $V$ of the canonical sheaf $\omega_X$. 
The advantage we get is that $V$ has canonical singularities. 
In particular, it has hypersurface singularities in codimension 2. 
Since in general,  index-one covers  only exist  locally, and  $V$ is  singular, we work with the notion of  complex analytic orbispaces, 
which is a straightforward extension of orbifolds. 

Now we work locally around every point of $S$.  
In the third step, we assume that $X$ has canonical singularities, and  will blowup $X$ at centers strictly contained in $S$, 
so that  $X$ has hypersurface singularities around (the strict transform of) $S$. 
This is the main objective of Section \ref{section:double-point}.

Next in the fourth step, we blow up  further so that a defining equation of $X$ is in a  similar shape as the ones for Du Val singularities.
With this shape of defining equation, we may consider $X$ as a family of Du Val singularities.  
This will be done in Section \ref{section:equation}. 

In the fifth step, in Section \ref{section:local-cover}, 
by using smooth quasi-\'etale covering spaces of  Du Val singularities, 
we can construct covering spaces of $X$ which are smooth over  general points of $S$.  
In general, such a covering space will contain new and mysterious singularities, over the divisorial critical locus, which can also be regarded as the degeneracy locus of the family of Du Val singularities.

In the end, we first show in Section \ref{section:local-proof}  that the previous local constructions can be carried out globally, with the language of  complex analytic orbispaces.   
Then in Section \ref{section:modification},   we take some dlt modification on $X$, so that we  envelop the divisorial critical locus into a dlt reduced divisor. 
What we gain  is that, dlt pairs are  close  to snc pairs, and we have better understanding on finite covers whose divisorial critical locus is a reduced dlt divisor.  
With such a modification, we  can deduce  Theorem \ref{thm:main-thm}.

\vspace{2mm}

\noindent\textbf{Acknowledgment.} 
The author is grateful to Omprokash Das for conversations.
The author is supported by the National Key R\&D Program of China (No. 2021YFA1002300).

\section{Preliminaries}

We  fix some notation and prove some elementary results in this section. 
Throughout this paper, the symbol $\mathbb{D}$ stands for a disc  contained in $\mathbb{C}$ centered at $0$. 
We  will write $\mathbb{D}_r$ if we need to stress the radius $r$ of $\mathbb{D}$. 
We denote  the origin of  $\mathbb{C}^n$ by $\mathbf{0}_n$. 
By a polydisc of dimension $n$, we  refer to $\mathbb{D}^n$, 
which is always assumed  centered at the origin of $\mathbb{C}^n$.

\subsection{Complex analytic varieties and their singularities} 

A  complex analytic variety  $X$
is a reduced and irreducible  complex analytic space.   
We will denote by $X_{\sm}$ its smooth locus and by $X_{\sing}$ its singular locus.  
A smooth complex analytic variety is also called a complex manifold.   
We say that a normal compact complex analytic variety $X$ is $\mathbb{Q}$-factorial, if for any reflexive sheaf $\mathcal{L}$ of rank one on $X$, there is some integer $m> 0$ such that $(\mathcal{L}^{\otimes m})^{**}$ is locally free.

We underline the notion   of reduced divisors in a complex analytic variety $X$ which can be non normal. 
A prime divisor in $X$ is a closed irreducible and reduced  analytic subspace of codimension 1. 
A reduced divisor $D$ in $X$ is by definition a formal finite sum of distinct prime divisors. 
By abuse of notation, we also use $D$ to denote the support of it,  
which is a reduced closed analytic subspace of $X$, pure of codimension 1.

We say that a complex analytic variety $X$ has quotient singularities in codimension $2$ if there is a closed analytic subset $V\subseteq X$ of codimension at least $3$, such that $X\setminus V$ has quotient singularities.  
We refer to \cite[Section 2.3]{KollarMori1998} for the notion of terminal, canonical, klt and dlt singularities.  
As shown in \cite[Lemma 5.8]{GK20} (see also \cite[Proposition 9.3]{GKKP2011}), 
if $X$ has klt singularities, then it has quotient singularities in codimension $2$.

Let $X$ be a complex analytic variety of dimension $n$, 
and  let $S\subseteq X_{\sing}$ be an irreducible component. 
Assume that $S$ has codimension 2 in $X$, and that $X$ has quotient singularities at general points of $S$. 
Then as shown in \cite[Lemma 5.8]{GK20} (see also \cite[Proposition 9.3]{GKKP2011}), 
there is a proper closed analytic subset $Z$ of $S$, 
such that the following property holds. 
For every point $x\in S\setminus Z$, there is an open neighborhood $U$ of $x$ in $X$, 
such that $(x\in U) \cong (\mathbf{0}_{n-2} \in  \mathbb{D}^{n-2}) \times (o\in V)$, where $(o\in V)$ is a klt surface singularity.  
In  the next  lemma, we show that the singularity type $(o\in V)$ is the same as the one of any local surface intersecting $S$ transversally at $x$. 
Since $S\setminus Z$ is connected, these types are all the same on $S\setminus Z$. 
Therefore, we will say that  $X$ has the same type of singularities  at points of $S\setminus Z$.  
And if for example $(o\in V)$ is a Du Val singularity of type $A_r$, 
then we say that $X$ has $A_r$-type singularities at points of $S\setminus Z$.

\begin{lemma}
\label{lemma:singular-type-well-defined}
With the notation above, if $T\subseteq U$ is a  surface intersecting $S$ transversally at $x$, then 
$(x\in T)$ is isomorphic to $(o\in V)$ as surface singularities. 
\end{lemma}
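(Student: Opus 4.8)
The plan is to use the local product structure and show that the second projection restricts to an isomorphism of germs on $T$. After shrinking $U$, identify it via the given isomorphism with $\mathbb{D}^{n-2}\times V$, so that $x$ becomes $(\mathbf{0}_{n-2},o)$ and $S$ becomes $\mathbb{D}^{n-2}\times\{o\}$; this last identification is legitimate because $(\mathbb{D}^{n-2}\times V)_{\sing}=\mathbb{D}^{n-2}\times V_{\sing}=\mathbb{D}^{n-2}\times\{o\}$ is irreducible, hence coincides with the germ of $S$ at $x$. Write $q\colon U\to V$ for the second projection; note $q(x)=o$. I claim that $q|_T\colon(x\in T)\to(o\in V)$ is an isomorphism of germs, which is exactly the assertion of the lemma.

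For the germs in question the Zariski tangent space of $U$ at $x$ splits as $T_x^{\mathrm{Zar}}U=T_xS\oplus T_o^{\mathrm{Zar}}V$, the summand $T_xS=T_x(\mathbb{D}^{n-2})$ being the kernel of the differential $dq_x$. Whichever of the customary ways transversality is made precise for these (singular) germs — $T$ being a general transverse linear slice through $x$, or the Zariski tangent space of $T$ at $x$ meeting $T_xS$ trivially — it forces $T_x^{\mathrm{Zar}}T\cap T_xS=0$, equivalently $dq_x$ maps $T_x^{\mathrm{Zar}}T$ injectively into $T_o^{\mathrm{Zar}}V$.

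The rest is standard commutative algebra. A morphism of germs of complex spaces inducing an injection on Zariski tangent spaces is a closed immersion of germs: choosing a closed embedding $(o\in V)\hookrightarrow(\mathbf{0}_N\in\mathbb{C}^N)$ with coordinates $z_1,\dots,z_N$, injectivity of $dq_x$ on $T_x^{\mathrm{Zar}}T$ says that the classes of $(q|_T)^{*}z_1,\dots,(q|_T)^{*}z_N$ span $\mathfrak{m}_{T,x}/\mathfrak{m}_{T,x}^{2}$, so by Nakayama they generate the ideal $\mathfrak{m}_{T,x}$, whence the induced homomorphism $\cO_{\mathbb{C}^N,\mathbf{0}_N}\to\cO_{T,x}$ is surjective; since $q|_T$ factors through the closed immersion $V\hookrightarrow\mathbb{C}^N$, this homomorphism factors through $\cO_{V,o}$, so $(q|_T)^{\#}\colon\cO_{V,o}\to\cO_{T,x}$ is surjective. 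Let $J$ be its kernel. Then $\cO_{T,x}\cong\cO_{V,o}/J$ has dimension $2$, because $T$ is a surface, while $\cO_{V,o}$ is a two-dimensional integral domain, the surface germ $(o\in V)$ being irreducible; by Krull's principal ideal theorem this is possible only if $J=0$. Hence $(q|_T)^{\#}$ is an isomorphism, i.e.\ $q|_T$ is an isomorphism of germs, proving the lemma.

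The only point requiring care is the middle step: attaching a usable meaning to transversal intersection when $U$ and $T$ are singular at $x$ — which they necessarily are, since the conclusion identifies $(x\in T)$ with the singular germ $(o\in V)$ — and extracting from it the injectivity of $dq_x$ on $T_x^{\mathrm{Zar}}T$. Granting that, the tangential criterion for closed immersions together with the elementary dimension count finishes the argument, and the connectedness of $S\setminus Z$ noted before the lemma then yields that the singularity type is constant along $S\setminus Z$.
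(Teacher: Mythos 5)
Your proof is correct, but it takes a genuinely different route from the paper's. The paper passes upstairs: it forms the product resolution $\widetilde U=\mathbb D^{n-2}\times\widetilde V\to U$, uses transversality to show the strict transform $\widetilde T$ is smooth, identifies $\widetilde T\to T$ as the minimal resolution via adjunction, and then concludes $(x\in T)\cong(o\in V)$ — a final step that implicitly invokes the fact that a klt (equivalently, quotient) surface singularity is determined by its minimal resolution. You instead stay downstairs and prove the sharper statement that the second projection $q\colon U\cong\mathbb D^{n-2}\times V\to V$ restricts to an isomorphism of germs $(T,x)\to(V,o)$: transversality gives $T_x^{\mathrm{Zar}}T\cap T_xS=0$, hence injectivity of $dq_x$ on $T_x^{\mathrm{Zar}}T$, hence by the tangential criterion a closed immersion $(T,x)\hookrightarrow(V,o)$, and a surjection $\cO_{V,o}\twoheadrightarrow\cO_{T,x}$ between two-dimensional local rings with integral (normal) source must have zero kernel. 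This is more elementary — no resolution, no adjunction, no tautness — and it exposes the one genuinely delicate point, namely what ``transversal'' means for a singular germ; your observation that both natural readings (general linear slice, or trivial intersection of Zariski tangent spaces) yield $T_x^{\mathrm{Zar}}T\cap T_xS=0$ is exactly what the paper also needs but does not spell out. The only thing the paper's route buys in exchange is the explicit description of the minimal resolution of $T$, which is not needed for the lemma itself.
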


\begin{proof}
We may assume hat $U=  \mathbb{D}^{n-2} \times V$. 
Then the subset $S\subseteq U$ is identified with $\mathbb{D}^{n-2} \times \{o\}$.
Let $\mu\colon \widetilde{V} \to V$ be the minimal resolution, and let $\rho\colon \widetilde{U} \to U $ be the desingularization  with 
$ \widetilde{U} = \mathbb{D}^{n-2} \times \widetilde{V}$.  
Let $\widetilde{T} = \rho^{-1}(T)$, and   let $\tilde{x}\in \widetilde{T}$ be a point lying over $x$.  
With the product structure   $\widetilde{U} = \mathbb{D}^{n-2} \times \widetilde{V}$, 
we can write $\tilde{x} = (\mathbf{0}_{n-2}, \tilde{v})$, where $\tilde{v} \in \widetilde{V}$ is contained in the exceptional divisor of $\mu$. 
Let   $\sigma \colon S \to \widetilde{U}$ be a section, which is the composition of the following sequence, 
\[S = \mathbb{D}^{n-2}  \times \{ o \}  \to  \mathbb{D}^{n-2}  \times \{ \tilde{v}\} \hookrightarrow   \widetilde{U}.\]
Since $T$ meets $S$ transversally at $x$, we deduce that, at $\tilde{x}$, 
the Zariski tangent space of $\widetilde{T}$ and the Zariski tangent space of $\sigma(S)$, 
both viewed as the subspace of the Zariski tangent space of $\widetilde{U}$, 
intersect transversally. 
Since $\dim\,  \widetilde{T} + \dim\, \sigma(S) = \dim \, \widetilde{U}$, and since $\widetilde{U}$ is smooth, 
we deduce that $\widetilde{T}$ is smooth at $\tilde{x}$. 
Therefore, we obtain that $\widetilde{T}$ is smooth. 
Furthermore, by the adjunction formula, we see that  $\widetilde{T} \to T$ is the minimal resolution. 
This implies that $(x\in T)$ is isomorphic to $(o\in V)$ as surface singularities. 
\end{proof}

In the following lemma, we recall a characterization of different  Du Val singularities by their defining equations (see also \cite[4.25]{KollarMori1998}).

\begin{lemma}
\label{lemma:ADE} 
Let $(o\in X)$ be the germ of a Du Val singularity, defined in a neighborhood of the origin of $\mathbb{C}^3$ by an equation of the shape 
\[
F(x,y,z) = x^2+ F_2(y,z) + F_3(y,z) + R(y,z) = 0,
\]
where $F_2$ and $F_3$ are homogeneous polynomials in $(y,z)$ of degree 2 and 3 respectively, and   $R(y,z) = 0 \mod \, (y,z)^4$. 
Then the following properties hold.
\begin{enumerate}
    \item $F_2 \neq 0$ if and only if $(o\in X)$ is of type $A_r$ for some $r\ge 1$. 
    \item $F_2=0$ and $F_3$ has three pairwise coprime  factors if and only if $(o\in X)$ is of type $D_4$. 
    \item $F_2=0$ and $F_3$ has exactly two   coprime  factors if and only if $(o\in X)$ is of type $D_r$ for some $r\ge 5$.
    \item $F_2=0$ and $F_3$ is a cube if and only if $(o\in X)$ is of type $E_r$ for some $r \in \{6,7,8\}$. 
\end{enumerate}
Furthermore, in the last case, if we can write 
\[
F(x,y,z) = x^2+  u(y,z) \cdot y^3+ u_a(y,z) \cdot   yz^a + u_b(y,z) \cdot   z^b, 
\]
where $a\ge 3$ and $b\ge 4$ are integers, 
$u$ is a unit, 
and $u_a$, $u_b$ are either units or zero, 
then the following assertions hold. 
\begin{enumerate}
    \item[(5)] $b=4$ and $u_b\neq 0$ if and only of $(o\in X)$ is of type $E_6$. 
    \item[(6)] $u_b=0$ or $b\ge 5$, and $u_a\neq 0$, and $a=3$ if and only if $(o\in X)$ is of type $E_7$. 
    \item[(7)] $u_a=0$ or $a\ge 4$, and $u_b\neq 0$, and $b=5$ if and only if $(o\in X)$ is of type $E_8$. 
\end{enumerate}
\end{lemma}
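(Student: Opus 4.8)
The statement is essentially a bookkeeping corollary of the ADE classification of rational double points, so the plan is to match the equation $F$ against the standard normal forms
\[
A_r\colon\ x^2+y^2+z^{r+1}\ (r\ge 1),\qquad D_r\colon\ x^2+y^2z+z^{r-1}\ (r\ge 4),
\]
\[
E_6\colon\ x^2+y^3+z^4,\qquad E_7\colon\ x^2+y^3+yz^3,\qquad E_8\colon\ x^2+y^3+z^5,
\]
and to isolate, at each passage from one family to the next, the discrete invariant of the equation that detects it. Since $F$ contains no monomial involving $x$ other than $x^2$, the germ $(o\in X)=\{x^2+b(y,z)=0\}$, with $b\defeq F_2+F_3+R$, is the suspension of the plane curve germ $\{b=0\}\subseteq\mathbb{C}^2$, hence an ADE singularity of exactly the same type; so it is enough to read off the type of this plane curve from $F_2$, $F_3$ and $R$. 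I would also use that the four conclusions "$A_r$ for some $r\ge 1$", "$D_4$", "$D_r$ for some $r\ge 5$" and "$E_r$ for some $r\in\{6,7,8\}$" are mutually exclusive and together exhaust all Du Val types; hence it suffices to prove the implications from the stated conditions on $F$ to the corresponding types, the converses following formally. The same remark reduces (5)--(7) to proving the forward implications together with the pairwise incompatibility and the exhaustiveness of the three conditions there.

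For (1) and the reduction to $F_2=0$: the Hessian of $F$ at $o$ has rank $1+\rk F_2$, where $\rk F_2$ is the rank of $F_2$ as a quadratic form, so $F_2\ne 0$ is equivalent to this rank being at least $2$; in the normal forms this rank equals $3$ for $A_1$, equals $2$ for $A_r$ with $r\ge 2$, and equals $1$ for every $D_r$ and every $E_r$. Thus $F_2\ne 0$ forces type $A_r$ for some $r\ge 1$, which is (1), and I now assume $F_2=0$, so that $(o\in X)$ is of type $D$ or $E$. Then $F_3\ne 0$ (otherwise $\{b=0\}$ would have multiplicity at least $4$, impossible for an ADE curve), so $\{b=0\}$ has multiplicity exactly $3$ and tangent cone $\{F_3=0\}$. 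Over $\mathbb{C}$ the binary cubic $F_3$ has root pattern $(1,1,1)$, $(2,1)$ or $(3)$, and these are precisely the tangent cones of $D_4$ (where $F_3=y^2z+z^3$ is a product of three distinct linear forms), of $D_r$ with $r\ge 5$ (tangent cone $y^2z$) and of $E_r$ (tangent cone $y^3$). Combined with the fact that $(o\in X)$ is already known to be of type $D$ or $E$, this yields (2), (3) and (4).

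For (5)--(7) I would assume $(o\in X)$ is of type $E$ and $F=x^2+u\,y^3+u_a\,yz^a+u_b\,z^b$ with $a\ge 3$, $b\ge 4$, $u$ a unit and $u_a,u_b$ units or zero; rescaling $y$ and $z$ we may take $u=1$ and $u_a,u_b\in\{0,1\}$, and I would compare $F$ with the three $E$-type normal forms using quasi-homogeneous weights on $(y,z)$ (recalling that a higher-weight perturbation of a weighted-homogeneous ADE equation has the same type). With weights $(4,3)$ both $y^3$ and $z^4$ have weight $12$ while $yz^a$ has weight $4+3a\ge 13$, so $b=4$ and $u_b\ne 0$ force $F$ to be of type $E_6$. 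With weights $(3,2)$ both $y^3$ and $yz^3$ have weight $9$ while $z^b$ has weight $2b\ge 10$ for $b\ge 5$, so $u_a\ne 0$, $a=3$ and ($u_b=0$ or $b\ge 5$) force $F$ to be of type $E_7$. With weights $(5,3)$ both $y^3$ and $z^5$ have weight $15$ while $yz^a$ has weight $5+3a\ge 17$ for $a\ge 4$, so $b=5$, $u_b\ne 0$ and ($u_a=0$ or $a\ge 4$) force $F$ to be of type $E_8$. Finally one checks that these three sets of conditions on $(u_a,a,u_b,b)$ are pairwise incompatible and that any remaining admissible combination makes $\{b=0\}$ have Milnor number greater than $8$, hence non-Du Val; so within the $E$-cases written in this special shape the three conditions are exhaustive, which gives (5)--(7).

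The one genuinely external input is the ADE classification itself (the list of normal forms and its completeness), which is what I would cite from \cite[4.25]{KollarMori1998}; granting that, the only place needing real care is the last paragraph — the weight bookkeeping and, above all, checking that no Du Val $E$-singularity presented in the special shape falls outside cases (5)--(7). Everything else is a direct comparison with the normal forms together with the formal exclusivity and exhaustiveness argument.
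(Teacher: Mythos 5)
Your proposal is correct, but it follows a genuinely different route from the paper's. The paper leans directly on the normal-form algorithm of \cite[4.25]{KollarMori1998}: item (1) is quoted from Step 3, items (5)--(7) from Steps 7--8, and the real work goes into items (2)--(3), where the paper explicitly factorizes $F_3+R$ using \cite[(4.24.3)]{KollarMori1998}, applies Weierstrass preparation, and performs coordinate changes to land on the $D_r$ normal form, reading off $r$ from the resulting exponent. You instead isolate discrete analytic invariants and match them against the list of normal forms: the rank of the Hessian for (1), the multiplicity structure of the tangent cone of the suspended plane curve for (2)--(4), and a quasi-homogeneous weight comparison for (5)--(7), closing each biconditional by the mutual exclusivity and exhaustiveness of the listed conditions. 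Both arguments take the completeness of the ADE classification as external input. Your approach buys conceptual transparency and avoids the coordinate gymnastics of the paper's treatment of (2)--(3); the paper's approach buys an explicit normal form (which it actually needs later, e.g.\ in Section \ref{section:equation}), so the computation is not wasted there. The two places in your write-up that deserve to be spelled out are exactly the ones you flag: (i) the semi-quasi-homogeneity principle you invoke for (5)--(7) needs the standard fact that a simple quasi-homogeneous principal part with isolated singularity absorbs all higher-weight perturbations (true for ADE since the monomial basis of the Milnor algebra lives entirely below the top weight), and one should note that $u,u_a,u_b$ are unit \emph{functions}, so each contributes its leading constant at the stated weight plus strictly higher-weight terms; (ii) the exhaustiveness check in (5)--(7), where the excluded combinations must be shown to have Milnor number exceeding $8$ (or a non-isolated singularity), contradicting the Du Val hypothesis.

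One point of reconciliation with the statement as printed: items (5)--(7) are labelled with types $D_6,D_7,D_8$, but the preamble ("in the last case") and the paper's own proof (which cites the $E$-type Steps 7--8 of \cite[4.25]{KollarMori1998}) make clear that $E_6,E_7,E_8$ are intended; your proof establishes the intended version, and the literal version would be false.
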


\begin{proof}
The item (1) follows from the Step 3 of \cite[4.25]{KollarMori1998}. 
For the item (2) and (3), as in Step 4 of \cite[4.25]{KollarMori1998}, we see that $(o\in X)$ is of type $D_r$ for some $r\ge 4$ if and only if $F_2=0$ and $F_3$ is not a cube. 
In particular, up to a linear change of coordinates, we can write 
\[F_3(y,z) = z(\alpha z+ y)(\beta z+y)  \]
where $\alpha,\beta\neq 0$ are complex numbers. 
Applying \cite[(4.24.3)]{KollarMori1998}, we can write 
\[
F_3(y,z)  + R(y,z) = (z+p(y,z) )(y^2 +(\alpha + \beta)yz + \alpha\beta  z^2 + q(y,z) ), 
\]
where $p,q$ are holomorphic functions whose terms have degrees at least 2 and 3 respectively. 
Replacing $z$ by $z-p(y,z)$, we can assume that  
\begin{equation} \label{eqn:Du-Val}
F_3(y,z)  + R(y,z) = z( y^2 +(\alpha + \beta)yz + \alpha\beta z^2 + q(y,z)).     
\end{equation}
If $\alpha=\beta$, then  up to replacing $y$ by $y-\alpha z$, we can assume that 
\begin{eqnarray*}
    F_3(y,z)  + R(y,z) = z( y^2 + q(y,z)).  
\end{eqnarray*}
Applying Weierstrass preparation theorem for $ y^2 + q(y,z)$ with respect to $y$, 
and noting that $q(y,z)=0\mod \, (y,z)^3$, 
we have 
\[
y^2 + q(y,z) = (\mathrm{unit})\cdot (y^2 + yv_1(z) + v_2(z))
\]
such that $v_1(z) = 0\mod \,  (z)^2$ and $v_2(z) = 0 \mod \, (z)^3$.  
Replacing $y$ by $y-\frac{1}{2}v_1(z)$, we can assume that 
\[
F_3(y,z)  + R(y,z) = (\mathrm{unit})\cdot z( y^2 + w(z)), 
\]
where $w(z)=0\mod \,  (z)^3$. 
Therefore, we have 
\[
F_3(y,z)  + R(y,z) = (\mathrm{unit})\cdot zy^2 +   (\mathrm{unit})\cdot z^s, 
\]
for some integer $s\ge 4$.  
It follows that $(o\in X)$ is of type $D_{s+1}$. 
This proves the item (3). 

If $\alpha \neq \beta$, replacing $y$ by $y-\frac{1}{2}(\alpha+\beta)z$ in \eqref{eqn:Du-Val}, we can assume that 
\[
F_3(y,z)  + R(y,z) =  z(y^2 +   \gamma z^2  +q(y,z)),
\] 
where $\gamma\neq 0$ is a complex number and $q(y,z)=0\mod \, (y,z)^3$.  
The same argument as above shows that  $(o\in X)$ is of type $D_{4}$. 
We hence deduce the item (2).

Now we assume that $F_3$ is a cube. 
Then the item (4)  is a consequence of the items (1)-(3).
The items (5) and (7) follow  from the Step 7 of \cite[4.25]{KollarMori1998}, and the item (6) follows from the Step 8 of \cite[4.25]{KollarMori1998}.  
This completes the proof of the lemma. 
\end{proof}

Let $f\colon Y\to X$ be a proper bimeromorphic morphism between complex analytic spaces. 
Since we do not assume that $Y$ is normal, 
we say that there is a $f$-ample and $f$-exceptional divisor $-H$, if $H$ is a closed analytic subspace of $Y$ contained in the $f$-exceptional locus, 
such that the ideal sheaf $\mathcal{I}$ of $H$ is a $f$-ample invertible  sheaf   (see \cite[Definition 1.1]{Nakayama1987}).    
In the proof of the main theorem, we apply several local constructions around a component of the singular locus. 
The following lemma enables us to extend local modifications  to the whole variety.

\begin{lemma}
\label{lemma:extend-morphism-1} 
Let $X$ be a complex analytic space  and let $U\subseteq X$ be an open subset. 
Assume that there is a proper bimeromorpihc  morphism $p\colon U'\to U$ such that the indeterminacy locus $Z$ of $p^{-1}$ is compact. 
Then $p$ extends to some proper bimeromorpihc  morphism $f\colon X'\to X$, which is an isomorphism over  $X \setminus Z$. 
Assume further that there is some $p$-exceptional  divisor in $U'$ which is  $p$-ample over $U$, then $f$ is projective over $X$. 
\end{lemma}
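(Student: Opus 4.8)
The plan is to glue $p\colon U'\to U$ with the identity map on $X\setminus Z$ along their common open overlap, where $Z$ is the compact indeterminacy locus of $p^{-1}$. First I would choose an open neighborhood $W$ of $Z$ in $X$ with $\overline{W}$ compact and $\overline{W}\subseteq U$; this is possible precisely because $Z$ is compact. Over $U\setminus Z$ the morphism $p$ restricts to an isomorphism, so $p^{-1}(W\setminus Z)$ and $W\setminus Z$ are identified, and likewise $p^{-1}(U\setminus Z)\cong U\setminus Z$. I then form $X'$ by gluing $U'$ and $X\setminus Z$ along the open sets $p^{-1}(U\setminus Z)$ and $U\setminus Z$ via this isomorphism. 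Since the transition identification is the identity on the overlap, the gluing is a well-defined complex space, and the two structure maps $U'\to U\hookrightarrow X$ and $(X\setminus Z)\hookrightarrow X$ agree on the overlap, hence descend to a morphism $p\colon X'\to X$. By construction $p$ is an isomorphism over $X\setminus Z$, in particular outside $U'$, and it is bimeromorphic.

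The remaining point for the first assertion is that $p\colon X'\to X$ is \emph{proper}. Properness is local on the base, so it suffices to check it over an open cover of $X$. Over $X\setminus Z$ the map is an isomorphism, hence proper. For a point of $Z$, pick the neighborhood $W$ above: I claim $p^{-1}(W)= p^{-1}(W)\cap U'$, and that the induced map $p^{-1}(W)\to W$ is proper because $p\colon U'\to U$ was assumed proper (so $p^{-1}(\overline W)\to \overline W$ is proper, $\overline W$ compact, whence $p^{-1}(\overline W)$ is compact, and $p^{-1}(W)$ maps properly to $W$). Covering $X$ by $X\setminus Z$ and such $W$'s gives properness of $p$ on all of $X$. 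I expect this patching-and-properness bookkeeping to be the main (though routine) obstacle: one must be careful that the identification used for gluing is genuinely the identity on the nose on the overlap, so that no monodromy obstruction arises, and that the neighborhood $W$ can be taken small enough to sit inside $U$ while still containing $Z$.

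For the projectivity statement, suppose $D$ is a $p$-exceptional divisor on $U'$ which is $p$-ample over $U$. Since $p$ is an isomorphism over $X\setminus Z$, the divisor $D$ is supported over $Z$, hence over the relatively compact $W$; in particular its closure $\overline D$ in $X'$ equals $D$ (viewed inside $p^{-1}(W)$), so $D$ extends as a divisor on $X'$ with $\operatorname{Supp}(D)\subseteq p^{-1}(Z)$. Relative ampleness is also local on the base: $D$ is $p$-ample over $W$ by hypothesis (ampleness over $U$ restricts to ampleness over the open subset $W$), and over $X\setminus Z$ the map $p$ is an isomorphism, so any relatively ample class works trivially there — concretely, $D+\varepsilon\,p^\ast A$ for a suitable auxiliary ample-enough class, or more simply one argues that a proper morphism which is projective over each member of an open cover of the base and is an isomorphism away from a relatively compact piece is projective. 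Thus $p\colon X'\to X$ is projective over $X$. I would phrase the last step using the valuative/local criterion for relative projectivity, taking the cover $\{X\setminus Z,\ W\}$ of $X$, with the understanding that over $X\setminus Z$ the identity map is trivially projective and over $W$ the existence of the $p$-ample divisor $D$ gives it directly. The only subtlety is making the two relatively ample divisors agree near the overlap, which is handled by the fact that $D$ is exceptional and supported in the compact part, so it does not interfere with the (automatically projective) isomorphism region.
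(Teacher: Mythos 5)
Your proposal is correct and follows essentially the same route as the paper: glue $U'$ to $X\setminus Z$ along the open overlap where $p$ is an isomorphism, check properness locally on the base, and extend the $p$-ample exceptional divisor (which is supported over the compact set $Z$) to conclude projectivity. The extra bookkeeping you supply on properness and on the locality of relative ampleness is sound and merely fills in details the paper leaves implicit.
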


\begin{proof}
Let $W=X\setminus Z$. 
Since $p$ is an isomorphism over $U\cap W$, we deduce that  $U'$ and $W$ glue together along $U\cap W$ to a complex analytic space $X'$, and that $p$ extends to a proper  bimeromorphic morphism $f\colon X'\to X$. 
If there is  a   $p$-exceptional and $p$-ample  divisor $-H$, 
then   $H$ is compact since $Z$ is compact. 
Thus $H$ is naturally a closed analytic subspace of $X'$. 
We note that its ideal sheaf in $X$ is also invertible and  $f$-ample over  $X$. 
Hence  $f\colon X'\to X$ is projective.      
\end{proof}

\subsection{Finite morphisms between complex analytic varieties} 

In this subsection, we summarize some results concerning finite morphisms between complex analytic varieties. 
We recall from \cite[page 47]{GR84} that a morphism $f\colon Y\to X$ is called finite if it is closed and if every fiber is a finite set. 
In this case,  the critical locus of $f$ is the smallest closed subset  $Z\subseteq X$, such that  $f$ is \'etale over $X\setminus Z$, that is, $f$ is  locally biholomorphic on $Y\setminus f^{-1}(Z)$. 
By the divisorial critical locus of $f$, we refer to the codimension 1 part of $Z$. 
The morphism $f$ is said to be quasi-\'etale if its critical locus has codimension at least $2$.  
By abuse of notation, we say that $f$ is Galois of Galois group $G$ if its restriction over $X\setminus Z$ is Galois, and if there is an action of $G$ on $Y$, extending the Galois action of $G$ on $f^{-1}(X\setminus Z)$, such that the quotient $Y/G$ is canonically isomorphic to $X$. 
We refer to \cite[Th\'eor\`eme 4]{Cartan1957} for the quotient of a  complex  analytic space by a finite group.  
The following theorem is due to  Grauert-Remmert.

\begin{thm}
\label{thm:GR-cover} 
Let $X$ be a complex analytic variety,  
and let $X^\circ \subseteq X$ be a dense Zariski open subset.  
Assume that $X^\circ$ is normal and we have a finite \'etale morphism $p\colon Y^\circ \to X^\circ$. 
Then $p$ extends to a finite morphism $p\colon Y\to X$ with $Y$ normal, which is  unique up to isomorphism.
\end{thm}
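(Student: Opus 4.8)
The plan is to realize $Z$ as the normalization of $X$ in the finite \'etale cover $Y$, and to locate the entire difficulty in a single coherence statement of the same nature as Oka's theorem on the coherence of the normalization sheaf.

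I would begin with two harmless reductions. Because $X^\circ$ is normal, the normalization $\nu\colon\overline X\to X$ is biholomorphic over $X^\circ$, so I may regard $Y\to X^\circ\subseteq\overline X$; since a composite of finite morphisms is finite and normality is preserved, an extension of $p$ over $\overline X$ yields one over $X$, and conversely a finite morphism $Z'\to X$ from a normal space in which $Y$ is dense has all components dominating $X$ and therefore factors through $\nu$, so uniqueness over $X$ follows from uniqueness over $\overline X$. Thus I may assume $X$ normal. Next, $p$ being finite \'etale over the connected space $X^\circ$, the space $Y$ has finitely many connected components, each irreducible, normal and finite \'etale over $X^\circ$; extending each separately and taking the disjoint union at the end, I may assume $Y$ irreducible.

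Let $\iota\colon Y\to X$ be the composite $Y\to X^\circ\subseteq X$, and consider on $X$ the sheaf of $\mathcal O_X$-algebras $\iota_*\mathcal O_Y$ together with its subsheaf $\mathcal A\subseteq\iota_*\mathcal O_Y$ of sections integral over $\mathcal O_X$. Over $X^\circ$ the morphism $p$ is finite, so $(p_*\mathcal O_Y)|_{X^\circ}$ is integral over $\mathcal O_{X^\circ}$ and $\mathcal A|_{X^\circ}=p_*\mathcal O_Y$. Over a point $x$ of $A:=X\setminus X^\circ$, normality of $Y$ shows that every germ in $\mathcal A_x$ is holomorphic along the full fibre — an element integral over $\mathcal O_{X,x}$ is, locally on $Y$, integral over the normal rings $\mathcal O_{Y,y}$ inside their own fraction fields, hence belongs to them — so $\mathcal A_x$ is exactly the integral closure of $\mathcal O_{X,x}$ in the relevant finite extension. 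Granting that $\mathcal A$ is a \emph{coherent} sheaf of $\mathcal O_X$-algebras, its relative analytic spectrum is a complex space $\pi\colon Z\to X$ with $\pi_*\mathcal O_Z=\mathcal A$; then $\pi$ is finite, $Z$ is normal because its local rings are integrally closed in their total quotient rings, and $Z\times_X X^\circ\cong Y$ since there $\mathcal A=p_*\mathcal O_Y$. For uniqueness, if $\pi'\colon Z'\to X$ is another finite morphism with $Z'$ normal, $Z'|_{X^\circ}=Y$ and $Y$ dense in $Z'$, then $\pi'_*\mathcal O_{Z'}$ is a coherent $\mathcal O_X$-subalgebra of $\iota_*\mathcal O_Y$, finite over $\mathcal O_X$, containing $\mathcal O_X$, integrally closed in its total quotient ring, and with the same restriction $p_*\mathcal O_Y$ over $X^\circ$ as $\mathcal A$; using the normality of $Z$ and of $Z'$ to see that sections of each are integral over the other and hence lie in the other, both must coincide with the full integral closure, so $Z'\cong Z$ over $X$. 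Finally, this construction is local and canonical, so the local models glue to the global $Z$, and uniqueness descends to the original, possibly non-normal, $X$.

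The step I expect to be the main obstacle is the coherence of $\mathcal A$, equivalently the finiteness of $\mathcal A_x$ as an $\mathcal O_{X,x}$-module for $x\in A$: concretely, that near each such $x$ the finite \'etale algebra $p_*\mathcal O_Y$ is generated, as an $\mathcal O_{X^\circ}$-algebra, by finitely many sections each satisfying a monic equation whose coefficients are holomorphic on a whole neighborhood of $x$, and not merely on $X^\circ$. This cannot be obtained from the Riemann extension theorem: the complement $A$ may be a hypersurface and $\pi$ may genuinely ramify along it (already $z\mapsto z^{n}$ does), so the elementary symmetric functions of the sheets are unbounded near $A$; what must be shown is that they are nonetheless meromorphic, of finite polar order, along $A$. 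This is the analytic counterpart of Nagata's theorem that a normal excellent domain has module-finite integral closure in a finite field extension — the analytic local rings $\mathcal O_{X,x}$ are of this kind — together with the passage from this ring-theoretic finiteness to coherence on $X$, which is Oka's normalization theorem when $p$ is an isomorphism and its generalization by Grauert and Remmert in general. I would either invoke this finiteness directly, or reprove it by the classical device of cutting $X$ with generic hyperplane sections to reduce control of the polar orders to normal surfaces, and finally to normal curves, where the assertion is elementary.
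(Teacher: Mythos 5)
Your proposal is correct and follows essentially the same route as the paper: reduce to the case of normal $X$ via the normalization (which is biholomorphic over $X^\circ$ by hypothesis), and then invoke the Grauert--Remmert extension theorem, which the paper cites as \cite[Theorem XII.5.4]{SGA1} and which you re-derive as the relative analytic spectrum of the integral closure of $\mathcal{O}_X$ in $\iota_*\mathcal{O}_Y$. The only substantive input you do not prove --- the coherence/finiteness of that integral closure --- is exactly the content of the cited theorem, and you correctly identify it as such, so no gap remains beyond what the paper itself delegates to the literature.
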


\begin{proof} 
Let $r\colon X'\to X$ be the normalization. 
By assumption, $r$ is an isomorphism over $X^\circ$. 
Hence up to replacing $X$ by $X'$, we may assume that $X$ is normal. 
In this case, the theorem is proved in {\cite[Th\'eor\`eme XII.5.4]{SGA1}}.  
\end{proof}

%\begin{lemma}
%\label{lemma:descend-ample} 
%Let $f\colon Y\to X$ be a proper bimeromorphic morphism between complex analytic varieties. 
%Assume that there is a finite group $G$ acting on $X$ and $Y$ such that $f$ is $G$-equivariant. 
%Suppose that there is  a $G$-invariant $f$-ample and $f$-exceptional divisor. 
%If $\varphi\colon Y/G \to X/G$ is  the natural morphism, then there is a $\varphi$-ample and $\varphi$-exceptional divisor. 
%\end{lemma}

%\begin{proof}
%By assumption, there is a $G$-invariant invertible ideal sheaf $\mathcal{I} \subseteq \mathcal{O}_Y$, which induces a   $f$-ample and $f$-exceptional divisor.  
%Let $N$ be a positive integer divisible by the order of $G$. 
%Then $\mathcal{I}^N$ is also  a $G$-invariant invertible ideal sheaf, and $\mathcal{I}\cong \mathcal{I}^{\otimes N}$ as $G$-linearized invertible sheaf.  
%By the Kempf's descent lemma (see \cite[Th\'eor\`eme 2.3]{DrezetNarasimhan1989}, whose proof works in our setting), we deduce that there is an ideal sheaf $\mathcal{J} \subseteq \mathcal{O}_{Y/G}$ such that $\mathcal{I}^N = p^*\mathcal{J}$, where $p\colon Y\to Y/G$ is the natural morphism.  
%Since $p$ is finite, we deduce that $\mathcal{J}$ is $\varphi$-ample. 
%Hence it induces a $\varphi$-ample and $\varphi$-exceptional divisor. 
%\end{proof}

We recall the notion of cyclic covers. Let $X$ be a complex analytic space and let $s_1,...,s_m$ be holomorphic functions on $X$. 
Assume that $s_i$ and $s_j$ do not have non-unit common factors if $i\neq j$.  
Let $k_1,...,k_m $ be positive integers. 
Then we have the following $\mathcal{O}_X$-algebra 
\[
\mathcal{A} = \mathcal{O}_X [T_1,...,T_m] /(T_1^{k_1} - s_1,..., T_m^{k_m}-s_m).   
\]
Let $Z = X \times \mathbb{C}^m$ so that  $(T_1,...,T_m)$ is  the coordinates system of $\mathbb{C}^m$. 
Then we can   define the following complex analytic space 
\[
X[\sqrt[k_1]{s_1}, ..., \sqrt[k_m]{s_m}]  =  \mathrm{Spec}_{\mathcal{O_X}}\, \mathcal{A}   
\]
as the closed subspace in $Z$ define by the ideal  $(T_1^{k_1} - s_1,..., T_m^{k_m}-s_m)$.

\begin{lemma}
\label{lemma:cyclic-cover-unit}
With the notation above, let $x\in X$ be a point and let $s'_1,...,s'_m$ be holomorphic functions on $X$. 
Assume that there are unit functions $\mu_1,...,\mu_m$ such that $s'_i=\mu_i \cdot s_i$ for all $i=1,...,m$.  
Then, up to shrinking $X$ around $x$, there is an isomorphism  over $X$  
\[
X[\sqrt[k_1]{s_1}, ..., \sqrt[k_m]{s_m}]  \cong X[\sqrt[k_1]{s'_1}, ..., \sqrt[k_m]{s'_m}]. 
\]
In particular, if $x' \in X[\sqrt[k_1]{s_1}, ..., \sqrt[k_m]{s_m}]$ is a point lying over $x$, 
then the germ $(x'\in X[\sqrt[k_1]{s_1}, ..., \sqrt[k_m]{s_m}])$  depends only on the integers $k_1,...,k_m$, and the stalks  $(\mathcal{I}_1)_x,..., (\mathcal{I}_m)_x$ at $x$ of the invertible  ideal sheaves generated by $s_1,...,s_m$ respectively.
\end{lemma}

\begin{proof}
Up to shrinking $X$ around $x$, we may assume that $\mu_i$ admits a $k_i$-th root $\gamma_i$. 
Then we have 
\begin{eqnarray*}
 &&    \mathcal{O}_X [T_1,...,T_m] /(T_1^{k_1} - s'_1,..., T_m^{k_m}-s'_m)   \\
 &=&   \mathcal{O}_X [T_1,...,T_m] /(T_1^{k_1} - \gamma_1^{k_1} s_1,..., T_m^{k_m}-  \gamma_m^{k_m}s_m) \\
 &  = &  \mathcal{O}_X [T_1,...,T_m] /( \gamma_1^{k_1} ( (\gamma_1^{-1}T_1)^{k_1} -  s_1),..., \gamma_m^{k_m} ((\gamma_m^{-1}T_m)^{k_m}-  s_m ) ) \\
 &  = &  \mathcal{O}_X [T_1,...,T_m] /(  (\gamma_1^{-1}T_1)^{k_1} -  s_1,...,   (\gamma_m^{-1}T_m)^{k_m}-  s_m  ) \\
 &\cong &  \mathcal{O}_X [T_1,...,T_m] /(T_1^{k_1} - s_1,..., T_m^{k_m}-s_m),   
\end{eqnarray*}
where the last isomorphism is an isomorphism of $\mathcal{O}_X$-algebras. 
This implies the assertion of the lemma. 
\end{proof}

With the notation above, we assume that  $X$ is a normal  complex analytic variety.   
Suppose that, for $i=1,...,m$, there is an integer $r_i>0$ and a   divisor $D_i$ such that  $r_iD_i$ is defined by   $s_i=0$.  
Then for any integer $n>0$,  we can construct the cyclic cover $X[\sqrt[nr_1]{s_1}, ..., \sqrt[nr_m]{s_m}]$ over $X$. 
Let \[X[\sqrt[nr_1]{s_1}, ..., \sqrt[nr_m]{s_m}]^{nor} \] 
be  its normalization and let  $p\colon X[\sqrt[nr_1]{s_1}, ..., \sqrt[nr_m]{s_m}]^{nor}\to X $ be the natural morphism.

\begin{lemma}
\label{lemma:cyclic-functorial}
With the notation above, we let $x\in X$ be a point and let $U\subseteq X$ be an open neighborhood of $x$. 
Assume that for each $i=1, ... ,m$, the divisor  $l_iD_i|_U$ is   defined by $t_i=0$, 
where $l_i > 0$ is an integer and  $t_i$ is a holomorphic function on $U$. 
Then, up to shrinking $U$,  every connected component of   $ U[\sqrt[nl_1]{t_1}, ..., \sqrt[nl_m]{t_m}]^{nor} $ is isomorphic to every connected component of $p^{-1}(U)$.
\end{lemma}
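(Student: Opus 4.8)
The plan is to reduce the statement to the already established uniqueness of normal finite extensions of étale covers, namely Theorem \ref{thm:GR-cover}, by showing that the two cyclic covers agree as étale covers over a common dense open subset. First I would set $D = \bigcup_i D_i$ (discarding any indices with $D_i = 0$, which only contribute a harmless étale factor that can be treated separately) and let $X^\circ = X \setminus \Sing(X_{\mathrm{red}}) \setminus \Sing(D)$, or more simply the locus where $X$ is smooth and $D$ is a smooth divisor; shrinking $U$ we may assume $U$ is contained in this locus. Over $U^\circ = U \cap X^\circ$ both $p^{-1}(U)^{\mathrm{nor}}$ and $U[\sqrt[nl_1]{t_1},\dots]^{\mathrm{nor}}$ restrict to finite covers that are étale away from $D$, so by Theorem \ref{thm:GR-cover} it suffices to identify them, component by component, as étale covers of $U^\circ \setminus D$.

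The key computation is local and monodromy-theoretic. Near a general point of $D_i$ choose a local coordinate $w$ cutting out $D_i$; then $s_i = \alpha w^{r_i}$ and $t_i = \beta w^{l_i}$ with $\alpha,\beta$ units, so after shrinking, $s_i^{l_i}$ and $t_i^{r_i}$ differ by the unit $\alpha^{l_i}\beta^{-r_i}$, which (shrinking $U$ to a polydisc so that $\pi_1$ is generated by small loops around the components of $D$) admits an $(n r_i l_i)$-th root that is a single-valued holomorphic function. Hence adjoining $T_i$ with $T_i^{n r_i} = s_i$ and adjoining $T_i'$ with $(T_i')^{n l_i} = t_i$ generate the same subcover of $U^\circ \setminus D$: both are the connected $\mathbb{Z}/n$-cover determined by the local monodromy character sending the meridian $\gamma_i$ of $D_i$ to a primitive $n$-th root of unity (independent of $r_i$, $l_i$, $s_i$, $t_i$ because the exponents $n r_i$, $n l_i$ are exactly matched to the vanishing orders $r_i$, $l_i$). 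Over the part of $U^\circ$ away from all the $D_i$ the cover is pulled back from the étale classifying data, which is literally the same $\mathcal{O}_{U^\circ}$-algebra once the roots of units are chosen. Therefore the normalizations of the two $\mathcal{O}_{U^\circ}$-algebras, restricted to $U^\circ \setminus D$, are isomorphic finite étale covers; taking connected components and applying Theorem \ref{thm:GR-cover} to extend uniquely across $D \cap U^\circ$ and then (by normality and Hartogs-type extension across the codimension $\geq 2$ complement) across all of $U$, we conclude that each connected component of one cyclic cover is isomorphic to each connected component of the other, after shrinking $U$ enough that the étale cover over $U^\circ \setminus D$ has the homogeneous (single orbit) structure that makes all components isomorphic.

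The main obstacle I expect is bookkeeping rather than conceptual: one must be careful that shrinking $U$ to a contractible polydisc really does make every relevant unit (the ratios $\alpha^{l_i}\beta^{-r_i}$, and the transition units comparing $s_i$ and $t_i$ on overlaps) admit the needed roots, and that the identification of the two $\mathbb{Z}/n$-subcovers is compatible simultaneously for all $i = 1,\dots,m$, so that the fiber products glue to a genuine isomorphism of the full covers and not merely of each cyclic factor. A secondary subtlety is the claim that, after shrinking, \emph{every} connected component of one cover is isomorphic to \emph{every} connected component of the other: this follows because over a small enough polydisc the monodromy representation factors through the abelian group $\prod_i \mathbb{Z}/(n r_i)$ acting by the matched characters, so the cover $p^{-1}(U)$ is a disjoint union of copies of a single connected Galois cover, and the same description holds verbatim for the $t_i$-version. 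Once these points are pinned down the proof is immediate from Theorem \ref{thm:GR-cover}.
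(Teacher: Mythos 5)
Your argument is correct in substance but takes a genuinely different route from the paper's. The paper's proof is purely algebraic: it first reduces to the case where $l_i$ is the Cartier index of $D_i$ at $x$, so that $r_i=d_il_i$ and $s_i=\eta_i\cdot t_i^{d_i}$ for a unit $\eta_i$; after extracting an $(nr_i)$-th root $\gamma_i$ of $\eta_i$ on a small $U$, the algebra $\mathcal{O}_U[T_i]/(T_i^{nr_i}-s_i)$ visibly factors as a product of copies of $\mathcal{O}_U[T_i]/(T_i^{nl_i}-\gamma_i^{nl_i}t_i)\cong \mathcal{O}_U[T_i]/(T_i^{nl_i}-t_i)$, which gives the statement directly, with no appeal to fundamental groups or to Theorem \ref{thm:GR-cover}. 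Your route instead identifies the two covers where they are \'etale, via the observation that the meridian monodromy of $s_i^{1/(nr_i)}$ and of $t_i^{1/(nl_i)}$ around $D_i$ is the same primitive $n$-th root of unity (equivalently, $l_i\cdot\mathrm{wind}(s_i\circ\gamma)=r_i\cdot\mathrm{wind}(t_i\circ\gamma)$ for every loop $\gamma$ in $U\setminus D$, because $s_i^{l_i}/t_i^{r_i}$ has trivial divisor and is hence a unit), and then invokes Grauert--Remmert uniqueness to carry the identification across the branch and singular loci. What this buys is a conceptual explanation of \emph{why} the cover depends only on $D_i$ and $n$: the associated character of $\pi_1(U\setminus D)$ is intrinsic. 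The cost is that you must justify topological assertions (that the relevant covers are classified by these characters, that the cover is homogeneous so \emph{all} components agree) which the algebraic factorization renders unnecessary. Two points to repair in the write-up: you cannot ``shrink $U$ so that it is contained in the smooth locus of $(X,D)$'', since $x$ itself may be singular --- but your argument never needs this, only that $U^\circ\setminus D$ is dense Zariski open so that Theorem \ref{thm:GR-cover} applies; and the winding-number comparison tacitly uses normality (a function with trivial divisor on a non-normal space need not be a unit), so one should pass to the normalization first, as the cited theorem itself does.
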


\begin{proof}
We remark that both $l_i$ and $r_i$ are divisible by the Cartier index of $D_i$ at $x$. 
Hence, up to shrinking $U$,  it is enough to prove the case when $l_i$ is the Cartier index of $D_i$ at $x$. 
Then there are integers $d_i > 0$ and   unit functions $\eta_i$ on $U$,  
such that $r_i=d_il_i$ and $s_i = \eta_i \cdot t_i^{d_i}$. 
Up to shrinking $U$, we may assume that $\eta_i$ admits a $(nr_i)$-th root $\gamma_i$.   
Then  the normalization of the $\mathcal{O}_U$-algebra
\begin{eqnarray*}
 &&\mathcal{O}_U [T_1,...,T_m] /(T_1^{n r_1} - s_1,..., T_m^{n r_m}-s_m)\\
& = & 
 \mathcal{O}_U [T_1,...,T_m] /(T_1^{n d_1 l_1} - \gamma_1^{n d_1 l_1} \cdot  t_1^{d_1},..., T_m^{n d_m l_m}- \gamma_m^{n d_m l_m} t_m^{d_m})  
\end{eqnarray*}
is isomorphic to the product of copies of the normalization of
\[
 \mathcal{O}_U [T_1,...,T_m] /(T_1^{nl_1} - \gamma_1^{nl_1} \cdot  t_1,..., T_m^{nl_m}-\gamma_m^{nl_m} \cdot t_m). 
\]
Since each $\gamma_i$  is a unit function, we deduce that 
\begin{eqnarray*}
    && \mathcal{O}_U [T_1,...,T_m] /(T_1^{nl_1} - \gamma_1^{nl_1} \cdot  t_1,..., T_m^{nl_m}-\gamma_m^{nl_m} \cdot t_m)  \\
 &\cong&  
 \mathcal{O}_U [T_1,...,T_m] /(T_1^{nl_1} - t_1,..., T_m^{nl_m}-t_m). 
\end{eqnarray*} 
This implies the lemma. 
\end{proof}

The following lemma summarizes some geometric properties on cyclic covers.

\begin{lemma}
\label{lemma:cyclic-cover-Cartier} 
With the notation above, we set  $X'= X[\sqrt[nr_1]{s_1}, ..., \sqrt[nr_m]{s_m}]^{nor}$. 
Assume that every $D_i$ is reduced and irreducible. 
Then the following properties hold. 
\begin{enumerate}
\item If   $D'_i = p^{-1}( D_i)$, then $D'_i$ is a Cartier divisor in $X'$. 
Furthermore, the ramification index of $p$ along any component of  $D'_i$ is equal to $n$. 
\item If $(X, \sum_{i=1}^m D_i)$ is snc, then so is $(X', \sum_{i=1}^m D'_i)$.
\item If $(X, \sum_{i=1}^m D_i)$ is dlt, then so is $(X', \sum_{i=1}^m D'_i)$.
\end{enumerate}

\end{lemma}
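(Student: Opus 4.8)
The plan is to treat the three items separately, using that all statements are local on $X$ (and on $X'$), and that by Lemma \ref{lemma:cyclic-functorial} we may replace the given data $(r_i,s_i)$ by any other local presentation of the same divisors. For item (1), I would first reduce to the case $m=1$: the cyclic cover $X'$ is obtained by iterating single $n r_i$-th root extractions and normalizing, and being Cartier and the ramification index along $D_i'$ can be checked after this reduction since $D_i'$ is the preimage of $D_i$ under the composite. So suppose $X' = X[\sqrt[nr]{s}]^{nor}$ with $rD = \div(s)$ and $D$ reduced. Work near a point $x$ where $D$ is Cartier of index $1$, i.e.\ (after shrinking and using Lemma \ref{lemma:cyclic-functorial}) $D$ itself is defined by a single function $t$ with $s = \eta t^{r}$ for a unit $\eta$, and after absorbing an $(nr)$-th root of $\eta$ we may take $s = t^{r}$. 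Then $\cA = \cO_X[T]/(T^{nr} - t^{r})$; the element $u \defeq T^{n}/t$ satisfies $u^{r}=1$, so $\Spec_{\cO_X}\cA$ splits, up to the finite group $\mu_r$ of $r$-th roots of unity, into copies cut out by $T^{n}=t$, and each of these is already normal (it is the standard $n$-th root cover of a normal variety along a Cartier divisor, which is normal by the Jacobian/Serre criterion). Hence locally $X' \cong X[\sqrt[n]{t}]$, in which $D' = (T=0)$ is visibly Cartier, and the ramification index along $D'$ is exactly $n$ since $t$ vanishes to order $n$ in $T$. This also handles the "general point of every component" clause: a general point of any component of $D_i'$ lies over the Cartier locus of $D_i$, where the above description applies verbatim.

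For item (2), I would again localize and use that an snc divisor $\sum D_i$ is, near any point, given in suitable coordinates by $D_i = (z_i = 0)$ for distinct coordinate functions. By item (1)'s local analysis, near such a point $X'$ is a connected component of $X[\sqrt[n]{z_1},\dots,\sqrt[n]{z_m}]$, i.e.\ the map $(w_1,\dots,w_m,\text{rest})\mapsto(w_1^n,\dots,w_m^n,\text{rest})$ on a polydisc; this is smooth, and $D_i' = (w_i = 0)$, so $(X',\sum D_i')$ is snc. Item (3) is the one I expect to be the main obstacle. The natural strategy is: dlt means there is a closed subset $W\subseteq X$ of codimension $\ge 2$ such that $(X\setminus W,\sum D_i)$ is snc and $X\setminus W$ meets every stratum; push this through the cover. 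Over $X\setminus W$ item (2) gives that $(X'\setminus p^{-1}(W),\sum D_i')$ is snc, and $p^{-1}(W)$ has codimension $\ge 2$ because $p$ is finite. The remaining points are: (i) $X'$ is normal — true by construction; (ii) $(X',K_{X'}+\sum D_i')$ is log canonical. For (ii) I would use the Riemann–Hurwitz/pullback formula for cyclic covers: write $K_{X'} + \sum D_i' = p^{*}(K_X + \sum D_i) + (\text{correction})$; the key point is that $p$ is quasi-\'etale away from the $D_i$ (it ramifies only along the $D_i$, with index $n$ by item (1)), so $p^{*}(K_X + \sum (1-\tfrac1n)D_i) = K_{X'} + \sum(1-\tfrac1n)D_i'$, and adding $\tfrac1n(D_i' = p^*D_i)$, wait — more carefully, $p^*D_i = n D_i'$ locally along the reduced part, so $p^{*}(K_X+\sum D_i) = K_{X'} + \sum D_i'$; since $p$ is finite surjective between normal varieties, log canonicity (indeed dlt) of the target with the pulled-back boundary descends from/ascends to the cover by the standard behavior of discrepancies under finite morphisms (\cite[Proposition 5.20]{KollarMori1998} or \cite[Proposition 2.42]{KollarMori1998}).

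The one genuine subtlety in item (3) — and where I'd spend the most care — is that the dlt "snc-away-from-codimension-$2$" condition requires the snc locus to be \emph{large} in the strong sense (every stratum of $\sum D_i'$ must meet the snc locus of $X'$), not merely that its complement has codimension $\ge 2$; one must check that $p^{-1}$ of the dlt-snc locus of $X$ still meets every stratum of $\sum D_i'$, which follows because the strata of $\sum D_i'$ map onto strata of $\sum D_i$ (the cover is, étale-locally along $D_i'$, just $w_i\mapsto w_i^n$, preserving the stratification), together with the finiteness of $p$. Combining normality, the log-canonical pullback formula, and this stratum-compatibility gives that $(X',\sum D_i')$ is dlt, completing the proof.
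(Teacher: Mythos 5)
Your items (2) and (3) follow essentially the paper's route (local normal form via Lemma \ref{lemma:cyclic-functorial}, then \cite[Proposition 5.20]{KollarMori1998} to control the non-klt centers), but item (1) has a genuine gap. You establish that $D_i'$ is Cartier only over the locus where $D_i$ is already Cartier of index $1$ on $X$: the reduction ``work near a point where $D$ is Cartier of index $1$'' discards exactly the points where the statement has content. Since $X$ is only assumed normal and only $r_iD_i$ is principal, $D_i$ can fail to be Cartier along a codimension $\geq 2$ subset of its support; the lemma asserts that $D_i'$ is Cartier at the points of $X'$ lying over that subset as well, and this is precisely what is used later (e.g.\ in Lemma \ref{lemma:local-dlt-cover}, where the Cartier-ness of $H_i$ is invoked at the non-snc points via \cite[Proposition 16.6]{Kollar1992}). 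Your closing remark that a general point of any component of $D_i'$ lies over the Cartier locus of $D_i$ rescues only the ramification-index clause, not the Cartier-ness clause, which must hold at every point of $X'$.

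The repair is the paper's argument, for which your computation already supplies the ingredients: $T_i$ is a \emph{global} holomorphic function on $X'$ whose zero set is $D_i'$, and $T_i^{nr_i}=s_i$ forces $\div(T_i)=\tfrac{1}{nr_i}\,p^*\div(s_i)=\tfrac{1}{n}\,p^*D_i$; your local analysis at general points of the components of $D_i'$ shows each such component occurs in $p^*D_i$ with multiplicity $n$, hence $\div(T_i)=D_i'$ with all multiplicities equal to $1$, so $D_i'$ is principal and therefore Cartier everywhere. A smaller point on item (2): the local model $(w_1,\dots,w_m)\mapsto(w_1^n,\dots,w_m^n)$ only shows the pair is log smooth at each point; to conclude snc you must also check that the individual components and strata of $\sum D_i'$ are globally smooth (a priori a component could be locally reducible, i.e.\ self-crossing, at some point). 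The paper rules this out by proving each component of $D'$ is normal via a non-klt-center argument; your local model does give local irreducibility of $p^{-1}(D_i)$ at each point, so this is fillable, but it needs to be said.
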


\begin{proof}
For the item (1), we first prove the statement on the ramification indices of $p$.
Since $X$ is normal and $D_i$ is reduced, we deduce that both $X$ and $D_i$ are smooth  at a general point $x\in D_i$. 
In a neighborhood $U$ of $x$ in $X$, every $s_j$ is a unit if $j\neq i$. 
Furthermore, $D_i$ is defined by $t_i=0$ for some holomorphic function $t_i$ on $U$, 
and $s_i = \eta \cdot t_i^{r_i}$ for some unit holomorphic function $\eta$ on $U$.  
By Lemma \ref{lemma:cyclic-functorial}, up to shrinking $U$,  
the preimage $p^{-1}(U)$ is a disjoint union of  copies of $Y$, 
where $Y=U[\sqrt[n]{t_i}]$. 
Since  $Y\to U$ is totally branched over $D_i\cap U$, 
we deduce that  the  ramification index of  $p$ along any component of  $D'_i$ is equal to $n$.

We will now prove that $D_i'$ is Cartier. 
We notice that $T_i$ is a holomorphic function on $X'$ and its zero locus is equal to $D_i$'. 
It is enough to show that the vanishing order of $T_i$ along every component of  $D_i'$ is 1. 
The vanishing order of $s_i$ along $D_i$ is $r_1$. 
From the result of the previous paragraph, it follows that the vanishing order of $s_i$, 
viewed as a holomorphic function on $X'$, along every component of $D_i'$ is equal to $nr_i$.  
By definition, $T_i^{nr_i} = s_i$. 
Thus the vanishing order of $T_i$ along every component of  $D_i'$ is 1. 

For the item (2), we let $D=\sum_{i=1}^m D_i$ and $D' =p^{-1}(D)$.  
We first observe the following fact.   
Let $x\in X$ be a point. 
Since $D_i$ is smooth, it is irreducible at $x$. 
Then in  an open neighborhood of $x$, 
$X$ is isomorphic to a  polydisc and each $D_i$ is either empty 
or isomorphic to  a  coordinate hyperplane. 
Then we deduce that $(X',D')$ is log smooth at every point $x'\in X'$ lying over $x$.  
In other words, there is an open neighborhood $U'$ of $x'$ such that  $(U',D'|_{U'})$ is snc.

Next we  will show that every irreducible component of $D'$ is normal.    
Without loss of the generality, we only need to show that $D'_1$ is normal.  
Assume by contradiction that there is a non-normal locus $V'\subseteq D'_1$. 
Then $V'$ is a non-klt center of $(X',D'_1)$ by \cite[Proposition 5.51]{KollarMori1998}. 
By \cite[Proposition 5.20]{KollarMori1998} and its proof, this implies that $V=p(V')$ is  a non-klt center  of the pair $(X,D_1)$. 
Since both $X$ and  $D_1$ are smooth. we can only have $V= \emptyset$. 
This is a contradiction. 

In the last step, let $S$ be a stratum of $D'$ and let $x'\in S$.  
We have seen that there is an open neighborhood $U'$ of $x'$ such that  $(U',D'|_{U'})$ is snc. 
Since the irreducible components of $D'$ are normal, up to shrinking $U'$, 
their restrictions on $U'$ are either empty or irreducible. 
Hence $S|_{U'}$ is a stratum of $D'|_{U'}$.  
It follows that $S$ is smooth at $x'$.  
Thus $(X',D')$ is snc. 

For the item (3), by the definition of dlt pairs, 
there is a Zariski closed subset $W\subseteq X$ of codimension at least 2,  
such that $(X,D)$ is snc over $X^\circ:= X\setminus W$. 
Furthermore, $W$ does not contain any non-klt center of $(X,D)$.   
Let $W'= p^{-1}(W)$ and let $X'^\circ = p^{-1}(X^\circ)$. 
Then the item (2)  implies that $(X',D')$ is snc on $X'^\circ$.  
Furthermore, by \cite[Proposition 5.20]{KollarMori1998} and its proof,    
if $Z'$ is a non-klt center  of $(X',D')$, then  $p(Z')$  is a non-klt center of $(X,D)$.   
Hence $W'$ does not contain any non-klt center of $(X',D')$.  
Therefore  $(X',D')$ is dlt. 
This completes the proof of the lemma. 
\end{proof}

In the following lemma, we will use the  basechange by a cyclic cover to kill the divisorial critical locus of a given finite morphism.

\begin{lemma}
    \label{lemma:local-dlt-cover}
Let $(X,\Delta)$ be a reduced dlt pair.  
Let $D_1,...,D_m$ be the irreducible components of $\Delta$. 
Assume that $r_iD_i$ is defined by $s_i=0$ for some holomorphic function $s_i$ on $X$ and for some integer $r_i > 0$. 
Let $\pi\colon Y\to X$ be a finite morphism with $Y$ normal,  
whose divisorial critical locus is contained in $\Delta$.   
Let $N$ be the lcm of all the coefficients in $\pi^*D_i$ for $i=1,...,m$. 
Then for  any positive integer $n$ divisible by $N$, the following properties hold. 
Let $V = X[\sqrt[nr_1]{s_1}, ..., \sqrt[nr_m]{s_m}]^{nor}$ and let $W$  be the normalization of   $Y\times_{X } V$. 
Then the natural morphism $\mu \colon W \to V$ is quasi-\'etale. 
Moreover, if $\pi^{-1}(X\setminus \Delta) $ is smooth, then $W$ is smooth in codimension 2. 
\begin{equation*}
  \xymatrixcolsep{3pc}
  \xymatrix{ 
  W \ar[r]  \ar[d]_\mu & Y \ar[d]^\pi\\
  V\ar[r]_p & X
  }
\end{equation*}
\end{lemma}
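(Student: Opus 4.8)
The plan is to analyze the morphism $\mu\colon W\to V$ locally, first at the generic point of each component of the divisorial branched locus of $\pi$, and then in codimension $2$ on $V$. Recall that the branched locus of $\pi$ in codimension $1$ is supported on $\Delta=\sum D_i$, so étaleness of $\mu$ over the complement of $\mu^{-1}(\text{preimage of }\Delta)$ is automatic since base change of the étale morphism $\pi$ away from $\Delta$ stays étale, and taking normalization does not affect the étale locus. Thus the only issue is what happens over the preimages $D_i'\subseteq V$ of the $D_i$. Let me write $\pi^*D_i=\sum_j a_{ij}E_{ij}$ with $a_{ij}\ge 1$, so that at the generic point of $E_{ij}$ the morphism $\pi$ has ramification index $a_{ij}$ along a divisor whose image is $D_i$.

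First I would use Lemma \ref{lemma:cyclic-cover-Cartier}(1): since $(X,\Delta)$ is dlt it is snc at a general point of each $D_i$, so $D_i$ is reduced there, and the cyclic cover $p\colon V\to X$ has ramification index exactly $n$ along each component $D_i'$ of $p^{-1}(D_i)$. Now work analytically-locally at a general point of $D_i$: here $X$ looks like a polydisc with $D_i=\{t_i=0\}$, the other $s_j$ are units, and by Lemma \ref{lemma:cyclic-functorial} the cover $V\to X$ is locally $\{T_i^n=t_i\}$ (a disjoint union of copies thereof), while $\pi$ is locally, up to étale factors, a disjoint union of covers of the form $\{u^{a_{ij}}=t_i\}$ for the various $j$. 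The normalized fibre product of $\{T_i^n=t_i\}$ with $\{u^{a_{ij}}=t_i\}$ over $\{t_i=0\}$: setting $t_i=T_i^n$ and solving $u^{a_{ij}}=T_i^n$, since $a_{ij}\mid n$ by the divisibility hypothesis on $n$, the equation factors as a product of copies of $u=\zeta\, T_i^{n/a_{ij}}$, each of which is an \emph{isomorphism} onto the base $\{T_i\}$-disc. Hence $W\to V$ is étale (indeed a disjoint union of isomorphisms locally) at the generic point of every component of $D_i'$. Combined with the observation of the first paragraph, $\mu$ is étale in codimension $1$ on $V$, i.e. quasi-\'etale. This is the main computation and, I expect, the main obstacle: one must bookkeep the interplay between the ramification index $a_{ij}$ of $\pi$, the ramification index $n$ of the cyclic cover, and the normalization, and check the divisibility $a_{ij}\mid N\mid n$ is exactly what makes the roots rational and the fibre product normal and \'etale. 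One should also note $N$ is well-defined since $\pi^*D_i$ makes sense as $\pi$ is a finite morphism to the normal variety $X$ (or at least is well-defined at generic points of $D_i$, which is all that is used).

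For the final assertion, assume $\pi^{-1}(X\setminus\Delta)$ is smooth; I want $W$ smooth in codimension $2$. Over $V\setminus D'$ (where $D'=\sum D_i'$) the morphism $\mu$ is étale, and $V\setminus D'$ maps to $X\setminus\Delta$ under the étale-in-codimension-one map... more precisely $\mu^{-1}(V\setminus D')\to \pi^{-1}(X\setminus\Delta)$ is étale, hence $\mu^{-1}(V\setminus D')$ is smooth by hypothesis, and its complement $\mu^{-1}(D')$ has codimension $1$ in $W$, so $W$ is smooth away from a codimension-$1$ set; to control codimension $2$ I must examine a general point of each component of $D'=\mu^{-1}(D'_{\mathrm{red}})$, equivalently a general point of each $E_{ij}'$ over $E_{ij}$. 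But the local model at such a point, computed above, is a disjoint union of copies of a smooth polydisc (the cover $u=\zeta T_i^{n/a_{ij}}$ being an isomorphism), so $W$ is smooth at the generic point of every divisorial component of $\mu^{-1}(D')$ as well. Therefore the singular locus of $W$ has codimension at least $3$ in $W$, hence at least $2$ — in fact the statement "smooth in codimension $2$" follows: $W$ is regular at every point of codimension $\le 2$. Here I use that smoothness of a normal complex space can be checked at generic points of its closed subsets, together with the earlier local product descriptions; the only subtlety is to make sure that "general point of $D_i$" genuinely sees the full local structure of $W$ in codimension $2$, which is where invoking that $(X,\Delta)$ is dlt — hence snc in codimension $2$ along $\Delta$ — and Lemma \ref{lemma:cyclic-cover-Cartier}(2) on $(V,D')$ being snc does the work.
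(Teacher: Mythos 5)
Your proof of quasi-\'etaleness is correct and is essentially the paper's argument: both reduce to a general point of $D_i$, where $(X,\Delta)$ is snc because the non-snc locus of a dlt pair contains no non-klt center, and both compare the ramification index $a_{ij}$ of $\pi$ with the ramification index $n$ of the cyclic cover, the divisibility $a_{ij}\mid N\mid n$ making the normalized fibre product \'etale there. Your version just makes the local equation $u^{a_{ij}}=T_i^{n}$ explicit.

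The second half has a genuine gap. What you establish is that $W$ is smooth on $\mu^{-1}(V\setminus H)$ and at the generic point of every divisorial component of $\mu^{-1}(H)$; that only proves $W$ is smooth in codimension $1$. To get smoothness in codimension $2$ you must rule out a codimension-$2$ component $Z$ of $W_{\sing}$ lying over a codimension-$2$ subset of $H$, and the principle you invoke for this --- that a dlt pair is snc in codimension $2$ along $\Delta$ --- is false. For instance $X=\{uv=w^2\}\times\mathbb{C}$ with $D_1=\{u=w=0\}$ is plt (hence dlt), yet $X$ is singular along the codimension-$2$ curve $\{u=v=w=0\}\subseteq D_1$; the non-snc locus in the definition of dlt may well have codimension $2$, it merely cannot contain a non-klt center. (Also, item (2) of Lemma \ref{lemma:cyclic-cover-Cartier} assumes $(X,\sum D_i)$ is snc globally, so it cannot be quoted here.) The paper closes this gap differently: if $Z\subseteq W_{\sing}$ has codimension $2$, then $\mu(Z)\subseteq H$; since $(V,H)$ is dlt by Lemma \ref{lemma:cyclic-cover-Cartier}(3) and $H$ is Cartier by part (1), \cite[Proposition 16.6]{Kollar1992} forces $V$ to be \emph{smooth} at general points of $\mu(Z)$ --- this is exactly the improvement of singularities along $H$ that the cyclic cover buys and that your local model at generic points of $H$ cannot see --- and then Zariski purity applied to the quasi-\'etale $\mu$ makes $W$ smooth at general points of $Z$, a contradiction. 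You would need this (or an equivalent) input to finish.
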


\begin{proof}
We fix a positive integer $n$ divisible by $N$. 
Let $p\colon V\to X$ be  the natural morphism.   
Then the  critical locus of $p$ is exactly $\Delta$. 
Let $H_i= p^{-1}(D_i)$ for $i=1,...,m$ and let $H= \sum_{i=1}^m H_i$.  
Then $\mu$ is quasi-\'etale over $V\setminus H$. 
Furthermore,  each $H_i$ is Cartier and $p^*D_i = nH_i$ by Lemma \ref{lemma:cyclic-cover-Cartier}.  
The latter property implies that $\mu$ is \'etale over general 
points of every component of $H_i$, for $n$ is divisible by $N$.  
As a consequence, $\mu$ is quasi-\'etale.

Suppose that $\pi^{-1}(X\setminus \Delta) $ is smooth. 
We assume by contradiction that  there is an irreducible component $Z\subseteq W_{\sing}$  such that $\dim Z = \dim W-2$. 
Since $p$ is \'etale over $X\setminus \Delta$, we deduce that $W\to Y$ is \'etale over $\pi^{-1}(X\setminus \Delta)$. 
Therefore,  $Z$ is contained in $(p\circ \mu)^{-1}(\Delta)$. 
Then $\mu(Z) \subseteq  H$. 
We note that $(V,H)$ is dlt by Lemma \ref{lemma:cyclic-cover-Cartier}.  
Since $H$ is Cartier, by (16.6.1.1) and (16.6.3.2) of \cite[Proposition 16.6]{Kollar1992}, 
we deduce that $V$ is smooth around general points of $\mu(Z)$. 
By the Zariski's purity theorem, we conclude that  $\mu$ is \'etale over general points of $\mu(Z)$. 
Thus $W$ is smooth  around general points of $Z$. 
This is a contradiction. 
\end{proof}

We note that, in the previous lemma, if $\pi$ is Galois, then the number  $N$ divides the degree of $\pi$. 
We will also need the following elementary result.

\begin{lemma}
    \label{lemma:cubic}
Let $S = \mathbb{D}^N$ be a polydisc with coordinates $(T_1,...,T_N)$, 
let  $H$ be the divisor   defined by  $T_1\cdots T_n =0$ for some integer  $1\le n \le N$, 
and let 
\[F(y,s)  = y^3 + a(s)y^2 + b(s)y + c(s)\] 
be a cubic polynomial in $y$ whose coefficients are holomorphic functions on $S$. 
Let $X\subseteq \mathbb{C} \times S$ be the closed analytic subspace defined by $F(y,s) = 0$, 
and let $\pi\colon X\to S$ be the natural projection. 
Assume that, either the divisorial  critical locus of $\pi$ is contained in  $H$, 
or the critical locus is $S$.   
In other words, either $F(y,s)$ has three distinct roots in $y$ for any point $s\in S\setminus H$ fixed, 
or  $F(y,s)$ has a multiple root in $y$ for a general point $s\in S$ fixed. 

Let $\overline{S} = S[\sqrt[6]{T_1}, ..., \sqrt[6]{T_n}]$ 
and let $\mu\colon \overline{S} \to S$ be the natural morphism. 
We define  
\[G(y,\overline{s}) =   y^3 + (a\circ \mu)(\overline{s})y^2 + (b\circ \mu)(\overline{s})y + (c\circ \mu)(\overline{s}) \] 
as the pullback of $F$ on $\mathbb{C} \times \overline{S}$. 
Then there is a factorization 
\[G(y,\overline{s}) = (y - p(\overline{s}))(y - q(\overline{s}))(y - r(\overline{s})) \]
for some holomorphic functions $p,q,r$ on $\overline{S}$. 
\end{lemma}

\begin{proof}
First we assume that the critical locus of $\pi$ is the entire space  $S$. 
In this case, $X$ is non-reduced, and has either one or two irreducible components. 
If there is only one component, then with the reduced structure, it is bimeromorphic to $S$. 
Since $S$ is smooth, it follows that $X_{\red}$ is isomorphic to $S$. 
It follows that  $F(y,s) = (y-p(s))^3$ for some holomorphic function $p$ on $S$. 
If $X$ has  two components $X_1$ and $X_2$, 
then one of them is reduced and the other one is non-reduced with multiplicity 2. 
Hence the same argument as before shows that $F(y,s) = (y-p(s))^2(y-q(s))$ for some holomorphic functions $p$ and $q$ on $S$.  
In conclusion, in this case, the function $F$ already admits a factorization.  
Hence its pullback $G$ also admits a factorization.

Now we assume that the critical locus of $\pi$ is a proper subset of $S$.  
Let $X'$ be the normalization of $X$ and let $\pi'\colon X' \to S$ be the natural morphism. 
Since the degree of $\pi'$ is 3, we see that the ramification index of $\pi'$ along any prime divisor in $X'$ is contained in $\{1,2,3\}$.  
Let $\overline{X}$ be the hypersurface defined by $G$ and let $Y$ be its normalization.  
Then $Y$ is also the normalizaiton of $X'\times_S \overline{S}$, 
and the natural finite morphism $Y\to \overline{S}$ is quasi-\'etale by Lemma \ref{lemma:local-dlt-cover}. 
The Zariski's purity theorem implies that it is indeed \'etale, for $\overline{S}$ is smooth. 
Since $\overline{S}$ is simply connected, we deduce that $Y$ has three connected components. 
Hence $\overline{X}$ has three irreducible components. 
The same argument as in the previous paragraph implies that each component is isomorphic to $\overline{S}$. 
Therefore,   there is a factorization 
\[G(y,\overline{s}) = (y - p(\overline{s}))(y - q(\overline{s}))(y - r(\overline{s}))\]
for some holomorphic functions $p,q,r$ on $\overline{S}$. 
\end{proof}

\subsection{Notion of  complex analytic orbispaces}

The notion of $\mathcal{C}^\infty$ orbifolds was first introduced in \cite{Satake1956}.  
A complex orbifold is   a $\mathcal{C}^\infty$ orbifold whose orbifold charts are complex manifolds such that the group actions on them are holomorphic.  
In this subsection, we will fix the notion of  (reduced)  complex analytic orbispaces, which is a straightforward extension of the notion of complex orbifolds, 
just by allowing orbifold charts to have singularities.  
Particularly, a complex orbifold is a complex analytic orbispace whose orbifold charts are all smooth. 

\begin{defn}\label{def:orbifold}
We take the following definitions.
\begin{enumerate}
    \item Let $U$ be a connected Hausdorff space. An  orbifold chart   of $U$ is a triple $(\widetilde{U}, G, \pi)$ such that $\widetilde{U}$ is a reduced complex analytic space of dimension $n$,  that $G$ is a finite group acting  holomorphically on $\widetilde{U}$, and that $\pi\colon \widetilde{U}\to U$ is a continuous map inducing a homeomorphism from $\widetilde{U}/G$ to $U$.   
    We denote by $\mathrm{ker}\, G$ the maximal subgroup of $G$ which acts trivially on $\widetilde{U}$.

    \item Let $\iota \colon U' \hookrightarrow  U$ be an open embedding, and let $(\widetilde{U}',G',\pi')$ be an orbifold chart of $U'$. 
          An  injection  from $(\widetilde{U}',G',\pi')$ to  $(\widetilde{U}, G, \pi)$ consists of an open embedding $\varphi \colon \widetilde{U}'\to \widetilde{U}$ and a group monomorphism $\lambda \colon G'\to G$ such that $\varphi$ is equivariant with respect to $\lambda$,
          that $\lambda$ induces an isomorphism from  $\mathrm{ker}\, G' $ to $\mathrm{ker}\, G$, 
          and that $\iota\circ \pi' = \pi\circ \varphi$.  

    \item Let $X$ be a connected second countable Hausdorff space. 
          An  orbifold atlas  on $X$ is a collection $\mathcal{U} = \{(\widetilde{U},G,\pi)\}$ of orbifold charts of open subsets of $X$ which covers $X$ and is compatible in the following sense. 
          For any two  orbifold charts $(\widetilde{U},G,\pi)$ and $(\widetilde{U}',G',\pi')$ in $\mathcal{U}$,  
          of open subsets $U$ and $U'$ respectively, for any point $x\in U\cap U'$, there is an open neighborhood $V$ of $x$ with an orbifold chart  $(\widetilde{V},H,\rho)$ such that there are injections from $(\widetilde{V},H,\rho)$ to  $(\widetilde{U},G,\pi)$ and  $(\widetilde{U}',G',\pi')$.  
          
    \item With the notation in (3), there is a unique maximal  orbifold atlas $\mathcal{U}_{max}$ on $X$ which contains $\mathcal{U}$. 
          Two orbifold atlases  are   equivalent if they are contained in the same maximal atlas.
          An orbifold chart of some open subset of $X$ is said to be compatible with $\mathcal{U}$ if it belongs to $\mathcal{U}_{max}$. 

    \item A complex analytic orbispace $\mathfrak{X}$  consists of 
          a connected second  countable Hausdorff space $X$ and a maximal orbifold atlas $\mathcal{U}_{max}$ on it. 
          If a collection  $(X_i,G_i,\pi_i)_{i\in I}$ of orbifold charts over $X$ forms an orbifold atlas, we write $\mathfrak{X} = (X_i,G_i,\pi_i)_{i\in I}$ for the complex analytic orbispace induced by $(X_i,G_i,\pi_i)_{i\in I}$, 
    and we call $X$ the quotient space of $\mathfrak{X}$. 
\end{enumerate}
\end{defn}

Throughout this paper, we may  write $\mathfrak{X} = (X_i,G_i)_{i\in I}$ or just $\mathfrak{X} = (X_i,G_i)$ for a complex analytic orbispace if no confusion seems likely to result.  
We note that the item (3) in the previous definition does make sense by  the same argument as in \cite[Lemma 4.1.1]{ChenRuan2002}. 
Furthermore, the  compatibility condition can be interpreted in an equivalent way as in the following lemma.

\begin{lemma}
\label{lemma:orbispace-def}
Let $X$ be a  connected second countable Hausdorff space.  
Let $(X_i,G_i,\pi_i)$ be a family of orbifold charts over $X$, such that  their images cover  $X$.  
Then $(X_i,G_i,\pi_i)$ defines a complex analytic orbispace   if the following properties hold. 
For any point $x\in \pi_i(X_i) \cap \pi_j(X_j)$, there is  an open neighborhood $U$ of $x$ in $\pi_i(X_i) \cap \pi_j(X_j)$, such that every connected component of  $\pi_i^{-1}(U)$  is isomorphic to every connected component of $\pi_j^{-1}(U)$, as finite covers over $U$. 
\end{lemma}

\begin{proof}  
Let $W$ be a connected component of   $\pi_i^{-1}(U)$.  
Then $(W,H)$  is an orbifold chart of $U$, where $H\subseteq G_i$ is the subgroup which leaves $U$ invariant. 
Then there is a natural injection of orbifold chart from $W$ to $X_i$. 
The assumption implies that there is also an inclusion from $W$ to $X_j$. 
Hence the condition in (3) of Definition \ref{def:orbifold} is satisfied. 
This completes the proof of the lemma. 
\end{proof}

A main objective that we use the notion of  complex analytic orbispaces is to reduce the study of klt singularities to the one of canonical singularities.   
 
\begin{example}
\label{example: complex analytic orbispace}
Assume that $X$ is a complex analytic variety with klt singularities. 
Then for every point $x\in X$, there is an open neighborhood $U(x)$, and a smallest positive integer $k(x)$, such that $(\omega_X^{\otimes k(x)})^{**}|_{U(x)} \cong \mathcal{O}_{U(x)}$, where $\omega_X$ is the canonical sheaf of $X$. 
Let $V(x)\to U(x)$ be the cyclic cover induced by this isomorphism (see \cite[Definition 2.52]{KollarMori1998}), 
and let $G(x)$ be the Galois group. 
Then $V(x)$ has canonical singularities. 
Furthermore,  the collection of the $(V(x),G(x))$'s induces a complex analytic orbispace with quotient space $X$, see \cite[Lemma 2.3]{DasOu2023}. 
\end{example}

We will need  the following lemma, which performs cyclic covers on   complex analytic orbispaces.

\begin{lemma}
\label{2-construction-2}
Let $\mathfrak{X}=(X_i,G_i)_{i\in I}$ be a complex analytic orbispace with quotient space $X$. 
Let $\delta_i$ be a holomorphic function on $X_i$ such that the ideal sheaf $\mathcal{J}_i$ generated by $\delta_i$ is compatible along the overlaps. 
In other words, if $p_i\colon V\to X_i$ and $p_j\colon V\to X_j$ are inclusions of orbifold charts, 
then $p_i^* \mathcal{J}_i = p_j^{*}\mathcal{J}_j$ are the same ideal sheaf on $V$.   
Let $S\subseteq X$ be a compact subset and let $S_i=\pi_i^{-1}(S)$.

Then there is a finite family $(U_k,G'_k)_{k\in K}$ of orbifold charts satisfying the following properties. 
There is an application $\sigma \colon K\to I$ and there are inclusions $\iota_k\colon U_k \to X_{\sigma(k)}$ of orbifold charts. 
Let $n>0$ be  an integer and let $W_k=U_k[\sqrt[n]{\delta_{\sigma(k)}}]$.  
Then the natural morphism $W_k \to X$ is Galois over its image with Galois group $H_k$, 
and the family $(W_k,H_k)_{k\in K}$ induces a complex analytic orbispace, whose quotient space is an open neighborhood of $S$. 
\end{lemma}

\begin{proof}
For any $i\in I$,  the assumption implies that there is a unit function $u_g$ such that $g^*\delta_i = u_g \cdot \delta_i$ for any $g\in G_i$. 
For every point $x\in S_i$, there is an open neighborhood $U_x$ of $x$ in $X_i$, 
such that $u_g|_{U_x}$ admits $n$-th roots. 
Let $G_x\subseteq G_i$ be the stabilizer of $x$. 
By shrinking $U_x$, we may assume that $(U_x,G_x)$ is an orbifold chart compatible with $\mathfrak{X}$.  
By the same argument of Lemma \ref{lemma:cyclic-cover-unit}, 
the automorphisms of $U_x$ over $U_x/G_x$ can be lifted to automorphisms of $U_x[\sqrt[n]{\delta_i}]$. 
This implies that the natural morphism $U_x[\sqrt[n]{\delta_i}] \to X$ is Galois over its image.  

By considering all points $x\in S_i$ and all $i\in I$, we obtain a family of orbifold charts $(U_x,G_x)$. 
Since $S$ is compact, we can extract a finite family of points $(x_k)_{k\in K}$, with an application $\sigma\colon K\to I$, such that $x_k\in X_{\sigma(k)}$, and that the images of $U_{x_k}$ in $X$ covers $S$. 
We let $U_k = U_{x_k}$ and $G'_k = G_{x_k}$. 
Then the previous paragraph implies that $W_k\to X$ is Galois over its image, with Galois group $H_k$. 
By Lemma \ref{lemma:cyclic-cover-unit} and Lemma \ref{lemma:orbispace-def}, we deduce that the family $(W_k,H_k)_{k\in K}$ induces a complex analytic orbispace.
\end{proof}

From  \cite[Th\'eor\`eme 4]{Cartan1957}, we see that the 
quotient space $X$ of a complex analytic orbispace $\mathfrak{X} = (X_i,G_i,\pi_i)$ has a natural structure of complex analytic space. 
The   holomorphic functions on any open subset $U\subseteq \pi_i(X_i)$ are exactly the $G_i$-invariant holomorphic functions on $\pi_i^{-1}(U)\subseteq X_i$.
Furthermore, if every   $X_i$ is normal, then so is $X$.  
In this case, we say that $\mathfrak{X}$ is a normal  complex analytic orbispace.

We can consider proper bimeromorphic morphisms between  complex analytic orbispaces. 
They are closely related to proper bimeromorphic morphisms between the quotient spaces, as we explain as follows. 
Let  $\mathfrak{X} = (X_i,G_i,\pi_i)$  be a complex analytic orbispace with quotient space $X$. 
First we assume that there are $G_i$-equivariant proper bimeromorphic morphisms $f_i\colon Y_i \to X_i$ which are compatible along the overlap in the following sense. 
For any inclusions $\varphi_i \colon U \to X_i$ and $\varphi_j\colon U\to X_j$, the fiber products $U\times_{X_i} Y_i$ and $U {\times}_{X_j} Y_j$ are isomorphic over $U$.  
We note that such an isomorphism must be unique for  $f_i$ and $f_j$ are bimeromorphic. 
Then the collection of the $(Y_i,G_i)$'s induces a complex analytic orbispace $\mathfrak{Y}$ with quotient space $Y$. 
Furthermore, there is an induced proper bimeromorphic morphism $f\colon Y\to X$, so that $X_i \times_{X} Y \cong Y_i$ and the corresponding basechange of $f$ coincides with $f_i\colon Y_i \to X_i$.

Conversely, we assume that there is a proper bimeromorphic morphism $f\colon Y \to X$ with $Y$ normal. 
Let  $Y_i$ be the normalization of the main component of  $X_i\times_X Y$. 
Then there is a natural action of $G_i$ on $Y_i$ and the natural projection $f_i\colon Y_i \to X_i$ is $G_i$-equivariant. 
Moreover, the collection of the $(Y_i,G_i)$'s defines a complex analytic orbispace with quotient space $Y$. 
%If $Y$ is normal, then by replacing $Y_i$ with its normalization, we can obtain a normal complex analytic orbispace.

We   now introduce the following notion of (reduced) orbi-divisors, and prove a result on resolving the singularities of an orbi-divisor.

\begin{defn}
\label{defn:orbi-divisor} 
Let $ \mathfrak{X}=(X_i,G_i,\pi_i)_{i\in I} $ be a normal  complex analytic orbispace with quotient space $X$. 
An  orbi-divisor $\mathfrak{D}$ on $\mathfrak{X}$ is a collection of divisors $(D_i)_{i\in I}$ such that each $D_i$ is a reduced $G_i$-invariant  divisor in  $X_i$,  and that they are compatible along the overlaps. 
The images of the $D_i$'s in $X$  then define  a reduced divisor $E$ in $X$. 
The orbi-divisor $\mathfrak{D}$ is called a snc orbi-divisor if for any stratum $V$  of $E$,  including the case when $V=X$,  
the preiamge $\pi_i^{-1}(V)$ is  smooth in $X_i$.   
\end{defn}

We note that $\mathfrak{D}$ is a snc orbi-divisor
if and only if  $(X_i,\pi_i^{-1}(E))$ is a snc pair for all $i\in I$,  
and for any stratum $V$ of $E$, 
the preiamge $\pi_i^{-1}(V)$ is a disjoint union of strata of $\pi_i^{-1}(E)$. 
However, the condition that $(X_i,\pi_i^{-1}(E))$ is snc is not sufficient for our definition of snc orbi-divisor, as shown in the following example.

\begin{example}
\label{example:orbi-divisor}
Let $W = \mathbb{C}^2$ with coordinates $(a,b)$ and let $G=\mathbb{Z}/4\mathbb{Z} = <g>$.  
Then there is an action of $G$ on $W$ by setting $g.(a,b) = (-b,a)$.  
Then $(W,G,\pi)$ induces an orbifold with quotient space $X=W/G$. 
Let $D$ be the sum of the coordinates axes in $W$. 
Then $D$ is $G$-invariant and its image $E$ in $X$ is irreducible. 
We remark that the pair $(W,D)$ is snc, but $D$ does not induce a  snc orbi-divisor, for $D=\pi^{-1}(E)$ is not smooth. 
\end{example}

\begin{lemma} 
    \label{lemma:snc-orbidivisor}
Let $ \mathfrak{X}=(X_i,G_i,\pi_i) $ be a normal   complex analytic orbispace with quotient space $X$.  
Assume that    $\mathfrak{D}$ is an orbi-divisor in $\mathfrak{X}$.  
Then there are  $G_i$-invariant  projective bimeromorphic morphisms $f_i\colon  Y_i\to X_i$, 
such that the collection of the
$ (Y_i,G_i)$'s induces a complex orbifold $\mathfrak{Y}$ with quotient space $Y$, 
and that the preimage of $\mathfrak{D}$ in $\mathfrak{Y}$ is a snc orbi-divisor.  
\end{lemma}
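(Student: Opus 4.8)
The plan is to apply a functorial (hence automatically equivariant) embedded log resolution chart by chart, and to glue the resulting modifications using the functoriality of resolution. Throughout I will use that functorial embedded resolution of singularities is available in the complex analytic category and that it is compatible with isomorphisms and with open embeddings (in the sense that the resolution of an open subset is the restriction of the resolution of the ambient space).

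\emph{First}, for each $i$ I would apply functorial embedded log resolution to the pair $(X_i,D_i)$, where $D_i$ is the $G_i$-invariant Weil divisor that $\mathfrak D$ cuts out on the chart. This produces a projective bimeromorphic morphism $f_i\colon Y_i\to X_i$ with $Y_i$ smooth and with $B_i:=(f_i^{-1}D_i)_{\mathrm{red}}+\mathrm{Exc}(f_i)$ a simple normal crossings divisor. Since the resolution commutes with isomorphisms, every $g\in G_i$ lifts to an automorphism of $Y_i$; thus $G_i$ acts on $Y_i$, the morphism $f_i$ is $G_i$-equivariant, and $B_i$ is $G_i$-invariant. As $f_i$ is projective and only $f_i$-exceptional centers were blown up, one may pick an $f_i$-exceptional divisor that is $f_i$-ample; replacing it by the sum of its $G_i$-translates, which is again $f_i$-exceptional and $f_i$-ample, I may take it $G_i$-invariant.

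\emph{Second}, I would verify the compatibility axiom of Definition~\ref{def:orbifold} for the collection $(Y_i,G_i)$. Given $x\in\pi_i(X_i)\cap\pi_j(X_j)$, pick a common refinement chart $(\widetilde V,H,\rho)$ over a neighborhood of $x$ with equivariant open embeddings into $(X_i,G_i)$ and $(X_j,G_j)$. Because functorial resolution commutes with open embeddings, the resolution of $\widetilde V$ together with the pulled-back divisor is canonically the restriction of $Y_i\to X_i$, and likewise of $Y_j\to X_j$; this yields injections of orbifold charts from the resolution of $\widetilde V$ into both $Y_i$ and $Y_j$. Hence $(Y_i,G_i)$ defines an orbi-variety $\mathfrak Y$, whose quotient space $Y$ is obtained by gluing the spaces $Y_i/G_i$ along these overlaps. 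Since each $f_i$ descends to $\overline f_i\colon Y_i/G_i\to\pi_i(X_i)$ and these agree on the overlaps, they glue to a projective bimeromorphic morphism $f\colon Y\to X$; the $G_i$-invariant $f_i$-ample exceptional divisors, being compatible over the overlaps, descend to an $f$-ample $f$-exceptional divisor. The images of the $B_i$ in $Y$ assemble into an orbi-divisor $\mathfrak D'$ lying over $\mathfrak D$.

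\emph{Third and last}, I would upgrade $\mathfrak D'$ to a \emph{snc} orbi-divisor, and this is where I expect the real difficulty. Writing $E\subseteq Y$ for the image divisor and $\sigma_i\colon Y_i\to Y_i/G_i$ for the quotient, I must arrange $\sigma_i^{-1}(V)$ to be smooth for every stratum $V$ of $E$. Since $Y_i$ is smooth and $B_i$ is snc, $G_i$ permutes the components, hence the strata, of $B_i$, and the only way the condition can fail is that two components of $B_i$ lying in a common $G_i$-orbit intersect, so that a single branch of $E$ pulls back to a non-smooth union. To remedy this I would perform further functorial $G_i$-equivariant blow-ups along the closures of those strata of $B_i$ contained in an intersection of two components from a common $G_i$-orbit, carried out from the lowest-dimensional such strata upward; each blow-up keeps $B_i$ snc, separates the offending pairs, and stays compatible over the overlaps, and a bookkeeping induction on the stratum combinatorics of the $B_i$ shows the process terminates. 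After finitely many such steps $\sigma_i^{-1}(V)$ is smooth for every stratum $V$ of $E$, so $\mathfrak D'$ is a snc orbi-divisor; and since every blow-up is projective with an ample exceptional divisor over the previous model, the composite $f\colon Y\to X$ remains projective with an $f$-ample $f$-exceptional divisor. (Alternatively one could invoke functorial resolution of singularities for analytic Deligne--Mumford stacks directly, but the chart-level argument above stays within the framework of the paper.)
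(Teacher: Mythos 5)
Your first two steps (functorial log resolution chart by chart, equivariance and gluing via functoriality, descent of a relatively ample exceptional divisor) match the paper's proof, which uses the functorial principalization of the ideal sheaf of $E_i$ for the same purpose. The problem is in your third step, and it is a genuine gap. The snc orbi-divisor condition requires that for every \emph{global} irreducible component $\Gamma$ of the image divisor $E\subseteq Y$ the full preimage $\nu_i^{-1}(\Gamma)$ be smooth, i.e.\ that \emph{all} components of $B_i$ lying over $\Gamma$ be pairwise disjoint. Your diagnosis -- that failure can only come from two intersecting components of $B_i$ in a common $G_i$-orbit -- misses the main case: $\Gamma\cap\pi_i(Y_i)$ may be reducible (for instance when $\Gamma$ is a globally irreducible divisor whose two local branches at a point are separated into two smooth, transversally meeting components of $B_i$ by the log resolution, already with $G_i$ trivial), and then $\nu_i^{-1}(\Gamma)$ is a union of several $G_i$-orbits of components which your criterion never flags, yet which intersect. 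Moreover, since being "in a common orbit" depends on the chart and on the group ($H\subseteq G_i$ on a refinement chart has finer orbits), and since the decomposition of $E$ into global irreducible components is not visible chart by chart, your proposed blow-up centers are neither sufficient nor canonically defined across overlaps, so the claimed compatibility of the extra blow-ups is unsubstantiated.

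The paper avoids all of this by blowing up \emph{every} stratum of $B_i$, in increasing order of dimension starting from the global minimum over all charts: this center is intrinsic to the snc divisor $B_i$ (hence automatically $G_i$-equivariant and compatible on overlaps), and at the end the strict transform of $B_i$ together with each exceptional divisor is a disjoint union of smooth hypersurfaces. Once all components are pairwise disjoint, the preimage of \emph{any} union of image components -- in particular of any global component or stratum of $E$ -- is trivially smooth, with no case analysis on orbits or on the global irreducible decomposition of $E$. If you replace your selective blow-ups by this exhaustive, dimension-ordered blow-up of strata, the rest of your argument goes through.
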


\begin{proof}
Let $E$ be the image of $\mathfrak{D}$ in $X$ and let  $D_i = \pi_i^{-1}(E)$. 
Let $p_i\colon X'_i\to X_i$ be the functorial principalization of the ideal sheaf of $D_i$, 
see Theorem \ref{thm:func-principalization} in the next section. 
Then up to replacing $X_i$ by $X'_i$, we can assume that $X_i$ is smooth and $D_i$ is a snc divisor.

We note that, if $K$ and $K'$ are strata  of $D_i$ of minimal dimension, 
then   either $K'=K$ or $K'\cap K = \emptyset$. 
Let $r$ be the minimum of the dimensions of the strata  of $D_i$ for all $i$.  
Let $p_i\colon X'_i \to X_i$ be the blowup at the union of  all strata of $D_i$ of dimension $r$.  
Then the center of the blowup $p_i$ is either empty or smooth. 
Moreover, there is a natural action of $G_i$ on $X'_i$ so that $p_i$ is $G_i$-invariant.  
The preimage  $p_i^{-1}(D_i)$ is a $G_i$-invariant snc divisor in $X_i'$.  
Furthermore, the collection of the $(X'_i,G_i)$'s induces a   complex orbifold $\mathfrak{X}'=(X'_i,G_i,\mu_i)$ with quotient space $X'$. 
There is a natural projective bimeromorphic morphism $p\colon X' \to X$, 
with a $p$-ample and $p$-exceptional divisor.  
If $\Delta\subseteq X'$ is the exceptional set of $p$, 
then $\Delta$ is pure of codimension 1 and $\mu_i^{-1}(\Delta)$ is the exceptional set of $p_i$.
In particular, $\mu_i^{-1}(\Delta)$ is smooth and is the disjoint union of its irreducible components.

We continue this procedure by blowing up $X'_i$ at the union of all $(r+1)$-dimensional strata of the strict transform $(p_i^{-1})_*D_i$. 
By induction on the dimension of strata, we obtain projective bimeromorphic morphisms $f_i\colon Y_i \to X_i$ such that the following properties hold. 
\begin{enumerate}
    \item There is a natural action of $G_i$ on $Y_i$ so that $f_i$ is $G_i$-equivariant. 
    \item We have a complex  orbifold $ \mathfrak{Y} = (Y_i,G_i,\nu_i)$ with quotient space $Y$.  
    \item There is an induced bimeromorphic morphism $f\colon Y\to X$, whose exceptional locus is pure of codimension 1.  
    \item For any  prime $f$-exceptional $\Gamma$, the preimage $\nu_i^{-1}(\Gamma)$ is smooth. 
    \item The strict transform $(f_i^{-1})_*D_i$ in $Y_i$ is smooth. 
    \item $f_i^{-1}(D_i)$ is a snc divisor. 
\end{enumerate}
For the preimage of $\mathfrak{D}$ in $\mathfrak{Y}$, if $E'\subseteq Y$ is its image, 
then  the items (4) and (5) above imply  that, for every irreducible component of $\Lambda$ of $E'$, 
 the preimage $\nu_i^{-1}(\Lambda)$ is smooth,  hence is the disjoint union of its irreducible components. 
We then deduce that for any stratum $V$ of $E'$, the preimage $\nu_i^{-1}(V)$ is a disjoint union of strata of $f_i^{-1}(D_i)$.  
Thus  the preimage of $\mathfrak{D}$ in $\mathfrak{Y}$ is a snc orbi-divisor.
We conclude that morphisms $f_i$ satisfy the conditions of the lemma. 
\end{proof}

\subsection{Minimal Model Programs for complex analytic spaces}
 
Let $f\colon X\to Y$ be a projective surjective morphism  between   complex analytic spaces. 
We assume that either $Y$ is compact or $Y$ is a germ around a point $y\in Y$. 
Assume that $(X,\Delta)$ is a dlt pair and $X$ is $\mathbb{Q}$-factorial. 
We recall that  $X$ is $\mathbb{Q}$-factorial if every Weil divisor in $X$ is $\mathbb{Q}$-Cartier, 
and some reflexive power  $(\omega_X^{\otimes k})^{**}$ with $k \in \mathbb{Z}_{>0}$ of the canonical sheaf $\omega_X$ is locally free. 
In this case, we write $K_X$ for the class of $\frac{1}{k}c_1((\omega_X^{\otimes k})^{**})$.  
Then the $f$-relative cone theorem  and contraction theorem 
hold, see \cite[Section 4]{Nakayama1987},  \cite[Section 2.6]{DasHaconPaun2022} and \cite[Section 7]{Fujino2022}.   
Furthermore, if $H$ is a $f$-ample $\mathbb{Q}$-divisor, 
we can run a $f$-relative MMP for $(X,\Delta)$, with the scaling of $H$, see  \cite[Theorem 1.4]{DasHaconPaun2022} or \cite[Theorem 1.7]{Fujino2022}.  
The following lemma is essentially proved in \cite[Lemma 4.2]{Birkar2012}. 

\begin{lemma}
\label{lemma:MMP-scaling-birkar}
With the notation above, assume that 
\[
(X,\Delta) = (X_0,\Delta_0) \dashrightarrow (X_1,\Delta_1) \dashrightarrow \cdots 
\]
is the sequence of $f$-relative MMP, with the scaling of $H$. 
Assume that $\lambda_0 \ge \lambda_1 \ge\cdots$ is the sequence of the nef threshold for $H_i$ with respect to $K_{X_i}+\Delta_i$. 
In other words, 
\[
\lambda_i = \inf \{s\ge 0 \ | \  K_{X_i} + \Delta_i + sH_i \mbox{ is nef}\}.
\]
Then either $ \lim_{i} \lambda_i = 0$, or the previous sequence of MMP terminates.  
\end{lemma}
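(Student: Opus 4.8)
The plan is to follow closely the argument of \cite[Lemma 4.2]{Birkar2012}, adapted to the relative analytic setting, so I will only sketch the modifications. The sequence $\lambda_0 \ge \lambda_1 \ge \cdots$ is weakly decreasing by construction of the MMP with scaling, so $\lambda = \lim_i \lambda_i$ exists. Assume $\lambda > 0$ and that the MMP does not terminate; I will derive a contradiction. The key point is that the coefficients of $\Delta$ lie in a fixed finite set (or, more robustly, one works with the finitely many prime divisors appearing in $\Delta$), so after discarding finitely many initial steps we may assume the MMP is an MMP for $(X_i, \Delta_i + \lambda H_i)$, which is now a $K_{X_i}+\Delta_i+\lambda H_i$-nonpositive sequence where $K_{X_i}+\Delta_i+\lambda H_i$ stays pseudo-effective; the scaling number for the pair $(X_i,\Delta_i)$ with respect to the ample divisor $H_i - \lambda_i H_i$-type perturbation is bounded away from the relevant threshold, and this forces the $(K_{X_i}+\Delta_i)$-negative extremal rays being contracted to also be $(K_{X_i}+\Delta_i+\lambda H_i)$-trivial for all large $i$. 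Hence from some index on the MMP is simultaneously a sequence of flips for the dlt pair $(X_i,\Delta_i+\lambda H_i)$ along $(K_{X_i}+\Delta_i+\lambda H_i)$-trivial rays, and this pair has $K + \Delta + \lambda H$ nef for all $i$; one then invokes the special termination / length-of-flips type argument, or directly the fact that such an infinite sequence of $\lambda$-trivial flips cannot exist, exactly as in \cite[Lemma 4.2]{Birkar2012}.

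Concretely, the steps I would carry out are: (1) Record that $\lambda_i$ is nonincreasing and set $\lambda = \lim \lambda_i$; assume $\lambda > 0$ and the MMP is infinite. (2) Since only finitely many prime divisors occur among the $\Delta_i$ and among the components of $H$, and the MMP with scaling produces pairs whose coefficients lie in a fixed finite set, after truncating we may assume $\lambda_i = \lambda$ for all $i$ is false in general — instead we use that $K_{X_i}+\Delta_i+\lambda H_i$ is nef for every $i$ (because $\lambda \le \lambda_i$ and nefness of $K+\Delta+\lambda_i H$ at the start of step $i$, combined with the scaling construction, give nefness of $K+\Delta+tH$ for all $t \in [\lambda,\lambda_i]$ on $X_i$, in particular $t=\lambda$). (3) Deduce that the extremal ray $R_i$ contracted at step $i$ satisfies $(K_{X_i}+\Delta_i+\lambda H_i)\cdot R_i = 0$, so each step of the MMP is a flip or divisorial contraction for the dlt pair $(X_i, \Delta_i + \lambda H_i)$ that is $(K+\Delta+\lambda H)$-trivial. (4) Apply the relative analytic cone theorem and the termination results available in \cite{DasHaconPaun2022}, \cite{Fujino2022} (special termination for dlt pairs, or the termination of MMP with scaling in the relevant dimension as used in \cite{Birkar2012}) to conclude that such an infinite sequence is impossible, contradicting the assumption. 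Therefore either $\lambda = 0$ or the sequence terminates.

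The main obstacle I anticipate is step (4): one must be sure that the termination statements of Birkar's Lemma 4.2 are genuinely available in the projective-morphism-to-a-germ analytic category. In the algebraic case this rests on ACC-type statements and special termination for dlt flips; the excerpt has already granted us the relative cone and contraction theorems and the existence of the MMP with scaling over a germ or a compact base from \cite{Nakayama1987}, \cite{DasHaconPaun2022}, \cite{Fujino2022}, so the task is to check that the purely formal part of Birkar's argument — which reduces the problem to termination of a $(K+\Delta+\lambda H)$-trivial MMP, a statement about finitely many divisors and discrepancies — transports verbatim. I expect no genuinely new difficulty here beyond careful bookkeeping, since Birkar's proof is essentially formal once the basic MMP machinery is in place; the analytic subtleties (noncompactness, needing a relative ample divisor) are exactly the ones already absorbed into the cited relative MMP references.
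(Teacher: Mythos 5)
Your plan diverges from the paper's argument at the decisive point, and the route you substitute does not work. Steps (2) and (3) are false as stated: $\lambda_i$ is the \emph{infimum} of the $s$ for which $K_{X_i}+\Delta_i+sH_i$ is nef, so for $\lambda<\lambda_i$ the divisor $K_{X_i}+\Delta_i+\lambda H_i$ is in general \emph{not} nef, and the ray $R_i$ contracted at step $i$ satisfies $(K_{X_i}+\Delta_i+\lambda_i H_i)\cdot R_i=0$ with $H_i\cdot R_i>0$, hence $(K_{X_i}+\Delta_i+\lambda H_i)\cdot R_i=(\lambda-\lambda_i)\,H_i\cdot R_i<0$ whenever $\lambda_i>\lambda$. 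So the steps are $(K+\Delta+\lambda H)$-negative, not trivial, and your reduction to an infinite sequence of $(K+\Delta+\lambda H)$-trivial flips plus ``special termination'' collapses. Special termination is in any case not among the tools the paper has set up in the analytic category, and even granting it, it would not by itself rule out an infinite $\lambda$-trivial sequence.

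The idea you are missing is the one that makes Birkar's Lemma 4.2 (and the paper's proof) work: with $\mu=\lambda/2$ the sequence is an MMP for $K_X+\Delta+\mu H$ with scaling of $(\lambda_0-\mu)H$, and one then \emph{replaces the boundary by a klt boundary containing an ample piece}. Concretely, over a Stein germ one chooses $\varepsilon>0$ with $\mu H+\varepsilon\Delta$ $f$-ample, picks a general effective $D\sim_{\mathbb{Q}}\mu H+\varepsilon\Delta$ with $(X,(1-\varepsilon)\Delta+D)$ klt, and notes $K_X+(1-\varepsilon)\Delta+D\sim_{\mathbb{Q}}K_X+\Delta+\mu H$; termination then follows from the cited termination theorem for klt pairs with scaling (\cite[Theorem 1.4]{DasHaconPaun2022}). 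Without this perturbation the boundary has no reason to be big, and no termination statement applies. Finally, the lemma also covers the case of a compact base $Y$, which the paper handles by observing that the germ argument makes the sequence stationary near every point of $Y$ and then using compactness; your proposal does not address this case.
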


\begin{proof}
We set $\lambda = \lim_{i} \lambda_i$ and  assume that $\lambda > 0$.  
Let $\mu = \frac{1}{2}\lambda$. 
We first consider the case when $(y\in Y)$  is a germ of complex analytic space.  
In particular, we may assume that $Y$ is Stein.  
By \cite[Theorem 2.12]{DasHaconPaun2022}, there is some rational number $\varepsilon >0$ such that 
$\mu H+ \varepsilon \Delta$ is $f$-ample. 
Then, by using \cite[Lemma 2.21]{DasHaconPaun2022},   
we can obtain an effective $\mathbb{Q}$-divisor $D$ such that $D\sim_{\mathbb{Q}}   \mu H +\varepsilon \Delta$ and that $(X, (1-\varepsilon)\Delta + D)$ is klt.   
We set $\Delta' = (1-\varepsilon)\Delta + D$. Then 
\[
K_X+\Delta' \sim_{\mathbb{Q}} K_X+\Delta + \mu H. 
\]
Therefore, the sequence $(X_i)$ is also a sequence of $f$-relative MMP for $K_X+\Delta'$, with the scaling of $(\lambda_0-\mu)H$. 
Since $(X,\Delta')$ is klt, such a MMP must terminate  by \cite[Theorem 1.4]{DasHaconPaun2022} or \cite[Theorem 1.7]{Fujino2022}. 

In the case when $Y$ is a compact complex analytic variety, the previous argument shows that the sequence $(X_i)$  must be stationary over a neighborhood of any point of $Y$. 
Since $Y$ is compact, it follows that the MMP must terminate. 
\end{proof}

We use the MMP to construct certain bimeromorphic models in the following two lemmas.

\begin{lemma}
    \label{lemma:QcDV-modification} 
Let $X$ be a   compact  complex analytic variety.    
Let $\{S_1,...,S_r\}$ be a collection of  irreducible components of $X_{\sing}$ of codimension 2, such that 
$X$ has quotient  singularities around general points of $S_i$ for all $i=1,...,r$.    
Then there is a  projective bimeromorphic morphism $f\colon Y\to X$ such that the following properties hold. 
\begin{enumerate}
  %  \item $f$ factors through $f_1$. 
    \item $f$ is an isomorphism over $X_{\sm}$. 
    \item $f$ is an isomorphism over  general points of $S_i$ for all $i=1,...,r$. 
    \item $Y$ has $\mathbb{Q}$-factorial klt singularities.  
    \item The codimension 2 part of  $Y_{\sing} $ is  equal to $  \bigcup_{i=1}^r  f^{-1}_*S_i$.

\end{enumerate}
\end{lemma}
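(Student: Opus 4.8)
The plan is to produce $Y$ by a two-stage construction: first a dlt modification that $\mathbb Q$-factorializes $X$ and extracts exactly the right divisors, then a relative MMP over $X$ run with an appropriate scaling to contract everything back down except over the bad loci. Since $X$ has quotient singularities at general points of each $S_i$, in particular it is klt at those points, and by \cite[Lemma 5.8]{GK20} the locus where $X$ fails to be klt has codimension at least $3$; however $X$ need not be klt everywhere. So I would first take a $\mathbb Q$-factorial dlt modification $g\colon X'\to X$ in the sense of the analytic MMP (available by \cite{DasHaconPaun2022}, \cite{Fujino2022}), which exists after localizing or because $X$ is compact; write $K_{X'}+\Delta' = g^*K_X$ where $\Delta'$ is the reduced sum of $g$-exceptional divisors with appropriate coefficients, and $X'$ is $\mathbb Q$-factorial dlt. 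The point is that $g$ is an isomorphism over the locus where $X$ is already klt and $\mathbb Q$-factorial, which includes $X_{\sm}$ and the general points of each $S_i$, since there $X$ has quotient singularities (hence is $\mathbb Q$-factorial klt in a neighborhood transverse to $S_i$ — one checks this using the product structure $\mathbb D^{n-2}\times(o\in V)$ from Lemma \ref{lemma:singular-type-well-defined} and that surface quotient singularities are $\mathbb Q$-factorial).

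Next, pick a $g$-ample $\mathbb Q$-divisor $H$ on $X'$ (which exists as $g$ is projective) and run the $g$-relative MMP for $(X',\Delta')$ with scaling of $H$. By Lemma \ref{lemma:MMP-scaling-birkar} this terminates, because the limit nef threshold $\lambda$ must be $0$: if $\lambda>0$ the sequence terminates outright, and if $\lambda=0$ then since $K_{X'}+\Delta' = g^*K_X$ is already $g$-nef (indeed $g$-trivial), the MMP with scaling stabilizes after finitely many steps with terminal output $f\colon Y\to X$ on which $K_Y+\Delta_Y = f^*K_X$ is $f$-nef and, being $f$-trivial, the run stops. The resulting $Y$ is $\mathbb Q$-factorial, and $(Y,\Delta_Y)$ is dlt, so $Y$ has klt singularities once we check that $\Delta_Y = 0$ — and indeed $\Delta_Y$ is $f$-exceptional with $K_Y+\Delta_Y\sim_{\mathbb Q,f} 0$ and $-\Delta_Y$ being $f$-nef (by negativity of contraction, or because the MMP contracts all divisors on which $K+\Delta$ is trivial toward being covered by curves), forcing $\Delta_Y=0$; hence $Y$ is $\mathbb Q$-factorial klt. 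Properties (1) and (2) hold since every step of the construction is an isomorphism over $X_{\sm}$ and over general points of the $S_i$ (no divisor is extracted there and no flipping curve passes through those loci because $X$ is already good there), and (4) is what we just established.

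For (3), I would argue that the codimension-$2$ part of $Y_{\sing}$ is precisely $\bigcup (f^{-1})_*S_i$. The inclusion $\supseteq$ is clear from (2): $f$ is an isomorphism near general points of $S_i$, and $X$ is singular there, so $(f^{-1})_*S_i\subseteq Y_{\sing}$. For $\subseteq$: let $T$ be a codimension-$2$ component of $Y_{\sing}$. Since $Y$ is klt, by \cite[Lemma 5.8]{GK20} it has quotient singularities in codimension $2$, so $Y$ has quotient singularities at the general point of $T$; in particular $f(T)$ cannot be contained in the codimension-$\ge 3$ non-klt/non-quotient locus downstairs, and as $f$ is birational and $Y_{\sm}\supseteq f^{-1}(X_{\sm})$ up to codimension $\ge 2$ issues, $f(T)$ must be a codimension-$2$ component of $X_{\sing}$ over which $f$ is an isomorphism in codimension $1$; the only such components are (a sub-collection of) the $S_i$, forcing $T=(f^{-1})_*S_i$ for some $i$. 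The main obstacle I anticipate is the bookkeeping in this last step — precisely controlling which divisors the dlt modification extracts and ensuring the subsequent MMP does not create new codimension-$2$ singular loci away from the $S_i$, nor touches the prescribed good loci; this requires combining the analytic negativity lemma with the local product description near the $S_i$, and care that all the MMP technology of \cite{DasHaconPaun2022}, \cite{Fujino2022} applies in the compact-or-germ setting as stated in the preliminaries.
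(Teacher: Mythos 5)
Your construction has two genuine gaps. First, your opening move — a dlt modification $g\colon X'\to X$ with $K_{X'}+\Delta'=g^*K_X$ — presupposes that $K_X$ is $\mathbb{Q}$-Cartier, but the lemma assumes only that $X$ is a compact complex analytic variety (in this paper: a reduced irreducible complex space); $X$ need not be normal, let alone $\mathbb{Q}$-Gorenstein, so neither the crepant pullback $g^*K_X$ nor global discrepancies of $K_X$ are defined. The hypothesis of quotient singularities only gives $\mathbb{Q}$-Gorenstein-ness \emph{locally near general points of the $S_i$}, and that is all the paper uses: it takes a resolution $r\colon\widetilde X\to X$ (an isomorphism over $X_{\sm}$), lets $E$ be the reduced sum of the $r$-exceptional divisors lying over the $S_i$, and runs the $r$-relative MMP for the klt pair $(\widetilde X,(1-\varepsilon)E)$ — a pair living on the smooth $\widetilde X$, requiring nothing of $K_X$.

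Second, your contraction mechanism fails. If $K_{X'}+\Delta'$ is $g$-trivial (hence $g$-nef), the relative MMP with scaling terminates at step zero: it contracts nothing, so your output is $Y=X'$ with $\Delta_Y=\Delta'$, which is in general a nonzero reduced exceptional divisor; the claim that negativity ``forces $\Delta_Y=0$'' is not correct, and $(Y,\Delta_Y)$ dlt with $\Delta_Y\neq 0$ does not give $Y$ klt. The essential idea you are missing is the paper's choice of coefficient: one picks $\varepsilon>0$ so small that $-(1-\varepsilon)$ lies below the discrepancies of $K_X$ over general points of every $S_i$; then $K_{\widetilde X}+(1-\varepsilon)E$ is \emph{not} relatively trivial, the negativity lemma shows the MMP contracts exactly the components of $E$ (yielding property (2)), and a discrepancy comparison against $K_{\widetilde X}+(1-\varepsilon)E$ shows $Y$ is terminal outside $\bigcup f^{-1}_*S_i$, which is what gives property (3). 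Your verification of (3) via ``$f(T)$ must be one of the $S_i$'' also does not rule out new codimension-$2$ singular loci contained in the exceptional locus over the codimension-$\geq 3$ part of $X_{\sing}$; the terminality argument is what excludes these.
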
 

\begin{proof}
    Let $r\colon \widetilde{X} \to X$ be a  projective log resolution which is an isomorphism over $X_{\sm}$. 
    Let $E$ be the reduced sum of all  $r$-exceptional  prime divisors whose centers in  $X$ are one of the $S_i$'s.   
    Let $\varepsilon>0$ be a small enough rational number such that $-(1-\varepsilon)$ is smaller than the discrepancies for $K_X$ around general points of the $S_i$'s.  
    Then, by \cite[Theorem 1.4]{DasHaconPaun2022}, we can run a $r$-relative MMP for the klt pair $(\widetilde{X}, (1-\epsilon)E)$, with the scaling of certain $r$-ample divisor,  
    and obtain a projective bimeromorphic morphism $f\colon Y \to X$.    
    Then  $Y$ has $\mathbb{Q}$-factorial klt singularities.  
    The negativity lemma shows that $E$ is contracted by $\widetilde{X} \dashrightarrow Y$. 
    This implies that  $f$ is an isomorphism over general points of every $S_i$. 

    It remains to prove the item (4). 
    Up to replacing $X$ by $Y$ in the previous paragraph, we can assume that $X$  has $\mathbb{Q}$-factorial klt singularities.   
    By \cite[Proposition 2.36]{KollarMori1998},  we may assume further that $\widetilde{X}$ contains all divisors over $X$, whose  discrepancies  with respect to $K_X$  are at most 0.  
    Let $\Gamma$ be a prime divisor over $Y$ which has discrepancy at most 0 for $K_Y$.  
    On the one hand, since $K_Y$ is nef over $X$, the negativity lemma 
    implies that $K_Y + B \sim_{\mathbb{Q}} f^*K_X $ for some $f$-exceptional $\mathbb{Q}$-divisor $B\ge 0$. 
    Thus the discrepancy of $\Gamma$ for $K_X$ is at most 0. 
    Hence the center of $\Gamma$ in $\widetilde{X}$ is a divisor. 
    On the other hand, since the MMP $\widetilde{X} \dashrightarrow Y$ is  $(K_{\widetilde{X}} + (1-\varepsilon )E)$-negative, 
    the discrepancy of $\Gamma$ for $K_{\widetilde{X}} + (1-\varepsilon )E$ is at most 0 as well.  
    This implies that the center of $\Gamma$ in $\widetilde{X}$ is contained in $E$.  
    Therefore, we conclude that $\Gamma$ is a component of $E$. 
    Hence $Y$ has terminal singularities outside $\bigcup_{i=1}^r  f^{-1}_*S_i$. 
    It follows that  the codimension dimension 2  part of $Y_{\sing}$ is equal to $ \bigcup_{i=1}^r  f^{-1}_*S_i$. 
\end{proof}

\begin{lemma}
    \label{lemma:dlt-modification} 
Let $(X,D)$ be a reduced  pair, where $X$ is a  normal compact  complex analytic variety.    
Let $\{S_1,...,S_r\}$ be the collection of all codimension 2 irreducible components of $X_{\sing}$, which are not contained in $D$.   
Assume that $X$ has quotient singularities around general points of  $S_i$ for all $i=1,...,r$. 
Then there exists a  projective bimeromorphic morphism $f\colon Y\to X$ such that the following properties hold. 
\begin{enumerate} 
    \item $f$ is an isomorphism over  the snc locus of $(X,D)$ and over general points of every $S_i$.  
    \item $(Y,\Delta+f^{-1}_*D)$ is a $\mathbb{Q}$-factorial  dlt pair, where $\Delta$ is the reduced sum of all $f$-exceptional prime divisors.   
    \item If  $D = \Gamma_{\red}$ for some Cartier divisor $\Gamma$, and  if $X\setminus  D$  has quotient singularities, 
    then the exceptional locus of $f$ is contained in the support of $\Delta+f^{-1}_*D$.
\end{enumerate}
\end{lemma} 

\begin{proof} 
Let $\rho\colon Z \to X$ be a  projective log resolution of $(X,D)$,  which is an isomorphism over the snc locus of $(X,D)$. 
Let $E$ be the whole reduced  $\rho$-exceptional divisor and let $\Gamma = E + \rho^{-1}_*D$.  
Then, thanks to \cite[Theorem 2.45]{DasHaconPaun2022}, 
we can run a $\rho$-relative MMP for the   dlt pair $(Z,  \Gamma )$,  
with the scaling of some $\rho$-ample divisor $H$ as follows,  
\[(Z,\Gamma ) = (Z_0,\Gamma_0) \dashrightarrow (Z_1,\Gamma_1) \dashrightarrow \cdots.\]

If the MMP terminates, then we let $Y$ be the output of the MMP and let $f\colon Y \to X$ be the natural morphism. 
From the negativity lemma, 
we deduce that $f$ is an isomorphism over general points of each $S_i$. 

Otherwise, by Lemma \ref{lemma:MMP-scaling-birkar}, the nef threshold sequence $(\lambda_j)$ of the MMP  tends to 0. 
Let $V_i$ be a local surface intersecting $S_i$ transversally at a very general point $x_i\in S_i$. 
Then the MMP induces  a surface MMP over $V_i$.  
Since $X$ has quotient singularities around general points of $S_i$, 
we see that $V_i$ is a klt surface singularity by Lemma \ref{lemma:singular-type-well-defined}. 
Since $D$ does not contain $S_i$, we can pick  $x_i\not\in D$. 
Then we deduce that there is some integer $N(i)\ge 0$ such that  
$Z_j$ is isomorphic to $X$ at $x_i$ if $j\ge N(i)$.  
Let $k$  be the maximum of $\{N(1),...,N(r)\}$. 
Then    $Z_k\to X$ is an isomorphism over general points of every $S_i$. 
In this case, we set $Y=Z_k$ and denote by $f\colon Y\to X$  the natural morphism.

In both cases,   $(Y,\Delta+f^{-1}_*D)$ is a $\mathbb{Q}$-factorial  dlt pair, 
where $\Delta$ is the  reduced sum of all $f$-exceptional prime divisors. 
Furthermore,  $f$ is an isomorphism over the snc locus of $(X,D)$ and over general points of every $S_i$.    

It remains to prove the item (3).  
Assume that $V$ is an irreducible component of the $f$-exceptional locus which has codimension at least 2. 
Since $X\setminus D$ has quotient singularities,  
the exceptional locus of $f$ over $X\setminus D$ is pure of codimension 1, and hence $V$ is contained in $f^{-1}(\mathrm{Supp}\, D)$. 
We note that $f^{-1}(\mathrm{Supp}\, D) = \mathrm{Supp}\, f^*\Gamma$, which is pure of codimension 1. 
It follows that $f^{-1}(\mathrm{Supp}\, D)$ is contained in the support of $\Delta+f^{-1}_*D$.  
This completes the proof of the lemma. 
\end{proof}

\section{Bimeromorphic transforms}

In this section, we collect some tools of bimeromorphic transforms that we need for the proof of Theorem \ref{thm:main-thm}

\subsection{Functorial resolution of singularities}

Throughout this paper, a construction is called  functorial if it commutes with local analytic isomorphisms.
The functorial resolution of singularities implies equivariant resolution of singularities. 
It also allow us to glue local resolutions along the overlaps. 
We recall their statements  from \cite{Wlodarczyk2009} as follows.

 \begin{thm}
\label{thm:func-resol}
Let $X$ be a complex analytic variety. 
Then $X$ admits a functorial (or canonical) desingularization $r\colon  \widetilde{X} \to X$. 
More precisely,  $\widetilde{X}$ is a smooth complex analytic variety, $r$ is a projective bimeromorphic morphism,
which is a composition of blowups at smooth centers contained in $X_{\sing}$, the $r$-exceptional locus is a snc divisor, 
and  the construction of $r$ commutes with local analytic isomorphisms. 
\end{thm}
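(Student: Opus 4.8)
The plan is to deduce the statement from the functorial resolution algorithm for coherent ideal sheaves on smooth complex manifolds, and then to globalize it by gluing, using precisely that the algorithm commutes with local analytic isomorphisms.

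First I would recall the local input from \cite{Wlodarczyk2009} (building on Hironaka and on Bierstone--Milman): given a complex manifold $M$ and a coherent ideal sheaf $\mathcal{I}\subseteq\mathcal{O}_M$, there is a canonical, locally finite sequence of blowups of $M$ along smooth centers --- determined by a resolution invariant assembled from the order and the Hilbert--Samuel function of $\mathcal{I}$ --- after which the total transform of $\mathcal{I}$ defines a snc divisor; this invariant is intrinsic to the germ of $(M,\mathcal{I})$, so the whole procedure commutes with local analytic isomorphisms and, in particular, does not depend on the manifold into which a given germ is embedded. Since every point of $X$ has an open neighborhood $U$ admitting a closed embedding into a polydisc $M$, applying this to the ideal of $U$ in $M$ and following the strict transforms of $U$ yields a resolution $r_U\colon\widetilde{U}\to U$ with $\widetilde{U}$ smooth, with $r_U$ a composition of blowups along smooth centers contained in $U_{\sing}$, and with $r_U$-exceptional locus a snc divisor.

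Next I would glue. On an overlap $U\cap U'$ of two such charts, both $r_U$ and $r_{U'}$ restrict to resolutions of $U\cap U'$ that, by the embedding independence and functoriality just recalled, both coincide with the canonical resolution of $U\cap U'$; hence they agree over $U\cap U'$, compatibly with the exceptional data. By the standard gluing of complex analytic spaces along open subsets (in the spirit of Lemma \ref{lemma:extend-morphism-1}) the $\widetilde{U}$ patch to a complex analytic variety $\widetilde{X}$ with a bimeromorphic morphism $r\colon\widetilde{X}\to X$ restricting to each $r_U$; functoriality also makes this gluing unique. Smoothness of $\widetilde{X}$, the presentation of $r$ as a composition of blowups along smooth centers inside $X_{\sing}$, the snc property of the exceptional locus, and the fact that $r$ commutes with local analytic isomorphisms are all local and are inherited from the charts. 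Finally, the collection of centers is locally finite, so $r$ is proper; locally over $X$ it is a finite composition of blowups along coherent smooth centers, each carrying a relatively ample exceptional Cartier divisor, and composing these produces an $r$-exceptional $r$-ample $\mathbb{Q}$-divisor on $\widetilde{X}$, so $r$ is projective over $X$ (and projective in the usual sense when $X$ is compact).

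The genuine obstacle is the imported fact --- that the resolution algorithm depends only on the analytic germ of $(X,\mathcal{I})$ and commutes with local analytic isomorphisms --- since this is exactly what licenses the gluing; this is the substance of \cite{Wlodarczyk2009} (and of the earlier algebraic constructions it builds on). Among the steps carried out here, the only point requiring care is assembling the relatively ample exceptional divisor to conclude projectivity, together with checking that the locally finite composition remains well behaved in the non-compact, non-algebraic setting.
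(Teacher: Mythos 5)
The paper offers no proof of this statement: it is recalled verbatim from \cite{Wlodarczyk2009} (together with Theorem \ref{thm:func-principalization}) as a black box, so there is nothing internal to compare your argument against. Your sketch is the standard derivation of the analytic statement from the functorial embedded resolution algorithm --- local embedded resolution in a polydisc, independence of the embedding, and gluing over overlaps via the germ-invariance of the resolution invariant --- and you correctly identify that essentially all of the mathematical substance (germ-dependence of the algorithm, commutation with local analytic isomorphisms, local finiteness of the centers) is exactly the content of the cited reference rather than something you establish. As a reconstruction of what lies behind the citation it is accurate; as a self-contained proof it is not, but neither is the paper's treatment, so no gap is introduced relative to the paper.
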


\begin{thm}
\label{thm:func-principalization}
Let $X$ be a complex analytic variety, and let $\mathcal{I}$ be a coherent ideal sheaf on $X$. 
Then $\mathcal{I}$ admits a functorial (or canonical) principalization $r\colon  \widetilde{X}  \to X$. 
More precisely,  $\widetilde{X}$ is a smooth complex analytic variety, $r$ is a projective bimeromorphic morphism,
which is a composition of blowups at smooth centers contained in the cosupport of $\mathcal{I}$, 
$r^{-1}\mathcal{I} \cdot \mathcal{O}_{\widetilde{X}}$ is an invertible sheaf which defines a  divisor of snc support, and the construction of $r$ commutes with local analytic isomorphisms. 
\end{thm}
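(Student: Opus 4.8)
The plan is to obtain the statement from the algebraic functorial principalization theorem of Hironaka, in the constructive, invariant-driven form due to Bierstone--Milman, W{\l}odarczyk and Koll\'ar, via a reduction to the smooth case together with a globalization argument. First I would record two harmless reductions. Standard manipulations (using that $r^{*}\mathcal{I}$ can be invertible only if $\mathcal{I}$ is torsion-free of generic rank one, hence locally a fractional ideal) allow us to assume that $\mathcal{I}\subseteq\mathcal{O}_{X}$ is a coherent sheaf of ideals; the general case follows from this one by functoriality and gluing. If moreover $X$ is singular, apply Theorem \ref{thm:func-resol} to get a functorial desingularization $\rho\colon X'\to X$ --- a composition of blowups of smooth centers in $X_{\sing}$ --- and replace $(X,\mathcal{I})$ by $(X',\rho^{*}\mathcal{I})$; the centers of the composite modification then lie over $X_{\sing}$ together with the cosupport of $\mathcal{I}$, which is exactly the cosupport of $\mathcal{I}$ in the case where $X$ is already smooth. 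So it suffices to principalize a coherent ideal on a complex manifold.

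For that I would invoke the desingularization-invariant algorithm. On a smooth algebraic variety over $\mathbb{C}$ this algorithm attaches to $(X,\mathcal{I})$ a canonical, strictly decreasing sequence of smooth blow-up centers --- assembled from the order function of $\mathcal{I}$, its coefficient (companion) ideals, and maximal-contact hypersurfaces --- each center contained in the cosupport of the current transform of $\mathcal{I}$, after finitely many steps rendering the transformed ideal invertible. The point is that every ingredient of the invariant is defined purely from local rings (orders of ideals, Hilbert--Samuel functions, coefficient ideals) and is therefore preserved under local analytic isomorphisms. Hence the algorithm makes literal sense for a coherent ideal on a complex manifold, and since the centers it selects are intrinsic, they agree on overlaps: running the algorithm on the members of any open cover of $X$ produces well-defined global smooth centers, and blowing them up and iterating yields $r\colon\widetilde{X}\to X$ with $\widetilde{X}$ smooth, $r$ a composition of blowups at smooth centers contained in the cosupport of $\mathcal{I}$ (more precisely, in the successive transforms of it), $r^{*}\mathcal{I}$ invertible, and the whole construction compatible with local analytic isomorphisms. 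This is precisely what \cite{Wlodarczyk2009} carries out, so in the write-up I would quote it after recording the reductions above.

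The genuine obstacle is not the local algebra, which is classical, but the bookkeeping required to turn the algebraic algorithm into a global construction over a space that is not of finite type: one must verify that the invariant is intrinsic enough to glue across charts that carry no algebraic structure, and that the resulting --- a priori only locally finite --- sequence of blowups still terminates. Termination does not follow formally from the germ-wise statement, because a complex analytic variety need not admit a finite cover by ``algebraic'' charts; I would obtain it by exhausting $X$ by relatively compact open subsets, noting that on each such subset only finitely many values of the invariant occur and that the centers produced on different subsets match on overlaps by functoriality. When $X$ is compact --- the only case needed for Theorem \ref{thm:main-thm} --- this reduces to a finite argument. This compactness-and-functoriality patching is exactly the content of \cite{Wlodarczyk2009}, and quoting it is ultimately the cleanest route.
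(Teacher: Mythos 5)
The paper does not prove this theorem at all: it is explicitly recalled as a known result from \cite{Wlodarczyk2009}, with no argument given. Your proposal, after some preliminary reductions, ultimately quotes the same reference for the substantive content, so it takes essentially the same approach as the paper.
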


We will need the following two results on partial desingularizations, which allow us to work locally around components of the singular locus.  
For example, we assume that $X$ is a complex analytic variety, and that  $S_1$  and $S_2$ are disjoint compact irreducible components of $X_{\sing}$. 
Let $X_1$ and $X_2$ be disjoint open subsets of $X$, which contain $S_1$ and $S_2$ respectively.
Suppose that, for $i=1,2$,  we have a proper bimeromorphic modification  $f_i\colon Y_i\to X_i$, which is an isomorphism over the smooth locus of $X_i$. 
We remark that there can be  an obstruction to glue the modifications $f_i$ into a modification on $X$. 
Indeed, there may be a component $S_3$ of $X_{\sing}$ which intersects both $S_1$ and $S_2$. 
If the modification of $f_1$ at general points of $S_3\cap X_1$  is different from the one of $f_2$ at general points of $S_3\cap X_2$, then we cannot glue $f_1$ and $f_2$. 
With the following two results, we can first obtain a  modification $\rho \colon X'\to X$ so that the singular loci of $\rho^{-1}(X_1)$ and $\rho^{-1}(X_2)$ are compact subsets. 
Then the modifications on  $\rho^{-1}(X_1)$ and $\rho^{-1}(X_2)$, which are isomorphic over the smooth loci,  
are independent and we can always glue them by using Lemma \ref{lemma:extend-morphism-1}.

\begin{prop}
\label{prop:partial-resolution} 
Let $X$ be a normal complex analytic variety and let $S$ be the union of some irreducible components of $X_{\sing}$.  
Let $\Delta$ be a reduced divisor in $X$, which does not contain any component of $S$. 
Then there is a projective bimeromorphic morphism 
$f\colon Y \to X$ satisfying the following properties. 
Let  $U = X\setminus S$ and $V= f^{-1}(U)$. 

\begin{enumerate}
 \item The $f$-exceptional locus is pure of codimension 1. 
       If we denote it by $E$ as a reduced divisor, then  $(V,(E + f^{-1}_*\Delta)|_V)$ is a snc pair.  
       In particular, $Y_{\sing}$ is contained in $f^{-1}(S)$. 
 \item $f$ is an isomorphism over general points of every irreducible component of $S$. 
 \item $f$ is an isomorphism over the  snc locus of $(X,\Delta)$.  
\end{enumerate}
\end{prop}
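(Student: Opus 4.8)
The plan is to combine functorial principalization with a carefully chosen MMP, in the same spirit as Lemma \ref{lemma:dlt-modification}, but being attentive to the fact that $\Delta$ may pass through $S$. First I would run the functorial principalization of Theorem \ref{thm:func-principalization} on the ideal sheaf of $\Delta \cup X_{\sing}$ (or apply Theorem \ref{thm:func-resol} directly to get a log resolution of $(X,\Delta)$): this produces a projective log resolution $\rho\colon Z\to X$ which, by functoriality, is an isomorphism over the snc locus of $(X,\Delta)$, hence in particular over the snc locus and over the general points of every component of $S$ that is not contained in $\Delta$ — but note $S$ is allowed to contain components lying in $\Delta$, so I should be slightly careful: the hypothesis is that $\Delta$ does not \emph{contain} any component of $S$, so each $S_i$ meets the snc locus of $(X,\Delta)$ in a dense open subset, and $\rho$ is an isomorphism there.

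Next, let $E$ be the whole reduced $\rho$-exceptional divisor and set $\Gamma = E + \rho^{-1}_*\Delta$, so $(Z,\Gamma)$ is a $\mathbb{Q}$-factorial reduced dlt pair. I would then run a $\rho$-relative MMP for $(Z,\Gamma)$ with the scaling of some $\rho$-ample divisor $H$, exactly as in the proof of Lemma \ref{lemma:dlt-modification}: either the MMP terminates, or by Lemma \ref{lemma:MMP-scaling-birkar} the nef-threshold sequence tends to $0$, in which case I restrict attention to a local surface $V_i$ cutting $S_i$ transversally at a very general point $x_i$ chosen inside the snc locus of $(X,\Delta)$; since $X$ has quotient (hence klt) singularities at the general point of each $S_i$ that lies in $S$, $V_i$ is a klt surface germ, and the induced surface MMP over $V_i$ must be eventually an isomorphism at $x_i$, so after finitely many steps we reach a model $Y=Z_k$ that is an isomorphism over the general point of every $S_i$ and over the snc locus of $(X,\Delta)$. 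Call $f\colon Y\to X$ the resulting projective bimeromorphic morphism, $\Delta_Y$ the whole reduced $f$-exceptional divisor, so that $(Y,\Delta_Y + f^{-1}_*\Delta)$ is $\mathbb{Q}$-factorial dlt; in particular $Y$ is $\mathbb{Q}$-factorial.

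It remains to upgrade the conclusion to the stated one, namely that over $U=X\setminus S$ the pair $(V,(E_Y + f^{-1}_*\Delta)|_V)$ is snc, where $E_Y$ is the $f$-exceptional locus and $V=f^{-1}(U)$. This needs one further resolution over $U$ only: since $X$ already has only the components of $S$ among its problematic codimension-$2$ singular loci, the output $Y$ of the MMP can still have singularities and the exceptional divisor need not be snc. I would therefore apply the functorial resolution/principalization again, relatively over $U$, to make $(V, (E_Y+f^{-1}_*\Delta)|_V)$ snc; by the local nature of the argument this does not disturb what happens over $S$, and by functoriality it is still an isomorphism over the snc locus of $(X,\Delta)$ and over general points of the $S_i$. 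Crucially, after this step the exceptional locus of the composed morphism is still a divisor — because a $\mathbb{Q}$-factorial dlt model admits no small modifications extracting divisors with discrepancy $>0$ over it that we would need to contract again — and $Y_{\sing}$ is then contained in $f^{-1}(S)$. The main obstacle I expect is precisely this last bookkeeping: ensuring simultaneously that (a) the final model is an isomorphism over the general points of every component of $S$, even those components that happen to lie inside $X_{\sing}$ but along which $X$ need not have quotient singularities, and (b) that after the extra resolution over $U$ the exceptional set stays a divisor and no new codimension-$2$ singularities appear away from $f^{-1}(S)$ — this is where one has to invoke $\mathbb{Q}$-factoriality of the dlt output together with the negativity lemma, as in Lemma \ref{lemma:dlt-modification}, rather than a naive resolution which could a priori contract divisors.
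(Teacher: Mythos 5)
Your proposal takes a genuinely different route (log resolution followed by a relative MMP, as in Lemma \ref{lemma:dlt-modification}, plus a final resolution over $U$), whereas the paper's proof is purely resolution-theoretic: it decomposes a log resolution of $(X,\Delta)$ into a sequence of blowups at smooth centers and then \emph{selectively} performs only those blowups whose centers meet the preimage of $U$, checking by an elementary argument that such a center can never contain a component of the strict transform of $S$ (if it did, its image $D$ would satisfy $F\subseteq D\subseteq P$ for a component $F$ of $S$ and a component $P$ of the singular locus of $(X,\Delta)$ not contained in $\Delta$, forcing $F=P\supseteq D$ and contradicting $D\cap U\neq\emptyset$). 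That argument needs nothing beyond functorial resolution.

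There are genuine gaps in your approach. First, the proposition makes \emph{no} assumption that $X$ has quotient or klt singularities along the components of $S$, nor that $X$ is compact or normal; yet your termination argument for the MMP hinges on the transverse surface germ $V_i$ being klt (so that the induced surface MMP stabilizes), and Lemma \ref{lemma:MMP-scaling-birkar} and Lemma \ref{lemma:dlt-modification} are only available for $X$ compact or a germ. You flag this yourself in (a), but it is not a bookkeeping issue — without klt singularities along some $S_i$ the MMP may never become an isomorphism near that component, and the whole strategy breaks. Second, the final step ``apply functorial resolution relatively over $U$ only'' is not a well-defined operation: the global functorial resolution of the MMP output $Y$ would necessarily blow up centers inside $Y_{\sing}$, which contains $f^{-1}_*S_i$ near its general points (since $S_i\subseteq X_{\sing}$ and $f$ is an isomorphism there), thereby destroying property (2); while a resolution performed only over $V$ need not glue to a projective morphism over all of $X$ (the extension Lemma \ref{lemma:extend-morphism-1} requires the indeterminacy locus of the inverse to be compact, which is not available here). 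Controlling exactly which centers get blown up near $S$ is the heart of the proposition, and it is precisely what the paper's inductive selective-blowup construction accomplishes and what your proposal leaves unresolved.
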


\begin{proof}
Let $r\colon Z\to X$ be a  log resolution of $(X,\Delta)$ such that it can be decomposed into a sequence
\[
Z= Z_m \to \cdots \to Z_0 = X,
\]
where each $Z_{j+1}\to Z_j$ is the blowup at a smooth irreducible center $C_j$ contained in the singular  locus (\textit{i.e.} the non-snc locus)  of $(X_j,\Delta_j)$, and $\Delta_j$ is the sum of the strict transform of $\Delta$ in $X_j$ and the whole reduced exceptional divisor of $X_j\to X$.   
We will construct by induction a sequence of projective bimeromorphic morphisms 
\[
Y=Y_m \to \cdots \to Y_0 = X 
\]
such that the following properties hold. 
The composition $f\colon Y\to X$ satisfies the properties of the proposition. 
In particular, we can define $S_j$ the strict transform of $S$ in $Y_j$, 
and $Y_{j+1}\to Y_j$ is an isomorphism over general points of every irreducible component of $S_j$. 
Let $V_j$ be the preimage of $U$ in $Y_j$ and let $W_j$ be the preimage of $U$ in $Z_j$. 
Then $V_j \cong  W_j$ under the natural bimeromorphic map $\psi_j\colon Z_j \dashrightarrow Y_j$ over $X$.

Assume that we have constructed $Y_j$. 
If $C_j\cap W_j = \emptyset$, then we let $Y_{j+1}\to Y_j$ be the identity map.   
We note that  $W_{j+1}\cong W_{j}$. 
Hence $V_{j+1}\cong W_{j+1}$ under $\psi_{j+1}$. 

If  $C_j\cap W_j \neq \emptyset$, then we can define the strict transform $D_j$  of $C_j$ \textit{via} $\psi_j$. 
Let $Y_{j+1}\to Y_j$ the blowup of $D_j$.   
It follows that $V_{j+1}\cong W_{j+1}$ under $\psi_{j+1}$. 
It remains to prove that $Y_{j+1}\to Y_j$ is an isomorphism over general points of every irreducible component of $S_j$. 
It is equivalent to show that $D_j$ does not contain any of these irreducible components.  
Assume by contradiction that $D_j$ contains a component $F_j$ of $S_j$. 
Then the image $D$ of $D_j$ in $X$ contains the one $F$ of $F_j$. 
We note that $D\cap U \neq \emptyset$. 
On the one hand, by assumption, $F$ is an irreducible component of $S$.  
On the other hand, $D$ is also the image of $C_j$ in $X$. 
By assumption on $Z$, $D$ is contained in singular locus of $(X, \Delta)$.  
It follows that there is an irreducible component $P$ of the singular locus of  $(X, \Delta)$ such that 
\[
F\subseteq D \subseteq P. 
\]
Since $\Delta$ does not contain $F$, we deduce that $P$ is an irreducible component of $X_{\sing}$.  
Then we must have $F=P$. 
This is a contradiction as  we have 
\[
 \emptyset \neq  D\cap U \subseteq  P\cap U = F\cap U = \emptyset.   
\]
In conclusion, $Y_{j+1}\to Y_j$ is an isomorphism over general points of every irreducible component of $S_j$. 
This completes the proof of the proposition. 
\end{proof}

\begin{cor}
\label{cor:partial-resolution} 
With the notation in Proposition \ref{prop:partial-resolution}, we assume that $X$ has quotient singularities around $S\setminus \Delta$.  
Let $g\colon X'\to Y$ be the normalization of the blowup of $Y$ at the ideal sheaf $\mathcal{I}$ of $E+f^{-1}_*\Delta$, 
and let $\rho\colon X'\to X$ be the natural morphism. 
Then the following properties hold.
\begin{enumerate} 
  \item $g^*\mathcal{I}$ is an invertible sheaf and it  defines a Cartier divisor $\Gamma$.
  \item If $\Delta'= \Gamma_{\red}$, then   $(X',\Delta')$ is a   snc pair on $\rho^{-1}(X\setminus S)$.  
       In particular, $X'_{\sing}$ is contained in $\rho^{-1}(S)$. 
  \item The $\rho$-exceptional locus is contained in $\Delta'$, and $\Delta'$ is equal to the sum of $\rho^{-1}_*\Delta$ and the whole reduced $\rho$-exceptional divisor.  
  \item $\rho$ is an isomorphism over general points of every irreducible component of $S$. 
  \item $\rho$ is an isomorphism over the  snc locus of $(X,\Delta)$.  
  \item Every component $C$ of $X'_{\sing}$ which is not contained in $\Delta'$ is the strict transform of a component of $S$.  
        Moreover, $X'\setminus \Delta'$ has quotient singularities.  
\end{enumerate}
\end{cor}

\begin{proof}
The item (1) follows from the property of blowups. 
Since $(Y, E+f^{-1}_*\Delta)$ is a snc pair on $f^{-1}(X\setminus S)$, we deduce that $g$ is an isomorphism over $f^{-1}(X\setminus S)$. 
This implies the item (2).  
Since $Y\setminus E$ is normal, the $g$-exceptional locus is contained in $\Delta'$. This shows the items (3) and (4).   
The item (5) follows from the fact that $f$ is an isomorphism over the  snc locus of $(X,\Delta)$.   
For the item (6), we first  note that $\rho(C\setminus \Delta') \subseteq S\setminus \Delta$ by the items (2) and (3). 
Moreover, $\rho$ is an isomorphism from a neighborhood of  $C\setminus \Delta'$ to its image in $X$.  
This implies that $C$ is the strict transform of a component of $S$. 
Since $X$ has quotient quotient singularities around $S\setminus \Delta$, 
we deduce  that $X'$ has quotient singularities around $C\setminus \Delta'$. 
This shows the item (6), and completes the proof of the corollary. 
\end{proof}

\subsection{Blowups at an ideal sheaf}
\label{subsection:blowup}

Let $X$ be a complex analytic space and let $S\subseteq X$ be a closed subspace. 
If $g\colon S' \to S$ is a blowup at a coherent ideal sheaf $\mathcal{I}$, 
then there is a natural blowup $f\colon X'\to X$, 
such that the restriction of $f$ on the strict transform of $S$ in $X'$ is isomorphic to   $g$.    
Indeed, the closed analytic subspace $Z\subseteq S$ defined by $\mathcal{I}$ is naturally a closed analytic subspace of $X$. 
If we denote the ideal sheaf of $Z$ in $X$ by $\mathcal{I}_1$, 
then we set  $f\colon X'\to X$ as the blowup at the ideal sheaf $\mathcal{I}_1$.  
As a consequence, if $g\colon S' \to S$ is now a composition of a sequence of functorial blowups, 
then there is a natural functorial morphism $f\colon X' \to X$ 
such that the restriction of $f$ on the strict transform of $S$ in $X'$ is isomorphic to   $g$.   

For an effective  Cartier divisor $H\subseteq S$, the blowup of $S$ at (the ideal sheaf of) $H$ is just an isomorphism. 
However, we can blowup $X$ at $H$ and get a non trivial morphism $f\colon X'\to X$. 
In this case, $f$ induces an isomorphism from the strict transform $S'$ of $S$ to $S$.  
We will focus on this operation in the next subsection.

In the remainder of this subsection, we let $o\in S\subseteq X$ be a point. 
We will shrink $S$ and $X$ around $o$ freely.  
Let $W$ be an open neighborhood of $\{\mathbf{0}_N \}\times S$  in $\mathbb{C}^N \times S$.  
We assume that  there is a closed immersion from $X$ to   $W$
such that the composite inclusion  $S\subseteq X \subseteq W$ coincides  with $\{\mathbf{0}_N \}\times S \subseteq W$. 
We denote by $\mathcal{J}_X$ the ideal sheaf of $X$ inside $W$. 
Let $F(x_1,...,x_N,s)$ be a holomorphic function on $W$, where $(x_1,...,x_N)$ are
the coordinates of $\mathbb{C}^{N}$ and $s\in S$ is a point. 
We assume that $F\in \mathcal{J}_X$. 
In particular, we have $F(0,...,0,s) =0$ for all $s \in S$. 
Then, up to shrinking $W$,  we have a natural  decomposition  
\[F = F_1+ F_2 + \cdots,\]
where $F_d(x_1,...,x_N,s)$ is a homogeneous polynomial of degree $d$ in $(x_1,...,x_N)$, with coefficients as holomorphic functions on $S$. 
We will blowup $W$ at  closed subspaces contained in $S=\mathbf{0}_N\times S$ and then investigate the defining functions of the strict transform $X'$ of $X$, in a neighborhood of the strict transform $S'$ of $S$.

More precisely, let $\mathcal{I}$ be a coherent ideal sheaf on $S$ whose cosupport is proper closed subset.  
Up to shrinking $X$, we assume that $\mathcal{I}$ is globally generated. 
Let $h\colon W'\to W$ be the blowup at the ideal   $(x_1,...,x_N, \mathcal{I})$.  
Since  the cosupport of $\mathcal{I}$ is strictly contained in $S$, 
we can denote by $X'$ and $S'$ the strict transforms of $X$ and $S$ respectively in $W'$.  
Let $\mathcal{J}_{X'}$ be the ideal sheaf of $X'$ inside $W'$. 

Then for every point $o'\in S'$ lying over $o$,   
there is a   neighborhood $U'\subseteq W'$ of $o'$, such that  
$U'$ is isomorphic to an neighborhood of $\mathbf{0}_N\times (S'\cap U')$ in  $\mathbb{C}^N \times  (S'\cap U')$,  
and that 
$h|_{U'}\colon U'\to W$ can be written in coordinates as 
\[
(x'_1,...,x'_N,s') \to (tx'_1,...,tx'_N, h(s') ) = (x_1,...,x_N,s), 
\]
where  $t$ is a holomorphic function on $S'\cap U$, which is a generator of the invertible ideal sheaf $(h|_{S'})^*\mathcal{I}$  on $S'\cap U'$.  
Furthermore, regarded as a function on $U'$, \textit{i.e.} $t(x'_1,...,x'_N,s') = t(s')$,  $t$ is a generator of the invertible ideal sheaf $h^*(x_1,...,x_N,\mathcal{I})$ on $U'$. 
Indeed,  we can let $U'  \subseteq  W'\setminus h^{-1}_*H$ and $t=(h|_{S'})^*\xi$, 
where $\xi$ belongs to some  generating set of $\mathcal{I}$, and  $H\subseteq W$ is the zero locus  of the function $\xi$ on $W$ defined by $\xi(x_1,...,x_N,s)=\xi(s)$.

If we pullback the function $F_i$ by $h|_{U'}$, then it is of the shape $ t^i \cdot  G_i(x'_1,...,x'_N,s')$, 
where $G_i$ is a homogeneous polynomial of degree $i$ in $(x'_1,...,x'_N)$, with coefficients as holomorphic functions in $s'$. 
Thus if $m$ is the smallest integer such that $F_m\neq 0$, then the pullback of $F$ on $U'$ is of the shape
\[
(h^*F)(x'_1,...,x'_N,s')  = t^m( G_m + t G_{m+1} + \cdots + t^{k-m}G_k+\cdots).
\]
When the coefficients of $F_m$ are not all contained in $\mathcal{I}$, then $t$ does not divides $G_m$, 
and the function 
\[
  G_m + t G_{m+1} + \cdots + t^{k-m}G_k+\cdots
\]
is in  the ideal $\mathcal{J}_{X'}$  of $X'$ in $W'$.  
If moreover, $F_m(x_1,...,x_N,o)$ is a nonzero polynomial in $(x_1,...,x_N)$, 
then so is $G_m(x'_1,...,x'_N,o')$.

We assume that every coefficient of $F_m$ belongs to $\mathcal{I}$.  More precisely, we can write  
\[
F_m(x_1,...,x_N, s) = \sum_I a_I(s)x_I, 
\]
where the sum runs over all multi-indices $I$ with $|I|=m$,
and  $a_I \in \mathcal{I}$ for all $I$. 
If we set $a_I'(s') = a_I(h(s'))$, then $t$ divides $a_I'$. 
In particular, we can write $a_I'(s')=t\cdot b_I'(s')$. 
Then  the pullback of $F_m$ can be written as 
\[
(h^*F_m)(x'_1,...,x'_N,s') = \sum a_I'(s') \cdot t^m\cdot x'_I = t^{m+1}( \sum  b_I'(s') x'_I).
\]
This implies that $G_m$ is divisible by $t$. 

In addition, if $\mathcal{I}$ is exactly the ideal generated by the coefficients $a_I$ of $F_m$, 
then, up to shrinking $U'$, 
there is some $b_I'$ which is nowhere vanishing on $U'\cap S'$. 
As a consequence, we can write $(f^*F_m)(x'_1,...,x'_N,s') = t^{m+1} H_m(x'_1,...,x'_N,s')$ such that 
$H_m$ is a nonzero homogeneous polynomial  in $(x'_1,...,x'_N)$ for any $s'\in U'\cap S'$.
Therefore, the function 
\[
  H_m + G_{m+1} +  t G_{m+2} +  \cdots + t^{k-m-1} G_{k} + \cdots 
\]
is in $\mathcal{J}_{X'}$.

\subsection{Blowups at a Cartier divisor}
\label{subsection:blowup-divisor}

Let $X$ be a   complex analytic space and let $S\subseteq X$ be a closed subspace. 
In the previous subsection, we have seen the blowup of $X$ at a center strictly contained in $S$. 
In this subsection, we will specify   the case when the center is a Cartier divisor in $S$.  
We will perform this kind of bimeromorphic transforms in Section \ref{section:equation} and Section \ref{section:local-cover}.

In the remainder of this subsection,  
we let $S$ be a smooth complex  analytic variety  and let $H=H_1+ \cdots H_n$ be a reduced snc divisor on $S$. 
We assume that $S$ is isomorphic to a bounded neighborhood of the origin in $\mathbb{C}^N$ with $N\ge n$,  
and that  $H_i$  is isomorphic to the coordinate hyperplane $ \{T_i=0\}$. 
We consider  $W =   \mathbb{D}^3 \times S$.  
Let $F(x,y,z,s)$ be a holomorphic function on $W$, where $(x,y,z)$ are the coordinates of $\mathbb{D}^{3}$ and $s\in S$ is a point. 
We assume that    
\[F(x,y,z,s) = x^2 +  F_2(y,z,s) + F_3(y,z,s)  + \cdots + F_k(y,z,s) + \cdots,\]
where $F_k(y,z,s)$ is a homogeneous polynomial of degree $k$ in $(y,z)$, with coefficients as holomorphic functions on $S$. 
Let $X\subseteq W$ be the   subspace defined by $F=0$. 

Let $f\colon W'\to W$ be the blowup at the ideal $\mathcal{I} = (x,y,z,T_i)$, where $i\in \{1,...,n\}$. 
Let $X'$ and $S'$ be the strict transforms of $X$ and $S$ respectively in $W'$.  
Our interest  is  a defining function of $X'$ in a neighborhood of $S'$. 
We note that $f$ induces an isomorphism from $S'$ to $S$. 
There is an  open subset $U' \subseteq W'\setminus f^{-1}_*(D_i)$, where $D_i$ is the divisor in $W$ defined by $T_i=0$, such that   $U'\cong  \mathbb{D}^3 \times S$, and that  
$f|_{U'}$ can be written with coordinates as 
\[
(x',y',z',s) \to (T_i x,T_i y,T_i z,s). 
\]
Moreover, the $f$-exceptional divisor $E$ is defined by $\{T_i=0\}$ on $U'$.  
We note that the pullback $f^*F$ is divisible by   $T_i^2$. 
Thus, a  defining function of $X'\cap U'$ in $U'$ is of the shape
\[
F'(x',y',z', s) = x'^2 +  F_2(y',z',s) + T_i F_3(y',z',s)  + \cdots + T_i^{k-2} F_k(y',z',s) + \cdots.
\]
We will need to perform this operation for several times in order to raise the exponent of $T_i$ in the degree $k$ part of the defining function $F$ for all $k$ large.

More precisely,  let $f\colon W' \to W$ be the  
composition of the following sequence  
\[
W'=W^e \to W^{e-1}  \to \cdots \to W^0 = W, 
\]
such that each $\varphi^k \colon  W^{k+1}\to W^{k}$ is the blowup of $W^k$  at the center $H_i^k$ for some $i \in \{1,...,n\}$, 
where $H_i^k$ is the preimage of $H_i$ in $S^k$, and $S^k$ is the strict transform of $S$ in $W^k$.  
We say that there are $m$ blowups whose centers are  $H_i$  if there are $\varphi^{k_1},...,\varphi^{k_m}$ in the previous sequence, 
such that $\varphi^{k_l}$ is the blowup of $W^{k_l}$ at the center $H_i^{k_l}$ for all $l=1,...,m$.  
Let $X'$ and $S'$ be the strict transforms of $X$ and $S$ respectively in $W'$. 
Then $f$ induces an isomorphism from $S'$ to $S$.  
Let $T' = \prod_{i=1}^n  T_i^{m_i}$, where $m_i$ is the number of blowups  whose centers are $H_i$. 
Then there is an open neighborhood $U'$ of  $S'$ in $W'$, 
such that  $U'\cong \mathbb{D}^3 \times S'$ 
and that $f|_{U'}$ can be written with coordinates as 
\begin{equation}\label{eqn:blowup}
(x',y',z',s) \mapsto (T' x, T' y, T' z,s). 
\end{equation}
Moreover, the $f$-exceptional divisor $E$ is defined by $\{T'=0\}$ on $U'$  as a closed subset.  
A  defining function for $X'\cap U'$ in $U'$ is 
\[
F'(x',y',z', s) = x'^2 +  F_2(y',z',s) + T' F_3(y',z',s) + \cdots  + (T')^{k-2} F_k(y',z',s) + \cdots.
\]
As a consequence, the homogeneous part  of $F'$  in $(x',y',z')$ of degree $k$ is divisible by $(T')^{k-2}$.

\begin{remark}
\label{rmk:function-exc-div} 
Assume that $m_i>0$ for some $i\in \{1,...,n\}$. 
If we  regard $T_i$ as a function on $U'$ by setting   $T_i(x',y',z',s)=T_i(s)$, 
then we observe   from \eqref{eqn:blowup}   that the divisor in $U'$ defined by $T_i=0$ is the $f$-exceptional divisor defined by the ideal sheaf $f^{-1}\mathcal{I}(H_i) \cdot \mathcal{O}_{W'}$, 
where $\mathcal{I}(H_i) = (x,y,z, T_i)$ is the ideal sheaf of $H_i$ in  $W$.   
\end{remark}

By abuse of notation, we  will write $X$ and $S$ for $X'\cap U'$ and $S'$ respectively, and  say that, after the blowups, the defining equation of $X$ becomes 
\begin{eqnarray*}
  F(x,y,z, s) &=& x^2 +  F_2(y,z,s) + F_3(y,z,s) + \cdots + F_k(y,z,s) + \cdots \\
        & = &  x^2 +  F_2(y,z,s) + T' G_3(y,z,s) + \cdots + (T')^{k-2}  G_k(y,z,s) + \cdots
\end{eqnarray*}

\begin{remark}
    \label{rmk:blowup-order} 
We will call the previous operation a blowup of $X$ at the $H_i$'s.   
As mentioned earlier, the objective of this  operation is to raise the exponent of $T_i$ in $F_k$. 
We observe that the number  we raise  for the exponent of $T_i$  depends only on the number of blowups whose centers are $H_i$. 
In particular,  the order that we  blow up at the $H_i$'s does not matter for our purpose. 
\end{remark}

\begin{remark}
    \label{rmk:blowup-divisor-1} 
Assume that $F_j(y,z,s) = a(s)\cdot E_j(y,z,s)$, where the zero locus of $a(s)$ is contained in $H$. 
Let $b(s)$ be a holomorphic function whose zero locus is contained in $H$. 
By blowing up $X$ at the $H_i$'s for several times, the defining equation of $X$ becomes the following shape
\begin{eqnarray*}
   &&F(x,y,z, s) \\
&=& x^2 + F_2(y,z,s) + \cdots + (T')^{j-3} G_{j-1}  \\
           && +  (T')^{j-2} \Big( a(s)E'_j(y,z,s)  
             + T' \cdot \big(G_{j+1}( y,z,s) + T'G_{j+2}(y,z,s) + \cdots \big) \Big).  
\end{eqnarray*}
By choosing $m_i$ greater  than the vanishing order of $a(s)b(s)$ along $H_i$ for all $i=1,...,n$,  
the function $a(s)b(s)$ divides $T'$. 
As a result, the defining equation of $X$ can be written as 
\begin{eqnarray*}
F(x,y,z,s) &=& x^2 + F_2(y,z,s) + \cdots + (T')^{j-3} G_{j-1}  \\ 
           &&  +   (T')^{j-2} a(s)  \Big( E'_j(y,z,s) + \\ 
           && b(s)E'_{j+1}(y,z,s) +  b(s)E'_{j+2}(y,z,s) +  \cdots   \Big), 
\end{eqnarray*}
where each $E'_{l}$ is a holomorphic function, homogeneous in $(x,y,z)$ of degree $l$. 
In  other words, after the blowups, we  can write $F_j(y,z,s) = a'(s) \cdot E'_j(y,z,s)$, 
where $ a'(s) =  (T')^{j-2} a(s)$. 
Furthermore, the zero locus of $a'(s)$ is contained in $H$, 
and $a'(s)b(s)$ divides 
\[F_{j+1} + F_{j+2} + \cdots =  a'(s)b(s)E'_{j+1} +  a'(s)b(s)E'_{j+2} + \cdots .  \]
In conclusion, by blowing up $X$ and by denoting $a'(s)$ by  $a(s)$, 
we can make $a(s)b(s)$ divides $F_{j+1} + \cdots$.   
Equivalently, the defining function $F$ becomes 
\begin{eqnarray*}
F(x,y,z,s) &=& x^2 + F_2(y,z,s) + \cdots +  F_{j-1}(y,z,s) \\
           &&+ F_j(y,z,s)+ F_{j+1}(y,z,s) + \cdots \\ 
           &=&  x^2 + F_2(y,z,s) + \cdots +  F_{j-1}(y,z,s)  \\  
           && + a(s) E_j(y,z,s) + a(s)b(s)R(y,z,s),   
\end{eqnarray*}
where $R(y,z,s)$ is a holomorphic function with $R(y,z,s) = 0 \mod\, (y,z)^{j+1}$.
\end{remark}

\begin{remark}
\label{rmk:blowup-divisor-2}
Assume that $F(x,y,z,s) = x^2 + G(y,z,s)$ with 
\[G(y,z,s)  = \sum_{i=0}^3 R_i(y,z,s) \cdot y^i\cdot (q(s)z)^{3-i} +   P(y,s), \]
where the zero locus of $q(s)$ is contained in $H$, 
and $P(y,s) = 0 \mod \,  (y)^4$. 
We blow up $X$  at  each $H_i$ for $m_i$ times,  
and  we choose $m_i$  greater than the vanishing order of $q(s)$ along $H_i$. 
Particularly, there is a holomorphic function $b(s)$ such that $T' = q(s)b(s)$.  
Up to shrinking $S$ around the origin, we may assume that $b(s)$ and $q(s)$ are bounded on $S$. 
The defining equation of $X$ becomes 
\begin{eqnarray*}
F(x,y,z,s) &=& x^2 +  (T')^{-2} \cdot G(T'y,T'z,s) \\ 
           &=& x^2 + (T')^{-2} \cdot \Big(  \sum_{i=0}^3 R_i(T'y,T'z,s) \cdot (T')^3   y^i\cdot (q(s)z)^{3-i}    + P(T'y,s) \Big) \\
           &=& x^2 + T' \cdot   \sum_{i=0}^3 R_i(T'y,T'z,s) \cdot    y^i\cdot (q(s)z)^{3-i}  
           +   (T')^{-2} \cdot P(T'y,s) .  
\end{eqnarray*}
We note that $P'(y,s) := (T')^{-2} \cdot P(T'y,s)$ is a holomorphic function in $(y,s)$.  
We set $\zeta = q(s)z$ and  define
\[R_i'(y,\zeta,s)  := T'R_i(T'y,b(s) \cdot \zeta,s) = T'R_i(T'y,b(s) \cdot q(s)z,s)  = T'R_i(T'y,T'z,s),   \]
for $i=0,2,3$. 
Then each $R'_i(y,\zeta,s)$ is a holomorphic function on $\mathbb{D}^2\times S$, for $T'$ and $b(s)$ are bounded on $S$. 
Moreover, we have 
\[
F(x,y,z,s) = x^2 + \sum_{i=0}^3 R'_i(y,\zeta,s) \cdot    y^i\cdot \zeta^{3-i}    +  P'(y,s),
\]
which now can be viewed as a function in $(x,y,\zeta,s)$.  
In conclusion, by blowing up $X$ at the $H_i$'s,  
we can make $F(x,y,z,s)$ a  holomorphic function  
in $(x,y,\zeta,s)$. 
\end{remark}

\section{Modification to double-point singularities} 
\label{section:double-point}

Let $(S,H)$ be a reduced snc pair, and let $o\in S$ be a point. 
We consider a holomorphic function $F$ defined in a neighborhood of $(\mathbf{0}_3,o) \in \mathbb{C}^3\times S$ of the shape 
\[
F(x,y,z,s) = x^2 + F_2(y,z,s)+F_3(y,z,s)+R(y,z,s),
\]
where $(x,y,z)$ are coordinates of $\mathbb{C}^3$ and $s\in S$, the functions 
$F_2$ and $F_3$ are homogeneous polynomials in $(y,z)$, of degrees $2$ and $3$ respectively,  with coefficients as holomorphic functions on $S$, 
and   $R(y,z,s) = 0 \mod \, (y,z)^4$. 
We say that $F$ is of \textit{standard form} with respect to $(S,H)$ if the following property holds. 
Either $F_2(y,z,s) = a(s) \cdot G_2(y,z,s)$, where $G_2(y,z,s)$ is a nonzero polynomials in $(y,z)$ for any point $s\in S$ , and the zero locus of $a(s)$ is contained in $H$; 
or $F_2(y,z,s) = 0$ and $F_3(y,z,s) = b(s) \cdot G_3(y,z,s)$, where $G_3(y,z,s)$ is a nonzero polynomials in $(y,z)$ for any point $s\in S$, and the zero locus of $b(s)$ is contained in $H$.  
The objective of this section is to prove the following proposition.

\begin{prop}
\label{prop-first-local-reduction}
Let $X$ be a   complex analytic variety. 
Let $S$ be a  codimension 2 irreducible component   of $X_{\sing}$.  
Assume that there is a proper Zariski closed  subset $C\subseteq S$ such that 
$X$ has the same type of  canonical singularities at  points   of $S\setminus C$.  
Then there is a   projective bimeromorphic morphism 
$f\colon Y\to X$ 
such that the following properties hold.   
\begin{enumerate}
    \item $f$ is the composition of blowups with centers strictly contained in $S$. 
    \item If $S_Y$ is  the strict transform of $S$ in $Y$, then $(S_Y,H_Y)$ is a   snc pair, where $H_Y$  is the reduced divisor, 
    whose support is the union of the preimage of  $C$ in $S_Y$ and the intersection of $S_Y$ with the $f$-exceptional locus. 
    \item For every point $o\in S_Y$, there is a neighborhood $U\subseteq Y$ of $o$, such that 
    $U$ is isomorphic to a hypersurface in $\mathbb{D}^3\times (S_Y\cap U)$, defined by an equation of standard form with respect to $(S_Y\cap U,H_Y\cap U)$. 
    \item  $Y$ has the same type of canonical singularities at   points of $S_Y\setminus H_Y$. 
    \item The construction of $f$ is functorial. 
\end{enumerate}
\end{prop}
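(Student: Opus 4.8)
The plan is to work locally around a general point of $S$ and then globalize by functoriality. First I would take a functorial log resolution (Theorem \ref{thm:func-resol} applied to $(X, \mathrm{Sing})$, or rather a functorial principalization of the ideal of $S$) so that, after blowing up, the strict transform $S_Y$ of $S$ is smooth; since $f$ must consist of blowups of $S$ at ideal sheaves, I would invoke the discussion of Subsection \ref{subsection:blowup}: resolving $S$ by functorial blowups of ideal sheaves on $S$ canonically induces blowups of $X$. This secures item (1) and the smoothness part of item (2), and the snc divisor $H_Y$ will be assembled from the exceptional locus together with the locus $C$ (its closure, or rather its preimage) over which we lose control — after further functorial blowups of $S$ we may assume $H_Y$ is snc on $S_Y$.

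Next, working at a general point $o \in S_Y$, I would use that $X$ has a \emph{canonical} surface singularity transversally to $S$ at $o$, which by Lemma \ref{lemma:singular-type-well-defined} is of a fixed Du Val type, constant along $S_Y \setminus (\text{bad locus})$. Locally $(o \in X) \subseteq \mathbb{D}^3 \times S_Y$ is a hypersurface $F = 0$; since the general transverse slice is Du Val, after a coordinate change (completing the square in the ``$x$'' direction, using that the Hessian is nonzero in at least one direction — this is where canonicality, i.e., multiplicity $2$, enters) we may write $F = x^2 + F_2(y,z,s) + F_3(y,z,s) + R(y,z,s)$ with $R \equiv 0 \bmod (y,z)^4$. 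The content of ``standard form'' is the extra requirement that $F_2 = a(s) G_2$ with $G_2$ fibrewise nonzero and $\{a = 0\} \subseteq H$, or else $F_2 \equiv 0$ and $F_3 = b(s) G_3$ similarly. To achieve this I would look at the divisor in $S_Y$ where $F_2(y,z,s)$ (as a binary quadratic form in $(y,z)$) degenerates, i.e., the locus where its coefficients acquire a common factor or all vanish; by a further functorial principalization of the ideal generated by the coefficients of $F_2$ on $S_Y$, and then applying the blowup operations of Subsection \ref{subsection:blowup-divisor} (Remark \ref{rmk:blowup-divisor-1} in particular), we can factor out this common-zero divisor into $a(s)$ and push the remaining part $G_2$ to be fibrewise nonzero; if $F_2$ vanishes identically we run the same argument on $F_3$, using that the general transverse slice is still Du Val of type $D$ or $E$ so $F_3 \not\equiv 0$. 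All of these principalizations and blowups are functorial, which gives item (5); and since away from the new exceptional/degeneracy divisor $H_Y$ the transverse slice type is unchanged, we get items (3) and (4).

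The remaining subtlety — and I expect this to be the main obstacle — is the interaction between ``blowing up only centers inside $S$'' and ``controlling the singularities of $X'$ off $S$'': one must check that the blowups dictated by principalizing ideals on $S_Y$, when lifted to $X$ via the construction of Subsection \ref{subsection:blowup}, do not introduce new codimension-$2$ singular components of $Y$ other than $S_Y$, and that the resulting $Y$ is still normal with the claimed hypersurface presentation near $S_Y$. This is handled by the explicit coordinate computation in Subsection \ref{subsection:blowup}: the strict transform $X'$ near $S'$ is again a hypersurface in $\mathbb{C}^3 \times S'$ with a defining equation obtained from $F$ by the substitution $(x,y,z) \mapsto (T'x, T'y, T'z)$ and dividing by the appropriate power of $T'$, so the shape $x^2 + (\ldots)$ is preserved and the transverse slice type is read off directly. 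One also needs that ``$X$ has the same type of canonical singularity along $S \setminus C$'' is inherited: this follows because $f$ is an isomorphism over the generic point of $S$ (the blown-up centers are proper closed subsets of $S$), so the transverse slice type cannot change there, and by Lemma \ref{lemma:singular-type-well-defined} it is the same type along all of $S_Y \setminus H_Y$.

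Finally, to glue: each local construction is built from functorial principalizations of ideal sheaves supported on $S$ (the degeneracy loci of $F_2$, resp. $F_3$) followed by the canonical lifts to $X$ of Subsection \ref{subsection:blowup} and the divisorial blowups of Subsection \ref{subsection:blowup-divisor}; functoriality means these agree on overlaps, so they assemble to a single global projective bimeromorphic $f \colon Y \to X$. Projectivity follows since each stage is a blowup, hence carries an $f$-ample exceptional divisor, and compositions of such remain projective over the base. This yields all five properties, completing the proposed proof.
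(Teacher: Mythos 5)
Your overall strategy --- functorial local constructions on $S$ glued globally, with the degeneracy ideal of $F_2$ (resp.\ $F_3$) principalized on $S_Y$ to produce $H_Y$ and the standard form --- matches the paper's, and your last step is essentially Lemma \ref{lemma:ideal-secondary} together with the final argument of Subsection 4.C. However, there is a genuine gap in the middle: you pass from ``$S_Y$ is smooth'' directly to ``locally $(o\in X)\subseteq \mathbb{D}^3\times S_Y$ is a hypersurface $F = x^2 + F_2 + F_3 + R$'' by working \emph{at a general point} and completing the square where the Hessian is nonzero. But item (3) demands this presentation at \emph{every} point of $S_Y$, and the standard form only permits the coefficients $a(s)$, $b(s)$ to degenerate along $H_Y$ --- the ambient presentation as a hypersurface in $\mathbb{D}^3\times S_Y$ with leading term $x^2$ must hold everywhere. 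At special points of $S$ the embedded dimension of $X$ can exceed $\dim X + 1$ (so there is no hypersurface presentation at all), and even where there is one, the quadratic part of the defining function in $(x,y,z)$ can vanish identically at special points, so there is no square to complete. Your proposal offers no mechanism to repair either failure.

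The paper spends the bulk of Section 4 exactly on this: Lemma \ref{lemma:embeded-dimension-reduction} and Corollary \ref{cor:blowup-hypersurface-sing} blow up the (coordinate-independent) ideal on $S$ generated by the maximal minors of the Jacobian matrix of the defining equations, dropping the maximal embedded dimension step by step until $X$ has hypersurface singularities along all of $S$; Lemma \ref{lemma:blowup-double-point} then blows up the ideal generated by the coefficients of the full degree-$2$ part $F_2(x,y,z,s)$ to make the quadratic form fibrewise nonzero along all of $S'$, after which the Weierstrass/square-completion argument applies everywhere. Both constructions require their own functoriality proofs (invariance of the minor ideals under coordinate changes), which are not automatic. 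Without these two reductions your argument only establishes the conclusion over a dense open subset of $S_Y$, which is strictly weaker than the proposition. A secondary, minor point: the divisorial blowups of Subsection \ref{subsection:blowup-divisor} and Remark \ref{rmk:blowup-divisor-1} that you invoke are used in the paper only in the next stage (Proposition \ref{prop:good-shape}); in the proof of the present proposition one only blows up ideal sheaves on $S$ whose cosupports are proper closed subsets, and the factorization $F_2 = a(s)G_2$ comes from the blowup of $\mathcal{I}$ itself (Subsection \ref{subsection:blowup}), not from divisorial blowups.
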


We remark that the items (1)-(4) still hold if we blow up $Y$ at a stratum of $H_Y$.   
The proof of the proposition is divided into three steps. 
In the first one, we reduce to the case when $X$ has hypersurface singularities. 
In the second step, we reduce to the case when $X$ has double-point singularities. 
We arrange the defining equation into standard form in the last step.

\subsection{Hypersurface singularities}
\label{subsection:hypersurface}

In this subsection, we consider the following situation. 
Let $X$ be a complex analytic variety and let $S \subseteq X $ be a closed  subspace. 
Assume that  $S$ is  irreducible and smooth of dimension $n$.  
Let $W =  \mathbb{D}^N \times S$.   
Assume that $X$ is isomorphic to the closed subspace in $W$ defined by 
holomorphic functions $f_j(x_1,...,x_N,s)$, where  $(x_1,...,x_N)$ are the coordinates of $\mathbb{C}^N$, $s\in S$ is a point,  and $j=1,...,k$ for some positive integer $k$. 
Assume that the composite inclusion $S\subseteq X \subseteq W$ coincides with $\{\mathbf{0}_{N}\} \times S \subseteq W$.  
In particular, $f_j(\mathbf{0}_N, s) = 0$ for all $s\in S$ and for all $j=1,...,k$.
We set  
\[
\mathrm{d}_x f_j := \sum_{i=1}^N \frac{\partial f_j}{\partial x_i} \mathrm{d}x_i  
\]
and we let  $r(s)$ be the rank of $ (\mathrm{d}_x f_j (\mathbf{0}_N, s))_{j=1,...,k}$, in the vector space generated by $\mathrm{d}x_1,..., \mathrm{d}x_N$.    
We remark that such an embedding $X\subseteq W$ always exists locally around every point of $S$, as $S$ is smooth.

\begin{lemma}
\label{lemma:embedded-dim}
With the notation above, the  embedding dimension  of $X$ at a point $s\in S \subseteq X$ is equal to   $ N + n - r(s)$. 
\end{lemma}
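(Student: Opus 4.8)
The embedded dimension of $X$ at a point $p = (\mathbf{0}_N, s) \in S$ is by definition $\dim_{\mathbb{C}} \mathfrak{m}_p/\mathfrak{m}_p^2$, where $\mathfrak{m}_p$ is the maximal ideal of the local ring $\mathcal{O}_{X,p}$. I would compute this via the conormal exact sequence. Write $R = \mathcal{O}_{W,p}$, with maximal ideal $\mathfrak{n}_p$, so that $\mathcal{O}_{X,p} = R/\mathcal{J}$ where $\mathcal{J} = (f_1,\dots,f_k)$ is the ideal of $X$. Since $W = \mathbb{D}^N \times S$ with $S$ smooth of dimension $n$, $W$ is smooth of dimension $N+n$ at $p$, so $\dim_{\mathbb{C}} \mathfrak{n}_p/\mathfrak{n}_p^2 = N+n$. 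The standard formula gives
\[
\dim_{\mathbb{C}} \mathfrak{m}_p/\mathfrak{m}_p^2 = \dim_{\mathbb{C}} \mathfrak{n}_p/\mathfrak{n}_p^2 - \dim_{\mathbb{C}} \big( (\mathcal{J} + \mathfrak{n}_p^2)/\mathfrak{n}_p^2 \big) = (N+n) - \dim_{\mathbb{C}} \overline{\mathcal{J}},
\]
where $\overline{\mathcal{J}}$ denotes the image of $\mathcal{J}$ in the cotangent space $\mathfrak{n}_p/\mathfrak{n}_p^2$, i.e. the span of the linear parts (differentials at $p$) of the $f_j$.

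So it remains to identify $\dim_{\mathbb{C}} \overline{\mathcal{J}}$ with $r(s)$. I would choose local coordinates on $W$ near $p$ of the form $(x_1,\dots,x_N, t_1,\dots,t_n)$, where $(x_i)$ are the coordinates on the $\mathbb{D}^N$ factor and $(t_\ell)$ are local coordinates on $S$ centered at $s$; then $\{\mathrm{d}x_1,\dots,\mathrm{d}x_N,\mathrm{d}t_1,\dots,\mathrm{d}t_n\}$ is a basis of $\mathfrak{n}_p/\mathfrak{n}_p^2$, and the image of $f_j$ in this space is $\mathrm{d}f_j(p) = \mathrm{d}_x f_j(\mathbf{0}_N,s) + \sum_\ell \frac{\partial f_j}{\partial t_\ell}(p)\,\mathrm{d}t_\ell$. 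The key point is that the $S$-derivatives contribute nothing new: since $S = \{\mathbf{0}_N\}\times S \subseteq X$, each $f_j$ vanishes identically on $S$, i.e. $f_j(\mathbf{0}_N, s) \equiv 0$ as a function of $s$, hence $\frac{\partial f_j}{\partial t_\ell}(\mathbf{0}_N, s) = 0$ for all $\ell$ and all $s$. Therefore $\mathrm{d}f_j(p) = \mathrm{d}_x f_j(\mathbf{0}_N,s)$ lies in the subspace spanned by $\mathrm{d}x_1,\dots,\mathrm{d}x_N$, and $\overline{\mathcal{J}}$ is exactly the span of the vectors $\mathrm{d}_x f_1(\mathbf{0}_N,s),\dots,\mathrm{d}_x f_k(\mathbf{0}_N,s)$ inside that span. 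By definition this has dimension $r(s)$, the rank of the matrix $(\partial f_j/\partial x_i(\mathbf{0}_N,s))$. Combining, $\dim_{\mathbb{C}}\mathfrak{m}_p/\mathfrak{m}_p^2 = (N+n) - r(s)$, as claimed.

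I do not anticipate a serious obstacle here; the argument is essentially the conormal/Jacobian-matrix computation, and the only subtle ingredient — that the $t$-derivatives drop out — is an immediate consequence of the hypothesis $S \subseteq X$. The one place to be slightly careful is the passage between ideals in $\mathcal{O}_{X,p}$ versus ideals in the ambient $\mathcal{O}_{W,p}$, and the identity $(\mathfrak{n}_p/\mathcal{J})/(\mathfrak{n}_p/\mathcal{J})^2 \cong \mathfrak{n}_p/(\mathcal{J}+\mathfrak{n}_p^2)$, which is a routine isomorphism of $\mathbb{C}$-vector spaces that I would state and then use without belaboring. If desired, one can phrase the whole computation coordinate-freely using the conormal sequence $\mathcal{J}/\mathcal{J}^2 \to \Omega_W \otimes \mathcal{O}_X \to \Omega_X \to 0$ restricted to $p$, but the matrix-rank formulation is cleaner for the bookkeeping needed later in the section.
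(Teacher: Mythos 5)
Your proof is correct and follows essentially the same route as the paper: both reduce the embedded dimension to $(N+n)$ minus the rank of the differentials $\mathrm{d}f_j(p)$ in the cotangent space of $W$, and then observe that the $S$-direction partials vanish because each $f_j$ vanishes identically along $\{\mathbf{0}_N\}\times S$, so that rank equals $r(s)$. The only difference is that you spell out the conormal/cotangent-space computation that the paper invokes with "by definition," which is a harmless elaboration.
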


\begin{proof}
Since  $S$ is smooth, by  Jacobi criterion (see for example \cite[Section 6.1]{GR84}),   
the  embedding dimension  of $X$ is equal to $N+n-R(s)$, 
where $R(s)$ is the rank of $(\mathrm{d} f_j (\mathbf{0}_N, s) )_{j=1,...,k}$, in the vector space $\Omega^1_{W, (\mathbf{0}_N, s)}$. 
Since $f_j(\mathbf{0}_N, s) = 0$ for all $s\in S$ and for all $j=1,...,k$, 
the partial derivatives of $f_j$, with respect to the variables of $ S$, 
are all zero along   $\{\mathbf{0}_N\} \times S$, for all $j=1,...,k$.  
It follows that $R(s)$ is equal to $r(s)$. 
This completes the proof of the lemma. 
\end{proof}

\begin{remark}
With the notation above, 
we remark that $\mathrm{d}_x f_j( \mathbf{0}_N, s)$ represents the linear part of $f_j$ in $(x_1,...,x_N)$. 
Indeed, such a linear part is equal to 
\[
 \sum_{i=1}^N \frac{\partial f_j}{\partial x_i} ( \mathbf{0}_N, s)  \cdot x_i.
\]
\end{remark}

\begin{lemma}
\label{lemma:embeded-dimension-reduction} 
With the notation  above, let $r = \min \{r(s) \ | \  s\in S\}$.  
In  other words, $N+n-r$ is the maximum of the  embedding dimensions  of $X$ at the points of  $S$. 
Assume that $X$ has  embedding dimension  smaller than $N+n-r$ at general points of $S$. 
Then there is a projective bimeromorphic morphism  $f\colon Y\to X$ obtained by blowing up $X$ at ideal sheaves whose cosupports are strictly contained in $S$, 
such that the maximum of the  embedding dimensions of $Y$ at the points of  $S_Y$ is at most $N+n-r-1$, where $S_Y$ is the strict transform of $S$ in $Y$. 
Moreover, $S_Y$ is smooth and    the construction of 
$f$ is functorial,  which depends only on the isomorphism class of $X$ around $S$.  
In particular, it is independent of $N$. 
\end{lemma}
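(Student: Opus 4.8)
The plan is to work locally around a general point of $S$ and use the defining equations $f_1,\dots,f_k$ together with the rank function $r(s)$. Fix a point $s_0\in S$ where $r(s_0)=r$ is minimal, so the embedded dimension of $X$ at $s_0$ is the maximum $N+n-r$. By hypothesis, at a general point $s\in S$ the rank $r(s)$ is strictly larger than $r$, i.e. $r(s)\ge r+1$. The idea is to reorder the $f_j$ so that $f_1,\dots,f_r$ have linearly independent $\mathrm{d}_x$-differentials at $s_0$ but that, as we vary $s$, some further linear combination of the $f_j$ contributes a new independent differential direction generically. We then perform a blowup (in the sense of \S\ref{subsection:blowup}) that ``absorbs'' one of the ambient coordinates using this extra independent equation, dropping the ambient dimension by one along the strict transform, while keeping $S$ smooth and untouched away from a proper closed subset.

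\textbf{Key steps.} First I would isolate, after a linear change of the ambient coordinates $x_1,\dots,x_N$ (holomorphically in $s$, which is allowed since $S$ is smooth), a function among the $f_j$ — call it $g$ — whose linear part $\sum_i c_i(s)x_i$ in the $x$-variables is identically zero at $s_0$ but not identically zero as a function on $S$; such a $g$ must exist, for otherwise the span of the $\mathrm{d}_xf_j(\mathbf 0_N,s)$ would have constant rank $r$ near $s_0$, contradicting the assumption that the embedded dimension drops generically. Write the zero locus of the generator of the ideal generated by these coefficients $c_i(s)$; this is a proper closed subset $Z\subsetneq S$. Second, following the recipe of \S\ref{subsection:blowup}, I would blow up $W$ (hence $X$) at the ideal $(x_1,\dots,x_N,\mathcal I)$ where $\mathcal I$ is the ideal of $Z$ in $S$, so that on the relevant chart $U'$ away from $h^{-1}_*\{Z\}$ the map is $(x',s')\mapsto(tx',h(s'))$ with $t$ a generator of the pulled-back $\mathcal I$. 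After this blowup, by the computation recorded in \S\ref{subsection:blowup} the pullback of $g$ factors as $t^{m}G$ with $m$ the order of vanishing of $g$ in the $x$-variables, and the transformed equation $G=G_1+tG_2+\cdots$ now has a genuinely nonzero linear part in the $x'$-coordinates at every point of the strict transform $S'$, precisely because we divided out by $\mathcal I$. Third, using this nonvanishing linear part, I would invoke the implicit function theorem to eliminate one ambient coordinate: the hypersurface locally becomes a graph over the remaining $N-1$ $x'$-coordinates times $S'$, so $X$ embeds in $\mathbb D^{N-1}\times S'$, lowering the ambient dimension. The point $o\in S_Y$ then has embedded dimension at most $(N-1)+n-r = N+n-r-1$ by Lemma \ref{lemma:embedded-dim}.

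\textbf{Smoothness of $S_Y$ and functoriality.} Since the blowup centers $(x_1,\dots,x_N,\mathcal I)$ all contain $\{\mathbf 0_N\}\times S$ and restrict on $S$ to the ideal $\mathcal I$ of the smooth subvariety $Z\subseteq S$, the strict transform $S_Y$ is just the blowup of the smooth variety $S$ along the smooth center $Z$, hence smooth; and $f$ restricts to this blowup $S_Y\to S$. The elimination of the coordinate by the implicit function theorem is a local analytic isomorphism onto its image and commutes with local analytic isomorphisms, and the blowup at $(x_1,\dots,x_N,\mathcal I)$ is likewise functorially defined once $\mathcal I$ is (and $\mathcal I$, being built from the vanishing of the coefficients of the linear parts of a functorially chosen subset of the defining equations, is functorial after replacing it by its functorial principalization as in Theorem \ref{thm:func-principalization}); so the whole construction commutes with local analytic isomorphisms.

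\textbf{Main obstacle.} The delicate part is the \emph{choice of $g$ and of the ideal $\mathcal I$ in a functorial, coordinate-free way} — one must make the selection of which equation and which coefficients to use depend only on the analytic-local structure of $X$ along $S$, not on the arbitrary embedding or on arbitrary linear algebra choices, so that the resulting blowup glues along overlaps. I expect one handles this by phrasing $\mathcal I$ intrinsically as a Fitting-type ideal of the conormal sheaf of $X$ restricted to $S$ (the failure of the embedded dimension to be locally constant is measured by such a Fitting ideal), then principalizing it functorially; the linear-algebra bookkeeping above is then just the local shadow of this intrinsic construction. Verifying that this Fitting ideal has the claimed cosupport (proper in $S$, and nonempty exactly where the embedded dimension jumps) and that a single blowup of it achieves the rank drop — rather than requiring an a priori unbounded iteration — is the technical heart of the argument.
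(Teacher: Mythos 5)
Your overall strategy matches the paper's: blow up $W$ at an ideal of the form $(x_1,\dots,x_N,\mathcal I)$ where $\mathcal I\subseteq\mathcal O(S)$ measures the locus where the embedded dimension jumps, check via the computation of Subsection \ref{subsection:blowup} that the rank of the $x$-differentials goes up on the strict transform, then principalize to restore smoothness of $S$ and argue functoriality via a minor/Fitting-ideal description. However, your concrete choice of $\mathcal I$ — the ideal generated by the coefficients $c_i(s)$ of the linear part of a single auxiliary function $g$ — has a genuine gap.

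The missing step is a preliminary normalization. The paper first arranges $\mathrm{d}_xf_j(\mathbf 0_N,o)=\mathrm dx_j$ for $j\le r$ and then subtracts from each $f_j$ with $j>r$ a linear combination (with coefficients holomorphic on $S$) of $f_1,\dots,f_r$ so that $\partial f_j/\partial x_i(\mathbf 0_N,s)\equiv 0$ for all $i\le r$, $j>r$, and \emph{only then} takes $\mathcal I$ to be generated by the remaining derivatives $\partial f_j/\partial x_i(\mathbf 0_N,s)$ with $i,j>r$ (equivalently, the $(r{+}1)$-minors of the full Jacobian). Without this, the new linear part of $g$ produced by the blowup can land back in the span of $\mathrm dx'_1,\dots,\mathrm dx'_r$ at some points of the strict transform, so the rank does not increase there. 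Concretely, take $S=\mathbb D$ with coordinate $s$, $N=2$, $f_1=x_1$, $f_2=sx_1+s^2x_2$, so $r=1$ and the rank is $2$ for $s\ne 0$. Your recipe gives $g=f_2$ with coefficient ideal $\mathcal I=(s,s^2)=(s)$; on the chart where $t=s$ the strict transform of $f_2$ is $x_1'+sx_2'$, whose differential at $s=0$ coincides with that of $f_1'=x_1'$, so the embedded dimension is still $N+n-r$ at that point and nothing has been gained. After the paper's normalization one replaces $f_2$ by $f_2-sf_1=s^2x_2$, gets $\mathcal I=(s^2)$, and the blowup does produce rank $2$ everywhere on the strict transform. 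You gesture at the correct intrinsic object (a Fitting ideal of the Jacobian) in your final paragraph, but the construction you actually carry out uses a different and insufficient ideal. Two smaller points: $Z=\mathrm{cosupp}\,\mathcal I$ need not be smooth, so $S_Y$ is not automatically smooth after one blowup — the paper restores smoothness by a subsequent functorial principalization of $(g|_{S_Z})^*\mathcal I$, and you should make that an explicit step rather than an aside; and $X$ is cut out by $k$ equations, not one, so the implicit-function-theorem "graph" reduction should be phrased for the full system (though once the rank statement is in place it is not needed — Lemma \ref{lemma:embedded-dim} already converts the rank bound into the embedded-dimension bound).
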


\begin{proof}
Let $o\in S$ be a point at which the  embedding dimension  of $X$ is $N+n-r$. 
We will first work locally around $o$. 
In particular, we are allowed to shrink $S$ around $o$. 
Then, by showing that the operation in the lemma is functorial, we can glue the local constructions  in the global setting. 

Up to a linear change of the coordinates $(x_1,...,x_N)$ of $\mathbb{C}^N$, 
and up to permuting the functions $f_j$, 
we may assume that 
\[
\mathrm{d}_x f_j (\mathbf{0}_N, o) = \mathrm{d}x_j 
\]
for $j=1,...,r.$ 
Then, by subtracting each $f_j$, with $j>r$, some linear combination, with coefficients as holomorphic functions on $S$, of the $f_j$'s with $j=1,...,r$, we may assume that 
\[
\frac{\partial f_j}{\partial x_i} (\mathbf{0}_N, s)=0 
\]
for $i=1,...,r$, $j>r$ and all $s\in S$ contained in a neighborhood of $o$.   
In particular, the matrix $\Big[ \frac{\partial f_j}{\partial x_i} (\mathbf{0}_N, s)\Big]_{1\le i \le N, 1\le j \le k }$, whose entries are holomorphic functions on $S$,  is of the shape 
\[
\begin{bmatrix}
A(s) & \mathbf{0}\\
* & B(s) &
\end{bmatrix},	
\]
where $A(s)$ is a $r\times r$ invertible matrix with $A(o)=\mathrm{Id}_r$.
Let $\mathcal{I} \subseteq \mathcal{O}(S)$ be the ideal generated by 
\[\frac{\partial f_j}{\partial x_i} (\mathbf{0}_N,s)  \mbox{ for all }  i, j>r,  \]
equivalently, the entries of the matrix $B$ above. 
Then, up to shrinking $S$, the closed subset contained in $S$, at which $X$ has  embedding dimension  $N+n-r$,  is the cosupport of $\mathcal{I}$.  
By assumption, this cosupport is a proper subset of $S$.

Let $h\colon Z\to X$  be the projective bimeromorphic morphism induced  by blowing up the ideal $\mathcal{I}$ on $S$, see Subsection \ref{subsection:blowup}.  
That is, $h$ is the blowup of $X$ at the ideal $(x_1,...,x_N, \mathcal{I})$. 
Let $S_Z$ be the strict transform of $S$ in $Z$ and $o_Z\in S_Z$ a point lying over $o$. 
Then there is a neighborhood $U$ of $o_Z$ in $Z$,  which  can be viewed as a closed subset of 
$ \mathbb{D}^N\times (U\cap S_Z). $ 
Moreover, $h$ can be written in coordinates as 
\[
h\colon (x_1',...,x_N',s') \mapsto (tx_1',...,tx_N',h(s')), 
\]
where $t$ is a holomorphic function on $U\cap S_Z$ which defines the exceptional divisor of $S_Z \to S$ on $U\cap S_Z$. 
We define 
\[
f'_j(x_1',...,x_N',s') = \frac{1}{t} (h^*f_j)(x_1,...,x_N,s) = \frac{1}{t} f_j( tx_1',...,tx_N', h(s') )
\]
for $j=1,...,r$, and 
\[
f'_j(x_1',...,x_N',s') = \frac{1}{t^2} (h^*f_j)(x_1,...,x_N,s) = \frac{1}{t^2} f_j( tx_1',...,tx_N', h(s') )
\]
for $j=r+1,...,k$. 
Then as shown in Subsection \ref{subsection:blowup}, 
all of the $f'_j$ are holomorphic functions and vanish on $Z$. 
We see that 
\[
\mathrm{d}_{x'} f'_j (\mathbf{0}_N, o_Z) = \mathrm{d}x'_j 
\]
for $j=1,...,r. $
Furthermore, since the ideal $\mathcal{I}$ is generated by $\frac{\partial f_j}{\partial x_i} (\mathbf{0}_N,s) $, where $i, j>r$, 
there are some $i,j>r$ such that 
\[
\frac{\partial f'_j}{\partial x'_i} (\mathbf{0}_N,o_Z) \neq 0.
 \]
As a consequence, after the blowup, the rank of $(\mathrm{d}_{x'} f'_j (\mathbf{0}_N, o_Z))_{j=1,...,k}$ is at least $r(o)+1$. 
In particular, if $S_Z$ is smooth at $o_Z$, then the  embedding dimension  of $Z$ at $o_Z$ is at most $N+n-r-1$.

Let $f\colon Y \to X$ be the projective bimeromorphic morphism induced  by the functorial principalization of $(g|_{S_Z})^{-1}\mathcal{I} \cdot \mathcal{O}_{S_Z}$ on $S_Z$.  
In particular, the strict transform $S_Y$ of $S$ in $Y$ is smooth.
Since $Y\to Z$ is the composition of blowups with centers strictly contained in  $S_Z$,  
the similar computation as above shows that 
$Y$ has  embedding dimension   at most $N+n-r-1$  at  a point  $o_Y\in S_Y$ lying over $o$. 
\\

It remains to show that the construction is functorial. 
This is equivalent to show that, around $o\in S$,  
the ideal $\mathcal{I}$ is independent of the choice of  coordinates. 
We first note that, in the construction above, the ideal $\mathcal{I}$ is also the ideal generated by all determinants of $(r+1)$-minors of the matrix 
\[
 \Big[\frac{\partial f_j}{\partial x_i} (\mathbf{0}_N, s) \Big]_{1\le i \le N, 1\le j \le k }. 
\]
Therefore, if $(g_e)_{e=1,...,t}$ is another collection of functions defining $X$, then they define the same ideal $\mathcal{I}$. 
Moreover, we assume that  $\psi \colon  (x'_1,...,x'_N,s) \to (x_1,...,x_N,s)$ is a change of  coordinates   of $\mathbb{C}^N\times S$.  
Then the $N\times N$ matrix 
\[\Big[\frac{\partial}{\partial x'_i} x_l (\mathbf{0}_N,s) \Big]_{ 1 \le i \le N, 1\le l \le N}\] is invertible around $o\in S$.  
Hence, by the chain rule, we deduce that the functions $f_j\circ \psi$, which define $X$ inside $\mathbb{C}^N\times S$ with the coordinates system  $(x'_1,...,x'_N,s)$, 
determine the same ideal $\mathcal{I}$. 

Next we will   reduce to the case when $r=0$, that is, the case when $N+n$ is the embedding dimension of $X$ at $o$.  
Indeed, from the previous discussions, thanks to the implicit function theorem, 
up to a change of coordinate of  $\mathbb{C}^N\times S$, 
we may assume that $(f_1,...,f_r)=(x_1,...,x_r)$. 
For any $j=r+1,...,k$, we set 
\[g_j(x_{r+1},...,x_N)= f_{j}(0,...,0, x_{r+1},...,x_N).\]
It follows that there is a closed embedding $X\subseteq \mathbb{C}^{N-r}\times S$, 
such that, with the coordinate system $(x_{r+1},...,x_N,s)$, the subspace $X$ is defined by $g_{r+1},...,g_k$.  
The ideal sheaf $\mathcal{I}$ is defined by the entries of the matrix  
\[
 \Big[\frac{\partial g_j}{\partial x_i} \Big]_{r+1\le i \le N, r+1\le j \le k }. 
\]

Therefore, we may always assume that $r=0$ and that $N+n$ is the embedding dimension of $X$ at $o$, 
for the construction of $\mathcal{I}$ around $o$.   
Assume that there is another  coordinates system $(x_1',...,x_N',s')$ of $\mathbb{C}^N\times S$, 
such that the subspace $X\subseteq \mathbb{C}^N\times S$  is defined by a family  $(g_e(x_1',...,x_N',s'))_{e=1,...,t}$ of functions. 
Then, locally around $o$,  there is a holomorphic map 
\[\varphi \colon  (x_1,...,x_N,s) \mapsto (x_1',...,x_N',s'), \]
where we regard $s'$ and  $x_1',...,x_N'$  as holomorphic functions in $(x_1,...,x_N, s)$.   
We note that $s'(\mathbf{0}_N,s)=s$ for any $s\in S$.  
We define the following  $N\times N$ matrix 
\[
\Theta(s) = \Big[\frac{\partial}{\partial x_i} x_l' (\mathbf{0}_N,s) \Big]_{ 1 \le i \le N, 1\le l \le N}. 
\]
As shown in Lemma \ref{lemma:embedded-dim}, the partial derivatives of $g_e$ with respect to the variables of $S$  are all 0 on $\{\mathbf{0}_N \} \times S$. 
Hence   for any $s\in S$,  we have 
\[
 \Big[\frac{\partial}{\partial x_i} (g_e\circ \varphi)  (\mathbf{0}_N,s) \Big]_{ 1 \le i \le N, 1 \le e \le t} = \Theta(s) \cdot \Big[(\frac{\partial g_e}{\partial x'_l}  (\mathbf{0}_N,s'(\mathbf{0}_N,s) ) \Big]_{ 1 \le l \le N, 1 \le e \le t}
\]
As we have assume that $r=0$ and that $N+n$ is the embedding dimension of $X$ at $o$, 
by considering the entries in the both sides above,
we deduce  that the ideal $\mathcal{I}$ constructed \textit{via} $(g_e\circ \varphi)_{e=1,...,t}$ on the coordinates system $(x_1,...,x_N,s) $ is contained in the one constructed \textit{via} $(g_e )_{e=1,...,t}$ on the coordinates system $(x_1',...,x_N',s')$. 
Since $(g_e\circ \varphi)_{e=1,...,t}$ also defines the inclusion $X\subseteq \mathbb{C}^N\times S$ with respect to the coordinates system $(x_1,...,x_N,s)$, 
it follows that the ideal constructed  \textit{via} $(f_j)_{j=1,...,k}$  on the coordinates system $(x_1,...,x_N,s) $ is  contained in  the one constructed \textit{via} $(g_e )_{e=1,...,t}$ on the coordinates system $(x_1',...,x_N',s')$.   
Since the roles of the two coordinates systems are symmetric, 
we conclude that the ideal $\mathcal{I}$ we have constructed is independent on the choice of coordinates. 
This completes the proof of the lemma. 
\end{proof}

\begin{example}
\label{example:embedded}
Assume that  $S=\mathbb{C}^2$ with coordinates $(a,b)$, and that $X\subseteq  \mathbb{C}^4\times S$ is defined by the ideal generated by the following functions,
\begin{eqnarray*}
f_1(x_1,x_2,x_3,x_4,a,b) &=& x_3  + x_3^2     \\
f_2(x_1,x_2,x_3,x_4,a,b) &=& x_3 + ax_1 +  bx_1x_2   \\
f_3(x_1,x_2,x_3,x_4,a,b) &=& x_3 + b x_1 +  x_2^2+x_4^3. \\
\end{eqnarray*}
Then when $a\neq 0$ or $b\neq 0$, and $x_1=x_2=x_3=x_4=0$,  
the rank of the Jacobi matrix of $(f_1,f_2,f_3)$ is  2. 
At $\mathbf{0}_6$, the rank of the Jacobi matrix is 1. 
Then $X$ has  embedding dimension  5 at this point. 
Now we blow up $\mathbb{C}^6$ at the ideal $(x_1,x_2,x_3,x_4,a,b )$ and obtain $h\colon W\to \mathbb{C}^6$. 
Then on  the open set $U= W \setminus  h^{-1}_*(\{ a =  0 \})$, $h$ can be written in coordinate as 
\[
h\colon (x_1',x_2',x_3',x'_4, \alpha , \beta ) \mapsto (\alpha x_1', \alpha x_2', \alpha x_3', \alpha x'_4,  \alpha, \alpha\beta ). 
\]
Then we have 
\begin{eqnarray*}
h^*f_1(x'_1,x'_2,x'_3,x'_4,\alpha , \beta) &=& \alpha (x'_3 +  \alpha (x'_3)^2)     \\
h^*f_2(x'_1,x'_2,x'_3,x'_4,\alpha , \beta) &=& \alpha (x'_3 + \alpha x'_1 +  \alpha^2\beta x'_1x'_2)   \\
h^*f_3(x'_1,x'_2,x'_3,x'_4,\alpha , \beta) &=& \alpha (x'_3  + \alpha \beta  x'_1 +  \alpha (x'_2)^2 +\alpha^2 (x'_4)^3). \\
\end{eqnarray*}
We set $g_j(x'_1,x'_2,x'_3,x'_4,\alpha,\beta)  = \frac{1}{\alpha}h^*f_j(x'_1,x'_2,x'_3,x'_4,\alpha , \beta) $ 
for $j=1,2,3$. 
We notice that $g_2-g_1$ and  $g_3-g_1$ are divisible by $\alpha$. 
We set 
\[p_2 = \frac{1}{\alpha}(g_2-g_1)  \mbox{ and }p_3 = \frac{1}{\alpha}(g_3-g_1).\] 
Then  the strict transform $X'$ of $X$ in $W$ is defined, on $U$, by the ideal generated by the following functions  
\begin{eqnarray*}
g_1(x'_1,x'_2,x'_3,x'_4,\alpha , \beta) &=&   x'_3 +   \alpha (x'_3)^2    \\
p_2(x'_1,x'_2,x'_3,x'_4,\alpha , \beta) &=&     x'_1  +  \alpha\beta x'_1x'_2   - (x'_3)^2   \\
p_3(x'_1,x'_2,x'_3,x'_4,\alpha , \beta) &=&    \beta  x'_1 +   (x'_2)^2  + \alpha (x'_4)^3 - (x'_3)^2.   \\
\end{eqnarray*}
Then the rank of the Jacobi matrix of $(g_1,p_1,p_2)$ is at least 2 on $U$. 
We can perform the same calculation on $U'= W \setminus  h^{-1}_*(\{ b =  0 \})$. 
In addition, the strict transform $S'$ of $S$ in $W$ is contained in $U\cup U'$. 
This shows that $X'$ has  embedding dimension  at most 4 around $S'$. 
\end{example}

\begin{cor}
\label{cor:blowup-hypersurface-sing}
With the notation in Lemma \ref{lemma:embeded-dimension-reduction}, we assume that   $X$ has hypersurface singularities (respectively is smooth) around general points of $S$. 
Then there is a projective bimeromorphic morphism  $f\colon X'\to X$ such that the following properties hold. 
\begin{enumerate}
    \item $f$ is obtained by a sequence  of blowups at   centers strictly contained  in $S$. 
    \item The strict transform $S'$ of $S$ in $X'$ is smooth. 
    \item $X'$ has hypersurface singularities (respectively is smooth)  around $S'$. 
    \item The construction of $f$ is functorial.
\end{enumerate}
\end{cor}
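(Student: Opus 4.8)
The plan is to iterate Lemma~\ref{lemma:embeded-dimension-reduction}. Write $e=\dim X$. By Lemma~\ref{lemma:embedded-dim} the embedded dimension of $X$ at a point $s\in S$ equals $N+n-r(s)$; since $S\subseteq X_{\sing}$ this is at least $e+1$ for every $s\in S$, while the hypothesis that $X$ has hypersurface singularities around general points of $S$ says it equals $e+1$ for general $s$. Let $M(X)$ denote the maximum of the embedded dimensions of $X$ at the points of $S$, so that $M(X)=N+n-r$ with $r=\min_s r(s)$ and $M(X)\geqslant e+1$. If $M(X)=e+1$ there is nothing to prove: $S$ is already smooth, $X$ already has hypersurface singularities along it, and one takes $f=\id$. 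The interesting case is $M(X)\geqslant e+2$.

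In that case $X$ has embedded dimension $e+1<N+n-r$ at general points of $S$, so Lemma~\ref{lemma:embeded-dimension-reduction} applies and yields a functorial projective bimeromorphic morphism $f_1\colon X_1\to X$, a composition of blowups at ideal sheaves whose cosupports are proper closed subsets of $S$, such that the strict transform $S_1$ of $S$ is smooth, the maximal embedded dimension of $X_1$ along $S_1$ is at most $M(X)-1$, and $X_1$ again has the shape assumed in this subsection (a variety locally embedded in $\mathbb{D}^{N'}\times S_1$ with $S_1=\{\mathbf{0}_{N'}\}\times S_1$). I would then observe that, because every blown-up ideal has cosupport strictly inside $S$, the morphism $f_1$ is an isomorphism over a dense open subset of $S$; hence $\dim X_1=e$, the embedded dimension of $X_1$ at general points of $S_1$ is still $e+1$, and in particular $S_1\subseteq(X_1)_{\sing}$. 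Thus the very same hypotheses hold for $(X_1,S_1)$.

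Iterating this produces a tower $\cdots\to X_2\to X_1\to X$, functorial at each stage, with the intrinsic integers $M(X_k)$ strictly decreasing as long as $M(X_k)>e+1$ and bounded below by $e+1$; hence the process terminates after finitely many steps at some $X'=X_m$ with $M(X')=e+1$. At that point every point of the smooth subvariety $S'$ has embedded dimension at most $e+1=\dim X'+1$, i.e.\ $X'$ has hypersurface singularities along $S'$; the composition $f\colon X'\to X$ is a sequence of blowups at centers strictly contained in the successive strict transforms of $S$, and it is functorial as a composition of functorial constructions. This gives the four assertions.

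The substance of the argument is entirely carried by Lemma~\ref{lemma:embeded-dimension-reduction}, which has already been established; the only thing I expect to require care is the bookkeeping that its hypothesis persists along the tower and that the iteration terminates. Both reduce to the single observation that each step is an isomorphism over a dense open subset of $S$, so that the generic embedded dimension along the strict transform of $S$ stays equal to $e+1$ while the maximal embedded dimension strictly decreases. So I do not anticipate a real obstacle here beyond invoking the earlier lemma cleanly and checking that the output again fits the local set-up of this subsection.
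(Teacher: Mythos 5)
Your proposal is correct and follows essentially the same route as the paper: both iterate Lemma \ref{lemma:embeded-dimension-reduction} to drop the maximal embedded dimension along the strict transform of $S$ one step at a time until it reaches $\dim X+1$, and conclude by functoriality of each step. Your explicit check that the hypothesis of the lemma persists along the tower (generic embedded dimension stays $e+1$ since each blowup is an isomorphism over a dense open subset of $S$) is a detail the paper leaves implicit, but the argument is the same.
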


\begin{proof}
We will only treat the case when  $X$ has hypersurface singularities around general points of $S$, the other case can be proved by the same argument. 
Assume that the maximum of  embedding dimensions of $X$ at points of  $S$ is $\dim X +r$. 
If $r=1$, then $X$ has hypersurface singularities around $S$. 
Otherwise, there is a  sequence
of projective bimeromorphic morphisms 
\[ X_e  \to   \cdots \to X_1 =   X,\]
such that  
each $X_i \to X_{i+1}$ is the  functorial operation in Lemma \ref{lemma:embeded-dimension-reduction}, 
and  that the   maximum of  embedding dimensions of $X_e$ at points of $S_e$ is $\dim X +1$, 
where $S_e$ is the strict transform of $S$ in $X_e$. 
We let $X'=X_e$ and let $f\colon X'\to X$   be the composition of the sequence. 
Then it satisfies all the properties of the corollary.  
\end{proof}

\subsection{Double-point singularities}

We will prove the following statement in this subsection.

\begin{lemma}
\label{lemma:blowup-double-point}
Let $X$ be a complex analytic variety of dimension $n$ and let $S\subseteq X_{\sing}$ be an irreducible component of codimension $2$. 
Assume that $S$ is smooth, 
that $X$ has hypersurface singularities around  $S$, 
and that $X$ has double-point singularities around general points of $S$. 
Then there is a projective bimeromorphic morphism  $f\colon X'\to X$ such that the following properties hold. 
\begin{enumerate}
    \item $f$ is obtained by a sequence of blowups at centers strictly contained in  $S$. 
    \item The strict transform  $S'$ of $S$ in $X'$ is smooth. 
    \item $X'$ has double-point singularities around $S'$.  
    \item The construction of  $f$ is functorial. 
\end{enumerate}
\end{lemma}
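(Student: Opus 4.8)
The plan is to follow the pattern of Corollary~\ref{cor:blowup-hypersurface-sing} and Lemma~\ref{lemma:embeded-dimension-reduction}, with the multiplicity of $X$ transverse to $S$ playing the role of the embedded dimension; the key point is that here a single well-chosen blowup will suffice. Working locally around a point $o\in S$: since $S$ is smooth of dimension $n-2$ and $X$ has hypersurface singularities along $S$, after shrinking I can embed $X$ as a hypersurface $\{F=0\}\subseteq \mathbb{D}^3\times S$ with $S\subseteq X$ equal to $\{\mathbf{0}_3\}\times S$, exactly as in Subsection~\ref{subsection:blowup}. Because $X$ is singular along $S$, the function $F$ has no term of degree $\le 1$ in the coordinates $(x,y,z)$ of $\mathbb{D}^3$, so $F=F_2+F_3+\cdots$ with $F_k$ homogeneous of degree $k$ in $(x,y,z)$ and coefficients holomorphic on $S$; and the double-point hypothesis at general points of $S$ says exactly that the quadratic form $F_2$ is not identically zero over $\mathcal{O}_S$.

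I would then let $\mathcal{I}\subseteq\mathcal{O}_S$ be the ideal generated by the coefficients of $F_2$ and blow up $\mathbb{D}^3\times S$ at the ideal $(x,y,z,\mathcal{I})$ in the sense of Subsection~\ref{subsection:blowup}. Its center lies over $V(\mathcal{I})$, the locus where $X$ has transverse multiplicity $\ge 3$, which is a proper closed subset of $S$; hence the center is strictly contained in $S$. By the analysis of Subsection~\ref{subsection:blowup} in the case $m=2$, with $\mathcal{I}$ equal to the full coefficient ideal of the degree-$2$ part, the strict transform $X_1$ is, near every point of the strict transform $S_1$ of $S$, a hypersurface whose defining equation takes the form $H_2+H_3+tH_4+\cdots$ where $H_2$ is a quadratic form in the new transverse coordinates which is nonzero at \emph{every} point of $S_1$. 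Thus $X_1$ has multiplicity exactly $2$ at every point of $S_1$, and by upper semicontinuity of multiplicity it has double-point singularities in a neighborhood of $S_1$. Since $S_1=\Bl_{\mathcal{I}}S$ need not be smooth, I would next apply the functorial resolution of $S_1$ (Theorem~\ref{thm:func-resol}); it is a composition of blowups at smooth centers in $(S_1)_{\sing}\subsetneq S_1$, which by Subsection~\ref{subsection:blowup} extend to blowups of $X_1$ at centers strictly contained in $S_1$, and a computation of the same type shows that the nonvanishing of the degree-$2$ part persists, so the resulting space $X'$ still has double-point singularities around the now smooth strict transform $S'$ of $S$.

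For the global statement I would check that $\mathcal{I}$ is intrinsic: it equals the restriction to $\{\mathbf{0}_3\}\times S$ of the ideal of $\mathbb{D}^3\times S$ generated by the second-order partial derivatives of a defining function of $X$, since $F$, its first partials, and every partial involving a variable of $S$ vanish along $\{\mathbf{0}_3\}\times S$. So, exactly as in Lemma~\ref{lemma:embeded-dimension-reduction}, the construction commutes with local analytic isomorphisms, the local modifications glue to a global $f\colon X'\to X$, and $f$ is projective as a composition of blowups. The two steps I expect to require the most care are this coordinate-independence of $\mathcal{I}$ (hence the functoriality of the whole procedure) and the verification that the nonvanishing of $H_2$ survives the resolution of $S_1$; both are patterned closely on the corresponding arguments in Lemma~\ref{lemma:embeded-dimension-reduction}, so neither should be a serious obstacle, and the genuinely new ingredient is simply the observation that blowing up the coefficient ideal of $F_2$ makes the quadratic part everywhere nondegenerate in one stroke.
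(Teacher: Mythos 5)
Your proposal is correct and follows essentially the same route as the paper: blow up at the ideal $(x,y,z,\mathcal{I})$ where $\mathcal{I}$ is the coefficient ideal of $F_2$, invoke the computation of Subsection~\ref{subsection:blowup} to get a nowhere-vanishing quadratic part along the strict transform, then smooth the strict transform of $S$ by further functorial blowups, with functoriality reduced to coordinate-independence of $\mathcal{I}$. Your intrinsic description of $\mathcal{I}$ as the restriction to $S$ of the Hessian ideal of a defining function is a clean repackaging of the paper's direct change-of-coordinates computation (which shows $F_2$ transforms by an invertible congruence up to a unit), and your use of functorial resolution of $S_1$ in place of the paper's principalization of the exceptional locus is an immaterial variant.
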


\begin{proof}
It is enough to study the problem locally. 
Indeed, once we prove the functoriality,  we can glue our local constructions in the global setting.  
Let $o\in S \subseteq X$ be a point.  
We may assume that $S$ is a polydisc in $\mathbb{C}^n$ and $o\in S$ is the origin. 
Then locally around $o$, 
we may assume that $X$ is the hypersurface  in   $ \mathbb{D}^3\times S$, defined by the analytic function 
\[
F(x,y,z,s) = \sum_{i\ge 0} F_i(x,y,z,s),
\]
where $(x,y,z)$ are the coordinates of $\mathbb{D}^3$, $s \in S$,  and    $F_i$ is a homogeneous polynomial in $(x,y,z)$ of degree $i$ with coefficients as holomorphic functions on $S$.  
Furthermore, the composite inclusion $S\subseteq X\subseteq \mathbb{D}^3\times S$ coincides with $\{\mathbf{0}_3\} \times S \subseteq \mathbb{D}^3\times S$.  
In particular, we have $F_0=0$. 
Since $S\subseteq X_{\sing}$,   it follows that    $F_1=0$. 
Since $X$ has double-point singularities around general points of $S$,  
we have  $F_2 \neq 0$.

Let $\mathcal{I} \subseteq \mathcal{O}(S)$ be the   ideal  generated by the coefficients of $F_2$. 
If $D\subseteq S$ is the closed subset defined by $\mathcal{I}$, 
then  at every point in $S\setminus D$, 
$X$ has double-point   singularities.   

Let $h\colon X''\to X$ be the  blowup  up of  $X$ at the ideal $(x,y,z,\mathcal{I})$. 
As shown in Subsection \ref{subsection:blowup}, at every point $o''$ in the strict transform $S''$ of $S$ in $X''$, lying over $o$,  
there is an open neighborhood  $U''\subseteq X''$ of $o''$ such that 
$U''$ is isomorphic to the hypersurface in $\mathbb{D}^3\times (S''\cap U'')$ defined by the holomorphic function
\[
G(x'',y'',z'',s'') = G_2(x'',y'',z'',s'') + R(x'',y'',z'',s'')
\]
where $G_2$ is a homogeneous quadratic polynomial in $(x'',y'',z'')$ with coefficients as holomorphic functions on $S''$, and the terms of $R$ have degrees at least $3$ in $(x'',y'',z'')$. 
Furthermore, $G_2(x'',y'',z'',o'')$ is a nonzero polynomial in $(x'',y'',z'')$.
In particular, if $S''$ is smooth at $o''$, then $X''$ has double-point singularities at $o''$.

Let $X' \to X''$ be the projective bimeromorphic morphism induced by the functorial principalization of the ideal $(h|_{S''})^{-1}\mathcal{I}\cdot \mathcal{O}_{S''}$ on $S''$,   see Subsection \ref{subsection:blowup}.  
In particular, the strict transform $S'$ of $S$ in $X'$ is smooth. 
Since $X'\to X''$ is the composition of blowups with center strictly contained in  $S''$,   
the computations of Subsection \ref{subsection:blowup} implies  that 
$X'$ has double-point singularities along  $S'$.

To  prove that  construction is functorial, 
it is enough to show that the ideal $\mathcal{I} \subseteq \mathcal{O}(S)$  
is independent of the choice of  coordinates.
Let $ (\overline{x},\overline{y},\overline{z},\overline{s})$ be some other choice of coordinates of $\mathbb{D}^3\times S$ 
such that $X$ is defined by $\overline{F}(\overline{x},\overline{y},\overline{z},\overline{s}) = 0$. 
Let $\varphi\colon (x,y,z,s) \mapsto (\overline{x},\overline{y},\overline{z},\overline{s})$ be the change of coordinates, 
and we regard $\overline{x},\overline{y},\overline{z},\overline{s}$  as vector-valued holomorphic functions in $(x,y,z,s)$.  
Then we have 
\[
\overline{s}(0,0,0,s) = s  
\]
for all $s\in S$.   
Hence $\overline{s}(x,y,z,s) = s  \mod \, (x,y,z)$. 
There is a $3\times 3$ matrix $\Theta(s)$, 
whose entries are holomorphic functions on $S$, such that 
\[
(\overline{x},\overline{y},\overline{z}) =  (x,y,z) \cdot  \Theta(s)  \mod \, (x,y,z)^2. 
\]
We decompose $\overline{F}  = \sum_{i\ge 2} \overline{F}_i$ into the sum of its homogeneous parts in $(\overline{x},\overline{y},\overline{z})$. 
Then we have 
\begin{eqnarray*}
(\varphi^*\overline{F})(x,y,z,s) &=& \overline{F}(\overline{x},\overline{y},\overline{z},\overline{s})  \\
                                 &=& \overline{F}_2( (x,y,z) \cdot  \Theta(s), s)    \mod \, (x,y,z)^3. 
\end{eqnarray*} 
Since both the equations $\varphi^*\overline{F}=0$ and $F=0$ define  the hypersurface $X$, 
there is a unit function $U(x,y,z,s)$ such that 
\[F= U\cdot \varphi^*\overline{F}.\] 
We deduce that 
 the ideal $\mathcal{I}$ generated by the coefficients of  $F_2(x,y,z,s)$ 
is the same    as the one generated by the coefficients of $\overline{F}_2( (x,y,z) \cdot  \Theta(s), s)$.   
This implies that $\mathcal{I}$ is contained in the ideal generated by the coefficients of $\overline{F}_2(\overline{x},\overline{y},\overline{z},\overline{s})$.  
Since the roles of the coordinates systems are symmetric, 
we deduce that the ideal $\mathcal{I}$ we have constructed is independent of the choice of coordinates. 
This completes the proof of the lemma. 
\end{proof}

We  have the following observation for double-point singularities. 

\begin{lemma}
\label{lemma:local-irr} 
Let $X$ be a complex analytic variety  and let $S\subseteq X_{\sing}$ be an irreducible component of codimension at least $2$. 
Assume  that $X$ has double-point singularities around $S$. 
Then there is an open neighborhood $U$ of $S$ such that $U$ is locally irreducible. 
Equivalently, for every point $x\in U$, the stalk $\mathcal{O}_{X,x}$ is an integral domain. 
\end{lemma}

\begin{proof}
Let $f\colon Y\to X$ be the normalization of $X$, and let $x\in X$ be a point. 
If $\mathcal{O}_{X,x}$ is an integral domain, then $f$ is a homeomorphism over an open neighborhood of $x$ by \cite[Corollary on page 163]{GR84}. 
Conversely, if $\mathcal{O}_{X,x}$ is not an integral domain, then there is an open neighborhood of $x$ which is not irreducible by the local decomposition lemma (see \cite[page 79]{GR84}). 
In particular, $f$ is not a  homeomorphism over $x$.

Let $s\in S$ be a point. 
We claim that $\mathcal{O}_{X,s}$ is an integral domain. 
Assume the contrary. 
Then there is an open neighborhood $Z$ of $s$ in $X$ such that $Z$ is not irreducible.  
We may assume that $Z\subseteq \mathbb{D}^{n+1}$, and that $Z$ is a double cover of $\mathbb{D}^n$, where $n$ is the dimension of $X$. 
Then $Z$ has two irreducible components $Z_1$, $Z_2$,  and each of them is a smooth divisor in $\mathbb{D}^{n+1}$. 
It follows that $Z_{\sing} = Z_1 \cap Z_2$, which is pure of  dimension $n-1$.  
However, an irreducible component of  $S\cap Z$ is an irreducible component of $Z_{\sing}$, and is of dimension at most $n-2$. 
This is a contradiction. 

As a consequence, for every point $s\in S$, there is an open neighborhood $V_s$ of $s$ in $X$, such that the normalization morphism $f$ is an homeomorphism over $V_s$. 
It follows that $V_s$ is locally irreducible.  
In the end, we let $U= \bigcup_{s\in S} V_s$. 
Then $U$ is a locally irreducible neighborhood of $S$. 
This completes the proof of the lemma.
\end{proof}

\subsection{Equations of standard form}

We will complete the proof of Proposition \ref{prop-first-local-reduction} in this subsection. 

\begin{lemma}
\label{lemma:ideal-secondary}
Let $S$ be a polydisc  in  $\mathbb{C}^n$. 
Let $X\subseteq \mathbb{D}^3\times S$ be the hypersurface   defined by the equation 
\[
F(x,y,z,s) = x^2+ F_2(y,z,s)+F_3(y,z,s) + R(y,z,s) = 0, 
\]
where $(x,y,z)$ are the coordinates of $\mathbb{D}^3$ and $s\in S$. 
Moreover, for $i=2,3$,  $F_i(y,z,s)$ is a homogeneous polynomial of degree $i$ in $(y,z)$, with coefficients as holomorphic functions on $S$. 
The function $R(y,z,s)$ is holomorphic and its terms have degree at least 4 in $(y,z)$. 
We  assume that either $F_2$ or $F_3$ is not identically zero, 
and  define an   ideal  $\mathcal{I}$  of $\mathcal{O}(S)$
%, the ring of holomorphic functions on $S$,  
as follows. 
If $F_2 \neq 0$, then $\mathcal{I}$ is the ideal generated by the coefficients of $F_2$. 
Otherwise $\mathcal{I}$ is the ideal generated by  the coefficients of $F_3$. 
Then  the property that $F_2\neq 0$, and  the ideal $\mathcal{I}$, depend only on the isomorphism classes of $X$, 
and is independent of the choice of  coordinates $(x,y,z,s)$. 
\end{lemma}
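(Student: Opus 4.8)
The plan is to follow the pattern of the functoriality arguments in Lemma \ref{lemma:blowup-double-point} and Lemma \ref{lemma:embeded-dimension-reduction}, but with enough extra bookkeeping to read the ideal $\mathcal{I}$ off the low-degree parts of the defining equation. First I would record that, under the standing hypothesis ($F_2$ or $F_3$ not identically zero), $F$ is a \emph{reduced} equation: by the Weierstrass preparation theorem $F$ is, up to a unit, the Weierstrass polynomial $x^{2}+ (F_2+F_3+R)$, which is squarefree unless $F_2+F_3+R\equiv 0$, i.e. unless $F=x^{2}$, which is excluded. Hence if $\overline{F}$ is another equation of the given shape in coordinates $(\overline{x},\overline{y},\overline{z},\overline{s})$ cutting out the same $X$, the two coordinate systems differ by an ambient biholomorphism $\varphi$ preserving $S=\{x=y=z=0\}$, and $F = U\cdot\varphi^{*}\overline{F}$ for a unit $U$. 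Composing $\varphi$ with an automorphism of the polydisc $S$ — an operation under which the assertion is plainly stable — I may assume $\overline{s}\equiv s\bmod (x,y,z)$; I write $(\overline{x},\overline{y},\overline{z})\equiv (x,y,z)\,\Theta(s)\bmod (x,y,z)^{2}$ with $\Theta(s)\in\mathrm{GL}_{3}(\mathcal{O}(S))$, and $\ell_x,\ell_y,\ell_z$ for the linear parts of $\overline{x},\overline{y},\overline{z}$ in $(x,y,z)$.

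Next I would compare degree-$2$ parts in $(x,y,z)$. Since $\overline{s}\equiv s$ and $\overline{y},\overline{z}\equiv\ell_y,\ell_z\bmod(x,y,z)^{2}$, the degree-$2$ part of $\varphi^{*}\overline{F}$ is $\ell_x^{2}+\overline{F}_2(\ell_y,\ell_z;s)$, so if $A_s=\mathrm{diag}(1,B_s)$ and $\overline{A}_s=\mathrm{diag}(1,\overline{B}_s)$ are the symmetric matrices of the quadratic forms $x^{2}+F_2$ and $\overline{x}^{2}+\overline{F}_2$ (so $B_s,\overline{B}_s$ are the $2\times2$ matrices of $F_2,\overline{F}_2$), then $A_s = U(\mathbf{0}_3,s)\,\Theta(s)\,\overline{A}_s\,\Theta(s)^{T}$. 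The key observation — which I expect is the heart of the matter — is that the ideal of $2\times2$ minors of $\mathrm{diag}(1,B_s)$ equals the ideal of $\mathcal{O}(S)$ generated by the entries of $B_s$, i.e. by the coefficients of $F_2$ (the extra minor $\det B_s$ being redundant), and that the ideal of $2\times2$ minors of a matrix is unchanged after multiplying on either side by a matrix invertible over $\mathcal{O}(S)$ and after scaling by a unit. Hence the coefficients of $F_2$ and of $\overline{F}_2$ generate the same ideal; this ideal vanishes for one iff for the other, which simultaneously shows that "$F_2\ne 0$" is intrinsic and that $\mathcal{I}$ is intrinsic when $F_2\ne 0$.

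It remains to treat the case $F_2=0$ (hence also $\overline{F}_2=0$, by the previous step). Now the degree-$2$ comparison reads $x^{2}=U(\mathbf{0}_3,s)\,\ell_x^{2}$, forcing $\ell_x$ to be a nonzero $\mathcal{O}(S)$-multiple $\lambda(s)\,x$ of $x$ alone; consequently the first column of $\Theta(s)$ is $(\lambda(s),0,0)^{T}$ with $\lambda$ a unit, and the lower-right $2\times2$ block $\Psi(s)$ of $\Theta(s)$ then lies in $\mathrm{GL}_{2}(\mathcal{O}(S))$. I would then compare degree-$3$ parts: the degree-$3$ part of $\varphi^{*}\overline{F}$ is $2\ell_x q_x+\overline{F}_3(\ell_y,\ell_z;s)$, where $q_x$ is the quadratic part of $\overline{x}$ ($\overline{R}$ contributes nothing, being of order $\ge 4$ in $\overline{y},\overline{z}$), and, adding the contribution of the linear part of $U$ against $[\varphi^{*}\overline{F}]_2=\lambda^{2}x^{2}$, one gets $F_3(y,z;s)=U(\mathbf{0}_3,s)\,\overline{F}_3(\ell_y,\ell_z;s)+x\cdot(\text{quadratic in }(x,y,z))$. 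Since the left side is free of $x$, setting $x=0$ yields $F_3(y,z;s)=U(\mathbf{0}_3,s)\,\overline{F}_3\bigl((y,z)\Psi(s);s\bigr)$; as $\Psi(s)$ is invertible over $\mathcal{O}(S)$, the coefficients of $F_3$ and of $\overline{F}_3$ generate the same ideal, finishing the proof.

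The steps I expect to be purely mechanical are the reducedness of $F$, the elementary computation of the $2\times2$ minors of $\mathrm{diag}(1,B)$, and the truncation identities for $\varphi^{*}\overline{F}$ in degrees $2$ and $3$. The main obstacle is organizing those truncations carefully enough to isolate exactly the quadratic (resp. cubic) form data and no more; once that is in place, the congruence-invariance of the minor ideal (and the symmetry between the two coordinate systems) does the rest.
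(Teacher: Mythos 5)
Your proof is correct and follows essentially the same route as the paper's: write $F=U\cdot\varphi^{*}\overline{F}$, compare the degree-$2$ truncations via the congruence-invariant ideal of $2\times2$ minors of the symmetric matrix of $x^{2}+F_{2}$ (exactly the paper's Case 2 device), and in the $F_2=0$ case compare degree-$3$ truncations using the invertible $2\times2$ block acting on $(y,z)$. The only differences are organizational — you extract the dichotomy ``$F_2=0$ iff $\overline{F}_2=0$'' directly from the minor-ideal equality instead of the paper's cross-term argument, and you get both containments from invertibility of $\Theta$ rather than ``by symmetry'' — which is harmless.
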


\begin{proof}  
Assume that we have another choice of coordinates $(a,b,c,s')$ such that $X$ is defined as 
\[
G(a,b,c,s') = a^2 + G_2(b,c,s') + G_3(b,c,s') + P(b,c,s') = 0.  
\]
We denote the change of coordinates  by
\[
\varphi\colon  (x,y,z,s) \mapsto (a,b,c,s'),
\]
and we can regards $a,b,c,s'$ as vector-valued holomorphic functions in $(x,y,z,s)$.
Then $s'(0,0,0,s) = s$ for all $s\in S$. 
Thus $s'(x,y,z,s) = s  \mod \, (x,y,z)$.
If we set $(\varphi^*G)(x,y,z,s) = G(a,b,c,s')$, then 
\begin{equation}\label{eqn:change-cooridante}
    (\varphi^*G)(x,y,z,s) = (\mathrm{unit}) \cdot F(x,y,z,s). 
\end{equation}

First we assume that $G_2 =0$. 
Then, by comparing the coefficient before the term $x^2$ in \eqref{eqn:change-cooridante},   
we can  write  
\[a(x,y,z,s) = l(x,y,z,s)  \mod\, (y,z)^2,\] 
where $l =(\mathrm{unit})\cdot x +  l_1(y,z,s)$, and $l_1(y,z,s)$ is linear in $(y,z)$. 
Then we have 
\[(\varphi^*G)(x,y,z,s) = l(x,y,z,s)^2  \mod\, (x,y,z)^3.\]
Thus,  up to a nonzero multiplicative constant, 
the term $x  \cdot l_1(y,z,s)$ belongs to 
\[(\varphi^*G)(x,y,z,s)  \mod\, (x,y,z)^3. \]
Comparing with \eqref{eqn:change-cooridante}, we deduce that $l_1(y,z,s) =0$. 
Hence $l(x,y,z,s) = (\mathrm{unit})\cdot x$  and  $F_2 = 0$.  

We note that 
\begin{equation}\label{eqn:change-cooridante-1}
  (\mathrm{unit}) \cdot F(x,y,z,s) = u(s)\cdot F_3(y,z,s)   \mod\, (x,(y,z)^4), 
\end{equation}
where $u(s)$ is a unit. 
We write $b(x,y,z,s) = b_1(y,z,s) + b_1'(s)x + b_{>1}(x,y,z,s)$, where $b_1$ is linear in $(y,z)$ and $b_{>1}(x,y,z,s) = 0  \mod\, (x,y,z)^2$. 
Similarly, we write  $c(x,y,z,s) = c_1(y,z,s) + c_1'(s)x + c_{>1}(x,y,z,s)$.  
Then 
\begin{equation}
\label{eqn:change-coordinate-3}
\begin{split}
(\varphi^*G)(x,y,z,s) &=      G(a,b,c,s') \\ 
                &=  a^2 + G_3(b,c,s') +P(b,c,s') \\ 
               &=  G_3( b_1(y,z,s), c_1(y,z,s), s)    \mod\, (x,(y,z)^4). 
\end{split}
\end{equation}
Here, for the notation $G_3( b_1(y,z,s), c_1(y,z,s), s)$,  if  
\[G_3(b,c,s') =  \sum_{i=1}^3 \alpha_i(s')b^ic^{3-i}, \]
then  
\[G_3( b_1(y,z,s), c_1(y,z,s), s)  = \sum_{i=1}^3 \alpha_i(s)b_1(y,z,s)^ic(y,z,s)^{3-i}.\]
This is well-defined since the functions $\alpha_i$ are functions on $S$.   
By comparing \eqref{eqn:change-coordinate-3} with \eqref{eqn:change-cooridante} and \eqref{eqn:change-cooridante-1}, 
we deduce that 
\[
G_3( b_1(y,z,s), c_1(y,z,s), s)  =  u(s)\cdot F_3(y,z,s).
\]
As a consequence, the ideal generated by the coefficients of $F_3(y,z,s)$, regarded as a homogeneous polynomials in $(y,z)$, is contained in the ideal generated by the coefficients of $G_3(b,c,s')$.  
Hence, by symmetry, the ideal   $\mathcal{I}$ we have constructed is independent of the choice of coordinates in this case. 
\\

Next we assume that $G_2\neq 0$. 
In this case, we have 
\begin{equation}
\label{eqn:change-cooridante-2}
(\mathrm{unit}) \cdot F(x,y,z,s) = u(s) \cdot  (x^2 + F_2(y,z,s))   \mod\, (x,y,z)^3, 
\end{equation}
where $u(s)$ is a unit.  
There is a $3\times 3$ matrix $\Theta(s)$, with coefficients as holomorphic functions in $S$ such that 
\[(a,b,c) = (x,y,z)\cdot \Theta(s) \mod\, (x,y,z)^2. \] 
Let $(a_1,b_1,c_1) = (x,y,z)\cdot \Theta(s)$. 
Then we have 
\[
(\varphi^*G)(x,y,z,s) =   a_1^2 + G_2(b_1,c_1,s)    \mod \, (x,y,z)^3.  
\]
Comparing the previous equation with \eqref{eqn:change-cooridante} and $\eqref{eqn:change-cooridante-2}$, 
we deduce that 
\[
a_1^2+ G_2(b_1,c_1,s) = u(s) (x^2+ F_2(y,z,s)). 
\]
Both sides of the equation above can be viewed as  quadratic forms in three variables, with coefficients as holomorphic functions on $S$. 
We denote the LHS by $L(a_1,b_1,c_1)$ and the RHS by $Q(x,y,z)$.   
The ideal $\mathcal{J}$ (respectively of $\mathcal{I}$) generated by the coefficients of  $G_2$ (respectively of $F_2$) is exactly the idea generated by the determinants of all 2-dimensional minors of the symmetric matrix of  $L$ (respectively of $Q$).  
Since $ L((x,y,z)\cdot \Theta(s)) = Q(x,y,z) $,  %by Lemma \ref{lemma:congrence}, 
we have  $\mathcal{I} \subseteq \mathcal{J}$. 
By symmetry, we conclude that $\mathcal{I} = \mathcal{J}$.    
This completes the proof of the lemma.
\end{proof}

Now we are ready to prove Proposition \ref{prop-first-local-reduction}

\begin{proof}
Let $\tau\colon X' \to X$ be the projective bimeromorphic morphism induced by the functorial desingularization of $S$, as shown at the beginning of Subsection \ref{subsection:blowup}.  
Then, by replacing $X$ with  $X'$, $S$ with its strict transform $S'$ in $X'$,  and  $C$ with the union of its preimage in $S'$ and the intersection of $S'$ with the $\tau$-exceptional locus,   
we may assume that $S$ is smooth. 
By applying Corollary \ref{cor:blowup-hypersurface-sing} and then Lemma \ref{lemma:blowup-double-point}, we may assume that $X$ has double-point singularities along $S$.  

It particular, locally around a point $o_X\in S$, $X$ is the hypersurface in   $\mathbb{D}^3\times S$, defined by the equation 
\[
F(x,y,z,s) = x^2+ F_2(y,z,s)+F_3(y,z,s) + R(y,z,s) = 0, 
\]
where $(x,y,z)$ are the coordinates of $\mathbb{D}^3$ and $s\in S$. 
Moreover, for $i=2,3$,  $F_i(y,z,s)$ is a homogeneous polynomial of degree $i$ in $(y,z)$, with coefficients as holomorphic functions on $S$. 
The function $R(y,z,s)$ is holomorphic and its terms have degrees at least 4 in $(y,z)$. 

Since $X$ has canonical singularities around general points of $S$, by Lemma \ref{lemma:ADE} 
we see that either $F_2$ or $F_3$ is not identically zero. 
Hence we can define the ideal sheaf $\mathcal{I}$ as in Lemma \ref{lemma:ideal-secondary}. 

We only treat the case  when  $F_2\neq 0$.  
The other case can be proved by the same method. 
Let $p\colon X'\to X$ be the projective bimeromorphic morphism induced by the blowup of $S$ at $\mathcal{I}$. 
Let $S'$ be the strict transform of $S$ in $X'$, and let $E$ be the  divisor in $ S'$ defined by $(p|_{S'})^*\mathcal{I}$.  
Then, as shown in Subsection \ref{subsection:blowup},  for any point $o'$ in $S'$, 
there is a neighborhood $U'\subseteq X'$ of $o'$, 
such that $U'$ is the hypersurface in   $ \mathbb{D}^3\times (S'\cap U')$, defined by the function
\[
G(x',y',z',s') = x'^2 + a(s') \cdot G_2(y',z',s') + W(y',z',s'), 
\]
where $a(s')$ is a generator of $\mathcal{O}_{S'}(-E)$ on $S'\cap U'$, $G_2(y',z',s')$ is a nonzero polynomial in $(y',z')$ at any points $s' \in S'\cap U'$, 
and  $W(y',z',s') = 0 \mod \, (y',z')^3$.  

We denote by $\mathcal{I}'$  the ideal sheaf of the closed subset $E \cup (p|_{S'})^{-1}(C)$ in $S'$.  
Let $Y\to X'$ be the projective bimeromorphic morphism induced by the functorial principalization of $\mathcal{I}'$ on $S'$, 
and let $f\colon Y\to X$ be the natural morphism. 
We denote by $S_Y$ the strict transform of $S'$ in $Y$, 
and by $H_Y\subseteq S_Y$   the preimage of $E \cup (p|_{S'})^{-1}(C)$.  
Then $H_Y$ can be seen as a reduced divisor in $S_Y$ and $(S_Y,H_Y)$ is a snc pair. 
Furthermore,  for any point $o\in S_Y$, there is a neighborhood $U\subseteq Y$ of $o$, such that $U$ is a hypersurface in $\mathbb{D}^3\times (S_Y\cap U)$ defined by an equation of standard form with respect to $(S_Y\cap U,H_Y\cap U)$.  
In addition, $Y$ has the same type of canonical singularities at every point of $S_Y\setminus H_Y$, 
and $H_Y$ contains the intersection of $S_Y$ with the $f$-exceptional locus.  
This completes the proof of the proposition. 
\end{proof}

\section{Further improvement of  defining equations}
\label{section:equation}

Let $X$ be a complex analytic variety and let $S\subseteq X_{\sing}$ be an irreducible component of codimension 2 in $X$. 
In the previous section, we reduce the local situation to the case when $X$ is isomorphic to a hypersurface of $\mathbb{D}^3\times S$, and the hypersurface is defined by an equation of standard form.  
In this section, we will perform some bimeromorphic transforms on $X$, 
so that we can improve further the shape of the equation.  
After we prove some elementary results in Subsection \ref{subsection:Hensel}, we will work under the following setup.

\begin{setup}
\label{setup-local} 
Let $X$ be a complex analytic variety of dimension $M+2$, 
and let $S\subseteq X_{\sing}$ be an irreducible component of dimension $M$.  
Let $o\in S$ be a point and we  assume that $S$ is an open neighborhood  of   the origin $o=\mathbf{0}_M$ in $\mathbb{C}^M$.  
We denote by $(T_1,...,T_M)$  the coordinates of $S$ and we define  $H_i=\{T_i=0\}$. 
Let $H=H_1+\cdots +H_n$, where $n\ge 0$ is an integer. 
Assume that $X$ is    a neighborhood of  $\{\mathbf{0}_3\}\times S$,  in  the  hypersurface  in  $\mathbb{D}^3\times S$, defined by an equation of standard form with respect to $(S,H)$, see the beginning Section \ref{section:double-point}.  
Furthermore, the composite  inclusion $S\subseteq X\subseteq \mathbb{D}^3\times S$ is identified with  $\{\mathbf{0}_3\}\times S \subseteq \mathbb{D}^3\times S$.
Assume that $X$ has the same type of canonical singularities at points of $S\setminus H$. 
We will shrink $X$ and $S$ freely around $o\in S$. 
In particular, for every unit holomorphic function on $\mathbb{D}^3 \times S$, we may assume that it admits logarithms. 
In addition, for a holomorphic function $p(x,y,z,s)$ on $\mathbb{D}^3 \times S$ of the shape 
\[
p (x,y,z,s) =  p_0 + q(x,y,z,s),
\]
where  $p_0 \neq 0$ is a complex number and  $q(0,0,0,s) = 0$, 
we can assume that $p$ is a unit function.  
Throughout this section, 
we will make these assumptions without the specification of shrinking $S$. 
\end{setup}

The following proposition is the main objective of in this section.

\begin{prop}
\label{prop:good-shape}
With the notation of  Setup \ref{setup-local},  up to shrinking $S$,  there is an integer $N>0$ such that the following properties hold.   
\begin{enumerate}
\item Assume   that $X$ has $A$-type or $E$-type  singularities at   points of $S\setminus H$. 
Let $f\colon Y\to X$ be the composition of a sequence of blowups, such that each  center is a component $H_i$ of $H$, see Subsection \ref{subsection:blowup-divisor}.  
In particular, we can identify $S$ with its strict transform in $Y$. 
Assume that for each $i=1,...,n$, there are at least $N$ blowups in the sequence whose centers are $H_i$.  
Then   there is a neighborhood $U\subseteq Y$ of $S\subseteq Y$, such that $U$ is isomorphic to a neighborhood of $\{\mathbf{0}_3\} \times S$ in  the hypersurface in  $\mathbb{D}^3\times S$ defined  by a  function of one of  the following shapes
\begin{eqnarray*}
    F(x,y,z,s) &=& x^2 + \alpha(s)y^2 + \beta(s) z^l,   \ l \ge 2,    \\
    F(x,y,z,s) &=& x^2 + \alpha(s)y^3 +  \beta(s) \cdot y(\gamma(s)z +u(y,s) y)^3, \\
    F(x,y,z,s) &=& x^2 + \alpha(s)y^3 +  \beta(s) \cdot  (\gamma(s)z +u(y,s) y)^l, \  l =4,5. 
\end{eqnarray*} 
\item Assume   that $X$ has $D$-type  singularities at  points of $S\setminus H$.
Let $f_1\colon X_1\to X$ be the composition of  blowups at the $H_i$'s as in Subsection \ref{subsection:blowup-divisor}, 
such that   for each $i=1,...,n$, there is exactly one blowup whose  center is $H_i$. 
Let $f_2\colon X_2 \to X_1$ be the basechange over $S[\sqrt[6]{T_1},...,\sqrt[6]{T_n}]$. 
We define $\overline{S} = f_2^{-1}((f_1)^{-1}_*S)$ and define $\overline{H}_i$ as the preimage of $H_i$ in $\overline{S}$. 
Let $f_3\colon Y\to X_2$ be the composition of a sequence of blowups at the   $\overline{H}_i$'s as in Subsection \ref{subsection:blowup-divisor}. 
Assume that for each $i=1,...,n$, there are at least $N$ blowups in the sequence whose centers are $\overline{H}_i$.
Let $f\colon Y\to X$ be the natural morphism, 
let $\overline{S}_Y$ be the  strict transform of $\overline{S}$ in $Y$. 
Then   there is a neighborhood $U\subseteq Y$ of $\overline{S}_Y$, 
such that $U$ is isomorphic to a neighborhood of $\{\mathbf{0}_3\} \times \overline{S}_Y$ in  the hypersurface in  $\mathbb{D}^3\times \overline{S}_Y$ defined  by a function of the shape  
\begin{eqnarray*}
     F(x,y,z,s) &=& x^2 + \alpha(s) y(\gamma(s)z +u(y,s) y )^2  +  \beta(s) y^l,   \ l\ge 3.
\end{eqnarray*}
\end{enumerate}
Here $(x,y,z)$ is the coordinates of $\mathbb{D}^3$,   $l$ is an integer,  $\alpha(s)$, $\beta(s) $ and $\gamma(s)$ are holomorphic functions  on $S$ (or on $\overline{S}$) whose zero loci are contained in $H$ (or in $\sum\overline{H}_i$),  and $u(y,s)$ is a holomorphic function.   

In addition, regarded as a holomorphic function  on $U$, the Cartier divisor defined by $T_i=0$ is equal to the $f$-exceptional divisor  defined by the ideal sheaf $f^{-1}\mathcal{I}(H_i) \cdot \mathcal{O}_Y$, 
where $\mathcal{I}(H_i)$ is the ideal sheaf of $H_i$ in $X$.  
\end{prop}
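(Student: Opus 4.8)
The plan is to start from the standard-form equation $F(x,y,z,s)=x^{2}+F_{2}(y,z,s)+F_{3}(y,z,s)+R(y,z,s)$ of Setup~\ref{setup-local} and normalize it using three ingredients: the blowup-at-$H_{i}$ operation of Subsection~\ref{subsection:blowup-divisor}, which after $m_{i}$ blowups with centre $H_{i}$ replaces $F$ by $x^{2}+F_{2}+T'F_{3}+(T')^{2}F_{4}+\cdots$ with $T'=\prod_{i=1}^{n}T_{i}^{m_{i}}$, and so makes the higher-order terms divisible by prescribed powers of the $T_{i}$; Weierstrass preparation together with the Hensel-type factorizations of Subsection~\ref{subsection:Hensel}; and the classification of Du Val equations in Lemma~\ref{lemma:ADE}, read off the general transversal slice $\mathbb{D}^{3}\times\{s\}$, which by hypothesis carries a fixed ADE type for $s\in S\setminus H$. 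Accordingly I split into the cases $F_{2}\not\equiv 0$ ($A$-type), $F_{2}\equiv 0$ with $F_{3}=b(s)G_{3}$ and $G_{3}(\cdot,s)$ a perfect cube at a general point ($E$-type), and $F_{2}\equiv 0$ with $G_{3}(\cdot,s)$ not a perfect cube at a general point ($D$-type). The last assertion of the proposition is immediate: by the description in Subsection~\ref{subsection:blowup-divisor} (and, in the $D$-type case, the basechange) the exceptional set $\Delta$ of $f$ meets $U$ in $\{T'=0\}$, which equals $\{T_{1}\cdots T_{n}=0\}$ precisely because at least one blowup is performed with each centre $H_{i}$.

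For part~(1) in the $A$-type case, write $F_{2}=a(s)G_{2}$ with $\{a=0\}\subseteq H$ and $G_{2}$ a nonzero quadratic form at every $s$; constancy of the type on $S\setminus H$ forces the rank-drop locus of $G_{2}$ into $H$, so after diagonalizing $G_{2}$ over $\mathcal{O}_{S}$ we may assume it is $y^{2}$ or $y^{2}+\mu(s)z^{2}$ with $\{\mu=0\}\subseteq H$. Blowing up at the $H_{i}$ until $a(s)$ divides $T'$, every term of $T'F_{3}+(T')^{2}F_{4}+\cdots$ becomes divisible by $a(s)$, so the square can be completed in $y$ (the factors of $a(s)$ arising in denominators being cancelled against $T'$), giving $x^{2}+\alpha(s)y^{2}+\Psi(z,s)$ with $\alpha=a\cdot(\mathrm{unit})$. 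The general slice identifies the exponent ($A_{r}$ gives $l=r+1$); Weierstrass preparation writes $\Psi=(\mathrm{unit})\cdot W(z,s)$ with $W$ a Weierstrass polynomial whose lower coefficients vanish on the dense set $S\setminus H$, hence vanish identically, and a last change $z\mapsto z\cdot(\mathrm{unit})^{1/l}$ produces $\beta(s)z^{l}$ with $\{\beta=0\}\subseteq H$. In the $E$-type case, $G_{3}(\cdot,s)$ is a perfect cube for every $s$ (being a perfect cube is a closed condition on binary cubic forms), and after a generic linear change in $(y,z)$ its $y^{3}$-coefficient is a unit near $o$, so the cube root is extracted holomorphically and, absorbing it into a coordinate change, $F_{3}=\tilde a(s)y^{3}$ with $\{\tilde a=0\}\subseteq H$. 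After blowing up the $H_{i}$, one runs the Newton-polygon reduction behind Steps~7 and~8 of \cite[4.25]{KollarMori1998} and parts~(4)--(7) of Lemma~\ref{lemma:ADE} slice by slice, using the results of Subsection~\ref{subsection:Hensel} and further blowups to make the required coefficients divisible by $T'$; this isolates the monomial $yz^{3}$, $z^{4}$ or $z^{5}$ and puts $F$ into one of the three displayed forms, the coefficients $\alpha,\beta,\gamma$ vanishing only where the generic $E_{r}$ type degenerates, i.e.\ on a subset of $H$.

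For part~(2), the binary cubic $G_{3}$ is not a perfect cube at a general point, so, after a generic linear change making its $y^{3}$-coefficient a unit near $o$, it dehomogenizes to a cubic polynomial whose divisorial branch locus over $S$ lies in $H$ (or whose branch locus is all of $S$, in the everywhere-double-root situation). Lemma~\ref{lemma:cubic} then splits its pullback to $\overline S$ into linear factors; this is exactly the role of the basechange $f_{2}$ over $S[\sqrt[6]{T_{1}},\dots,\sqrt[6]{T_{n}}]$, the preliminary single blowup $f_{1}$ with each centre $H_{i}$ being an auxiliary normalization preceding the root extraction. For the $D$-type, exactly two of the three linear factors coincide at a general point, hence identically, and after a coordinate change the simple factor becomes $y$ and the double factor becomes $\gamma(s)z+u(y,s)y$, so that $F_{3}=\alpha(s)\,y(\gamma z+uy)^{2}$; here $\{\gamma=0\}$ is the locus on $\overline S$ where $G_{3}$ becomes a cube, i.e.\ where the type jumps, hence is contained in $\sum\overline H_{i}$. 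Blowing up the $\overline H_{i}$ and reducing the higher-order terms as in part~(1) then brings $F$ to $x^{2}+\alpha\,y(\gamma z+uy)^{2}+\beta(s)y^{l}$ with $\{\beta=0\}\subseteq\sum\overline H_{i}$.

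I expect the main obstacle to be the final reduction in the $D$-type case: once sixth roots of the $T_{i}$ have been taken one must track how divisibility by the $\overline T_{i}$ interacts with the non-coordinate double factor $\gamma z+uy$, in order to force the residual higher-order terms into the single monomial $y^{l}$ --- with a coefficient vanishing only along $\sum\overline H_{i}$ --- rather than into a general Weierstrass polynomial in $y$; the analogous but more transparent bookkeeping on the $A/E$ side, isolating $z^{l}$ or $yz^{3}$ once enough blowups have been made, is the technical heart there. A further point requiring care throughout is that the blowups, the cube-root extraction and the basechange all preserve the standard-form hypotheses and are compatible on overlaps, so that the local normal forms obtained around different points of $S$ glue into the global picture used in the rest of the paper.
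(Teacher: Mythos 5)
Your overall route coincides with the paper's: a case division by ADE type, repeated blowups at the $H_i$ to force divisibility of the higher-order terms by $T'$ (Remarks \ref{rmk:blowup-divisor-1} and \ref{rmk:blowup-divisor-2}), Weierstrass preparation together with the factorization results of Subsection \ref{subsection:Hensel}, and, in the $D$-case, the sixth-root basechange combined with Lemma \ref{lemma:cubic}. Your $A$-type argument and the observation about $\{T_1\cdots T_n=0\}$ being the exceptional locus match the paper's Lemma \ref{lemma:blowup-equation-A} and the description in Subsection \ref{subsection:blowup-divisor}.

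There is, however, a concrete gap in your part (2). You assert that for $D$-type singularities ``exactly two of the three linear factors coincide at a general point, hence identically.'' By Lemma \ref{lemma:ADE}, item (2), this is false precisely for $D_4$: there the three linear factors of $F_3(\cdot,s)$ are pairwise coprime for general $s\in S\setminus H$, and only their mutual differences degenerate along $H$. Since the normal form $x^2+\alpha(s)\,y(\gamma(s)z+u(y,s)y)^2+\beta(s)y^{3}$ must still be reached in this case (it is the case $l=3$ of the statement), your argument as written does not cover it. The paper's Lemma \ref{lemma:blowup-equation-D} treats $D_4$ as a separate Case 2: one writes $(y-q(s)z)(y-r(s)z)=y^2+\xi(s)zy+\eta(s)z^2$ with $\eta=\mu^2\nu$, needs the extra observation that further blowups make $a(s)$ divisible by $\nu(s)$, and only then can Lemma \ref{lemma:Hensel} and Weierstrass preparation compress the two distinct factors into a square plus a $y^3$-remainder. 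A second, lesser, point: your $E$-type case is essentially a pointer to the reduction behind Steps 7 and 8 of \cite[4.25]{KollarMori1998}, whereas in the paper this is where most of the work lies (Lemmas \ref{lemma:blowup-equation-E6}, \ref{lemma:blowup-equation-E8} and \ref{lemma:blowup-equation-E7}); in particular, the choice of the number of blowups $e$ divisible by suitable integers so that the cross term $v(s)z^{3+p}y$ can be absorbed into a shifted variable $\zeta=T^{e}z+(\cdots)y$ is the only mechanism producing the $u(y,s)y$ inside $(\gamma(s)z+u(y,s)y)^{l}$, and the matching of the unit factors in front of $y^3$ and of the $l$-th power also requires an argument rather than a generic change of coordinates.
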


We note that $U$ in the previous proposition is not assumed to be the whole hypersurface in  $\mathbb{D}^3\times S$ (or in $\mathbb{D}^3\times \overline{S}_Y$) for the following reason.  
In application, we shrink $S$ to get an open subset $S'\subseteq S$ so that the proposition holds on $S'$. 
In particular, for example for the case (1),  
we can get an open neighborhood $W$ of the strict transform $S''$ of $S'$ in $Y$ so that $W$ is isomorphic to a hypersurface  in  $\mathbb{D}^3\times S''$. 
If we need to shrink $X$ to get an open neighborhood $X'$ of $S'$ in $X$, 
then we need to set $U=f^{-1}(X') \cap W$ in order that $f(U)$ is contained in $X'$.

We will prove the proposition in the remainder of the section, by discussing according to the singularity type of $X$ at   points of $S\setminus H$. 

\begin{proof}[{Proof of Proposition \ref{prop:good-shape}}] 
The last paragraph in the statement follows from Remark \ref{rmk:function-exc-div}. 
We can deduce the remainder part of  proposition by combining the results from Lemma \ref{lemma:blowup-equation-A} to  Lemma \ref{lemma:blowup-equation-D}. 
%, Lemma \ref{lemma:blowup-equation-E6},  Lemma \ref{lemma:blowup-equation-E8},  Lemma \ref{lemma:blowup-equation-E7} and  Lemma \ref{lemma:blowup-equation-D}. 
\end{proof}

\subsection{Factorization lemmas}
\label{subsection:Hensel}

We  prove two statements  on holomorphic functions in several variables, which enable us to factorize certain functions. 

\begin{lemma}
\label{lemma:Hensel}
Let $F(y,z,s) = y(y^2+a(y,z,s)yz +  b(y,z,s) z^2 ) +  b(y,z,s)^2  R(y,z,s)$ be a holomorphic function defined in a neighborhood of $\mathbf{0}_{2+N}\in \mathbb{C}^{2+N}$, where $N\ge 0$ is an integer,  $(y,z)$ is the first two coordinates of $ \mathbb{C}^{2+N}$, and  $ s \in \mathbb{C}^N$ is a point which represents the last $N$ coordinates of $ \mathbb{C}^{2+N}$. 
Assume that the terms of $R$ have degrees at least $4$ in $(y,z)$. 
Then, in a neighborhood of $\mathbf{0}_{2+N}$, 
there is a factorization $F =  G_1G_2$ with $G_1=y+ b(y,z,s)c(y,z,s)$ 
and 
\[G_2 = y^2 + a (y,z,s)yz + b(y,z,s) z^2 +  b(y,z,s) \cdot d(y,z,s), \] 
such that   $ c(y,z,s)$ is divisible by $z^2$ 
and that  $d(y,z,s) = 0 \mod \, (y,z)^3$. 
\end{lemma}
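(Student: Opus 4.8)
The plan is to produce the factorization with trivial unit, $F = G_{1}G_{2}$, by first splitting off the linear factor $G_{1}$ and then checking that the complementary factor is automatically of the required shape. Write $Q = y^{2} + a(y,z,s)yz + b(y,z,s)z^{2}$, so that $F = yQ + b^{2}R$. If $b \equiv 0$ then $F = y^{2}\bigl(y + a(y,z,s)z\bigr)$ and we take $c = d = 0$; so assume $b \not\equiv 0$. For a holomorphic $c = c(z,s)$ divisible by $z^{2}$ put $G_{1} = y + b(y,z,s)c(z,s)$; since $\partial_{y}G_{1} = 1 + c(z,s)\,\partial_{y}b(y,z,s)$ is a unit (because $c$ vanishes at the origin), $G_{1}$ is a reduced equation cutting out a smooth graph $\{y = \psi(z,s)\}$, where $\psi$ is the unique small root of $y + b(y,z,s)c(z,s) = 0$; note that $\psi = -b(\psi,z,s)c(z,s)$ is again divisible by $z^{2}$. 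Since $G_{1}$ is reduced, it divides $F$ as soon as $F$ vanishes on $\{G_{1}=0\}$, i.e. as soon as $F(\psi,z,s) = 0$.

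A short computation using $\psi = -b(\psi,z,s)c$ rewrites $Q(\psi,z,s) = b(\psi,z,s)\bigl(b(\psi,z,s)c^{2} - a(\psi,z,s)cz + z^{2}\bigr)$, and hence, up to sign, $F(\psi,z,s) = b(\psi,z,s)^{2}\bigl(c\,[\,b(\psi,z,s)c^{2} - a(\psi,z,s)cz + z^{2}\,] - R(\psi,z,s)\bigr)$. Cancelling the factor $b(\psi,z,s)^{2}$, the condition $F(\psi,z,s)=0$ becomes the fixed–point equation
\[
c \;=\; \frac{R(\psi,z,s)}{\,b(\psi,z,s)\,c^{2} - a(\psi,z,s)\,c\,z + z^{2}\,},
\]
where $\psi$ is the small root of $y + b(y,z,s)c(z,s) = 0$ attached to $c$. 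The crucial observation is that the right–hand side involves no division by $b$: since $c$ is divisible by $z^{2}$ the root $\psi$ is divisible by $z^{2}$, whence $R(\psi,z,s)$ is divisible by $z^{4}$ (because $R \in (y,z)^{4}$), while the denominator equals $z^{2} + O(z^{3})$, i.e. $z^{2}$ times a unit; so the right–hand side is a well–defined holomorphic function of $(z,s)$, again divisible by $z^{2}$. On a sufficiently small polydisc, the map sending $c$ to this right–hand side is a contraction of a small ball in the Banach space of holomorphic functions of $(z,s)$ vanishing to order at least $2$ along $\{z=0\}$, equipped with the sup norm; this yields the desired $c$, hence $G_{1}$, and $G_{2} := F/G_{1}$ is holomorphic.

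It remains to check that $G_{2} = Q + b\,d$ with $d \in (y,z)^{3}$. Since $G_{1} = y + (\text{terms divisible by }z^{2})$ and the degree–$3$ part of $F$ in $(y,z)$ is $y$ times the degree–$2$ part of $Q$, the factor $G_{2}$ agrees with $Q$ modulo $(y,z)^{3}$, so $P' := G_{2}-Q$ lies in $(y,z)^{3}$; expanding $G_{1}G_{2} = F = yQ + b^{2}R$ with $G_{1} = y + bc$ gives $G_{1}P' = b(bR - cQ)$. Evaluating $bR - cQ$ along $\{G_{1}=0\}$, i.e. at $y = \psi$, and using $Q(\psi,z,s) = b(\psi,z,s)\bigl(b(\psi,z,s)c^{2} - a(\psi,z,s)cz + z^{2}\bigr)$ together with the fixed–point equation, one finds it vanishes there; hence $G_{1}$ divides $bR - cQ$, so $d := (bR - cQ)/G_{1}$ is holomorphic and $P' = b\,d$. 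Finally $bR - cQ$ lies in $(y,z)^{4}$ (since $R \in (y,z)^{4}$ and $c \in (z^{2})\subseteq (y,z)^{2}$, $Q \in (y,z)^{2}$), while $G_{1}$ lies in $(y,z)\setminus(y,z)^{2}$, so $d$ lies in $(y,z)^{3}$, as required. The step I expect to be the main obstacle is the contraction estimate: one must bound $R(\psi,z,s)/z^{2}$ and control the dependence of the auxiliary root $\psi$ on $c$ uniformly after shrinking the polydisc (the loss of a factor $r^{-2}$ from dividing by $z^{2}$ must be beaten by taking $r$ small); the conceptual content — that, thanks to $R \in (y,z)^{4}$, the fixed–point equation never requires dividing by $b$ — is exactly what lets the argument work even when $b$ vanishes at the origin.
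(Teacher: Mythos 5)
Your factorization is correct, but it follows a genuinely different route from the paper's. The paper first normalizes $R$ to be divisible by $z^4$ (absorbing the $y^3$, $y^2z^2$, $yz^3$ parts of $R$ into unit modifications of the coefficients — this is where its $(\mathrm{unit})$ comes from), and then observes that with $p=-y-a z$ and $h=y^2+ayz+bz^2$ one has $py+h=bz^2$, which turns the factorization into a single quadratic equation $p\gamma^2+\gamma-R_3=0$ for one unknown germ $\gamma$; this is solved in closed form by the binomial series for $\sqrt{1+4pR_3}$, and $c=z^2\gamma$, $d=z^2\gamma p$ are then verified to work by a two-line computation (so the paper's $c$ and $d$ depend on $y$). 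You instead insist on $c=c(z,s)$, characterize divisibility of $F$ by the reduced germ $G_1$ as vanishing of $F$ on the graph $\{y=\psi(z,s)\}$, and solve the resulting fixed-point equation for $c$ by a Banach contraction; the form of $G_2$ is then recovered a posteriori from the identity $G_1P'=b(bR-cQ)$ together with the vanishing of $bR-cQ$ along $\{G_1=0\}$ and a lowest-degree count. Your version buys a cleaner statement (unit equal to $1$, no preliminary reduction of $R$, and a geometric interpretation of $G_1$ as cutting out a branch of $\{F=0\}$ that is a graph over $(z,s)$), and the key observation — that $R\in(y,z)^4$ and $\psi\in(z^2)$ force $R(\psi,z,s)\in(z^4)$, so the fixed-point map never divides by $b$ — is exactly right. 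The one place where you stop short of a complete proof is the contraction estimate itself; you have correctly identified what must be bounded (the Schwarz-lemma loss of $r^{-2}$ from dividing by $z^2$ against the gain of $r^{3}$ from $\partial_yR\in(y,z)^3$ in $R(\psi_1)-R(\psi_2)$, plus the Lipschitz dependence of $\psi$ on $c$), and these estimates are routine, but they do need to be written out; the paper's explicit quadratic avoids this analytic step entirely.
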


\begin{proof} 
We can write 
\[R(y,z,s) = y^3 R_0(y,z,s) +  y^2z^2 \cdot R_1(z,s)  + yz^3 \cdot R_2(z,s) + z^4 \cdot R_3(y,z,s), \]
where  $R_0(y,z,s) = 0 \mod \, (y,z)$. 
Then we have 
\begin{eqnarray*}
F(y,z,s) &=& y \Big( \big( 1+ b(y,z,s)^2 R_0(y,z,s) \big) y^2  \\ 
          &&  +  \big(a(y,z,s) + z\cdot b(y,z,s)^2 \cdot R_1(z,s)\big)yz  \\ 
          &&  + b(y,z,s) \big(1 +   z \cdot b(y,z,s) \cdot R_2(z,s)\big)z^2 \Big) \\
          &&  + b(y,z,s)^2z^4\cdot R_3(y,z,s).  
\end{eqnarray*}
We note that $ 1+b(y,z,s)^2R_0(y,z,s) = 1 \mod \, (y,z) $. 
In particular, it is a unit in a neighborhood of  $\mathbf{0}_{N+2}$.  
Hence, up to replacing 
  $( 1+ b(y,z,s)^2 R_0(y,z,s) )^{-1} F(y,z,s)$ by $F(y,z,s)$,  
\[
\big( 1+ b(y,z,s)^2 R_0(y,z,s) \big)^{-1} \cdot \big(a(y,z,s) + z\cdot b(y,z,s)^2 \cdot R_1(z,s)\big) \mbox{ by } a(y,z,s),
\] 
\[
\big( 1+ b(y,z,s)^2 R_0(y,z,s) \big)^{-1} \cdot \big(1 +   z \cdot b(y,z,s) \cdot R_2(z,s)\big) \cdot b(y,z,s) \mbox{ by }  b(y,z,s), 
\] 
we may assume that $R(y,z,s)$ is divisible by $z^4$. 
In particular, we can write $R(y,z,s) = z^4R_3(y,z,s).$

We define the following holomorphic functions in $(y,z,s)$, 
\[h(y,z,s) =  y^2+a(y,z,s)yz +  b(y,z,s)  z^2 \mbox{ and }  p(y,z,s)= -y - a(y,z,s)z.\]
Then we have 
\[p(y,z,s)y+   h(y,z,s) = b(y,z,s)  z^2.\]  
Up to shrinking the domain of $F$, we may assume that 
$1 + 4p(y,z,s) R_3(y,z,s)$ admits a square root $\delta(y,z,s)$ of the shape 
\[
\delta(y,z,s) = 1 + \sum_{i=1}^{+\infty} \binom{\frac{1}{2}}{i} \cdot \Big(4p(y,z,s) R_3(y,z,s)\Big)^i.
\]
Then $-1+\delta(y,z,s)$ is divisible by $p(y,z,s)$. 
We set 
\[
\gamma(y,z,s) = \frac{1}{2} \cdot p(y,z,s)^{-1}  \cdot (-1+\delta(y,z,s)). 
\] 
Then $\gamma$ is a holomorphic function and satisfies the equation 
\[
p\gamma^2 + \gamma -  R_3 = 0. 
\]
Let $c(y,z,s) = z^2\cdot \gamma(y,z,s)$ and $d(y,z,s)= z^2\cdot \gamma(y,z,s) \cdot  p(y,z,s)$. 
Then we have 
\begin{eqnarray*}
(y+bc)(h+bd) &=& yh + b(yd+hc) + b^2dc \\
             &=&  yh+ bz^2\gamma\cdot (py + h) + b^2z^4  \gamma^2 p \\  
             &=&  yh+ b^2z^4 \gamma + b^2z^4  \gamma^2 p \\ 
             &=&  yh + b^2z^4R_3  + b^2z^4\cdot (-R_3   + \gamma + p\gamma^2 ) \\
             &=&  F
\end{eqnarray*}
This completes the proof of the lemma. 
\end{proof}

\begin{cor}
    \label{cor:Hensel-2}
    Let $F(y,z,s) = y(y^2  +   z^3 ) +   z^5R(y,z,s)$ be  a holomorphic function defined in a neighborhood of $\mathbf{0}_{2+N}\in \mathbb{C}^{2+N}$, where $N\ge 0$ is an integer,  $(y,z)$ is the first two coordinates of $ \mathbb{C}^{2+N}$,
    and  $s\in \mathbb{C}^N$ is a point which represents the last $N$ coordinates of $ \mathbb{C}^{2+N}$.  
    Then, in a neighborhood of $\mathbf{0}_{2+N}$,  
    there is a factorization $F = G_1G_2$ with $G_1=y+  c(y,z,s )$   and $G_2 = y^2+   z^3 +  d(y,z,s ) +e(s)yz^2$, 
    such that  $ c(y,z,s)$ is divisible by $z^2$, and that terms of $d(y,z,s)$ have degrees at least 4 in $(y,z)$.  
\end{cor}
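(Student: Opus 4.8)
The plan is to deduce the corollary from Lemma~\ref{lemma:Hensel} after a single preliminary change of the coordinate $y$. If one tried to apply Lemma~\ref{lemma:Hensel} directly with $a=0$ and $b=z$, the remainder would be $z^{3}R(y,z,s)$, which is only guaranteed to be divisible by $z^{3}$; the one obstructing monomial is the pure power $z^{3}R(0,0,s)$. Writing $\rho(s):=R(0,0,s)$, the idea is to absorb this term by a Tschirnhaus-type substitution in $y$.

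First I would substitute $y\mapsto y-\rho(s)z^{2}$. A direct computation shows that the pure-$z$ term $\rho(s)z^{5}$ hidden in $z^{5}R$ is cancelled by the $-\rho(s)z^{5}$ coming from $(y-\rho(s)z^{2})z^{3}$, so that
\[
F(y-\rho(s)z^{2},z,s)=y^{3}-3\rho(s)\,y^{2}z^{2}+yz^{3}+3\rho(s)^{2}\,yz^{4}-\rho(s)^{3}z^{6}+z^{5}\bigl(R(y-\rho(s)z^{2},z,s)-\rho(s)\bigr),
\]
and the factor $R(y-\rho(s)z^{2},z,s)-\rho(s)$ vanishes at $(y,z)=(0,0)$, hence is divisible by $(y,z)$. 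Setting $a(z,s)=-3\rho(s)z$ and $b(z,s)=z+3\rho(s)^{2}z^{2}$, one checks that $y\bigl(y^{2}+a\,yz+b\,z^{2}\bigr)$ accounts exactly for the monomials of $F(y-\rho(s)z^{2},z,s)$ of degree at most $5$ that involve $y$, and that the remaining part $-\rho^{3}z^{6}+z^{5}(R(y-\rho z^{2},z,s)-\rho)$ is divisible by $z^{5}$, hence by $b^{2}=z^{2}(1+3\rho^{2}z)^{2}$. Thus $F(y-\rho(s)z^{2},z,s)=y(y^{2}+a\,yz+b\,z^{2})+b^{2}\mathcal{R}$ with
\[
\mathcal{R}=z^{3}(1+3\rho^{2}z)^{-2}\bigl(-\rho^{3}z+R(y-\rho z^{2},z,s)-\rho\bigr),
\]
which is holomorphic and all of whose terms have degree at least $4$ in $(y,z)$, so the hypothesis of Lemma~\ref{lemma:Hensel} is met.

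Next I would apply Lemma~\ref{lemma:Hensel} with these $a,b$ and this $\mathcal{R}$, getting $F(y-\rho z^{2},z,s)=(\mathrm{unit})\cdot\widetilde{G}_{1}\widetilde{G}_{2}$ with $\widetilde{G}_{1}=y+b\,\tilde c$ (where $\tilde c$ is divisible by $z^{2}$) and $\widetilde{G}_{2}=y^{2}+a\,yz+b\,z^{2}+b\,\tilde d$ (where $\tilde d=0\bmod(y,z)^{3}$). Finally I would substitute back $y\mapsto y+\rho(s)z^{2}$ and regroup. Putting $G_{1}=\widetilde{G}_{1}(y+\rho z^{2},z,s)=y+c(y,z,s)$ gives $c=\rho z^{2}+b\,\tilde c(y+\rho z^{2},z,s)$, which is divisible by $z^{2}$ because $\rho z^{2}$ is and because $b$ is divisible by $z$ and $\tilde c$ by $z^{2}$. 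For $G_{2}=\widetilde{G}_{2}(y+\rho z^{2},z,s)$, the expansion $(y+\rho z^{2})^{2}+a(y+\rho z^{2})z+bz^{2}=y^{2}+z^{3}-\rho(s)\,yz^{2}+\rho(s)^{2}z^{4}$ collapses, so $G_{2}=y^{2}+z^{3}+d(y,z,s)+e(s)\,yz^{2}$ with $e(s)=-\rho(s)$ and $d=\rho^{2}z^{4}+b\,\tilde d(y+\rho z^{2},z,s)$, all of whose terms have degree at least $4$ in $(y,z)$ since $b$ has no constant term and $\tilde d=0\bmod(y,z)^{3}$. As every step preserves units, this produces the required factorization.

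The only step that is not pure bookkeeping is spotting the preliminary substitution $y\mapsto y-R(0,0,s)z^{2}$: it is precisely tailored to kill the single bad monomial $z^{3}R(0,0,s)$ and thereby arrange that the leftover remainder has degree at least $4$ in $(y,z)$, which is exactly what Lemma~\ref{lemma:Hensel} requires.
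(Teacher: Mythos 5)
Your proof is correct and follows essentially the same route as the paper's: both hinge on the Tschirnhaus substitution $y\mapsto y-R(0,0,s)z^{2}$ (the paper's $y'=y+C(s)z^{2}$ with $C(s)=R(0,0,s)$) to cancel the obstructing $C(s)z^{5}$ term, after which the function is put in the form required by Lemma \ref{lemma:Hensel} and the factorization is pulled back through the substitution. The paper merely packages the same computation via an auxiliary product $H$ and a dummy variable $u=z$, so there is no substantive difference.
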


\begin{proof}   
We can write  $R(y,z,s) = C(s) + R_1(y,z,s)$, where   the terms of   $R_1(y,z,s )$ have degrees at least $1$ in $(y,z)$.  
It follows that 
\begin{eqnarray*}
       H(y,z,s)&:=& (y+C(s) \cdot  z^2) (y^2+z^3 -C(s)\cdot z^2y + C(s)^2\cdot z^4) \\ 
       & \ = &y(y^2+z^3) + C(s)\cdot z^5 + C(s)^3\cdot z^6 \\
       &\ =& F(y,z,s) + z^5 \cdot (C(s)^3z - R_1(y,z,s))
\end{eqnarray*}
We set   ${y'}= y+C(s)z^2$.  
Then we have 
\begin{eqnarray*}
H(y,z,s)& =& {y'}\Big( ({y'}-Cz^2)^2 + z^3 - Cz^2({y'}-Cz^2) + C^2z^4 \Big)  \\ 
       & = & {y'}\Big(  {y'}^2 - 3C{y'}z^2 +  z^3(1+3C^2z) \Big).
\end{eqnarray*}  
Thus we can write  
\[
F(y,z,s)={y'}\left(  {y'}^2 - 3C{y'}z^2 +  z^3(1+3C^2z)\right) + z^5R_2({y'},z,s) 
\]
for some holomorphic function $R_2$, whose terms have degrees at least 1 in $(y',z)$. 
Let $u=z$, then we have 
\[
    F(y,z,s)=\overline{F}({y'},z,s,u) =  {y'}({y'}^2  + a(u,s)\cdot {y'}z +  n(u,s)  u \cdot z^2 ) +   u^2 \cdot z^3R_2({y'},z,s ),  
\]
where $a(u,s)= -3C(s)u$,  and $n(u,s) = (1+3C(s)^2u)$ is a unit.  
Hence we can apply  Lemma \ref{lemma:Hensel} to $\overline{F}(y',z,s,u)$,   
and deduce a factorization $\overline{F}= \overline{G}_1\overline{G}_2$ with $\overline{G}_1={y'} +  \overline{c}({y'},z, s )$ such that $\overline{c}$ is divisible by $z^2$.   
Furthermore, we have 
\[\overline{G}_2 =  {y'}^2 - 3C(s){y'}z^2 +  z^3(1+3C(s)^2z) +  \overline{d}({y'},z,s ), \]
where the terms of $\overline{d}$ have degrees at least $4$ in $(y',z)$.
Since $y'= y+ C(s)z^2$, we obtain a factorization $ {F}= {G}_1 {G}_2$ with $G_1={y}+  c(y,z,s)$ and $G_2 = y^2+ z^3 + d(y,z,s)+e(s)yz^2$,  
such that $c$ is divisible by $z^2$ and  that the terms of $d$  have degree at least $4$ in $(y,z)$.
This completes the proof of the corollary.
\end{proof}

\subsection{Case of $A$-type singularities}
In this subsection, we  treat Proposition \ref{prop:good-shape} in the case when $X$ has $A$-type singularities at points of $S\setminus H$.

\begin{lemma}
  \label{lemma:blowup-equation-A} 
  Proposition \ref{prop:good-shape} holds if $X$ has $A$-type singularities at points of $S\setminus H$. 
\end{lemma}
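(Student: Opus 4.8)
The plan is to transform the standard-form equation of $X$ into the normal form $x^{2}+\alpha(s)y^{2}+\beta(s)z^{l}$ of Proposition~\ref{prop:good-shape}(1): first normalise the quadratic part $F_{2}$, then absorb all the higher-order terms, the key point being that the blowups of Subsection~\ref{subsection:blowup-divisor} put arbitrarily high powers of $T_{1}\cdots T_{n}$ in front of those terms.

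\textit{Normalising $F_{2}$.} By hypothesis $X$ has $A_{r}$-type singularities at the points of $S\setminus H$ for one fixed $r\ge 1$ (Setup~\ref{setup-local}); so for $s\notin H$ the transversal slice $\{F(x,y,z,s)=0\}$ of $X$ is a surface of type $A_{r}$, whence $F_{2}(\cdot,s)\not\equiv 0$ by Lemma~\ref{lemma:ADE}(1), and therefore $F_{2}\not\equiv 0$. By the definition of standard form, $F_{2}=a(s)G_{2}(y,z,s)$ with $a$ vanishing only along $H$ and $G_{2}(\cdot,s)$ a non-zero binary quadratic form for every $s$. A $\mathbb{C}$-linear change of $(y,z)$ making the $y^{2}$-coefficient of $G_{2}(\cdot,o)$ a unit, followed by completing the square in $y$, reduces to $F_{2}=\alpha_{0}(s)y^{2}+\beta_{0}(s)z^{2}$ with $\alpha_{0}$ a unit times $a$, so $\alpha_{0}$ vanishes only along $H$; these are coordinate changes of $\mathbb{D}^{3}\times S$ fixing $x$, fixing $S$ and the $s$-coordinates, and preserving the standard form. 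Inspecting $\operatorname{rk}F_{2}(\cdot,s)$: if $r=1$ this rank equals $2$ for every $s\notin H$ (otherwise the double point would be worse than $A_{1}$), which forces $\beta_{0}$ to vanish only along $H$ (and I will obtain $l=2$); if $r\ge 2$ the rank is $\le 1$ off $H$, forcing $\beta_{0}\equiv 0$, i.e.\ $F_{2}=\alpha_{0}(s)y^{2}$ (and $l$ will turn out to be $r+1$).

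\textit{Gaining divisibility and removing the perturbation.} Any holomorphic function on the polydisc $S$ whose zero locus lies in $H$ is a unit times a monomial in the $T_{i}$. After the blowups the degree-$k$ part ($k\ge 3$) of the equation carries a factor $(T')^{k-2}$ with $T'=\prod T_{i}^{m_{i}}$ and $m_{i}\ge N$ (Subsection~\ref{subsection:blowup-divisor}); so for $N$ large the higher-order remainder $\Xi$ of the equation is divisible by $\alpha_{0}$ — and by $\alpha_{0}\beta_{0}$ when $r=1$ — to as high an order as desired. With this divisibility I would absorb $\Xi$ into a coordinate change in $(y,z)$: solving a functional equation — by iteration, convergent by order considerations, or equivalently by the holomorphic implicit function theorem — produces a biholomorphism of $\mathbb{D}^{3}\times S$ fixing $x$ and the $s$-variables after which $\Xi$ is gone, up to a remainder depending only on $(z,s)$ and divisible by $\alpha_{0}$. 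Combined with the rescaling $x\mapsto(\mathrm{unit})^{1/2}x$ followed by dividing the equation by that unit — which deletes a unit factor while keeping the coefficient of $x^{2}$ equal to $1$ — this brings the equation to $x^{2}+\alpha_{0}(s)y^{2}+\beta_{0}(s)z^{2}$ when $r=1$, which is already the first normal form of Proposition~\ref{prop:good-shape}(1) with $l=2$, and to $x^{2}+\alpha_{0}(s)y^{2}+\alpha_{0}(s)w(z,s)$ with $w$ vanishing to order $\ge 3$ in $z$ when $r\ge 2$.

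\textit{Fixing $z^{l}$ when $r\ge 2$, and bookkeeping.} Write $w(z,s)=\sum_{k\ge 3}w_{k}(s)z^{k}$. For $s\notin H$, rescaling $y$ by $\alpha_{0}(s)^{-1/2}$ at that fixed $s$ turns the $A_{r}$ transversal slice into $\{x^{2}+y^{2}+\alpha_{0}(s)w(z,s)=0\}$, so $\operatorname{ord}_{z}w(z,s)=r+1$; hence $w_{k}\equiv 0$ for $k\le r$ and the zero locus of $w_{r+1}$ lies in $H$, giving $w=z^{r+1}W(z,s)$ with $W(0,s)=w_{r+1}(s)$. The equation $x^{2}+\alpha_{0}y^{2}+\alpha_{0}z^{r+1}W(z,s)$ is again of the type handled in Subsection~\ref{subsection:blowup-divisor}, its parts of degree $\ge r+1$ being purely in $z$; so after $N'$ more blowups at the $H_{i}$ the factor $W$ is replaced by $(T'')^{r-1}\bigl(w_{r+1}(s)+T''z\,w_{r+2}(s)+\cdots\bigr)$, and taking $N'$ at least the vanishing orders of $w_{r+1}$ along the $H_{i}$ — which are bounded in terms of the fixed equation of $X$ and of $N$ — the bracket becomes $w_{r+1}(s)$ times a unit, which the inverse-function-theorem substitution $\tilde z=z\cdot(\mathrm{unit})^{1/(r+1)}$ turns into $\tilde z^{r+1}$. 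The equation is then $x^{2}+\alpha(s)y^{2}+\beta(s)\tilde z^{r+1}$ with $\alpha=\alpha_{0}$ and $\beta=\alpha_{0}(T'')^{r-1}w_{r+1}$, both with zero locus in $H$ — the first normal form of Proposition~\ref{prop:good-shape}(1) with $l=r+1\ge 2$. As no coordinate change above touches the $s$-variables, the exceptional divisor of $Y\to X$ is still cut out by $T_{1}\cdots T_{n}$. The main obstacle is precisely this removal of the perturbation: it works only because the blowups supply enough divisibility, and one must verify both that the absorption converges to a genuine biholomorphism and that $N$ can be fixed in advance — in terms of the finitely many vanishing orders attached to the given equation of $X$ — rather than after the blowups have been carried out.
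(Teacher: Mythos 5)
Your proposal is correct and follows essentially the same route as the paper: normalize $F_2$ by a linear change and completion of the square, use the blowups of Subsection \ref{subsection:blowup-divisor} (via Remark \ref{rmk:blowup-divisor-1}) to make $a(s)$ and then the leading $z$-coefficient divide the higher-order terms, and read off the exponent $l+1$ from the $A_l$ classification of Lemma \ref{lemma:ADE}. The one step you leave at sketch level --- ``absorbing'' the remainder into a $(y,z)$-coordinate change up to a function of $(z,s)$ only --- is exactly what the paper makes precise with the Weierstrass preparation theorem in $y$ followed by completing the square (the unit factor being removed by rescaling $x$), and your case split $r=1$ versus $r\ge 2$ is unnecessary since that uniform argument handles both at once.
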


\begin{proof} 
We may assume that $X\subseteq \mathbb{D}^3 \times S$ is defined by an equation of standard form with respect to $(S,H)$, which is of  the shape   
\[ x^2 + F_2(y,z,s) + F_3(y,z,s)+R(y,z,s) = 0, \]
where $F_2$ and $F_3$ are homogeneous in $(y,z)$ of degree 2 and 3 respectively, and $R$ is a holomorphic function such that $R(y,z,s)  = 0 \mod \, (y,z)^3$.  
By assumption, when  $s\not\in H$, the term $F_2(y,z,s)$ is not zero (see Lemma \ref{lemma:ADE}).  
Since the defining equation is of standard form with respect to $(S,H)$, 
up to a linear change of the coordinates $(y,z)$, we may assume that 
$F_2(y,z,s) = a(s)(y^2 + b(s)yz + c(s)z^2)$, where the zero locus of $a$ is contained in $H$.  
By blowing up $X$  at the   $H_i$'s for several times,  
we may assume that $a(s)$ divides $F_3$ and $R$ (see Remark  \ref{rmk:blowup-divisor-1}).  
Hence $X$ is defined by  
\[
x^2 + a(s) \Big( y^2 + b(s)yz + c(s)z^2 +R'(y,z,s) \Big) = 0,
\]
where  $R' (y,z.s) = 0 \mod \,  (y,z)^3$. 
By applying Weierstrass preparation theorem to the function in the parenthesis after $a(s)$ above,  
with respect to the variable $y$, we see that $X$ is defined by 
\[
x^2 + a(s)\cdot  ( \mathrm{{unit}}) \cdot  ( y^2  + y \cdot R_1(z,s) + R_2(z,s) ) = 0,
\]
for some holomorphic functions $R_1$ and  $R_2$.   
Replacing $y$ by $y-\frac{1}{2}R_1(z,s)$,  the defining equation becomes 
\[
x^2 + a(s) \cdot ( \mathrm{{unit}}) \cdot  \left(   y^2 +     z^e \cdot R_3(z,s)  \right) =0, 
\]
where $e\ge 2$ is an integer and  $R_3$ is not divisible by $z$. 

By assumption, there is some integer $l\ge 1$ such that $X$ has $A_l$-type singularities  at any point $s \in S\setminus H$.  
Hence $e=l+1$,  and 
if $b(s)$ is the term in $R_3$ of degree $0$ in $z$, then  the zero locus of $b(s)$ is contained in $H$. 
Thus, by blowing up $X$  at the   $H_i$'s,   
we may assume that $b(s)$ divides $R_3(z,s)$, see Remark \ref{rmk:blowup-divisor-1}.  
Then the defining equation is of  the shape 
\[
x^2+ a(s) \cdot (\mathrm{unit})\cdot  y^2 + b(s) \cdot (\mathrm{unit})\cdot z^{l+1} = 0, 
\]
where the zero locus of $b(s)$ is contained in $H$. 
%In a polydisc around the origin of $\mathbb{D}^3\times S$, the two units in the previous equation admit square root and $r$-th root respectively. 
Hence, up to a change of coordinates, the defining equation becomes 
\[
x^2 + \alpha(s) y^2 + \beta(s) z^{l+1} = 0,  
\]
as in the statement of Proposition \ref{prop:good-shape}.
In the end, we note that if we continue blowing up $X$ at some $H_i$, then the defining equation remains the same shape. 
This completes the proof of the lemma. 
\end{proof}

\subsection{Case of $E$-type singularities}

In this subsection, we  treat Proposition \ref{prop:good-shape} in the case when $X$ has $E$-type singularities at points of $S\setminus H$.  
We will discuss the cases of $E_6$, $E_7$ and $E_8$ separately in three lemmas.

\begin{lemma}
\label{lemma:blowup-equation-E6} 
Proposition \ref{prop:good-shape} holds if $X$ has $E_6$-type singularities at points of $S\setminus H$. 
\end{lemma}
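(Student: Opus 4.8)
The plan is to first pin down the shape of the degree-three part $F_3$, and then to use the $E_6$ criterion of Lemma~\ref{lemma:ADE} together with the blowups described in Subsection~\ref{subsection:blowup-divisor} to bring the higher-order terms into the stated form. Since $X$ has $E$-type singularities at the points of $S\setminus H$, Lemma~\ref{lemma:ADE}(4) forces $F_2(\cdot,\cdot,s)=0$ and $F_3(\cdot,\cdot,s)$ to be a cube for every $s\notin H$. Because $S\setminus H$ is dense, the first alternative in the definition of standard form is impossible --- it would make $F_2(\cdot,\cdot,s)$ nonzero there --- so $F_2\equiv 0$ and $F_3=b(s)G_3(y,z,s)$, where $G_3(\cdot,\cdot,s)$ is a nonzero binary cubic for all $s$ and the zero locus of $b$ lies in $H$. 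The Hessian of the binary form $G_3$ has coefficients holomorphic in $s$ and vanishes exactly where $G_3(\cdot,\cdot,s)$ is a perfect cube, hence it vanishes on the dense set $S\setminus H$ and so identically; thus $G_3$ is a perfect cube over the fraction field of $\mathcal{O}_{S,o}$. Its triple root defines a map $S\to\mathbb{P}^1$ which is holomorphic on all of $S$ (as $G_3(\cdot,\cdot,s)\ne 0$ everywhere forces an extreme coefficient of $G_3$ to be nonzero at each point) and which lifts over the polydisc $S$ to a pair of holomorphic functions with no common zero; after possibly interchanging $y$ and $z$ --- allowed since the standard form is symmetric in $y,z$ --- we may write $G_3=(\mathrm{unit})\cdot(y-\rho(s)z)^3$ with $\rho$ holomorphic near $o$. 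Substituting $y\mapsto y+\rho(s)z$, dividing $F$ by a unit, and rescaling $x$ accordingly (legitimate after shrinking $S$, by Setup~\ref{setup-local}), I reduce to
\[
F(x,y,z,s)=x^2+\alpha(s)\,y^3+R(y,z,s),\qquad R\in(y,z)^4 ,
\]
where the zero locus of $\alpha$ lies in $H$ (or $\alpha$ is a unit).

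Over $S\setminus H$ I next apply the normalisation of \cite[4.25]{KollarMori1998} fibrewise, bringing each $E_6$ fibre into the normal form $x^2+y^3+z^4$; tracking these coordinate changes holomorphically in $s$ and clearing denominators along $H$, I obtain holomorphic functions $\alpha(s),\beta(s),\gamma(s)$ with zero loci in $H$ and a holomorphic $u(y,s)$ so that the identification $F\cong x^2+\alpha(s)y^3+\beta(s)\bigl(\gamma(s)z+u(y,s)y\bigr)^4$ holds over $S\setminus H$, the coefficient $\beta$ being nonzero off $H$ precisely because of the $E_6$ criterion ($u_b\ne 0$) in Lemma~\ref{lemma:ADE}. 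The point is to make this identification extend across $H$, and the only obstruction is that the steps isolating the $z^4$-coefficient, completing the fourth power and absorbing the tail of $R$ require inverting $\alpha,\beta,\gamma$, hence are a priori only meromorphic along $H$. This is exactly what the blowups at the $H_i$'s are for: by Remark~\ref{rmk:blowup-divisor-1} I blow up the $H_i$'s so that the product $\alpha\beta\gamma$ divides all the higher-order error terms involved, and by Remark~\ref{rmk:blowup-divisor-2} I blow up further so that $F$ becomes a holomorphic function of $(x,y,\gamma(s)z,s)$; after these blowups the coordinate changes become holomorphic, the tail of $R$ is absorbed, and the defining equation of $X$ near $o$ becomes $x^2+\alpha(s)y^3+\beta(s)\bigl(\gamma(s)z+u(y,s)y\bigr)^4$ --- the third shape in case~(1) of Proposition~\ref{prop:good-shape}, with $l=4$. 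Taking $N$ to be the maximum of the vanishing orders of $\alpha,\beta,\gamma$ along the finitely many components $H_i$ makes the construction work for any sequence of blowups with at least $N$ blowups at each $H_i$, and by Remark~\ref{rmk:blowup-order} their order is irrelevant; the assertion that $\{T_1\cdots T_n=0\}\cap U$ is the exceptional locus $\Delta$ of $f\colon Y\to X$ is then immediate from the chart description in Subsection~\ref{subsection:blowup-divisor}, where that exceptional divisor is cut out by $T'=\prod_{i=1}^{n}T_i^{m_i}$.

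The main obstacle is the second step: executing the \cite[4.25]{KollarMori1998}-style normalisation uniformly in $s$ while keeping everything holomorphic. The delicate part is to arrange, through the blowups of Remarks~\ref{rmk:blowup-divisor-1} and~\ref{rmk:blowup-divisor-2}, that every coefficient one would naively have to invert --- all of which vanish only along $H$ --- instead divides the higher-order terms that are to be absorbed, so that the absence of moduli for the $E_6$ singularity passes from the generic fibre to the whole family over $S$. By contrast, the extraction of the cube $F_3$ in the first step is routine once the Hessian is seen to vanish identically, and the final assertion on the exceptional locus is formal.
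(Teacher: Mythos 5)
Your Step 1 (forcing $F_2\equiv 0$, recognizing $F_3$ as a cube via the vanishing of the Hessian, and reducing to $F=x^2+\alpha(s)y^3+R$ with $R=0\bmod (y,z)^4$) is correct and matches the paper's opening move. The problem is Step 2, which is where the entire content of the lemma lives and which you only assert. Two specific gaps. First, you never perform the Weierstrass preparation in $y$ that packages $y^3+R$ into $y^3+\omega_0(z,s)z^2y^2+\omega_1(z,s)z^{3+p}y+\omega_2(z,s)z^{4+q}$ (followed by the Tschirnhaus substitution killing $\omega_0$, and the application of Lemma \ref{lemma:ADE} at points of $S\setminus H$ to conclude $p=q=0$ and that $\omega_2(0,0,s)$ vanishes only along $H$). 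Without this step the tail $R$ contains monomials of arbitrarily high pure $z$-degree and of $y$-degree $\ge 4$, and no choice of holomorphic $u(y,s)$ (which is independent of $z$) can absorb a $z^6$ term, say, into $\beta(s)(\gamma(s)z+u(y,s)y)^4$; nor can such a term be absorbed into $\alpha(s)y^3$.

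Second, and more seriously, the claim that Remarks \ref{rmk:blowup-divisor-1} and \ref{rmk:blowup-divisor-2} "make the coordinate changes holomorphic" does not hold. Those remarks control divisibility of \emph{strictly higher-degree} terms by the coefficient of a fixed lower-degree term. What must actually be controlled to complete the fourth power is the relation \emph{among the degree-4 coefficients themselves}: writing the degree-4 part as $v(s)z^3y+\eta\, b(s)z^4+\cdots$, one needs (after rescaling) the $z^3y$-coefficient to become divisible by the $z^4$-coefficient $b(s)$ so that $u$ comes out holomorphic. A blowup at the $H_i$'s multiplies \emph{all} degree-4 coefficients by the same power $(T')^{2}$, so it leaves the ratio $v(s)/b(s)$ untouched; Remark \ref{rmk:blowup-divisor-1} cannot create the needed divisibility. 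The paper's actual mechanism is the weighted regrouping of variables after blowing up $2e$ times with $e=3e'$ and $T^{e'}$ divisible by $b(s)$: one sets $Y=T^{2e'}y$, $Z=T^{3e'}z$, so that $T^{2e}y^3=Y^3$, $T^{4e}\eta b z^4=\eta b Z^4$, while $T^{4e}v z^3y=(vT^{e'})\,Z^3Y$ with $vT^{e'}=b\cdot m$ holomorphic. This arithmetic choice (the factor $3$ in $e=3e'$, matching the exponent gap between $Z^3Y$ and $Y^3$) is the crux of the proof and is absent from your argument; the subsequent completions of the fourth power and of the cube, and the unit-juggling that turns $R_3R_4T^ez$ into $\gamma(s)z$ after a coordinate change, all depend on it. As written, your proof defers precisely the step the lemma is about.
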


\begin{proof}
We may assume that $X\subseteq \mathbb{D}^3 \times S$ is defined by an equation of standard form with respect to $(S,H)$, which is of the shape
    \[ x^2 + F_2(y,z,s) + F_3(y,z,s)+R(y,z,s) =0, \]
where $F_2$ and $F_3$ are homogeneous in $(y,z)$ of degree 2 and 3 respectively, and $R(y,z,s) = 0 \mod \, (y,z)^4$.   
By Lemma \ref{lemma:ADE}, when  $s\not\in H$, the term $F_2$ is  zero and $F_3$ is a cube.  
It follows that $F_2=0$. 
Moreover, since the equation is of standard form with respect to $(S,H)$, 
up to shrinking $X$ and  up to a linear change of the coordinates $(y,z)$, 
we can assume that   
$F_3$ is  of the shape $F_3(y,z,s) = a(s)(y-u(s)z)^3$ for some holomorphic function  $a(s)$ whose zero locus is contained in $H$, 
and for some holomorphic function $u(s)$.   
Replacing $y$ by $y+u(s)z$,  we may assume that  
\[ F_3(y,z,s)=a(s)y^3.\]  

By blowing up $X$ for several times at the $H_i$'s as in Subsection \ref{subsection:blowup-divisor}, we may assume that $a(s)$ divides $R(y,z,s)$, see Remark \ref{rmk:blowup-divisor-1}. 
Then we can write  
\begin{eqnarray*}
    F_3(y,z,s) + R(y,z,s)  
  = a(s) \cdot  (y^3 + \cdots  ), 
\end{eqnarray*}
By applying  Weierstrass preparation theorem to the series in the parenthesis after $a(s)$ in the equation above, with respect to $y$, 
we can write
\begin{eqnarray*}
     F_3(y,z,s) + R(y,z,s) &=& (\mathrm{unit})\cdot  a(s) \cdot  \Big( y^3   + \omega_0(z,s)z^2y^2 \\ 
                            &&  +  \omega_1(z,s)z^{3+p}y + \omega_2(z,s)z^{4+q} \Big), 
\end{eqnarray*}
where $q,  p  \ge 0$ are integers,   $\omega_1$ and  $\omega_2$ are either zero or non divisible by $z$. 
Up to replacing $y$ by $y- \frac{1}{3}\omega_0(z,s)z^2$,   we can assume that  
\[
F_3(y,z,s) + R(y,z,s) = (\mathrm{unit})\cdot  a(s) \cdot  \Big( y^3   + \omega_1(z,s)z^{3+p}y + \omega_2(z,s)z^{4+q} \Big).  
\]    
By merging the terms of $\omega_1(z,s)z^{3+p}y$ divisible by $z^{4+q}$ into $ \omega_2(z,s)z^{4+q}$,  we can assume that $q\ge p$ and write 
\[
F_3(y,z,s) + R(y,z,s) = (\mathrm{unit})\cdot  a(s) \cdot  \Big( y^3   + v(z,s)z^{3+p}y + w(y,z,s)z^{4+q} \Big),  
\]
such that either $v$ is zero or $z$ does not divide $v$. 
Moreover,  the terms of  $v(z,s)$ have degrees at most $q-p$ in $z$. 
In addition, either $w(y,z,s)$ is  zero, or  $w(0,0,s)\neq 0$ for a general point $s\in S$.   
\\

Since $X$ has $E_6$-type singularities at points of $S\setminus H$, by Lemma \ref{lemma:ADE}, 
we deduce that  $q = p = 0$, and  $v(z,s) = v(s)$ is a holomorphic function on $S$. 
Lemma \ref{lemma:ADE} also implies that $w(y,z,s) = b(s) +      b'(y, z,s)$,  where
the zero locus of $b(s)$ is contained in $H$ and   $b'(y,z,s) = 0 \mod \, (y,z)$.  
By blowing up $X$ at the  $H_i$'s for several times as in Subsection \ref{subsection:blowup-divisor}, 
we may assume that $w(y,z,s)= \eta(y,z,s) \cdot b(s)$, where $\eta$ is a unit, see Remark \ref{rmk:blowup-divisor-1}. 

Let  $T=T_1\cdots T_n$.
Since the zero locus of   $b(s)$ is contained in $H=\{T=0\}$, there is an integer $e'\ge 0$ such that $T^{e'}$ is divisible by  $b(s)$. 
%, and let $k\ge 0$ be the greatest integer such that $T_i^k$ divides $v(s)$ for some $i\in \{1,...,n\}$.  
Let  $e=3e'$. 
By blowing up $X$ at each $H_i$ for  $2e$ times as in Subsection \ref{subsection:blowup-divisor},  
the defining function $F(x,y,z,s)$ becomes   
  \[
  F(x,y,z,s) = x^2 + (\mathrm{unit})\cdot  a(s) \cdot  \Big(   T^{2e}y^3 +  T^{4e} v(s) z^{3}y +  \eta(y,z,s) \cdot b(s) \cdot  T^{4e} z^{4}
  \Big), 
  \] 
where, by abuse of notation, we still denote $\eta(T^{2e}y,T^{2e}z,s)$ by $\eta(y,z,s)$.    
Since  $e-2e' = e'$, by the choice of $e'$,  there is a function $m(s)$   such that 
\[b(s) \cdot  m(s) \cdot T^{2e'}  = v(s) \cdot T^{e}.
\] 
Hence we can write  
\begin{eqnarray*}
F(x,y,z,s) &=& x^2 + (\mathrm{unit})\cdot  a(s) \cdot   \Big( T^{2e}y^3 \\ 
           &&  +   b(s) \cdot  m(s) \cdot   (T^{e} z)^{3} \cdot  T^{2e'}y  \\ 
           &&  +  \eta(y,z,s)\cdot b(s) \cdot  (T^{e} z)^{4}
  \Big).       
\end{eqnarray*} 
%We can apply \ref{operation-second-highest}  to the variable $T^e z$ in the function above.  
We set \[\zeta = T^e z   +  \frac{1}{4} \eta(y,z,s)^{-1} m(s) \cdot T^{2e'}y,\]
%where the unit here is a holomorphic function of $(z,s)$,  
so that 
\begin{eqnarray*}
   F(x,y,z,s) &=& x^2 +   (\mathrm{unit})\cdot  a(s) \cdot  \Big( T^{2e}y^3 + \nu_1(y,z,s)\cdot b(s)  (T^{2e'}y)^4  \\ 
    && + \nu_2(y,z,s)\cdot b(s) \zeta \cdot (T^{2e'}y)^3 \\
   && +    \nu_3(y,z,s)\cdot b(s) \zeta^2\cdot (T^{2e'}y)^2+    \eta(y,z,s)\cdot  b(s)\zeta^{4} \Big), 
\end{eqnarray*}
for some holomorphic functions $\nu_1,\nu_2, \nu_3$. 
We remark that $T^{2e}y^3 = (T^{2e'}y)^3$ and we  consider the following part in the last expression on $F(x,y,z,s)$ above, 
\begin{eqnarray*}
    &&     (T^{2e'}y)^3  + \nu_1(y,z,s)\cdot b(s)  (T^{2e'}y)^4 \\ 
    &&  +\nu_2(y,z,s)\cdot b(s) \zeta \cdot (T^{2e'}y)^3     \\
    && +\nu_3(y,z,s)\cdot b(s) \zeta^2\cdot (T^{2e'}y)^2 \\
    &=&  ( \mathrm{unit}) \cdot ( T^{2e'}y)^3  +\nu_3(y,z,s)\cdot b(s) \zeta^2\cdot (T^{2e'}y)^2. 
\end{eqnarray*}
It follows that there are  some holomorphic functions $\mu(y,z,s)$ and $\sigma(y,z,s)$ such that  the function $F$ can be written as 
\begin{eqnarray*}
    F(x,y,z,s) &=& x^2 +  (\mathrm{unit})\cdot  a(s) \cdot  \Big( (\mathrm{unit})\cdot  (T^{2e'}y + \mu(y,z,s) \zeta^2 )^3 \\ 
               && +    \sigma(y,z,s) \cdot b(s) \zeta^4    +   \eta(y,z,s)\cdot b(s)\zeta^4 \Big).
\end{eqnarray*}  
Furthermore, $\sigma(y,z,s) = 0 \mod \, (y,z)$.  
Hence the function $F$ can be written as 
\[
 F(x,y,z,s) = x^2 +  a(s)  \cdot (\mathrm{unit}) \cdot  \Big( (\mathrm{unit})\cdot  (T^{2e'}y + \mu(y,z,s) \zeta^2 )^3   +   (\mathrm{unit})\cdot b(s)\zeta^4 \Big).
\]
We recall  that $\zeta =  T^e z   +  \frac{1}{4} \eta(y,z,s)^{-1} m(s) \cdot T^{2e'}y$.  
Thus  there is some holomorphic function $\mu'(y,z,s)$ such that 
\begin{eqnarray*}
T^{2e'}y + \mu(y,z,s) \zeta^2 &=&  (\mathrm{unit}) \cdot  T^{2e'} y + \mu(y,z,s)T^{2e}z^2\\
                              & & (\mathrm{unit}) \cdot  T^{2e'} \Big(y + \mu'(y,z,s) T^{2e-2e'}z^2\Big). 
\end{eqnarray*}
If we set $\overline{y}= y + \mu'(y,z,s) T^{2e-2e'}z^2$, 
then $y-\overline{y}$ is divisible by $T^{2e-2e'}z^2$.
We deduce that  
\begin{eqnarray*}
\zeta &=& T^e z   +  \frac{1}{4} \eta(y,z,s)^{-1} m(s) \cdot T^{2e'}y \\
      &=&     T^e z   +   \frac{1}{4} \eta(y,z,s)^{-1} m(s) \cdot T^{2e'}\overline{y} +   \frac{1}{4} \eta(y,z,s)^{-1} m(s) \cdot T^{2e'}(y-\overline{y})     \\ 
       &=&   (\mathrm{unit}) \cdot  T^e z   +   \frac{1}{4} \eta(y,z,s)^{-1} m(s) \cdot T^{2e'}\overline{y}   \\
      &=& (\mathrm{unit}) \cdot  \Big( (\mathrm{unit}) \cdot T^e z  +  m(s) \cdot T^{2e'}\overline{y}\Big).
\end{eqnarray*}
Therefore, up to replacing  $\overline{y}$ by  $y$, the function $F$ can be written as 
\begin{eqnarray*}
    F(x,y,z,s) &=& x^2 +  R_1(y,z,s)\cdot a(s)  \cdot T^{2e} y^3 \\
      && +   R_2(y,z,s)\cdot a(s)b(s) \cdot  ( R_3(y,z,s)  T^e z   +  m(s) \cdot T^{2e'}y)^4,  
\end{eqnarray*}
where $R_1,R_2,R_3$ are units. 
We let $R_4=R_2R_1^{-1}$  and   $R_5 = R_1R_4^{-3}=R_2R_4^{-4}$, so that  
\begin{eqnarray*}
    F(x,y,z,s) &=& x^2 +  R_1R_4^{-3}\cdot a(s) \cdot  T^{2e} (R_4 y)^3 \\
      && +   R_2R_4^{-4}\cdot a(s)b(s) \cdot ( R_3R_4  T^e z   +  m(s) \cdot T^{2e'}R_4y)^4.  
\end{eqnarray*} 
Up to replacing $y$ by $R_4^{-1}y$, we deduce that $F$ is of the shape
\begin{eqnarray*}
    F(x,y,z,s) &=& x^2 +  R_5\cdot a(s) \cdot T^{2e} y^3 \\
      && +   R_5\cdot a(s)b(s) \cdot  ( R_3R_4  T^e z   +  m(s) \cdot T^{2e'}y)^4.   
\end{eqnarray*}
Then $X$ is defined by 
\begin{eqnarray*}
   R_5^{-1} x^2 +   a(s) \cdot T^{2e} y^3   +    a(s)b(s)  \cdot ( R_3R_4 T^e z   +    m(s) \cdot T^{2e'}y)^4 =0.   
\end{eqnarray*} 
Hence    there is a change of coordinates  so that $X$ can be defined by 
\[
x^2 +   \alpha(s)  y^3 +   \beta(s) (\gamma(s) z   +  u(y,s)y)^4 =0, 
\]
where the zero loci of $\alpha(s)$, $\beta(s)$ and $\gamma(s)$ are contained in $H$. 
In the end, we note that if we continue blowing up $X$ at some $H_i$, then the defining equation remains the same shape. 
This completes the proof of the lemma.
 \end{proof}

In the following lemma, we treat the case of $E_8$-type  singularity. 
Its proof proceeds in the same way as the one for the case of $E_6$-type singularity. 
 
\begin{lemma}
\label{lemma:blowup-equation-E8} 
Proposition \ref{prop:good-shape} holds if $X$ has $E_8$-type singularities at points of $S\setminus H$. 
\end{lemma}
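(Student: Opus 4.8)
The plan is to run, essentially verbatim, the argument of Lemma \ref{lemma:blowup-equation-E6}, with the exponents adjusted to match the $E_8$ entry of Lemma \ref{lemma:ADE}. First I would reduce, exactly as in the $E_6$ case, to the situation in which $X\subseteq \mathbb{D}^3\times S$ is defined by $x^2+F_2+F_3+R$ with $R = 0 \mod \, (y,z)^4$. Lemma \ref{lemma:ADE} forces $F_2=0$ and $F_3$ to be a cube at points of $S\setminus H$, hence $F_2=0$ identically; after a linear change in $(y,z)$ and a translation $y\mapsto y+u(s)z$ one arranges $F_3=a(s)y^3$ with the zero locus of $a$ contained in $H$. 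Blowing up the $H_i$'s so that $a(s)$ divides $R$ (Remark \ref{rmk:blowup-divisor-1}), then applying the Weierstrass preparation theorem in $y$ together with a translation $y\mapsto y-\frac{1}{3}\omega_0(z,s)z^2$ killing the $y^2$ term, the defining equation takes the shape
\[
x^2+(\mathrm{unit})\cdot a(s)\bigl(y^3+v(z,s)z^{3+p}y+w(y,z,s)z^{4+q}\bigr)=0,
\]
with integers $q\ge p\ge 0$, where $v$ is either zero or not divisible by $z$ and of $z$-degree at most $q-p$, and $w$ is either zero or satisfies $w(0,0,s)\neq 0$ for general $s$.

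Next I would feed this into the $E_8$ part of Lemma \ref{lemma:ADE}: it gives $4+q=5$, that is $q=1$, forces $w(0,0,s)$ to vanish only along $H$, and requires $v=0$ or $3+p\ge 4$; combined with $q\ge p$ this means $p=1$ whenever $v\neq 0$. In the degenerate subcase $v=0$ the equation is $x^2+(\mathrm{unit})\cdot a(s)(y^3+w(y,z,s)z^5)$, and after further blow-ups of the $H_i$'s arranging $w=\eta\cdot b(s)$ with $\eta$ a unit and the zero locus of $b$ in $H$ (Remark \ref{rmk:blowup-divisor-1}), a coordinate change gives $x^2+\alpha(s)y^3+\beta(s)z^5=0$, which is already the required shape with $u\equiv 0$. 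So I may assume $v\neq 0$ and $p=q=1$, and write $v(z,s)=v(s)$.

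From here the computation is the one in the $E_6$ proof with the homogeneous degrees $3,4$ replaced by $4,5$. I would blow up each $H_i$ enough times, with multiplicities a suitable common multiple of the vanishing orders of $a$ and $b$ along the $H_i$ (this is the role of the choice $e=3e'$ in the $E_6$ argument), so that the three monomials acquire matching powers of $T=T_1\cdots T_n$; then I would introduce $\zeta=T^{e}z+(\mathrm{unit})\cdot m(s)\,T^{2e'}y$, reorganize the resulting monomials, absorb the cross terms into a perfect cube $(\mathrm{unit})\cdot\bigl(T^{2e'}y+\mu(y,z,s)\zeta^{2}\bigr)^{3}$ and a perfect fifth power $(\mathrm{unit})\cdot b(s)\zeta^{5}$, and finally absorb the $\zeta^{2}$-correction back into $y$ and normalize the unit factors. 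This produces
\[
x^2+\alpha(s)y^3+\beta(s)\bigl(\gamma(s)z+u(y,s)y\bigr)^{5}=0,
\]
with the zero loci of $\alpha,\beta,\gamma$ contained in $H$; as in the $E_6$ case, continuing to blow up the $H_i$'s preserves this shape, and on the chart the zero locus of $T_1\cdots T_n$ coincides with the intersection with the exceptional locus.

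The hard part will be, just as in the $E_6$ case, the bookkeeping: choosing the blow-up multiplicities so that $\zeta$, $\mu$ and the various unit factors are genuinely holomorphic and the needed divisibilities hold, and carrying out the Weierstrass-type regrouping that collapses the extra monomials of $(\gamma z+uy)^5$ (the $y^2z^3$, $y^3z^2$, $y^4z$, $y^5$ terms) into the cube $y^3$ and the fifth power $\zeta^5$. Since the whole argument is structurally identical to that of Lemma \ref{lemma:blowup-equation-E6} --- only the relevant degree changing from $4$ to $5$ --- I expect no new difficulty of principle, only longer but routine manipulations.
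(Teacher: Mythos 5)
Your proposal is correct and follows essentially the same route as the paper: the same reduction to $x^2+(\mathrm{unit})\,a(s)(y^3+v z^{3+p}y+w z^{4+q})$, the same appeal to Lemma \ref{lemma:ADE} to pin down $p=q=1$ and $v(z,s)=v(s)$, and the same blow-up-and-absorb computation modeled on the $E_6$ case (your explicit splitting off of the subcase $v=0$ is harmless, since the paper just runs the argument with $v(s)=0$). The only caveat is in the deferred bookkeeping: the exponents you quote are the $E_6$ ones, whereas for $E_8$ one needs $e$ divisible by $15$ (the paper takes $e=5d=15e'$), the substitution $\zeta=T^{3d}z+(\mathrm{unit})\,m(s)T^{5e'}y$, and a cube correction of the form $(T^{5e'}y+\mu\zeta^{3})^{3}$ rather than $\mu\zeta^{2}$.
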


\begin{proof}
As in the case of Lemma \ref{lemma:blowup-equation-E6}, we can reduce to the situation when  
$X$ is a hypersurface in $\mathbb{D}^3\times S$ defined by the equation 
\[
F(x,y,z,s)=x^2 + F_3(y,z,s) + R(y,z,s) =0,  
\]
with 
\[
F_3(y,z,s) + R(y,z,s) =  (\mathrm{unit})\cdot  a(s)  \cdot   \Big( y^3   + v(z,s)z^{3+p}y + w(y,z,s)z^{4+q} \Big), 
\]
where $q \ge p  \ge 0$ are integers. 
Either $v$ is zero or $z$ does not divide $v$. Furthermore, 
the terms of  $v(z,s)$ have degrees at most $q-p$ in $z$. 
In addition, either $w(y,z,s)$ is  zero, or  $w(0,0,s)\neq 0$ for a general point $s\in S$.   
%\\

Since $X$ has $E_8$ singularities at points of  $S\setminus H$, by Lemma \ref{lemma:ADE}, 
we deduce that  $q = p = 1$, and  $v(z,s) = v(s)$ is a holomorphic function on $S$. 
Lemma \ref{lemma:ADE} also implies that $w(y,z,s) = b(s) +   b'(y,z,s)$ such that 
the zero locus of $b(s)$ is contained in $H$ and that $b'(y,z,s) = 0 \mod \, (y,z)$. 
By blowing up $X$ at the  $H_i$'s for several times as in  Subsection \ref{subsection:blowup-divisor}, 
we may assume that $w(y,z,s)= \eta(y,z,s) \cdot b(s)$, where $\eta$ is a unit, see Remark \ref{rmk:blowup-divisor-1}.

Let  $T=T_1\cdots T_n$.
Since the zero locus of the function $b(s)$ is contained in $H=\{T=0\}$, there is an integer $e'\ge 0$ such that $T^{e'}$ is divisible by  $b(s)$. 
We define
\[e = 5d =15e'.  \]
By blowing up $X$ at each $H_i$ for $e$ times,   as in  Subsection \ref{subsection:blowup-divisor},  
the function $F$ becomes   
\begin{eqnarray*}
F(x,y,z,s) &=& x^2 +   (\mathrm{unit})\cdot   a(s) \cdot\Big(   T^{e}y^3  \\  
           &&  +  T^{3e} v(s) z^{4}y +  \eta(y,z,s) \cdot b(s) \cdot  T^{3e} z^{5} \Big),    
\end{eqnarray*}
where, by abuse of notation, we still denote $\eta(T^ey,T^ez,s)$ by $\eta(y,z,s)$.  
Since $e=5d$,  we remark that, in the last line of the previous equation, 
\[
 T^{3e} v(s) z^{4}y +  \eta(y,z,s) \cdot b(s) \cdot  T^{3e} z^{5} = (T^{3d} z)^4 \cdot v(s) \cdot T^{3d}y + \eta(y,z,s) \cdot b(s) \cdot (T^{3d}z)^5. 
\]
Since  $3d - 5e' = 4e'$, by the choice of $e'$, there is a function $m(s)$   such that 
\[
b(s) \cdot  m(s) \cdot T^{5e'}  = v(s) \cdot T^{3d}.
\] 
Hence we can write  
\begin{eqnarray*}
F(x,y,z,s) &=& x^2 +  (\mathrm{unit})\cdot   a(s) \cdot  \Big(    T^{e}y^3 \\ 
           &&  +   b(s) \cdot  m(s) \cdot   (T^{3d} z)^{4}  \cdot T^{5e'}y  \\ 
           &&  +  \eta(y,z,s)\cdot b(s) \cdot  (T^{3d} z)^{5}
  \Big).       
\end{eqnarray*}
%We can apply \ref{operation-second-highest}  to the variable $T^{3d}  z$ in the function above.  
We set  \[\zeta = T^{3d} z   + \frac{1}{5} \eta(y,z,s)^{-1}  m(s) \cdot T^{5e'}y,\] 
%where the unit here is a holomorphic function of $(z,s)$,   
so that 
\begin{eqnarray*}
   F(x,y,z,s) &=& x^2 + (\mathrm{unit})\cdot   a(s) \cdot \Big( T^{e}y^3 + \nu_1(y,z,s)\cdot b(s)  \cdot  (T^{5e'}y)^5  \\ 
    && + \nu_2(y,z,s)\cdot b(s) \zeta \cdot (T^{5e'}y)^4  + \nu_3(y,z,s)\cdot b(s) \zeta^2 \cdot (T^{5e'}y)^3 \\
   && +    \nu_4(y,z,s)\cdot b(s) \zeta^3\cdot (T^{5e'}y)^2+    \eta(y,z,s)\cdot  b(s)\zeta^{5} \Big), 
\end{eqnarray*}
for some holomorphic functions $\nu_1,\nu_2, \nu_3, \nu_4$. 
We remark that $T^{e}y^3 = (T^{5e'}y)^3$ and we  consider the following part in the last expression of $F(x,y,z,s)$ above, 
\begin{eqnarray*}
    &&    T^{e}y^3 + \nu_1(y,z,s)\cdot b(s)  (T^{5e'}y)^5  + \nu_2(y,z,s)\cdot b(s) \zeta \cdot (T^{5e'}y)^4 \\
    && + \nu_3(y,z,s)\cdot b(s) \zeta^2 \cdot (T^{5e'}y)^3   +    \nu_4(y,z,s)\cdot b(s) \zeta^3\cdot (T^{5e'}y)^2 \\
    &=&  ( \mathrm{unit}) \cdot  (T^{5e'}y)^3  +\nu_4(y,z,s)\cdot b(s) \zeta^3\cdot (T^{5e'}y)^2. 
\end{eqnarray*}
Then there are  some holomorphic functions $\mu(y,z,s)$ and $\sigma(y,z,s)$ such that  the function $F$ can be written as 
\begin{eqnarray*}
    F(x,y,z,s) &=& x^2 + (\mathrm{unit})\cdot   a(s) \cdot  \Big( (\mathrm{unit})\cdot  (T^{5e'}y + \mu(y,z,s) \zeta^3 )^3 \\ 
               && +    \sigma(y,z,s) \cdot b(s) \zeta^6    +   \eta(y,z,s)\cdot b(s)\zeta^5 \Big).
\end{eqnarray*}   
Hence the function $F$ can be written as 
\[
 F(x,y,z,s) = x^2 +  a(s) \Big( (\mathrm{unit})\cdot  (T^{5e'}y + \mu(y,z,s) \zeta^3 )^3   +   (\mathrm{unit})\cdot b(s)\zeta^5 \Big).
\]
We recall that $\zeta = T^{3d} z   + \frac{1}{5} \eta(y,z,s)^{-1}  m(s) \cdot  T^{5e'}y$. 
Hence there is some holomorphic function $\mu'(y,z,s)$ such that 
\begin{eqnarray*}
    T^{5e'}y + \mu(y,z,s) \zeta^3 &=& (\mathrm{unit}) \cdot  T^{5e'}y + \mu(y,z,s) \cdot T^{9d}z^3 \\
       &=& (\mathrm{unit}) \cdot  T^{5e'} \Big(y + \mu'(y,z,s)\cdot T^{9d-5e'}z^3 \Big).  
\end{eqnarray*}
Let $\overline{y}= y + \mu'(y,z,s)\cdot T^{9d-5e'}z^3$. 
Then $y-\overline{y}$ is divisible by  $ T^{9d-5e'}z^3$, and we have 
\begin{eqnarray*}
    \zeta &=& T^{3d} z   + \frac{1}{5} \eta(y,z,s)^{-1}  m(s) \cdot  T^{5e'}y \\
          &=&  T^{3d} z   + \frac{1}{5} \eta(y,z,s)^{-1}  m(s) \cdot  T^{5e'}\overline{y} 
                 + \frac{1}{5} \eta(y,z,s)^{-1}  m(s) \cdot  T^{5e'}(y-\overline{y})\\
          &=& (\mathrm{unit})\cdot T^{3d} z +    \frac{1}{5} \eta(y,z,s)^{-1}  m(s) \cdot  T^{5e'}\overline{y} \\
          &=& (\mathrm{unit})\cdot  \Big(  (\mathrm{unit})\cdot T^{3d} z +  m(s) \cdot  T^{5e'}\overline{y}  \Big). 
\end{eqnarray*}
Up to replacing $\overline{y}$ with $y$, we can write 
\begin{eqnarray*}
    F(x,y,z,s) &=& x^2 +  R_1(y,z,s)\cdot a(s)T^{e} y^3 \\
       && + R_2(y,z,s)\cdot a(s)b(s) \cdot (R_3(y,z,s)T^{3d} z   +    m(s) \cdot T^{5e'}y)^5,   
\end{eqnarray*}
where $R_1,R_2,R_3$ are units. 
There is a holomorphic function $R_4$ such that $R_4^2=R_2R_1^{-1}$. 
Then we can set $R_5 = R_2R_4^{-5}=R_1R_4^{-3}$, and we have 
\begin{eqnarray*}
    F(x,y,z,s) &=&   x^2 +  R_5\cdot a(s)T^{e} (R_4y)^3 \\
               &  & +    R_5 a(s)b(s)  (R_3R_4 T^{3d} z  + m(s) T^{5e'}R_4 y)^5.   
\end{eqnarray*}
Up to replacing $y$ by $R_4^{-1}y$, we see that  $X$ is defined by 
\[
  R_5^{-1}x^2 +  a(s)T^{e} y^3 +     a(s)b(s)  (R_3R_4T^{3d} z   + m(s) T^{5e'} y)^5  =0 .
\] 
Hence   there is a change of coordinates  so that the defining equation becomes
\[
 x^2 +   \alpha(s)  y^3 +   \beta(s) (\gamma(s) z   +  u(y,s)y)^5 = 0, 
\]
where the zero locus of $\alpha(s)$, $\beta(s)$ and $\gamma(s)$ are contained in $H$.  
In the end, we note that if we continue blowing up $X$ at some $H_i$, then the defining equation remains the same shape. 
This completes the proof of the lemma.
\end{proof}

We will now treat the case of $E_7$-type singularity. 
Its proof is slightly different from the ones of $E_6$-type or $E_8$-type singularities.

\begin{lemma}
\label{lemma:blowup-equation-E7} 
Proposition \ref{prop:good-shape} holds if $X$ has $E_7$-type singularities at points of $S\setminus H$. 
\end{lemma}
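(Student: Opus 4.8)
The plan is to follow the proofs of Lemma~\ref{lemma:blowup-equation-E6} and Lemma~\ref{lemma:blowup-equation-E8}, adding one extra layer of bookkeeping to handle the fact that the secondary monomial of $E_7$ is $yz^3$ rather than a power of a single variable. First I would carry out the same preliminary reductions as in those two lemmas: since $X$ is of $E$-type at general points of $S\setminus H$, Lemma~\ref{lemma:ADE} gives $F_2=0$ and $F_3$ a cube, so after a linear change of $(y,z)$ and a translation $y\mapsto y+u(s)z$ one may take $F_3=a(s)y^3$ with the zero locus of $a(s)$ contained in $H$; then, blowing up at the $H_i$'s (Remark~\ref{rmk:blowup-divisor-1}) to make $a(s)$ divide $R$, applying the Weierstrass preparation theorem in $y$, and translating away the $y^2$-term, one reaches
\[
F_3+R=(\mathrm{unit})\cdot a(s)\cdot\bigl(y^3+v(z,s)\,z^{3+p}y+w(y,z,s)\,z^{4+q}\bigr),\qquad q\ge p\ge 0,
\]
with $v\equiv 0$ or $z\nmid v$ and $\deg_z v\le q-p$, and $w\equiv 0$ or $w(0,0,s)\ne 0$ for general $s$, exactly as in the $E_6$ and $E_8$ cases.

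Next I would invoke Lemma~\ref{lemma:ADE} at a general point $s\notin H$, where the reduced equation is a Du Val singularity: its $E_7$-characterization, the case $a=3$ there, forces $p=0$, forces $v(0,s)\not\equiv 0$ with zero locus contained in $H$, and forces $w\equiv 0$ or $q\ge 1$, so $z^5\mid w\,z^{4+q}$. The conceptual point, parallel to the other two lemmas, is that if one assigns $(y,z)$ the weights $(3,2)$ then $y^3$ and $v(0,s)z^3y$ have weight $9$ while every remaining monomial in $F_3+R$ --- the $z$-tail of $v(z,s)z^3y$, the term $w\,z^{4+q}$, and the terms of $R$ involving $y$ --- has weight strictly greater than $9$. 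Since each blowup at an $H_i$ multiplies $(x,y,z)$ by $T:=T_1\cdots T_n$ and divides $F$ by $T^2$, so that a monomial of $(y,z)$-degree $d$ gains the factor $T^{d-2}$, after sufficiently many blowups the weight-$9$ part keeps only a controlled power of $T$ while every correction term becomes divisible by an arbitrarily high power of $T$; in particular one may assume $v(z,s)=v_0(s)\cdot(\mathrm{unit})$ with $v_0$ a function of $s$ alone whose zero locus lies in $H$, after absorbing the unit into a rescaling of $z$.

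Then I would run the $\zeta$-substitution of Lemma~\ref{lemma:blowup-equation-E6}: choosing the number of blowups at each $H_i$ so that the resulting power of $v_0(s)$ is divisible by $3$, take a holomorphic cube root of it, set $\zeta=(\text{that cube root})\cdot z+(\text{a correction linear in }y)$ and $Y=T^{(\cdots)}y+(\text{a correction quadratic in }z)$, and use the same identity as there --- that $Y^3$ together with its lower $Y$-power companions, each carrying a factor whose zero locus lies in $H$, recombines into $(\mathrm{unit})\cdot Y^3$ --- to rewrite $F$ as $x^2+(\mathrm{unit})\cdot a(s)\cdot\bigl((\mathrm{unit})\cdot Y^3+(\mathrm{unit})\cdot Y\,\zeta^3\bigr)$, the corrections being swept into the units because their weight exceeds $9$ and so, after the blowups above, they carry enough powers of $T$. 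In the subcase $w\not\equiv 0$, where $q\ge 1$, one could instead apply Corollary~\ref{cor:Hensel-2} directly to factor $y(y^2+z^3)+z^5R$ and complete the cube on the two factors separately. Absorbing the remaining units into $y$ and $z$ and renaming puts $X$ into the form $x^2+\alpha(s)y^3+\beta(s)\,y(\gamma(s)z+u(y,s)y)^3$ with $\alpha,\beta,\gamma$ having zero loci in $H$; as in the other cases, further blowups at any $H_i$ preserve this shape, and the identification of the zero locus of $T_1\cdots T_n$ with $U\cap\Delta$ comes from the blowup construction exactly as in Lemma~\ref{lemma:blowup-equation-E6}.

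The hard part will be this final cube-completion: because the extra monomial $yz^3$ carries a factor of $y$, the cube has to be completed in $\zeta$ while keeping the factor $Y$ out front, and one must check carefully both that every correction term genuinely has weight $>9$ --- so that the blowups render it divisible by high enough powers of $T$ to be absorbed into units --- and that the cube root of the blown-up leading coefficient is holomorphic, which is what dictates the divisibility-by-$3$ condition on the number of blowups at each $H_i$. The factorization statements of Subsection~\ref{subsection:Hensel} are tailored to precisely this bookkeeping.
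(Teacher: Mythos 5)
Your overall architecture matches the paper's: the same preliminary reductions to
\[
F_3+R=(\mathrm{unit})\cdot a(s)\cdot\bigl(y^3+v(z,s)z^{3+p}y+w(y,z,s)z^{4+q}\bigr),
\]
the same appeal to Lemma \ref{lemma:ADE} to force $p=0$ and ($w\equiv 0$ or $q\ge 1$), the same choice of blowup multiplicities at the $H_i$'s so that the coefficient of $yz^3$ acquires a cube structure and the tail acquires enough powers of $T$, and the same final substitutions $\zeta\mapsto\zeta'$, $y\mapsto y+(\cdots)$ to reach $x^2+\alpha y^3+\beta\,y(\gamma z+uy)^3$.

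There is, however, one concrete gap in the route you foreground. You propose to finish by a direct cube-completion ``keeping the factor $Y$ out front,'' absorbing every weight-$>9$ correction into units multiplying $Y^3$ or $Y\zeta^3$. This cannot work for the term $w(y,z,s)z^{4+q}$: that term is not divisible by $y$, so dividing it by $Y^3$ or by $Y\zeta^3$ does not produce a holomorphic unit, no matter how many powers of $T$ it carries. High weight lets you absorb a correction into a unit only when the correction is a holomorphic multiple of one of the surviving monomials; the $z^{4+q}$-tail is not. This is precisely why the paper's proof does \emph{not} mimic the $E_6$/$E_8$ cube-completion here, but instead first factors $y(y^2+\zeta^3)+m'\zeta^{4+q}$ via Corollary \ref{cor:Hensel-2} (after the blowups arrange $q\ge 1$ so the tail is divisible by $\zeta^5$, and after a rescaling $y\mapsto \eta^{1/2}y$ normalizes the coefficients), then translates $y$ so the linear factor becomes exactly $y$, and only then completes the cube in the quadratic factor via $\zeta'=\zeta+\mu y$. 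The alternative you mention parenthetically ``in the subcase $w\not\equiv 0$'' is therefore not an alternative but the main (and only workable) mechanism, since $w\not\equiv 0$ is exactly the nontrivial case ($w\equiv 0$ already gives the factorization for free). With that inversion corrected — i.e.\ promoting Corollary \ref{cor:Hensel-2} from a fallback to the central step — your proof coincides with the paper's.
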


\begin{proof}
As in the case of Lemma \ref{lemma:blowup-equation-E6}, we can reduce to the situation when  
$X$ is a hypersurface in $\mathbb{D}^3\times S$ defined by the equation 
\[
F(x,y,z,s)=x^2 + F_3(y,z,s) + R(y,z,s) =0,  
\]
with 
\[
F_3(y,z,s) + R(y,z,s) =  (\mathrm{unit})\cdot  a(s)  \cdot  \Big( y^3   + v(z,s)z^{3+p}y + w(y,z,s)z^{4+q} \Big), 
\]
where $q \ge p  \ge 0$ are integers. 
Either $v$ is zero or $z$ does not divide $v$. Furthermore, 
the terms of  $v(z,s)$ have degrees at most $q-p$ in $z$. 
In addition, either $w(y,z,s)$ is  zero, or  $w(0,0,s)\neq 0$ for a general point $s\in S$.    
%\\

Since $X$ has $E_7$ singularities at points of $S\setminus H$, by Lemma \ref{lemma:ADE}, 
we deduce that  $p = 0$ and $q\ge 1$.  
Lemma \ref{lemma:ADE} also implies that $v(z,s) = b(s) + z  \cdot \omega(z,s)$ such that 
the zero locus of $b(s)$ is contained in $H$ and that $\omega(z,s)$ is a holomorphic function. 
By blowing up $X$ at the $H_i$'s for several times as in Subsection \ref{subsection:blowup-divisor},  
we may assume that $v(z,s)= \eta(z,s) \cdot b(s)$, where $\eta$ is a unit, see Remark \ref{rmk:blowup-divisor-1}. 
Up to multiplying $\eta$ by a unit function, 
we can assume that $b(s) = \prod_i T_i^{l_i}$ for some integers $l_1,...,l_n\ge 0$. 
Then we have 
\[
F_3(y,z,s) + R(y,z,s) = (\mathrm{unit})\cdot a(s) \cdot  \Big(   y^3   + \eta(z,s)  b(s) yz^{3} + w(y,z,s)z^{4+q} \Big).   
\]
By blow up $X$ at  each $H_i$ for $e_i$ times as in Subsection \ref{subsection:blowup-divisor},   
we can write 
\begin{eqnarray*}
    F(x,y,z,s) &=& x^2 + (\mathrm{unit})\cdot a(s) \cdot  \prod_i T_i^{e_i} \cdot  \Big(  y^3 \\
               && + \prod_i T_i^{e_i+l_i} \eta(z,s)  yz^{3} + \prod_i T_i^{qe_i+e_i} w(y,z,s)z^{4+q} \Big), 
\end{eqnarray*}
where by abuse of notation, we still write $\eta(z,s)$ for $\eta( \prod_i T_i^{e_i} \cdot z,s)$ and $w(y,z,s) $ for $w(\prod_i T_i^{e_i} \cdot y,\prod_i T_i^{e_i} \cdot z,s)$. 
We choose $e_i$ so that $e_i+l_i = 3e_i'$ for some integer $e_i'>0$. 
We note that 
\begin{eqnarray*}
qe_i +e_i  - (4+q)e_i' =  (q+1)e_i - \frac{1}{3}(4+q)(e_i+l_i) = 
   \frac{1}{3}(2q-1) e_i  - \frac{1}{3}(4+q)l_i.  
\end{eqnarray*}
Since $q \ge 1$, the RHS above tends to $+\infty$ if $e_i$ tends to $+\infty$.  
Hence we can pick $e_i$ sufficiently large so that 
\[
\prod_i T_i^{(4+q)e_i'}   \mbox{ divides }   \prod_i T_i^{qe_i+e_i} w(y, z,s).
\]
Then there is some holomorphic function $m(y,z,s)$ such that 
\[
   \prod_i T_i^{qe_i+e_i} w(y, z,s)  \cdot z^{4+q}  = m(y,z,s) \cdot  \prod_i T_i^{(4+q)e_i'} \cdot  z^{4+q}.
\]
Let $\zeta = z\cdot \prod_i T_i^{e_i'}$. 
Then  we have  
\begin{eqnarray*}
    F(x,y,z,s) &=& 
                   x^2 + (\mathrm{unit})\cdot a(s) \cdot \prod_i T_i^{e_i} \cdot \Big(   y^3   
                    +  \eta(z,s)  y\zeta^{3} + m(y,z,s)\zeta^{4+q} \Big) \\
               &=& x^2 + (\mathrm{unit})\cdot a(s) \cdot \prod_i T_i^{e_i}  \\
                && \cdot  \Big( y(\eta(z,s)^{-1}  y^2 + \zeta^3  ) + \eta(z,s)^{-1} m(y,z,s)\zeta^{4+q}
                   \Big).
\end{eqnarray*}
Replacing $y$ by $y\cdot \eta(z,s)^{\frac{1}{2}}$,   
we can write 
\[
F(x,y,z,s)=x^2+ (\mathrm{unit})\cdot a(s) \cdot \prod_i T_i^{e_i} \cdot \Big(y(y^2+\zeta^3) + m'(y,z,s)\zeta^{4+q} \Big)
\]
for some holomorphic function $m'(y,z,s)$.  
By applying Corollary \ref{cor:Hensel-2} 
to the function 
\[
y(y^2 +   \zeta^3  ) +    m'(y,z,s)\zeta^{4+q}, 
\]
it can be factorized into 
\[   \Big(y  +  c(y,\zeta,z,s)\Big) 
   \Big( y^2 + \zeta^3 + \sigma(y,\zeta,z,s) + \nu(s)\cdot \zeta^2y
   \Big),
\]
where $\zeta^2$ divides $c(y,\zeta,z,s)$ and   $\sigma(y,\zeta,z,s) = 0 \mod \, (y,\zeta)^4$.
By replacing   $y + c(y,\zeta,z,s)$ with $y$, 
the previous term becomes 
\begin{eqnarray*}
&&   y \Big(y^2 + \nu_1(y,\zeta,z,s) y\zeta^2  + (\mathrm{unit})\cdot \zeta^3 +  \nu_2(y,\zeta,z,s)\Big) \\
&=&   y\Big((\mathrm{unit}) \cdot y^2  + \nu_3(z,s)y\zeta^2  +   (\mathrm{unit})\cdot \zeta^3 \Big), 
\end{eqnarray*} 
where $\nu_1,\nu_2$ and $\nu_3$ are holomorphic functions, and   $\nu_2(y,\zeta,z,s) = 0 \mod \, (y,\zeta)^4$.  
There is some  $\zeta'$   of the shape
\[
\zeta' = \zeta +   (\mathrm{unit})\cdot \nu_3(z,s) y =  z\cdot \prod_i T_i^{e_i'} +  (\mathrm{unit})\cdot \nu_3(z,s) y, 
\]
so that the previous function becomes  
\begin{eqnarray*}
 &&   y \Big((\mathrm{unit}) \cdot y^2 + \nu_4(y, \zeta',z,s) y^2  + (\mathrm{unit})\cdot (\zeta')^3  \Big) \\ 
 &=&   y\Big((\mathrm{unit}) \cdot y^2 + (\mathrm{unit})\cdot (\zeta')^3  \Big), 
\end{eqnarray*} 
where  $\nu_4(y, \zeta',z,s) = 0 \mod \, (y,\zeta')$.  

Therefore, we deduce that 
\begin{eqnarray*}
F(x,y,z,s)   &=& x^2 + a(s) \prod_i T_i^{e_i} \cdot  R_1(y,z,s) \cdot  y^3 \\
           && +  a(s) \prod_i T_i^{e_i}  \cdot R_2(y,z,s) \cdot y\Big(   R_3(y,z,s) z\cdot \prod_i T_i^{e_i'} +  \nu_3(z,s) y\Big)^3,  
   \end{eqnarray*}
where $R_1, R_2, R_3$ are units. 
We write $\nu_3(z,s) = \mu_1(s) + z\mu_2(z,s)$.   
Let $T' = \prod_i T_i^{e'_i}$. 
By blowing up  $X$ at  each $H_i$  for $e_i'$ times as in Subsection \ref{subsection:blowup-divisor}, 
the defining function $F$ becomes 
\begin{eqnarray*}
F(x,y,z,s)   &=& x^2 + a(s) T' \cdot  \prod_i T_i^{e_i} \cdot  R_1(T'y,T'z,s) \cdot  y^3 \\
           && +  a(s)(T')^2 \cdot \prod_i T_i^{e_i}  \cdot R_2(T'y,T'z,s)  \\
           && \cdot y \Big(   R_3(T'y,T'z,s) z\cdot T' +  \mu_1(s) y  +  T'z \cdot \mu_2(T'z,s)y\Big)^3.          
   \end{eqnarray*}
The last line in the previous equation is equal to 
\[
y \Big(  (\mathrm{unit})\cdot z\cdot T' +  \mu_1(s) y   \Big)^3.  
\]
Hence we can write 
\begin{eqnarray*}
F(x,y,z,s)   &=& x^2 + a_1(s)  \cdot  Q_1(y,z,s) \cdot  y^3 \\
           && +  a_2(s)  \cdot Q_2(y,z,s) \cdot y\Big(  Q_3(y,z,s) z\cdot T' +  \mu_1(s) y\Big)^3,       
   \end{eqnarray*}
where $Q_1,Q_2,Q_3$ are units and the zero loci of $a_1$ and $a_2$ are both contained in $H$. 
We let $Q_4 = Q_2 \cdot Q_1^{-1}$. 
Then there is a unit $Q_5$ such that $Q_4^3 Q_5 = Q_1$ and $Q_4^4 Q_5 = Q_2$.  
We can write 
\begin{eqnarray*}
F(x,y,z,s)   &=& x^2 + a_1(s) \cdot  Q_5\cdot  (Q_4 \cdot  y)^3 \\
           && + a_2(s) \cdot Q_5  \cdot (Q_4 \cdot y) \cdot (  Q_3Q_4\cdot z\cdot T' +  \mu_1(s) Q_4 \cdot y)^3  \\
            &=& Q_5 \cdot \Big( 
             Q_5^{-1} x^2 + a_1(s) \cdot   (Q_4 \cdot  y)^3 \\
           && +  a_2(s)  \cdot    (Q_4 \cdot y)\cdot (   Q_3Q_4\cdot z\cdot T' +  \mu_1(s) Q_4 \cdot y)^3 
            \Big). 
   \end{eqnarray*}
%Up to shrinking $X$ we may assume that $R_5$ has a square root.
Hence, up to a change of coordinates,   $X$ can be defined by an equation of the shape 
\[
 x^2  + \alpha(s) y^3 +  \beta(s) y( \gamma(s) z + u(y,s)y)^3  = 0,
\]
where  $\alpha, \beta$ and $\gamma$ are holomorphic functions on $S$, whose zero loci are contained in $H$. 
In the end, we note that if we continue blowing up $X$ at the $H_i$'s, 
then the defining equation remains the same shape. 
This completes the proof of the lemma.
\end{proof}

\subsection{Case of $D$-type singularities}
In this subsection, we  treat Proposition \ref{prop:good-shape} in the case when $X$ has $D$-type singularities at points of $S\setminus H$.

\begin{lemma}
  \label{lemma:blowup-equation-D} 
Proposition \ref{prop:good-shape} holds if $X$ has $D$-type singularities at points of $S\setminus H$. 
\end{lemma}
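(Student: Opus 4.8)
The plan is to run the blowup procedure prescribed in Proposition~\ref{prop:good-shape}(2) and follow the defining equation through it, working locally around $o\in S\subseteq X$ as in Setup~\ref{setup-local}. Thus $X$ is the hypersurface $F=x^2+F_2(y,z,s)+F_3(y,z,s)+R(y,z,s)=0$ of standard form, with $R$ of order $\ge 4$ in $(y,z)$. Since $X$ has $D$-type singularities at the points of $S\setminus H$, Lemma~\ref{lemma:ADE} forces $F_2\equiv 0$, and then the standard form is necessarily the second alternative: $F_3=b(s)\,G_3(y,z,s)$ with $G_3(\cdot,\cdot,s)$ never identically zero and the zero locus of $b$ inside $H$, while $G_3(\cdot,\cdot,s)$ is not a cube for $s\notin H$. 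After a constant linear change of $(y,z)$ making the coefficient of $y^3$ in $G_3$ a unit, we may write $G_3=(\mathrm{unit})\cdot z^3\,\Phi(y/z,s)$ for a monic cubic $\Phi(t,s)=t^3+A(s)t^2+B(s)t+C(s)$ with holomorphic coefficients on $S$.

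The first key step is to factor $G_3$ by applying Lemma~\ref{lemma:cubic} to $\Phi$, and here the two sub-cases supplied by Lemma~\ref{lemma:ADE} enter. If $X$ has $D_4$-type at the points of $S\setminus H$, then $\Phi(\cdot,s)$ has three distinct roots for every $s\notin H$, so the discriminant of $\Phi$ has zero locus inside $H$ and the divisorial branched locus of $\{\Phi=0\}\to S$ lies in $H$; if $X$ has $D_r$-type with $r\ge 5$, then $\Phi(\cdot,s)$ has a repeated root for \emph{every} $s$, so the discriminant vanishes identically and the branched locus is all of $S$. In either case the hypothesis of Lemma~\ref{lemma:cubic} holds, so after the prescribed first blowup $f_1$ (which, since $F_2=0$, turns $F$ into $x^2+TF_3+T^2F_4+\cdots$ with $T=T_1\cdots T_n$) and the base change $f_2$ over $S[\sqrt[6]{T_1},\dots,\sqrt[6]{T_n}]$, the degree-$3$ part of the equation becomes $(\mathrm{unit})\cdot c(\bar s)\cdot(y-pz)(y-qz)(y-rz)$ with $p,q,r$ holomorphic on $\overline S$ and the zero locus of $c$ inside $\sum\overline H_i$. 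Since $\overline S$ is an irreducible polydisc, we may relabel so that $p\equiv q$ in the $D_{\ge 5}$ case; and in all cases $p-q,\ q-r,\ r-p$ have zero loci in $\sum\overline H_i$. Translating $y$ by $rz$ we reach
\[
F=x^2+(\mathrm{unit})\cdot c(\bar s)\cdot y\,(y^2+a_1 yz+b_1 z^2)+R',\qquad a_1=(r-p)+(r-q),\quad b_1=(r-p)(r-q),
\]
with $R'$ of order $\ge 4$ and the zero locus of $b_1$ in $\sum\overline H_i$ (and $y^2+a_1yz+b_1z^2=(y+(r-p)z)^2$ in the $D_{\ge 5}$ case).

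It remains to carry out the blowups $f_3$ at the $\overline H_i$'s and to reshape the equation. Performing $N$ blowups at each $\overline H_i$ multiplies the degree-$k$ part by $\overline T^{\,N(k-2)}$, where $\overline T=\overline T_1\cdots\overline T_n$, so for $N$ large compared with the $\overline T$-orders of $c$, $b_1$ and the units occurring, we may: factor $\overline T^{\,N}(\mathrm{unit})c$ out of the whole non-$x^2$ part; arrange that the resulting order-$\ge 4$ correction $\rho$ is divisible by $b_1^{2}$; and apply Lemma~\ref{lemma:Hensel} to $y(y^2+a_1 yz+b_1 z^2)+\rho$, obtaining a factorization $(\mathrm{unit})(y+b_1 c')(y^2+a_1 yz+b_1 z^2+b_1 d')$ with $c'$ divisible by $z^2$ and $d'=0\mod\,(y,z)^3$. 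Replacing $y$ by $y-b_1 c'$ makes the first factor equal to $y$. For the second factor we use the identity $b_1(y^2+a_1 yz+b_1 z^2)=(b_1 z+\frac12 a_1 y)^2-\frac14(p-q)^2y^2$, together with further blowups making $b_1$ and the higher-order $z$-corrections divide suitable powers of $\overline T$, so as to complete the square iteratively \emph{without} introducing poles, the surplus powers of $\overline T$ and of $b_1$ being absorbed into $\alpha,\beta,\gamma$. In the $D_{\ge 5}$ case the second factor is already a perfect square up to such corrections, and the surviving powers of $y$ collapse into a single term $\beta y^l$ with $l=r-1\ge 4$; in the $D_4$ case one gets $l=3$. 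In both cases $F$ is brought to the shape $x^2+\alpha(s)\,y\,(\gamma(s)z+u(y,s)y)^2+\beta(s)\,y^l$ with the zero loci of $\alpha,\beta,\gamma$ inside $\sum\overline H_i$, and one checks, exactly as in the $A$- and $E$-type cases and using Subsection~\ref{subsection:blowup-divisor}, that this shape is preserved by further blowups at the $\overline H_i$ and that on the relevant neighbourhood the zero locus of $\overline T_1\cdots\overline T_n$ equals its intersection with the exceptional locus of $f$. The main obstacle is precisely the bookkeeping in this last step: one must choose the blowup multiplicities and iterate the absorption so that \emph{every} denominator produced by the Hensel factorization and by the successive square-completions is a unit times a monomial in $\overline T_1,\dots,\overline T_n$, hence can be swallowed into $\alpha,\beta,\gamma,u$; keeping the $D_4$ and $D_{\ge 5}$ branches in parallel throughout is what makes this more delicate than the $A$- and $E$-type lemmas, the remaining arguments being formal consequences of Lemmas~\ref{lemma:ADE}, \ref{lemma:cubic} and~\ref{lemma:Hensel}.
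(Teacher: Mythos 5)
Your proposal follows the paper's route step for step: Lemma \ref{lemma:ADE} forces $F_2=0$, the prescribed blowup $f_1$ and the sixth-root base change $f_2$ put you in position to apply Lemma \ref{lemma:cubic} (your verification of its hypothesis --- divisorial branched locus in $H$ for $D_4$, branched locus all of $S$ for $D_{\ge 5}$ --- is correct and is in fact more explicit than the paper), and Lemma \ref{lemma:Hensel} then splits off the simple root as the factor $y$. Up to that point the argument is sound and matches the paper.

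The soft spot is the last step. After Lemma \ref{lemma:Hensel} the second factor is $y^2+a_1yz+b_1z^2+b_1d'$ with $d'=0\bmod (y,z)^3$, and your square--completion identity only handles the quadratic part: it leaves a remainder $a\,\eta\, d'\,y$ (in your notation, with $\eta=b_1$) whose terms still involve $z$ in degrees $\ge 4$, and these do not fit the target shape $\alpha\, y(\gamma z+uy)^2+\beta y^l$. ``Completing the square iteratively'' against these corrections is an infinite process whose convergence you do not address. The paper eliminates the $z$-dependence of the remainder in one stroke: it first uses further blowups (Remark \ref{rmk:blowup-divisor-2}) to make the whole bracket a holomorphic function of $\zeta=(\text{monomial in }T)\cdot z$, then applies the Weierstrass preparation theorem in $\zeta$ to reduce the quadratic factor to $(\zeta+R(y,s)y)^2+R'(y,s)$ with $R'$ depending on $(y,s)$ only; Lemma \ref{lemma:ADE} then identifies $R'=b(s)y^{l-1}+\cdots$ and one more round of blowups absorbs the unit. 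Relatedly, the denominator bookkeeping you defer is exactly where the paper's $D_4$ case does real work: there $\eta=\mu^2\nu$ and $\xi$ is divisible only by $\mu$, so one must divide the outer coefficient $a(s)$ by $\nu(s)$, which is legitimate only because blowing up at the $H_i$'s multiplies $a$ by $T_i$'s while leaving $\nu$ unchanged. Your mechanism for absorbing denominators is the right one, but without the Weierstrass step the reduction to the form $\beta(s)y^l$ is not actually completed.
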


\begin{proof} 
We can assume that $X$ is a hypersurface in $\mathbb{D}^3\times S$ defined by the following equation  of standard form with respect to $(S,H)$,   
    \[ F(x,y,z,s) = x^2 + F_2(y,z,s) + F_3(y,z,s)+R(y,z,s) = 0, \]
where $F_2$ and $F_3$ are homogeneous in $(y,z)$ of degree 2 and 3 respectively, 
and $R$ is a holomorphic function  such that  $R(y,z,s) = 0 \mod\, (y,z)^4$.   
By Lemma \ref{lemma:ADE}, we see that  $F_2 = 0$.  
Since $F$ is of standard form, up to shrinking $X$ and up to a linear change of the coordinates $(y,z)$, 
we may assume that 
\[F_3(y,z,s) = a(s)(y^3 + b(s)y^2z + c(s)yz^2 + d(s)z^3)\] 
such that  the zero locus of $a(s)$ is contained in $H$.  
We  blow up $X$ at each $H_i$ once (in arbitrary order), the shape  $F_3$ is unchanged.

By Lemma \ref{lemma:ADE},   $F_3(y,z,s)$ has either two or three distinct factors,  for any $s\in S\setminus H$. 
We  apply Lemma \ref{lemma:cubic}   to the following cubic polynomial in $\frac{y}{z}$, 
\[
a(s)^{-1} F_3(\frac{y}{z},1,s) = (\frac{y}{z})^3 + b(s)\cdot (\frac{y}{z})^2z + c(s)\cdot \frac{y}{z} + d(s). 
\]   
Let  $X_2\to X$ be the basechange  over $\overline{S}=S[\sqrt[6]{T_1}, ..., \sqrt[6]{T_n}] \to S$, let $\overline{S}$  be the preimage of $S$ in $X_2$, and let $\overline{H}_i$ be the preimage of $H_i$ for every $i=1,...,n$. 
In the remainder of the proof, by abuse of notation, we will write $s\in S$ for $\overline{s} \in \overline{S}$, $H=\sum_{i=1}^n H_i$ for $\overline{H}=\sum_{i=1}^n\overline{H}_i$,  $X$ for a neighborhood   of $\overline{S}$ in  $X_2$, 
and $F$ for a defining function of $X$. 
Then  Lemma \ref{lemma:cubic} implies  that 
\[
F_3(y,z,s)= a(s)(y-p(s)z)(y-q(s)z)(y-r(s)z)  
\]
for holomorphic functions $p,q,r$ on $S$. 
\\

\textit{Case 1.} 
We first assume that  $X$ has $D_{l}$-type singularities at points of $S\setminus H$ with $l\ge 5$. 
by Lemma \ref{lemma:ADE}, we may assume that $q(s)=r(s)$. 
Up to replacing $y$ by $ y+p(s)z$, we can write 
\[ F_3(y,z,s)=a(s)y(y-q(s)z)^2, \] 
such that the zero locus of $q$ is contained in $H$.  
In particular, it is of the shape 
\[
q(s)= T_1^{l_1}\cdots T_n^{ln} \cdot \overline{q}(s), 
\]
where $\overline{q}(s)$ is a unit. 
Up to replacing $z$ be $\overline{q}(s)^{-1} \cdot z$, we may assume that 
\[
q(s)= T_1^{l_1}\cdots T_n^{l_n}. 
\]

By blowing up  $X$ at the  $H_i$'s for several times as in Subsection \ref{subsection:blowup-divisor}, 
we may assume that $a(s)q(s)^4$ divides $R(y,z,s)$, 
see Remark \ref{rmk:blowup-divisor-1}.  
Hence we can write  $R( y,z,s) = a(s)q(s)^4Q(y,z,s)$ for some holomorphic function $Q$ with $Q(y,z,s) = 0 \mod\, (y,z)^4$.   
Then we can write 
\[
F(x,y,z,s) = x^2 + a(s)\Big( y(y^2-2q(s)zy + q(s)^2 z^2) + q(s)^4 \cdot Q(y,z,s)    \Big)
\]
By applying  Lemma \ref{lemma:Hensel} to the function in the parenthesis after $a(s)$ above,  
the defining function of $X$ can be written as 
\[ x^2 +   a(s)\cdot \Big(y +  q(s)^2  w(y,z,s)\Big)\Big( q(s)^2z^2 -2q(s)yz + y^2 + q(s)^2  v(y,z,s)\Big),\]
where   $w(y,z,s)$  is divisible by $z^2$ and   $v(y,z,s) =0 \mod \, (y,z)^3$. 
We replace  $y+q(s)^2 w(y,z,s)$ by $y$, then the second summand  above becomes 
\begin{eqnarray*}
   G(y,z,s) &=&   a(s) \cdot y \cdot \Big( R_1(y,z,s)  \cdot q(s)^2 z^2 \\ 
            &&  -2 R_2(y,s)\cdot  q(s)  zy    + y^2 + q(s)^2 R_3(y,s)  \Big), 
\end{eqnarray*}
where $R_1$ is a unit and  $R_3(y,z) = 0 \mod \,  (y)^3$. 
Therefore, by blowing up $X$ at the $H_i's$ for several times as in Subsection \ref{subsection:blowup-divisor},  we may assume that 
\begin{eqnarray*}
     G(y,z,s)  
  &=&   a(s)  y \cdot \Big( R_4(y,q(s) z,s) \cdot q(s)^2 z^2 \\ 
  && -2 R_2(y,s)\cdot  q(s)  zy    + y^2 + q(s)^2 R_3(y,s)  \Big), 
\end{eqnarray*}
where $R_4$ is a unit, see Remark \ref{rmk:blowup-divisor-2}. 

If we set $\zeta = q(s)z$, then by using Weierstrass preparation theorem with respect to the variable $\zeta$ for the function in the parenthesis after $a(s) y$ above, we can write 
\[
G(y,z,s) =  (\mathrm{unit}) \cdot  a(s)y  \cdot\left( \zeta^2+ 2R_5(y,s) \zeta y + R_6(y,s) \right), 
\]
where $R_6 = 0 \mod \, (y)^2$.  
It follows that we  can write 
\[
G(y,z,s) =  (\mathrm{unit}) \cdot a(s)  \cdot  \Big(  y \cdot (q(s)z + R_5(y,s) \cdot y )^2 +  R_7(y,s)\Big), 
\]
where   $R_7(y,s ) = 0 \mod \, (y)^3$. 

Since $X$ has  $D_{l}$-type singularities at points of $S\setminus H$ with $l\ge 5$, 
we deduce that 
\[ R_7(y,s) = b(s)y^{l-1} + y^l \cdot c(y,s), \]
where the zero locus of $b$ is contained in $H$. 
By blowing up $X$ at  the $H_i's$  for several times as in Subsection \ref{subsection:blowup-divisor}, 
the function $R_7(y,s)$ can be written as 
\[R_7(y,s)  = b(s) \cdot   R_8(y,s) \cdot y^{l-1}, \] 
where $R_8$ is a unit, see Remark \ref{rmk:blowup-divisor-1}.  
Hence the defining function of $X$ becomes 
\[
F(x,y,z,s)=x^2+ a(s) \cdot (\mathrm{unit})\cdot \Big( y (q(s)z + R_5(y,s)y)^2 +  b(s) \cdot R_8(y,s) \cdot y^{l-1} \Big)
\] 
%Shrinking $S$ if necessary, we may assume that 
Let  $R_{9}(y,z)$ be a  $(l-2)$-root of $R_{8}(y,s)$. Then we have 
\begin{eqnarray*}
F(x,y,z,s)& =& x^2+ a(s) \cdot (\mathrm{unit})\cdot \Big( y (  q(s)z + R_5 \cdot  y)^2 \\ 
            &&  +  b(s) \cdot R_9^{l-2}  \cdot y^{l-1} \Big)    \\
          & = &  x^2+ a(s) \cdot (\mathrm{unit})\cdot R_{9}^{-1} \cdot \Big( R_{9} \cdot  y  \cdot (  q(s)z + R_5 \cdot  y)^2  \\ 
             &&  +  b(s) \cdot (R_{9} \cdot  y)^{l-1} \Big).
\end{eqnarray*}
Up to replacing $y$ by $R_{9}(y,s)^{-1}y$, 
the equation $F(x,y,z,s)=0$ is equivalent to 
\[
(\mathrm{unit})\cdot  x^2+ a(s)  \Big( y ( q(s)z - R_{10}(y,s)y)^2 +  b(s) \cdot   y^{l-1} \Big) = 0.   
\]
%Up to shrinking $X$, we may assume that the unit before $x^2$ admits a square root. 
Hence, up to a change of coordinates, $X$ is now defined by 
\[
 x^2 + \alpha(s) \cdot  y  (\gamma(s)z + u(y,s)y )^2  + \beta(s) y^{l-1} =0,   
\] 
as in the statement of  Proposition \ref{prop:good-shape}. 
\\

\textit{Case 2.}  
Now we assume that $X$ has $D_{4}$-type singularities at points of $S\setminus H$.  
By Lemma \ref{lemma:ADE}, and by 
replacing $y$ by $ y+p(s)z$, we can write 
\[ F_3(y,z,s)=a(s)y(y-q(s)z)(y-r(s)z), \]  
such that the zero locus of $q$ and $r$  are contained in $H$, and that the zero locus of $q(s)-r(s)$ is also contained in $H$.  
Hence, up to switching $q$ and $r$,  there are  unit functions $\overline{q}$  and  $\overline{r}$  such that.  
\[
q(s)= T_1^{l_1}\cdots T_n^{l_n} \cdot \overline{q}(s) \mbox{ and  } r(s)= T_1^{l_1+k_1}\cdots T_n^{l_n+k_n} \cdot \overline{r}(s)
\]
for some integers $l_1,...,l_n, k_1,...,k_n\ge 0$. 
Let $\overline{v}$ be a square root of  $\overline{q}\overline{r}$. 
Then, by  replacing $z$ by $\overline{v}^{-1}z$, we may assume that $\overline{q}\cdot  \overline{r} = 1$.  
We set $\mu(s) = T_1^{l_1}\cdots T_n^{l_n}$ and $\nu(s)=T_1^{k_1}\cdots T_n^{k_n}$. 
Then we have 
\[q(s)r(s) = T_1^{2l_1+k_1}\cdots T_n^{2l_n+k_n} = \mu(s)^2 \cdot \nu(s).  \]
We write $(y-q(s)z)(y-r(s)z) = y^2 + \xi(s)zy + \eta(s)z^2$. 
That is, 
\[\eta(s) = q(s)r(s) = \mu(s)^2 \cdot \nu(s) \mbox{ and }
\xi(s) = q(s) + r(s) = (\overline{q}(s) + \overline{r}(s)\nu(s)) \cdot \mu(s).
\]

By blowing up  $X$ at the  $H_i's$ for several times as Subsection \ref{subsection:blowup-divisor},  
we may assume that $a(s)\eta(s)^2$ divides $R(y,z,s)$, see Remark \ref{rmk:blowup-divisor-1}.  
Then  we have 
\[
F(x,y,z,s) = x^2 + a(s) \Big( y(y^2 + \xi(s)zy + \eta(s)z^2) + \eta(s)^2 Q(y,z,s)
\Big),
\]
where $Q(y,z,s) = 0 \mod \, (y,z)^4$. 
By applying Lemma \ref{lemma:Hensel} to the function in the parenthesis after $a(s)$ above, 
the defining function of $X$ can be written as 
\[ x^2 +  a(s) \cdot  \Big(y +  \eta(s)  w(y,z,s)\Big)\Big( \eta(s) z^2 -2\xi(s)yz + y^2 + \eta(s)  v(y,z,s)\Big),\]
where   $w(y,z,s)$ is divisible by $z^2$ and  $v(y,z,s) = 0 \mod \, (y,z)^3$. 
We replace $y$  by $y-\eta(s)  w(y,z,s)$,    then the second summand  above becomes 
\begin{eqnarray*}
 G(y,z,s) &=&  a(s)y \cdot  \Big( R_1(y,z,s) \cdot \eta(s) z^2 \\ 
        &&-2   \xi(s)  zy    + R_2(y,z,s) y^2   \Big),    
\end{eqnarray*}
where $R_1$ is a unit.  
Recall that $\eta(s)=\mu(s)^2\nu(s)$ and that $\xi(s)$ is divisible by $\mu(s)$. 
Then we have 
\begin{eqnarray*}
 G(y,z,s) &=&   \frac{1}{\nu(s)} \cdot a(s)y \cdot  \Big( R_1(y,z,s) \cdot \mu(s)^2\nu(s)^2 z^2\\ 
        && -2 \cdot   \frac{\xi(s)}{\mu(s)}  \cdot  \mu(s) \nu(s) z  \cdot y    + \nu(s) R_2(y,z,s) y^2   \Big).   
\end{eqnarray*}
By blowing up $X$ at the $H_i's$ for several times as in Subsection \ref{subsection:blowup-divisor},  
we may assume that the function in the parenthesis after $a(s)y$ above 
is a function of $(y,\mu(s)\nu(s) z,s)$,  see Remark \ref{rmk:blowup-divisor-2}.  
In other words, we can assume that  
\begin{eqnarray*}
    G(y,z,s)    &=&   \frac{1}{\nu(s)} \cdot a(s) y \cdot  \Big( R_3(y,\mu(s)\nu(s) z,s) \cdot \mu(s)^2\nu(s)^2 z^2  \\ 
      & &  +2R_4(s)\cdot \mu(s)\nu(s)   z \cdot y    +  R_5(y,\mu(s)\nu(s) z,s)\nu(s) \cdot y^2  \Big), 
\end{eqnarray*}
where $R_3$ is a unit.   

We remark that if we blow up $X$ at the $H_i'$s for sufficiently many times, we can assume that $a(s)$ is divisible by $\nu(s)$.  
Indeed, if we blow up $X$ at $H_i$, then $a(s)$ becomes $T_i\cdot a(s)$ but $\nu(s)$ remains the same. 
By abuse of notation, we   denote $\frac{1}{\nu(s)} \cdot a(s)$ by $a(s)$ in the following computations.

If we set $\zeta = \mu(s)\nu(s)z$, then by applying Weierstrass preparation theorem with respect to the variable $\zeta$ to the function in the parenthesis after $a(s)y$ above, we can write 
\[
G(y,z,s) =   (\mathrm{unit}) \cdot a(s)y \cdot \Big( \zeta^2+ 2R_6(y,s) \zeta y + R_7(y,s) \Big), 
\]
where $R_7  = 0 \mod \, (y)^2$. 
It follows that we can write 
\[
G(y,z,s) =  (\mathrm{unit}) \cdot a(s) \cdot  \Big( y \cdot  \big(\mu(s)\nu(s) z + R_6(y,s) y \big)  ^2 +    R_{8}(y,s) \Big), 
\]
where $R_{8}(y,z) = 0 \mod \,  (y)^3$.   
Since $X$ has  $D_{4}$-type singularities at points of $S\setminus H$, 
we deduce that 
\[ R_{8}(y,s) = b(s)y^{3} + y^4c(y,s), \]
where the zero locus of $b$ is contained in $H$. 
By blowing up $X$ at  the $H_i$'s for several times as in  Subsection \ref{subsection:blowup-divisor}, 
the function $R_{8}(y,s)$ can be written as 
\[R_{8}(y,s)  = b(s) \cdot    (\mathrm{unit}) \cdot y^{3}, \]  
see Remark \ref{rmk:blowup-divisor-1}.  
Then we can write 
\[
F(x,y,z,s)=x^2 +  (\mathrm{unit}) \cdot a(s) \cdot   y \cdot  \big(\mu(s)\nu(s) z + R_8(y,s) y \big)  ^2 +   (\mathrm{unit}) \cdot a(s) b(s) \cdot y^3. 
\]
Now as in the proof for the Case 1,  
we deduce that   $X$ can be  defined by 
\[
 x^2 + \alpha(s) \cdot  y  (\gamma(s)z + u(y,s)y )^2  + \beta(s) y^{3} =0,   
\] 
as in Proposition \ref{prop:good-shape}. 
In the end, we note that if we continue blowing up $X$ at some $H_i$, then the defining equation remains the same shape. 
This completes the proof of the lemma. 
\end{proof}

\begin{remark}
\label{rmk:finite-equation} 
If we let $\theta\colon X\to S$ be the projection induced by the canonical one from  $\mathbb{D}^3\times S$ to $S$, then we can regard $T_i$ as a holomorphic function on $X_1$  by pulling it back \textit{via} $\theta\circ f_1 \colon X_1 \to S$. 
Then in a neighborhood of the strict transform  $(f_1^{-1})_*S$ of $S$ in $X_1$, the Cartier divisor  defined by  $T_i=0$ is the $f_1$-exceptional divisor defined by $f_1^{-1} \mathcal{I}({H_i}) \cdot \mathcal{O}_{X_1}$, where  $\mathcal{I}({H_i})$ is the ideal sheaf of $H_i$ in $X$, see Remark \ref{rmk:function-exc-div}. 
Furthermore, we have $X_2 = X_1[\sqrt[6]{T_1},..., \sqrt[6]{T_n}]$. 
\end{remark}

\begin{remark}
\label{rmk:finite-blowup}
The objective that we first construct $f_1\colon X_1\to X$  is to make the finite cover $f_2\colon X_2\to X_1$ functorial.   
For  the holomorphic function  $T_i$ on $S$, there is no canonical way to make it a  holomorphic function  on $X$. 
And the Cartier divisors in $X$ defined by $T_i=0$ depend on the choice of coordinates in Setup \ref{setup-local}. 
However, in a neighborhood $V$ of the strict transform  $(f_1^{-1})_*S$ of $S$ in $X_1$, the Cartier divisor  defined by  $T_i=0$ is the $f_1$-exceptional divisor defined by $f_1^{-1}\mathcal{I}({H_i})\cdot \mathcal{O}_{X_1}$. 
In particular, this Cartier divisor is intrinsic. 
To see this more explicitly, we assume that 
\[
\varphi\colon (x,y,z,s) \mapsto (x',y',z',s')
\]
is a change of coordinates, and we regard $x',y',z',s'$ as holomorphic vector-valued functions on $(x,y,z,s)$. 
Then $s' =s + P$ with $ P = 0 \mod\, (x,y,z)$.  
Let $T=T_1\cdots T_n$. 
Then $f_1$ can be written in coordinates as 
\[
f_1\colon (x_1,y_1,z_1,s) \mapsto (T(s)x_1,T(s)y_1,T(s)z_1,s) = (x,y,z,s)
\]
and 
\[
f_1\colon (x_1',y_1',z_1',s') \mapsto (T(s')x_1',T(s')y_1',T(s')z_1',s') = (x',y',z',s')
\]
As a consequence, the change of coordinates $\varphi_1\colon  (x_1,y_1,z_1,s) \mapsto  (x_1',y_1',z_1',s') $ satisfies 
\[
s'(x_1,y_1,z_1,s) = s + P(T(s)x_1,T(s)y_1,T(s)z_1,s)
\]
Since $P(x,y,z,s) = 0 \mod \, (x,y,z)$, there exists some vector-valued holomorphic function $Q$ such that 
$ P(T(s)x_1,T(s)y_1,T(s)z_1,s) = T(s) Q(x_1,y_1,z_1,s)$ and that $Q(x_1,y_1,z_1,s) = 0 \mod \, (x_1,y_1,z_1)$.  
Then 
\[
T_i(x_1',y_1',z_1',s') =   T_i(s') = T_i(s+ T(s) Q(x_1,y_1,z_1,s)). 
\]
It follows that 
\begin{eqnarray*}
    T_i(x_1',y_1',z_1',s') - T_i(x_1,y_1,z_1,s) 
    &=& T_i(s+ T(s) Q(x_1,y_1,z_1,s)) - T_i(s) \\ 
    &=& T(s) \cdot R(x_1,y_1,z_1,s),
\end{eqnarray*} 
such that $R(x_1,y_1,z_1,s) = 0 \mod \, (x_1,y_1,z_1)$.  
Since $T_i(s)$ divides $T(s)$,   up to shrinking $V$,   we have 
\[ T_i(x_1',y_1',z_1',s') = (\mathrm{unit})\cdot  T_i(x_1,y_1,z_1,s).\] 
As a result, by Lemma \ref{lemma:cyclic-cover-unit}, 
the finite morphism $f_2\colon X_2\to X_1$ constructed \textit{via} two choices of coordinates 
in Setup \ref{setup-local} are the same  on $V$.  
\end{remark}

\section{Local construction of covering spaces}  
\label{section:local-cover}

In this section, we will construct a finite Galois cover for  a  hypersurface $X$ defined by some equation  as in Proposition \ref{prop:good-shape}, so that the covering space is smooth over $S\setminus H$.

We   fix the following convention for this section.  
Assume that  $M$ is a path-connected topological space and  $m\in M$ is a point. 
We recall that  $\pi_1(M,m)$, the fundamental group of $M$ with basepoint $m$, 
 is the group of homotopy classes of loops in $M$ with basepoint $m$. 
The fundamental group of $M$, $\pi_1(M)$,  is the Galois group of the universal cover of $M$. 
There is an  isomorphism from  $\pi_1(M,m)$ to  $\pi_1(M)$. 
We will work with   fundamental groups without basepoint in this section. 
By Galois theory, there is a correspondence between normal subgroups of  $\pi_1(M)$ and isomorphism classes of Galois covers over $M$. 

If $\overline{m}\in M$ is another point, then there are isomorphisms $\pi_1(M,m)\cong \pi_1(M,\overline{m})$ induced by the paths joining $m$ and $\overline{m}$. 
These isomorphisms are in bijection with the inner automorphisms of  $\pi_1(M,\overline{m}). $
Now we assume that $f\colon M \to M'$ is a continuous map between path-connected topological spaces. If $m'=f(m)$, then there is a natural morphism  
\[
f_{m,*}\colon \pi_1(M,m) \to \pi_1(M',m'),  
\]
which induces a morphism of fundamental groups $\pi_1(M) \to \pi_1(M')$. 
The conjugacy class of  $\pi_1(M) \to \pi_1(M')$ is independent of the choice of basepoints. 
By abuse of notation, we will call this conjugacy class the natural morphism induced by $f$, and denote it by 
\[f_*\colon  \pi_1(M) \to \pi_1(M').\]
By the image of $f_*$, we refer to the conjugacy class of subgroups in $\pi_1(M')$ induced by $f_{m,*}(\pi_1(M,m))$. 
When it is the conjugacy class of a normal subgroup, then the quotient of $\pi_1(M')$ by the image of $f_*$ is well-defined.  
We say that $f_*$ is surjective if its image   is the conjugacy class of $\pi_1(M')$. 
For any normal subgroup $H$ of $\pi_1(M')$, its preimage in $\pi_1(M)$ is a well-defined normal subgroup, and we denote it by $(f_*)^{-1}(H)$. 
In particular, we can define the kernel of $f_*$. 
We say that $f_*$ is  injective if its kernel is the trivial subgroup of $\pi_1(M)$, and that $f_*$ is   an isomorphism if it is both surjective and injective.

\subsection{Preparatory lemmas}

In this part, we will prove some properties on varieties of the shape $Y \times \mathbb{D}^n$, where $Y$ is a Du Val singularity.   
We start with two elementary results.

\begin{lemma}
\label{lemma:homotopy-map} 
Let $U\subseteq \mathbb{C}^m$ and $S\subseteq \mathbb{C}^n$ be open neighborhoods of the origins, 
and let  $f\colon U \times S \to S$ be a morphism  such that 
$f  (\mathbf{0}_m,\mathbf{0}_n) = \mathbf{0}_n.$  
Then there are open neighborhoods  $W\subseteq U$ and  $T\subseteq S$ of the origins, such that
$f|_{W\times T}$ is  a continuous deformation  equivalence to the projection $p$ onto $T$, with values contained in $S$.  
In other words, there is a continuous function 
\[H  (u,s,\mu) \colon W\times T \times [0,1]  \to S, \]
such that $H(u, s, 0) = s$ and $H(u, s, 1) = f(u,s)$.
\end{lemma}

\begin{proof} 
There is a real number $r>0$ such that 
$S$ contains  the ball in $\mathbb{C}^n$ of radius $2r>0$ centered at the origin. 
We can define the function $g(u,s) = f(u,s) - s$ on $U\times S$, 
which takes values in $\mathbb{C}^n$. 
Then the assumption implies that $g (\mathbf{0}_m,\mathbf{0}_n) = \mathbf{0}_n.$  
Thus there are open neighborhoods  $W\subseteq U$ and  $T\subseteq S$ of the origins, 
such that $|g(u,s)| <  r$ for $(u,s)\in W\times T$. 
We can choose $T$ so that it is contained in the ball in $\mathbb{C}^n$ of radius $r$ centered at the origin. 
We define the function   $H  (u,s,\mu) = s + \mu g(u,s)$ on $W\times T \times [0,1]$. 
Then $H$ has values in $S$, 
and it induces a deformation equivalence from $f|_{W\times T}$ to $p$. 
This completes the proof of the lemma. 
\end{proof}

\begin{lemma}
\label{lemma:same-image-fund} 
Let $M$ and $M'$ be  path-connected topological spaces, 
and let $H(m,\mu)$ be a continuous map from $M\times [0,1]$ to $M'$. 
If we denote by $\varphi_0$ and $\varphi_1$ the restriction of $H$ on $M\times \{0\}$ and 
on $M\times \{1\}$ respectively, 
then the morphisms of fundamental groups 
\[(\varphi_0)_*, (\varphi_1)_* \colon  \pi_1(M) \to \pi_1(M') \]
have  the same image.  
\end{lemma}

\begin{proof}
For $i=0,1$, 
we let $\iota_i \colon M \to M\times [0,1]$ be the continuous map 
sending a point $m\in M$ to $(m,i)$. 
Then both $\iota_0$ and $\iota_1$ induce an isomorphism from $\pi_1(M)$ to $\pi_1(M\times I)$. 
Since $\varphi_i = H\circ \iota_i$ for $i=0,1$, the images of $(\varphi_0)_*$ and $ (\varphi_1)_*$ are both equal to the image of $H_*\colon  \pi_1(M\times I) \to \pi_1(M')$.
\end{proof}

\begin{lemma}
\label{lemma:construct-contract-tubular} 
Let $Y\subseteq \mathbb{C}^3$ be the hypersurface  defined by   $F(x,y,z)=0$, such that $F$ is  one of the following functions, 
\begin{eqnarray*}
    F(x,y,z) &=& x^2 +  y^2 +  z^l,   \  l \ge 2, \\
    F(x,y,z) &=& x^2 +  yz^2  +   y^l,   \  l \ge 3,  \\
    F(x,y,z) &=& x^2 +  y^3 +  yz^3, \\
    F(x,y,z) &=& x^2 +  y^3 +  z^l,   l=4,5,
\end{eqnarray*} 
where  $l$ is a positive integer.   
Let $\check{S}$ be a contractible open neighborhood of the origin in $\mathbb{C}^n$ and let  $X=Y\times \check{S}$.  
Assume that $(T_1,...,T_n)$ is a coordinates system of $\mathbb{C}^n$. 
Let $T = T_1\cdots T_k$ for some integer $0\le k \le n$, and let $H=\{T=0\}$. 
We set $ \check{S}^\circ = \check{S}\setminus H$. 
Then for any positive integer $N_0$, there exists positive integers  $p,q,r \ge N_0$ such that the morphism 
\[
\varphi \colon (x,y,z,s ) \mapsto (T^px,T^qy, T^r z, s)
\]
induces an automorphism of $Y \times  \check{S}^\circ$   over $ \check{S}^\circ$. 
\end{lemma}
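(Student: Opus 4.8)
The plan is to use the fact that every one of the listed polynomials $F(x,y,z)$ is quasi-homogeneous. First I would record, case by case, positive integers $(a,b,c)$ and an integer $d>0$ such that $F(\lambda^a x,\lambda^b y,\lambda^c z)=\lambda^d\,F(x,y,z)$ for all $\lambda\in\mathbb{C}$. Concretely one may take $(a,b,c)=(l,l,2)$ for $F=x^2+y^2+z^l$, $(a,b,c)=(l,2,l-1)$ for $F=x^2+yz^2+y^l$, $(a,b,c)=(9,6,4)$ for $F=x^2+y^3+yz^3$, and $(a,b,c)=(6,4,3)$ or $(15,10,6)$ for $F=x^2+y^3+z^l$ with $l=4$ or $l=5$ respectively. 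For each $\lambda\in\mathbb{C}^*$ the diagonal linear map $\psi_\lambda\colon(x,y,z)\mapsto(\lambda^a x,\lambda^b y,\lambda^c z)$ is an automorphism of $\mathbb{C}^3$ which, by the displayed identity, preserves the zero locus $\{F=0\}$; hence $\psi_\lambda$ restricts to an automorphism of $Y$, and $\lambda\mapsto\psi_\lambda$ is a holomorphic action of $\mathbb{C}^*$ on $Y$.

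Next I would observe that on $S^\circ=S\setminus H$ the function $T=T_1\cdots T_k$ is nowhere vanishing, so every integral power $T^j$ (including negative $j$) is a holomorphic function on $S^\circ$. Given $M$, choose an integer $m\ge M$ and set $(p,q,r)=(ma,mb,mc)$, so that $p,q,r\ge M$. Then for each $s\in S^\circ$ the map $\varphi$ restricts on the fibre $Y\times\{s\}$ to $\psi_{T(s)^m}$, an automorphism of $Y$; moreover $\varphi$ is holomorphic on $Y\times S^\circ$, commutes with the projection to $S^\circ$, and admits the holomorphic inverse $(x,y,z,s)\mapsto(T^{-p}x,T^{-q}y,T^{-r}z,s)$. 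Therefore $\varphi$ induces an automorphism of $Y\times S^\circ$ over $S^\circ$, which is the assertion.

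The only real content is the first step, namely checking that each $F$ is quasi-homogeneous with a choice of strictly positive integral weights; this is a short finite verification, and I do not expect any genuine obstacle. I would note in passing that it is essential to take $(p,q,r)$ proportional to the weight vector $(a,b,c)$: this is exactly what guarantees that the fibrewise map lies in a $\mathbb{C}^*$-orbit and hence preserves $Y$, whereas a non-proportional choice would in general move $Y$.
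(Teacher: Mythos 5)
Your proof is correct and is essentially the paper's argument: the paper likewise seeks $(p,q,r)$ with $F(T^px,T^qy,T^rz)=T^dF(x,y,z)$, obtaining the same linear conditions (e.g.\ $2p=q+2r=lq$ for $x^2+yz^2+y^l$) and noting the underdetermined system has arbitrarily large integer solutions, while you package the same computation more cleanly as scaling a quasi-homogeneous weight vector, with correct weights in every case. No gap.
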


\begin{proof}
We only treat the case when  $Y\subseteq \mathbb{C}^3$ is defined by 
\[0=F(x,y,z) = x^2 +  yz^2  +   y^l. \]
The other cases are similar. 
In order that  the map $\varphi$ is well-defined from  $Y \times  \check{S}^\circ$ to itself, 
it is sufficient that there is some positive integer $d$, so that 
\[
(T^px)^2 +  (T^qy)(T^rz)^2  +   (T^qy)^l = T^d (x^2 +  yz^2  +   y^l).
\]
It is equivalent to the system 
\[
2p = q+2r = lq. 
\]
There three unknowns  and two equations, hence the system is indeterminate and admits infinitely many integer solutions.  
Furthermore, since $l\ge 2$, we see that the solutions for $q,r$ tend to $+\infty$ if $p$ tends to $+\infty$.   
Hence there are integers $p,q,r \ge N_0$ such that $\varphi$ is well-defined. 
Since $T$ is nowhere vanishing  on $ \check{S}^\circ$, we deduce that $\varphi$ is an automorphism of $Y \times  \check{S}^\circ$  over $ \check{S}^\circ$.  
\end{proof} 

With the notation in the  previous lemma, we note that there is a finite subgroup $\Gamma$ of $\mathrm{SL}_2(\mathbb{C})$ such that $Y= \mathbb{C}^2/\Gamma$, see \cite[Section II.8]{Lamotke1986}. 
In particular, for any open subset $U\subseteq Y$ containing $\mathbf{0}_3$, 
there  is an open subset $W \subseteq U$ such that the natural morphism from 
$\pi_1(W_{\sm})$ to $\pi_1(Y_{\sm})$ is isomorphic.

\begin{lemma}
\label{lemma:tubular-fund-group-iso} 
With the notation in Lemma \ref{lemma:construct-contract-tubular}, 
Let $U\subseteq Y$ be an open neighborhood of  $\mathbf{0}_3$ such that the natural morphism of fundamental groups  $\pi_1(U_{\sm}) \to \pi_1(Y_{\sm})$ is an isomorphism.  
Let $S\subseteq \check{S}$ be an  open subset, 
such that $S$ is also isomorphic to a contractible open neighborhood of the origin in $\mathbb{C}^n$, and that $H|_S$ is isomorphic to the union of   coordinates hyperplanes. 
Let $S^\circ = S\setminus H$. 

Let $\varphi \colon U\times S^\circ \to Y\times \check{S}^\circ$ be an open embedding, 
whose restriction on $\{\mathbf{0}_3\} \times S^\circ$ is the identity map. 
Then  the induced morphism $\varphi_*$ of fundamental groups from  $\pi_1(U_{\sm}\times S^\circ) $ to $ \pi_1(Y_{\sm}\times  \check{S}^\circ)$ is injective. 
Furthermore, its image is equal to the image of $\pi_1(Y_{\sm}\times S^\circ)$
induced by the natural inclusion. 
In particular, if $k'$ is the number of irreducible components of $H|_S$, 
then we have 
\[
 \pi_1(Y_{\sm}\times  \check{S}^\circ) / \varphi_*( \pi_1(U_{\sm}\times S^\circ) ) \cong \mathbb{Z}^{k-k'}. 
\]
\end{lemma}

\begin{proof} 
Let $o\in S^\circ $, let $u\in U_{\sm}$ and let $V = \{u\}\times S^\circ \subseteq U_{\sm}\times S^\circ$.     
Then we can write  $\pi_1(U_{\sm}\times S^\circ) = H_1 \times G_1$,  where 
\[G_1 = \pi_1(V) \cong \pi_1(S^\circ)\cong \mathbb{Z}^{k'}, \ H_1 = \pi_1(U_{\sm} \times \{o\})\cong\pi_1(U_{\sm}).  
\]  
Similarly, we can write $\pi_1(Y_{\sm}\times S^\circ) = H_2 \times G_2$, where 
\[
G_2\cong \pi_1( \check{S}^\circ) \cong \mathbb{Z}^k  \mbox{ and } H_2 = \pi_1(Y_{\sm} \times \{o\}) \cong  \pi_1(Y_{\sm}). 
\]

Since $Y$ is a hypersurface in $\mathbb{C}^3$, locally around $(\mathbf{0}_3,o) \in U\times S^\circ$, the morphism $\varphi$ can be written in coordinates as 
\[
\varphi\colon (x,y,z, s) \mapsto (x',y',z',s')
\]
and we can regard $x',y',z',s'$ as vector-valued functions in $(x,y,z,s)$. 
The assumption implies that $s'(0,0,0,s) = s$.  
We let    
\[\psi \colon (x,y,z, s) \mapsto (x,y,z,s')  \]
be a  morphism defined in neighborhoods of $(\mathbf{0}_3,o) \in U\times S^\circ$. 
By the implicit function theorem, there is  an open neighborhood  $\overline{W}$ of $\mathbf{0}_3$ in $U$, and   an open neighborhood $\overline{T}$ of $o\in S^\circ$,  
such that  $\psi$ is an isomorphism from $\overline{W}\times \overline{T}$ to its image $Z$  inside $U\times  \check{S}^\circ$.  
We note that $Z$ is an open neighborhood of $(\mathbf{0}_3,o)$ in $U\times  \check{S}^\circ$.

By Lemma \ref{lemma:homotopy-map}, 
there is an open neighborhood  $W$ of $\mathbf{0}_3$ in $\overline{W}$, 
there are open neighborhoods $T\subseteq T'$ of $o$ in $\overline{T}$,  
such that  the restriction of $s'$ on $W\times T$ is deformation equivalent to the projection function $(x,y,z,s) \mapsto s$, with values inside $T'$. 
Up to shrinking $W$ and $T'$, we may assume that $W\times T' \subseteq  Z$. 
As a consequence, the image $ \psi(W_{sm}\times \{o\})$ is deformation equivalent to $W_{\sm}\times \{o\}$ inside $Z_{\sm} 
= Z\setminus (\{ \mathbf{0}_3 \} \times S^\circ )$.  
Then by Lemma \ref{lemma:same-image-fund},  the image of 
\[
\psi_* \colon \pi_1 (W_{\sm}\times \{o\}) \to \pi_1( Z_{\sm} )
\]
is equal to the one of $\pi_1(W_{\sm}\times \{o\})$ induced by the natural  inclusion 
\[W_{\sm}\times \{o\} \to Z_{\sm}.\]    
Moreover, we can choose $W$ so that the natural morphism 
$\pi_1(W_{\sm}) \to \pi_1(U_{\sm})$ is isomorphic.

Let  $\eta \colon Z \to Y\times  \check{S}^\circ$ be the morphism defined by $ \varphi\circ \psi^{-1}$, 
which is an open embedding.   
Then  $\eta$ can be written in coordinates as 
\[
\eta \colon (x,y,z, s') \mapsto (x',y',z',s'). 
\]
We see that the image $\eta(W \times \{o\})$ is a neighborhood of $(\mathbf{0}_3 ,o)$ in the fiber $Y\times \{o\}$.  
Since $\pi_1(Y_{\sm})$ can be  generated by loops  in an arbitrarily small neighborhood of $\mathbf{0}_3\in Y$,  
the morphism $\eta_*$  of fundamental groups from $\pi_1(W_{\sm}\times \{o\})$ to 
$H_2 = \pi_1( Y_{\sm}\times \{o\} )$ is surjective. 
Since both  groups  are finite   of the same order, we deduce that 
\[\eta_* \colon  \pi_1(W_{\sm}  \times \{o\} ) \to H_2 \] 
is an isomorphism.   
Since $H_2$ is a normal subgroup of $\pi_1( Y_{\sm}\times  \check{S}^\circ )$, 
we conclude with the previous paragraph that 
\[
\varphi_* = (\eta\circ \psi)_* \colon \pi_1(W_{\sm}\times \{o\}) \to \pi_1( Y_{\sm}\times  \check{S}^\circ )
\] 
induces an isomorphism from $\pi_1(W_{\sm}\times \{o\})$ to $H_2$. 
This implies that the morphism  $\varphi_*  \colon  \pi_1(U_{\sm}\times S^\circ) \to  \pi_1(Y_{\sm}\times  \check{S}^\circ)$
induces an isomorphism from $H_1$ to $H_2$.

Since $u$ is  connected to $\mathbf{0}_3$ by path in $U$, we see that $V$ is deformation equivalent  to $\{\mathbf{0}_3\} \times S^\circ$ inside $U\times S^\circ$.  
Thus  $\varphi(V)$ is  deformation equivalent  to $\{\mathbf{0}_3\} \times S^\circ$ inside $Y\times  \check{S}^\circ$. 
Hence by Lemma \ref{lemma:same-image-fund}, 
if $q\colon Y\times  \check{S}^\circ \to  \check{S}^\circ$ is the natural projection, 
then  $q \circ \varphi$ induces an isomorphism from $\pi_1(V)$ to 
the image of $\pi_1(S^\circ)$ in   $\pi_1( \check{S}^\circ)$.    
This implies that $\varphi_*$ induces an injective map on $G_1$. 
Since $G_1$ is a free Abelian group, we deduce that the intersection $\varphi_*(G_1)\cap \varphi_*(H_1)$ is trivial. 
Hence $\varphi_*$ is   injective. 
Finally, we notice that the image of $\varphi_*$ is equal to the preiamge of $\pi_1(S^\circ)$ under the morphism 
\[
q_*\colon \pi_1(Y_{\sm}\times  \check{S}^\circ) \to \pi_1(  \check{S}^\circ), 
\]
which is equal  to the image of   $ \pi_1(Y_{\sm}\times S^\circ)$ inside  $ \pi_1(Y_{\sm}\times  \check{S}^\circ)$. 
This completes the proof of the lemma.
\end{proof}

With the notation above, we remark that the fundamental group $\pi_1(Y_{\sm}\times  \check{S}^\circ)$ is isomorphic to $\pi_1(Y_{\sm}) \times \mathbb{Z}^k$. 
For this kind of groups, we construct a canonical subgroup (which is $N$) in the following lemma.  
After the Galois theory, it induces a canonical covering space of $Y_{\sm}\times  \check{S}^\circ$.

\begin{lemma}
\label{lemma:standard-subgroup}
Let $E=H\times G$ be a group, where $H\subseteq E$ be a finite subgroup  
and  $G\subseteq E$ is a subgroup isomorphic  to $\mathbb{Z}^k$ for some integer $k\ge 0$.   
Let $N$ be the intersection of all complements of $H$ in $E$. 
Then $N$ is of finite index in $E$, and is invariant under any automorphism  of $E$. 
Furthermore,  let  $E'\subseteq E$ be a normal  subgroup such that   $E/E' \cong \mathbb{Z}^l$ for some integer $0\le l \le k$.  
Then $H\subseteq E$.  
If $N'$ is  the intersection of all complements of $H$ in $E'$, 
then $N\cap E' = N'$. 
\end{lemma}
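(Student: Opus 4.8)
The plan is to understand the structure of complementary subgroups of $H$ in $E = H \times G$ with $G \cong \mathbb{Z}^k$, describe $N$ explicitly, and then check the two claimed properties by direct computation. First I would observe that a subgroup $C \subseteq E$ is a complementary subgroup of $H$ (meaning $H \cap C = \{1\}$ and $HC = E$) if and only if $C$ is the graph of a homomorphism $\gamma \colon G \to H$, that is, $C = C_\gamma := \{(\gamma(g), g) : g \in G\}$. Indeed, given such a $C$, for each $g \in G$ there is exactly one $h \in H$ with $(h,g) \in C$, and $g \mapsto h$ is a homomorphism since $C$ is a subgroup; conversely every graph $C_\gamma$ is complementary. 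Since $H$ is finite and $G \cong \mathbb{Z}^k$, the group $\operatorname{Hom}(G, H)$ is finite, so there are only finitely many complementary subgroups. Now
\[
N = \bigcap_{\gamma \in \operatorname{Hom}(G,H)} C_\gamma.
\]
An element $(h, g)$ lies in $N$ iff $h = \gamma(g)$ for every homomorphism $\gamma$; taking $\gamma = 0$ forces $h = 1$, and then the condition becomes $\gamma(g) = 1$ for all $\gamma \in \operatorname{Hom}(G, H)$. Hence $N = \{1\} \times G_0$, where $G_0 = \{g \in G : \gamma(g) = 1 \text{ for all } \gamma \in \operatorname{Hom}(G,H)\}$ is a subgroup of $G$.

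Next I would identify $G_0$ and deduce finite index. Writing $n$ for the exponent of $H$ (or just $|H|$), any homomorphism $G \to H$ kills $nG$, so $nG \subseteq G_0$; conversely $G_0 \subseteq G$ is characterized by vanishing of all homomorphisms to $H$. In fact, since $G \cong \mathbb{Z}^k$ and $H$ is finite, one can take $H$ to contain a cyclic group of order equal to the exponent $m$ of the torsion quotient relevant here — more simply, every $\gamma$ factors through $G/mG$ where $m = |H|$, and $\operatorname{Hom}(G/mG, H)$ separates points of $G/mG$ when $H$ has an element of order $m$; in the worst case $G_0 = mG$ for a suitable $m \geq 1$ dividing (a power of) $|H|$. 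Either way $G_0 \supseteq |H| \cdot G$, which has finite index $|H|^k$ in $G$, so $[E : N] = |H| \cdot [G : G_0] \leq |H|^{k+1} < \infty$. For invariance under automorphisms: if $\alpha \in \operatorname{Aut}(E)$, then $\alpha$ permutes the complementary subgroups of $\alpha(H)$; but $H$ is intrinsic — it is the set of torsion elements of $E$ (since $E = H \times \mathbb{Z}^k$ with $H$ finite), hence $\alpha(H) = H$ — so $\alpha$ permutes the collection $\{C_\gamma\}$, and therefore fixes their intersection $N$ setwise.

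Finally, for the relative statement with $E' \trianglelefteq E$ satisfying $H \subseteq E'$ and $E/E' \cong \mathbb{Z}^l$: since $H \subseteq E'$ and $H$ is still the torsion subgroup of $E'$, the same analysis applies internally, giving $N' = \{1\} \times G_0'$ where $G_0' = \{g \in G' : \gamma(g) = 1 \text{ for all } \gamma \in \operatorname{Hom}(G', H)\}$ and $G' = E' \cap (\{1\} \times G) \cong G \cap E'$ is a finite-index subgroup of $G$ isomorphic to $\mathbb{Z}^k$. The containment $N \cap E' \supseteq N'$ is clear since every complementary subgroup of $H$ in $E$ restricts to one in $E'$. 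For the reverse inclusion, an element of $N \cap E'$ has the form $(1, g)$ with $g \in G_0 \cap G'$, and I must show that every $\gamma' \in \operatorname{Hom}(G', H)$ kills $g$; the point is that $\gamma'$ extends (after multiplying by a bounded integer, or because $G'$ has finite index in the free abelian $G$ and $H$ is finite so $\operatorname{Hom}(-, H)$ is tame) to a homomorphism on $G$, or more directly: $g \in G_0$ means $g \in |H| \cdot G$, and since any homomorphism $G' \to H$ kills $|H| \cdot G'$ and $|H| \cdot G \cap G'$ differs from $|H| \cdot G'$ only by finite index which is itself killed — I would argue that $g$, lying in $|H|\cdot G$, can be divided appropriately so that $\gamma'(g) = 1$ for every $\gamma'$. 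The main obstacle I anticipate is precisely this last step: making the passage between homomorphisms defined on $G$ and on the finite-index subgroup $G'$ fully rigorous, i.e. checking that $G_0 \cap G' = G_0'$ rather than just one inclusion. This is a clean piece of abelian group arithmetic — likely handled by observing that for a finite group $H$, $g \in G$ satisfies $\gamma(g) = 1$ for all $\gamma \in \operatorname{Hom}(G, H)$ if and only if $g$ is divisible by the exponent of $H$ in $G$, and the same criterion on $G'$ combined with $[G : G'] < \infty$ forces the two subgroups to coincide — but it is the only spot requiring genuine care rather than bookkeeping.
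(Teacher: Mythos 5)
Your treatment of the first two claims is correct and close in spirit to the paper's: the paper also gets finiteness of the set of complements (via ``finitely many subgroups of a fixed index in a finitely generated group'' rather than your identification of complements with graphs of homomorphisms $G\to H$, but both work), and it also derives automorphism-invariance from the fact that $H$ is the torsion subgroup of $E$, hence characteristic. Your explicit description $N=\{1\}\times G_0$ with $G_0=\bigcap_\gamma\ker\gamma$ is a legitimate refinement, provided you remember that for nonabelian $H$ a homomorphism $\mathbb{Z}^k\to H$ is a $k$-tuple of \emph{commuting} elements; this does not affect finiteness.

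The third claim is where you have a genuine gap, and it sits exactly where you predicted. You assert that $G'=E'\cap(\{1\}\times G)$ is ``a finite-index subgroup of $G$ isomorphic to $\mathbb{Z}^k$.'' This is false: since $H\subseteq E'$ one has $E'=H\times G'$ and $G/G'\cong E/E'\cong\mathbb{Z}^l$, so for $l\ge 1$ the subgroup $G'$ has \emph{infinite} index and is isomorphic to $\mathbb{Z}^{k-l}$. Your argument for the hard inclusion $N\cap E'\subseteq N'$ leans entirely on this finite-index claim (and on dividing by $|H|$ inside a finite-index subgroup), so as written it does not go through. The structural fact you actually need is that $G/G'$ is \emph{free} abelian, hence $G'$ is a direct summand of $G$; then every homomorphism $\gamma'\colon G'\to H$ extends to $G$ by sending a complementary summand to $1$, which gives $G_0\cap G'\subseteq\ker\gamma'$ for all $\gamma'$ and hence $N\cap E'\subseteq N'$. (One also gets $G_0=\exp(H)\cdot G$ and $\exp(H)G\cap G'=\exp(H)G'$ for a direct summand, if you prefer the divisibility phrasing.) This is precisely the device the paper uses, phrased at the level of subgroups: it chooses a splitting $E/E'\to E$ with image $J$, and for any complement $K'$ of $H$ in $E'$ forms the complement $K=JK'$ of $H$ in $E$ with $K\cap E'=K'$, from which $N\cap E'\subseteq N'$ follows. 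Replace your finite-index step with the splitting/direct-summand argument and the proof is complete.
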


\begin{proof}
Since $E$ is finitely generated, for any fixed number $i$, 
there are only finitely many subgroups of $E$ of index $i$. 
Hence there are only finitely many complements of $H$ in $E$. 
It follows that $N$ is of finite index in $E$. 
Since $H$ is equal to the set of all elements of finite order in $E$, 
it is invariant  under any automorphisms of $E$. 
Hence so is $N$. 

For the second part of the lemma,  
since $E/E'$ is a free Abelian group and $H$ is finite, 
we must have $H\subseteq E'$. 
On the one hand, we assume that $K$ is a complement  of $H$ in $E$.  
Since $H\subseteq E'$, we deduce that  $K\cap E'$ is a complement  of $H$ in $E'$.  
Hence $N'\subseteq N\cap E'$. 

On the other hand, the quotient map $E\to E/E'$ admits a splitting $E/E' \to E$. 
Let $J\subseteq E$ be its image. 
Then $E=JE'$ and $J\cap E' = \{e\}$, where $e$ is the neutral element of $E$. 
Let $K'$ be   a complement  of $H$ in $E'$.  
Then $K:= JK'$ is a complement  of $H$ in $E$. 
Furthermore, $K\cap E' = K'$. 
It follows that $N\cap E' \subseteq N'$. 
This completes the proof of the lemma. 
\end{proof}

\subsection{Construction of covering spaces}

In the remainder of this section, we work with the following setup. 
Our goal is to prove Proposition \ref{prop:construct-local-cover}. 

\begin{setup}
\label{setup-local-cover} 
We fix the following notation for the remainder of this section. 
\begin{enumerate}
    \item We consider the data $X$, $\Delta$, $\check{S}$,  $H$ and  $G$ introduced as follows.  
Let $X$ be a complex analytic variety and let $\check{S}\subseteq X_{\sing}$ be an irreducible component of codimension $2$.   
Let $\Delta$ be a reduced divisor in $X$. 
There is a finite group $G$ acting on $X$ such that $\check{S}$ and $\Delta$ are $G$-invariant. 
The reduced pair $(\check{S}, H)$ is snc. 
If $H_1,\cdots, H_k$  are the irreducible components of $H$, then each $H_i$ is $G$-invariant.
Assume that $X$ is an open subset of  the hyperplane $M$ in $\mathbb{D}^3\times \check{S}$ defined by an equation   $F(x,y,z,s)=0$, where $(x,y,z)$ are coordinates of $\mathbb{D}^3$ and $s\in S$ is a point.  
Assume that $F$ is of one of the following shapes (as in Proposition \ref{prop:good-shape}),   
\begin{eqnarray*}
    F(x,y,z,s) &=& x^2 + \alpha(s)y^2 + \beta(s) z^l,   \ l \ge 2,    \\
    F(x,y,z,s) &=& x^2 + \alpha(s) y(\gamma(s)z +u(y,s) y )^2  +  \beta(s) y^l,   \ l\ge 3,  \\
    F(x,y,z,s) &=& x^2 + \alpha(s)y^3 +  \beta(s) \cdot y(\gamma(s)z +u(y,s) y)^3, \\
    F(x,y,z,s) &=& x^2 + \alpha(s)y^3 +  \beta(s) \cdot  (\gamma(s)z +u(y,s) y)^l, \  l =4,5, 
\end{eqnarray*} 
where  $l$ is an integer,  $\alpha(s)$, $\beta(s) $ and $\gamma(s)$ are holomorphic functions on $\check{S}$,    whose zero loci are contained in $H$, 
and $u(y,s)$ is a holomorphic function defined on $\mathbb{D} \times \check{S}$.    
The composite inclusion  $\check{S}\subseteq X \subseteq \mathbb{D}^3\times \check{S}$ is identified with $\{\mathbf{0}_3\}\times \check{S} \subseteq \mathbb{D}^3\times \check{S}$.  
Furthermore, if  $\theta\colon M\to \check{S}$ is the  projection sending $(x,y,z,s)$ to $s$, then $\Delta= \theta^{-1}(H)\cap X$. 
Let $\check{S}^\circ = \check{S}\setminus H$, $M^\circ =  \theta^{-1}(\check{S}^\circ)$, and  $X^\circ = \theta^{-1}(\check{S}^\circ)\cap X = X\setminus \Delta$.  

\item  The vanishing orders of $\alpha(s)$ and $\beta(s)$ along each $H_i$ are divisible by 6. 

\item  Let $o_S\in \check{S}$ be any point. 
We will work in a neighborhood of $o_S$ in $X$. 
Up to shrinking $X$ around $o_S$,   up to replacing $G$ by the stabilizer of $o_S$, 
and up to reducing the number $k$, 
we assume that $\check{S}$   an  open neighborhood of the origin $\mathbf{0}_n= o_S$ of $\mathbb{C}^n$ 
contained in a  polydisc of radius smaller than 1.     
Since $\check{S}$ is smooth, the action of $G$  on the germ $(o_S\in \check{S})$ is isomorphic to a linear action (see \cite[Lemme 1]{Cartan1954} or \cite[Lemma 9.9]{Kaw88}). 
Therefore,   we may assume that $\check{S}$ is stable under the scaling with any positive factor less than 1. 
In particular, it is contractible. 
We can also assume that there is a coordinates system   $(T_1,...,T_n)$  on $\check{S}$,  
such that  $H_i=\{T_i=0\}$, with $i=1,...,k$.
Let   $T=T_1\cdots T_k$.

\item 
Let $S = \frac{1}{2} \check{S}$ be the scaling of $\check{S}$ with factor $\frac{1}{2}$. 
Then $S$ is a $G$-invariant open neighborhood of $o_S$ and the closure of $S$ in $\mathbb{C}^n$ is contained in $\check{S}$. 
In particular,  there is a disc $\mathbb{D}' \subseteq \mathbb{D}$ such that 
\[
M':=  ((\mathbb{D}')^3\times S) \cap M  \subseteq X \subseteq M.
\]
Furthermore,  every holomorphic function on $\check{S}$ is   bounded on $S$.  
%By abuse of notation, we still denote by $H$ and $H_i$ their restrictions on $S$.  
Let $S^\circ = S\setminus H$.

\item 
Let $F'(x,y,z)$ be the polynomial function obtained by replacing $\alpha,\beta, \gamma$ with 1 and replacing $u$ with 0 in the expression of $F$. 
We denote by $Y$  the hypersurface in $\mathbb{C}^3$ defined by $F'=0$. 
Then $Y$ has an isolated Du Val singularity at $\mathbf{0}_3$.  
Let $U \subseteq Y$ be a bounded open  neighborhood of  $\mathbf{0}_3$,  
such that  the natural morphism  of fundamental groups 
$ \pi_1 (U_{\sm}) \to \pi_1(Y_{\sm}) $
is isomorphic. 
\end{enumerate}
\end{setup}

The open subset $M'$ in the item (4) of the previous setup plays an important role in our proof of Proposition \ref{prop:construct-local-cover}. 
The reason that we work with two neighborhood $S$ and $\check{S}$  is following one.  
The projection $\theta\colon X\to \check{S}$ is not $G$-invariant in general.  
As a  consequence, $X$ may not contain an open subset of the shape $ ((\mathbb{D}')^3\times \check{S}) \cap M$.

\begin{prop}
\label{prop:construct-local-cover}
With the notation of Setup \ref{setup-local-cover},  
there is an integer $N\ge 0$,   
such that the following properties hold for any integer  $N'\ge N$. 
Let  $\rho\colon \widehat{X} \to X$ be the  blowup of $X$ at the $H_i$'s as in Subsection \ref{subsection:blowup-divisor}, 
such that there are  $6N'$ blowups whose centers are $H_i$ for all $i$.  
Particularly, there is a natural action of $G$ on $\widehat{X}$ so that $\rho$ is $G$-equivariant. 
Let $\widehat{S}$ be the strict transform of $S$ in $\widehat{X}$, 
let $\widehat{H}\subseteq \widehat{S}$ be the preimage of $H$, 
and let $\widehat{X}^\circ = \rho^{-1}(X^\circ) = \widehat{X} \setminus \rho^{-1}(\Delta)$. 

Then there is an open neighborhood $W$ of $\widehat{S}$ and  
a finite  cover $\nu \colon \overline{W} \to W$, which  satisfy  the following properties.  
\begin{enumerate}
\item $\nu$ is quasi-\'etale  and Galois, over the open subset   $W^\circ := W\cap \widehat{X}^\circ = W\setminus \rho^{-1}(\Delta)$. 
\item $\overline{W}$ is normal.   
\item $\overline{W}$ is smooth over  $W^\circ$.  
\item $W$ is $G$-invariant,  
    and the automorphisms of $W$ over $W/G$    can be lifted to automorphisms of  $\overline{W}$.  
\item The finite morphism $\nu$  is functorial in the following sense. 
For any point $\hat{o}\in W$, let $o=\rho(\hat{o}) \in X$. 
Assume that there is an open neighborhood $W'$ of $\hat{o}$  in $W$,  
such that there is a finite morphism  $\nu' \colon \overline{W}' \to W'$,  
which is  constructed by the same method on 
  a  neighborhood $X'$  of $\rho(W')$ in $X$. 
Then there is an open neighborhood $W''$ of $\hat{o}$ in   $ W'$, 
such that every connected component of $\nu^{-1}(W'')$ is isomorphic to every connected component  $(\nu')^{-1}(W'')$.  
\end{enumerate}
\end{prop}

We will apply this local construction on orbifold charts of some  complex analytic orbispaces.  
The item (5) above is to ensure that the constructions are compatible along the overlaps.  
To explain the assumption on the finite morphism $\nu'\colon \overline{W}'\to W'$ in this item, 
we first illustrate the construction of the finite morphism $\nu\colon \overline{W} \to W$. 
Initially, we construct an open embedding $X^\circ \to Y \times \check{S}^\circ$,  
such that its restriction on $\check{S}^\circ$ is the identity map onto $\{\mathbf{0}_3\}\times \check{S}^\circ$.  
We note that $\check{S} \subseteq \mathbb{C}^n$, and $\check{S}^\circ \subseteq \mathbb{C}^n\setminus H$, where we still denote by $H$ its Zariski closure in $\mathbb{C}^n$, 
which is again the union of some coordinate  hyperplanes.  
Let  $Z\to Y_{\sm} \times \check{S}^\circ$ be the \'etale finite morphism induced by the canonical subgroup in Lemma \ref{lemma:standard-subgroup} of the fundamental group $\pi_1(Y_{\sm} \times \check{S}^\circ)$. 
Then we construct $\overline{W} \to W$ as the unique finite morphism induced by the fiber product $W^\circ_{\sm} \times_{X^\circ} Z$, 
see Theorem \ref{thm:GR-cover}.    
Now for the morphism $\nu \colon \overline{W}' \to W'$ in the item (5),  
the assumption particularly implies that there is an open embedding  
$X'\cap X^\circ \to Y \times (\mathbb{C}^n\setminus H')$,  
where $H'$ is the union of some coordinate  hyperplanes,  
such that $X'\cap S^\circ$ is mapped to  $\{\mathbf{0}_3\} \times (\mathbb{C}^n\setminus H')$.  
The morphism $\nu'$ is  induced by taking the fiber product of $ W'_{\sm}\cap W^\circ $ over the canonical finite morphism $Z'\to  Y_{\sm}\times (\mathbb{C}^n\setminus H')$ as above.

We   will divide the proof of the proposition into a sequence of lemmas. 
The sketch is as follows. 
As explained in the previous paragraph, we first construct an open embedding $X^\circ \to Y \times \check{S}^\circ$. 
We may construct a finite morphism $\overline{X} \to X$ by taking basechange as in the previous paragraph. 
However, since we have little information on the fundamental group of $X^\circ_{\sm}$,  
it is not clear to us that  $\overline{X}$  satisfies the item (4) and (5) of the proposition, 
which signify that $\overline{X}$ is independent of the open embedding $X^\circ \to Y \times \check{S}^\circ$.  
To deal with this problem, 
we will construct explicitly a tubular neighborhood $V$ of $S^\circ$ in $X^\circ$, 
such that $\pi_1(V_{\sm}) \cong \pi_1(Y_{\sm})\times \mathbb{Z}^k \cong \pi_1(Y_{\sm} \times \check{S}^\circ)$. 
Then the finite  cover  $\overline{V}\to V$  constructed by this method 
is independent of the choice of the open embedding from $X^\circ$ to $Y \times \check{S}^\circ$.  
A new issue is that,  $V$ may not extend to an open neighborhood of $S$ in $X$.  
It can collapse  when   approaching to $H\subseteq S$. 
Thus  $\overline{V}$ may not extend  to a covering space over any neighborhood of $S$. 
Our  solution to this is to blow up $X$ at the $H_i$'s as in Subsection \ref{subsection:blowup-divisor}. 
We show that, after some blowups, the preimage of $V$ extends to some open neighborhood $W$ of the strict transform of $S$. 
Then the covering space  $\overline{V}$ extends to a finite cover   $\overline{W} \to W$.

\begin{lemma}
\label{lemma:construct-tubular-1}
With the notation in Proposition \ref{prop:construct-local-cover},   
there is an open embedding, denoted by $\varphi^{-1}$,   from $M^\circ$ to $Y\times \check{S}^\circ$, 
whose inverse $\varphi$ can be written in coordinates as 
\[
\varphi\colon (\overline{x}, \overline{y}, \overline{z},s) \mapsto (a(s)\overline{x}, b(s)\overline{y}, \zeta(\overline{y},\overline{z},s),s) = (x,y,z,s),  
\]
where $a(s),b(s)$ are holomorphic functions on $\check{S}$, and 
\[\zeta(\overline{y},\overline{z},s)  
= \frac{1}{\gamma(s)} \Big( c(s)\overline{z}  
-u (b(s) \overline{y},s ) \cdot b(s)\overline{y}\Big), \]  
for some holomorphic function $c(s)$ on $\check{S}$.  
Moreover, we can impose that the zero loci of $a,b,c$ are contained in $H$,  
and that $b,c$ are  divisible by $\gamma$.
\end{lemma}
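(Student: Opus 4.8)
The plan is to exhibit the open embedding directly by producing a coordinate change on $M^\circ$ that transforms $F$ into $F'$. Recall from Setup \ref{setup-local-cover} that $F$ has one of four shapes, in each of which the functions $\alpha(s),\beta(s),\gamma(s)$ have zero loci contained in $H=\{T_1\cdots T_k=0\}$, so that on $S^\circ$ they are nowhere vanishing. Since we are allowed to shrink $S$ around $o_S$, we may assume each of these units admits a well-defined square root, cube root, and $l$-th root as needed; similarly for $\beta$ divided by a power of $\gamma$. First I would dispose of the simplest term: in every one of the four shapes the $x^2$ term is already in final form, so the embedding will always fix the $x$-coordinate up to a unit scaling $x=a(s)\overline x$, where $a(s)$ is chosen (a root of the appropriate product of $\alpha,\beta,\gamma$) so that after substitution the quadratic term $x^2$ emerges with coefficient $1$. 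This forces the precise value of $a(s)$ and shows its zero locus lies in $H$.

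Next I would treat $y$ and $z$. In the first shape $F=x^2+\alpha y^2+\beta z^l$, one sets $x=\sqrt\alpha\cdot\overline x$, $y=\overline y$ (or absorb a unit), $z=(\beta)^{1/l}\overline z$; after renormalizing by the unit $\alpha$ one gets $\overline x^2+\overline y^2+\overline z^l=F'$. Here $\gamma$ does not appear, and the ``$\zeta$'' has the degenerate form $c(s)\overline z$ with $w=0$. In the remaining three shapes the coordinate $z$ occurs only through the combination $\gamma(s)z+u(y,s)y$; I would therefore introduce the new coordinate $\zeta=\frac1{\gamma(s)}(c(s)\overline z+w(\overline y,s))$ precisely so that $\gamma(s)z+u(y,s)y$ becomes $c(s)\overline z$ after substituting $y=b(s)\overline y$ and choosing $w(\overline y,s)=-u(b(s)\overline y,s)\cdot b(s)\overline y$ — this is exactly the formula in the statement, and it is a legitimate holomorphic change of the $z$-coordinate because $1/\gamma$ is holomorphic on $S^\circ$ and the map is linear (hence invertible) in $\overline z$ for each fixed $(\overline y,s)$. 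Then $b(s)$ and $c(s)$ are fixed by requiring that the resulting coefficients of $y^3$ (resp. $y(\cdots)^2$, $(\cdots)^l$) and of the companion term become $1$ after dividing out the overall unit; tracking the exponents shows $b,c$ are products of roots of $\alpha,\beta,\gamma$, so their zero loci sit in $H$, and one checks directly from the exponent bookkeeping that $\gamma$ divides both $b$ and $c$ (the companion term always carries enough powers of $\gamma$). Finally, since $\alpha,\beta,\gamma$ are units on $S^\circ$ and the substitutions are invertible there (linear in $\overline z$, unit scalings in $\overline x$, unit scaling in $\overline y$), the map $\varphi$ is an open embedding of $Y\times S^\circ$ onto $M^\circ$ with the claimed inverse formula.

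I expect the only real bookkeeping obstacle to be verifying, uniformly across the four shapes, that the chosen roots $a,b,c$ of products of $\alpha,\beta,\gamma$ are consistent — i.e.\ that one can simultaneously normalize the two nonzero monomials in $F$ to have coefficient $1$ and that the leftover overall factor is a single unit of $\mathcal O(S)$ which can be divided away (this uses that on $S^\circ$ all of $\alpha,\beta,\gamma$ are units, together with the freedom to shrink $S$ so that roots exist). The divisibility of $b,c$ by $\gamma$ and the containment of the zero loci in $H$ then follow mechanically from the explicit exponents. Everything else is a routine substitution, so I would organize the write-up as a short case analysis over the four normal forms, recording in each case the explicit $a(s),b(s),c(s)$ and the overall unit, and then invoking holomorphicity and invertibility on $S^\circ$ to conclude.
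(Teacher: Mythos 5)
Your overall strategy is the same as the paper's: write down the putative coordinate change, compare the coefficients of the two monomials of $F$ against those of $F'$ (allowing an overall unit $v(s)$), and solve for $a,b,c$ as roots of expressions in $\alpha,\beta,\gamma$. The set-up of $\zeta$ and $w$ is exactly right, and so is the observation that invertibility on $S^\circ$ is automatic because $a,b,c,\gamma$ are nowhere vanishing there.

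There is, however, one genuine gap. You assert that $b(s)$ and $c(s)$ are \emph{fixed} by the normalization requirements and that holomorphicity, containment of the zero loci in $H$, and divisibility by $\gamma$ then ``follow mechanically from the exponent bookkeeping.'' This is not true for a particular solution of the system: for instance, in the shape $F=x^2+\alpha y^3+\beta\, y(\gamma z+uy)^3$ the conditions read $a^2=\alpha b^3=\beta b c^3=v$, so that $c^3=\alpha b^2/\beta$, which carries $\beta$ in the denominator and is a priori only \emph{meromorphic}, with poles along $H$; divisibility of $b,c$ by $\gamma$ fails just as badly for a generic solution. The missing idea is that the system (four unknowns $a,b,c,v$, three equations) is underdetermined, and its solution set is stable under rescaling by powers of $T=T_1\cdots T_k$ — in the example above, $(a,b,c,v)\mapsto(T^9a,\,T^6b,\,T^4c,\,T^{18}v)$ — which is exactly what lets one clear all poles along $H$ and force $\gamma\mid b$ and $\gamma\mid c$. (One also needs to shrink $S$ so that the unit parts of $\alpha,\beta$ admit logarithms before extracting the roots, which you do note.) Once this rescaling step is inserted, the rest of your argument goes through as written.
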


\begin{proof}
We treat the case when 
\[F(x,y,z,s)= x^2 + \alpha(s)y^3 +  \beta(s) \cdot y(\gamma(s)z +u(y,s) y)^3. \]
The other cases are similar.  
In order that $\varphi$ is formally well-defined, it is sufficient that, there is a unit function $v(s)$ on $\check{S}^\circ$, such that 
\begin{equation}\label{eqn:compare-isomophism}
    a^2\cdot \overline{x}^2 +  b^3\alpha \cdot \overline{y}^3 + \beta b \cdot \overline{y}(\gamma \cdot \zeta + u(b\overline{y} ,s) \cdot  b\overline{y} )^3 = v \cdot (\overline{x}^2+\overline{y}^3+\overline{y}\cdot \overline{z}^3). 
\end{equation}
The LHS above is equal to 
\[a^2\cdot \overline{x}^2 + \alpha \cdot b^3 \cdot \overline{y}^3 + \beta\cdot bc^3 \cdot \overline{y}\cdot \overline{z}^3.\]
Hence we need to solve 
\[
a^2 =  \alpha\cdot b^3=  \beta \cdot b c^3 = v. 
\]
Since the zero loci of $\alpha$ and $\beta$ are contained in $H$, we can write 
\[
\alpha = T_1^{l_1}\cdots T_k^{l_k} \cdot \alpha'  \mbox{ and }  \beta = T_1^{l'_1}\cdots T_k^{l'_k} \cdot \beta'
\]
where $l_1,...,l_k,l_1',...,l_k' \ge 0$ are integers divisible by $6$ after the item (2) of Setup \ref{setup-local-cover}, 
and $\alpha',\beta'$ are unit holomorphic functions on $\check{S}$. 
Since $\check{S}$ is simply connected, we can take the logarithms of $\alpha'$ and $\beta'$. 
Then we deduce that $a,b,c,v$ exist as meromorphic functions on  $\check{S}$, with poles and zeros contained in $H$.  
For example, we can set  $b=1$, $v= \alpha$, 
\[a  =  T_1^{\frac{1}{2}\cdot l_1}\cdots T_k^{\frac{1}{2}\cdot l_k} \cdot (\alpha')^{\frac{1}{2}} 
\mbox{ and }  c=  T_1^{\frac{1}{3}(l_1-l'_1)}\cdots T_k^{\frac{1}{3}(l_k-l'_k)} \cdot (\alpha')^{\frac{1}{3}} (\beta')^{-\frac{1}{3}}.\]  
We also note that, if $(a,b,c,v)$ satisfies \eqref{eqn:compare-isomophism}, then so does $(T^9 \cdot a,   T^6 \cdot b, T^4 \cdot  c, T^{18} \cdot v)$, see also Lemma \ref{lemma:construct-contract-tubular}. 
Since the zero locus of $\gamma$ is contained in $H$, 
up to multiplying $a,b,c$ by powers of $T$,  
we may assume that they are holomorphic functions and that  $b,c$ are divisible by $\gamma$.

The morphism  $\varphi^{-1}$ from $M^\circ$ to $Y\times \check{S}^\circ$ can be written in coordinates as 
\[
\varphi^{-1}\colon  (x,y,z,s) \mapsto ( a(s)^{-1}x, b(s)^{-1}y, c(s)^{-1}( \gamma(s) z + u(y,s)y ),s ) = (\overline{x}, \overline{y}, \overline{z},s).
\] 
Then $\varphi^{-1}$ is a well-defined morphism on $M^\circ$. 
Since $a,b,c,\gamma$ are nowhere vanishing holomorphic functions on $\check{S}^\circ$, we see that $\varphi^{-1}$ is   locally biholomorphic    
and  is injective. 
Hence  $\varphi^{-1}$ is an open embedding. 
\end{proof}

We remark that the divisibility condition in the item (2) of Setup \ref{setup-local-cover} is to guarantee the existence of  $a,b,c$ in the previous lemma.

\begin{lemma}
\label{lemma:construct-tubular-2}
We fix a morphism $\varphi$  as in Lemma \ref{lemma:construct-tubular-1} so that $\varphi^{-1}$ is an open embedding from $M^\circ$ to $Y\times \check{S}^\circ$.    
Then there are   integers $p,q,r>0$ such that the endomorphism $\eta$ of $Y\times \check{S} $ defined by 
\[
\eta \colon  (\overline{x},\overline{y}, \overline{z},s) \mapsto  (T^p\overline{x}, T^q\overline{y}, T^r\overline{z},s), 
\]
satisfies the following properties. 
The image $\eta (U\times S^\circ)$ is contained in $\varphi^{-1}(M'\cap M^\circ)$.  
In  other words, we have 
\[(\varphi\circ \eta)(U\times S^\circ) \subseteq M' \cap M^\circ \subseteq X^\circ.\]
Furthermore, for any point $s\in S\cap H$ and any neighborhood $X' $ of $s$ in $X$, 
there is an open neighborhood $S' $ of $s$ in $S$, 
such that 
\[
(\varphi \circ \eta)  \big(U \times (S'\cap S^\circ) \big) 
\]
is contained in $X' \cap X^\circ$.   
 \end{lemma}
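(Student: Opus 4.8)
The plan is to take for $\eta$ a weighted contraction of the Du Val fibres that simultaneously collapses them onto $S$ over the divisor $H$. First I would fix the exponents. Since $F'$ is one of the four weighted‑homogeneous polynomials occurring in Lemma~\ref{lemma:construct-contract-tubular}, that lemma, applied to $Y$ with the same $T=T_1\cdots T_k$, yields, for every $M>0$, integers $p,q,r\ge M$ with $F'(T^p\bar x,T^q\bar y,T^r\bar z)=T^{d}F'(\bar x,\bar y,\bar z)$ for some $d$, identically in $(\bar x,\bar y,\bar z)$ and in $s$ over $S^\circ$; by density of $\mathbb C^3\times S^\circ$ in $\mathbb C^3\times S$ this identity persists over all of $S$, so the stated formula for $\eta$ does define an endomorphism of $Y\times S$ (it fixes $\{\mathbf 0_3\}\times S$ pointwise and collapses the fibres over $H$). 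Up to shrinking $S$ around $o_S$ I would moreover assume $\overline S$ compact and contained in the polydisc of radius $<1$, so that $|T|\le c_0<1$ on $\overline S$ and the holomorphic functions $a,b,c,\gamma$ of Lemma~\ref{lemma:construct-tubular-1} — hence also $b/\gamma$ and $c/\gamma$, which are holomorphic since $b,c$ are divisible by $\gamma$, and $u$ on a fixed small polydisc — are bounded on the relevant compact sets; the integer $M$ (hence $p,q,r$) stays at my disposal for the estimates.

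Next I would establish the first assertion. By Lemma~\ref{lemma:construct-tubular-1} the three spatial components of $\varphi\circ\eta$ are
\[
a(s)\,T^p\bar x,\qquad b(s)\,T^q\bar y,\qquad \frac{c(s)}{\gamma(s)}\,T^r\bar z-u\bigl(b(s)T^q\bar y,s\bigr)\frac{b(s)}{\gamma(s)}\,T^q\bar y .
\]
On the bounded set $U_0$ and for $s\in S$ one has $|T(s)|^p,|T(s)|^q,|T(s)|^r\le c_0^{\,M}$, so by the boundedness above, provided $M$ was chosen large enough, each of these three components has modulus smaller than the radius $\delta'$ of $\mathbb D'$. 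Hence $\eta(U_0\times S^\circ)$ lies in the domain of $\varphi$, and since $F\circ\varphi=v\cdot F'$ by \eqref{eqn:compare-isomophism} vanishes on $Y\times S^\circ$, the image $(\varphi\circ\eta)(U_0\times S^\circ)$ is contained in $\bigl((\mathbb D')^3\times S^\circ\bigr)\cap M=M'\cap M^\circ\subseteq X^\circ$; that is, $\eta(U_0\times S^\circ)\subseteq\varphi^{-1}(M'\cap M^\circ)$. I then fix such $p,q,r$.

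For the second assertion, fix $s_0\in S$ and an open neighbourhood $X'$ of $s_0$ in $X$; note that $\varphi\circ\eta$ is already defined on all of $U_0\times S^\circ$ by the first assertion and that $(\varphi\circ\eta)(\mathbf 0_3,s)=s$, because $w(0,s)=0$. If $s_0\notin H$, then $(\mathbf 0_3,s_0)$ belongs to the open domain of $\varphi\circ\eta$ and is mapped to $s_0\in X'$; thus $(\varphi\circ\eta)^{-1}(X')$ is an open neighbourhood of $(\mathbf 0_3,s_0)$ in $Y\times S$, and since $(U_\lambda)_\lambda$ is a neighbourhood basis of $\mathbf 0_3$ in $Y$, it contains $U_\lambda\times S'$ for some $\lambda$ and some neighbourhood $S'$ of $s_0$; intersecting with $S^\circ$ gives the claim. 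If $s_0\in H$, then $(\mathbf 0_3,s_0)$ is no longer in the domain, and I would argue instead from the collapsing: as $s\to s_0$ one has $T(s)\to 0$, hence by the boundedness of $a,b,c/\gamma,u$ near $s_0$ and of $U_0$ the three components displayed above tend to $0$ uniformly in $(\bar x,\bar y,\bar z)\in U_0$, so $(\varphi\circ\eta)(\bar x,\bar y,\bar z,s)\to(\mathbf 0_3,s_0)=s_0$ uniformly on $U_0$; since $X'$ is open in $X$ (hence in $M$) and contains $s_0$, there is a neighbourhood $S'$ of $s_0$ in $S$ with $(\varphi\circ\eta)\bigl(U_0\times(S'\cap S^\circ)\bigr)\subseteq X'$, and here $\lambda=0$ works.

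The routine pieces are the boundedness bookkeeping and checking that the given $F'$ are weighted‑homogeneous with the exponents supplied by Lemma~\ref{lemma:construct-contract-tubular}. The one delicate point is the behaviour over $H$: there the tubular neighbourhood $\varphi^{-1}(M^\circ)$ degenerates and $\varphi\circ\eta$ is not even defined at $(\mathbf 0_3,s_0)$, so the continuity argument used when $s_0\notin H$ breaks down; this is precisely why the factors $T^p,T^q,T^r$ are inserted, since they force the uniform collapse of $U_0$ onto $s_0$ as $s\to s_0$ and thereby permit the choice $\lambda=0$.
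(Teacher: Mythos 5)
Your proof is correct and follows essentially the same route as the paper: fix $p,q,r$ via Lemma~\ref{lemma:construct-contract-tubular}, use the uniform bounds on $a$, $b$, $c/\gamma$, $b/\gamma$, $u$ together with $|T|$ being uniformly smaller than $1$ to force $(\varphi\circ\eta)(U_0\times S^\circ)$ into $(\mathbb D')^3\times S^\circ$, and for the refined statement shrink either $U_\lambda$ (general $s$) or $S'$ (when $s\in H$, exploiting $T(s)\to 0$ to collapse $U_0$ uniformly). The only cosmetic differences are your topological continuity argument in the case $s_0\notin H$ (the paper uses explicit estimates) and your explicit remark that the weighted-homogeneity identity extends $\eta$ over $H$ by density, which is a slightly more careful justification of a point the paper passes over quickly.
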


\begin{proof}
Since $a$, $b$, $c\cdot \gamma^{-1}$ and $b\cdot \gamma^{-1}$ are holomorphic functions on $\check{S}$, 
they are bounded on $S$. 
Hence there is a positive number $K$ such that, for any $s\in S$ and any $(\overline{x}, \overline{y}, \overline{z}) \in \mathbb{C}^3$,  
\[
|a(s)\overline{x}| \le K |\overline{x}|, \  |b(s)\overline{y}| \le K |\overline{y}| 
\mbox{ and }  |\frac{c(s)}{\gamma(s)}  \overline{z}| \le  K|\overline{z}|. 
\]
Since $u$ is   a holomorphic function on $\mathbb{D}\times \check{S}$, we may assume that $|u(b(s)\overline{y},s)|$ is bounded for $(\overline{y},s)\in \mathbb{D}_{\varepsilon} \times S$, where $\mathbb{D}_{\varepsilon}\subseteq \mathbb{C}$ is the disc of some radius $\varepsilon >0$.  
Hence we may assume further that 
\[
|\zeta(\overline{y},\overline{z},s) |  = \Big|  \frac{c(s)}{\gamma(s)}  \overline{z}  - \frac{b(s)}{\gamma(s)}  \cdot u(b(s)\overline{y},s)  \cdot \overline{y}     \Big| \le K(|\overline{z} | + |\overline{y}|)
\]
whenever $|\overline{y}|< \varepsilon$ and $s\in S$.

Since $M'\cap M^\circ$ is a hypersurface defined in $\mathbb{D'}^3 \times S^\circ$, in order that a point $ (\overline{x},\overline{y}, \overline{z},s) \in Y\times S^\circ$ is contained in $\varphi^{-1}(M'\cap M^\circ)$, it is necessary and sufficient that 
\[
|a(s)\overline{x}| < R',\  |b(s)\overline{y}|   <R' \mbox{ and } |\zeta(\overline{y},\overline{z},s)| < R', 
\]
where $R'$ is the radius of $\mathbb{D}'$. 
Since $S$ in contained in a polydisc of  radius   smaller than 1,  
we have $|T|<1$.  
Since $U$ is bounded,  we deduce that  there is some integer ${N_0}>0$ such that 
\[
K |T^{N_0}\overline{x}| < R', \  K|T^{N_0}\overline{y}| < R',\   |T^{N_0}\overline{y}| < \varepsilon,\   \mbox{ and } K(|T^{N_0} \overline{y}| + |T^{N_0} \overline{z}|) <  R'. 
\]
for all $(\overline{x},\overline{y}, \overline{z},s) \subseteq U \times S^\circ$. 
By Lemma \ref{lemma:construct-contract-tubular}, there exist integers $p,q,r \ge {N_0}$ such that 
\[
\eta \colon  (\overline{x},\overline{y}, \overline{z},s) \mapsto  (T^p\overline{x}, T^q\overline{y}, T^r\overline{z},s), 
\]
is a well-defined endomorphism of $Y\times S $. 
We then deduce that $\eta(U\times S^\circ)$ is contained in $\varphi^{-1}(M'\cap M^\circ)$.  

For the second property on $\eta$, we assume that  $X'$ is a neighborhood of $s$ in $X$. 
Then there is a positive number $\varepsilon'$, 
there is an open  neighborhood $S'\subseteq S$ of $s$ in $S$, 
such that 
\[
M \cap \big((\mathbb{D}_{\varepsilon'})^3 \times S' \big)  \subseteq X', 
\]
where $\mathbb{D}_{\varepsilon'} \subseteq \mathbb{C}$ is the disc of radius $\varepsilon'$. 
We remark that the value of $T$ at $s$ is equal to 0. 
Hence we may pick $S'$ in order that  $|T|^{N_0}$ is very small on $S'$. 
More precisely, since $U$ is bounded, we can pick $S'$ so that 
\[
K |T^p\overline{x}| <  \varepsilon', \  K|T^q\overline{y}| <  \varepsilon' , \  |T^q\overline{y}| <  \varepsilon  \mbox{ and } K(|T^q \overline{y}| + |T^r \overline{z}|) < \varepsilon'. 
\] 
for   all $(\overline{x},\overline{y}, \overline{z},s) \in U \times S'$. 
It follows that 
\[
(\varphi \circ \eta)  \big( U \times (S'\cap S^\circ) \big) 
\]
is contained in $X'$.   
This completes the proof of the lemma. 
\end{proof}

In the remainder of the section, by replacing $\varphi$ with $\varphi\circ \eta$, 
we assume that $\varphi$ satisfies the properties of $\varphi\circ \eta$  in  Lemma \ref{lemma:construct-tubular-2}. 
Let 
\[V= \varphi(U \times S^\circ) \subseteq M'\cap M^\circ \subseteq X^\circ.    \]
Then $V$ is a  tubular  neighborhood of $S^\circ$ in $X^\circ$. 
The fundamental group $\pi_1( V_{\sm} )$ is isomorphic to $ \pi_1(Y_{\sm}) \times \mathbb{Z}^k$.   
We observe that  $V$ does not extend to an  open neighborhood of $S$ in $X$. 
Indeed, since $N_0>0$,  a point  $(x,y,z,s) \in V$ approaches  to $(0,0,0,s_H)$ when $s$ tends  to a point $s_H\in H$.  
In the following lemma, we show that, if we blowup $X$ at the $H_i$'s for sufficiently many times, 
then the preimage  of $V$ extends to a neighborhood of the strict transform of  $S$.

\begin{lemma}
\label{lemma:construct-tubular-3}
With the notation above,  
there exists an integer $N>0$ such that the following property holds. 
Let $\rho \colon \widehat{M} \to M$ be a   morphism obtained by blowing up $M$ at the $H_i'$s as in Subsection \ref{subsection:blowup-divisor}, 
such that there  are at least $N$ blowups whose centers are $H_i$ for all $i$.    
Let $\widehat{X} = \rho^{-1}(X)$, $\widehat{X}^\circ = \rho^{-1}(X^\circ)$, $\widehat{V} = \rho^{-1}(V)$ and  let $\widehat{S}$ be the strict transform of $S$ in $\widehat{M}$. 
Then there is  an  open neighborhoods $W$  of $\widehat{S}$ in $\widehat{X}$, 
such that  $ W^\circ := W \cap \widehat{X}^\circ $  is contained in $ \widehat{V}$, 
and that  $W $ is invariant under the natural action of $G$ on $\widehat{X}$. 
Moreover, we have  $\rho(W \cap \rho^{-1}(\Delta)) \subseteq H\cap S$. 
\end{lemma}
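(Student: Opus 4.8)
The plan is to reduce the statement to one elementary size estimate, carried out in the single coordinate chart of $\widehat{M}$ that meets $\widehat{S}$. By the construction in Subsection~\ref{subsection:blowup-divisor}, after the blowups there is an open neighborhood $U'$ of $\widehat{S}$ in $\widehat{M}$ with coordinates $(x',y',z',s)$ in which $\widehat{S}=\{x'=y'=z'=0\}$, the $\rho$-exceptional locus is $\{T'=0\}$ with $T'=\prod_{i=1}^{k}T_i^{m_i}$ and $m_i\ge N$ the number of blowups with center $H_i$, and
\[
\rho|_{U'}\colon (x',y',z',s)\longmapsto (T'x',\,T'y',\,T'z',\,s).
\]
Over $U'$ a point lies in $\rho^{-1}(X^\circ)$ exactly when $s\in S^\circ$, and on that locus $\rho$ is injective with image in $M^\circ$; hence, for such a point, $(x',y',z',s)\in\widehat{V}_{\lambda}$ if and only if $\varphi^{-1}(T'x',T'y',T'z',s)=(\overline{x},\overline{y},\overline{z},s)$ satisfies $(\overline{x},\overline{y},\overline{z})\in U_\lambda\subseteq Y$. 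Thus it suffices to choose $N$ so large that $(\overline{x},\overline{y},\overline{z})$ is forced into an arbitrarily small ball about $\mathbf{0}_3$ as soon as $(x',y',z',s)$ is close enough to $\widehat{S}$, at a rate independent of $\lambda$.

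To produce the estimate I would use that the zero loci of $a,b,c,\gamma$ lie in $H$ to factor $a=A\prod_i T_i^{a_i}$, $b=B\prod_i T_i^{b_i}$, $c=C\prod_i T_i^{c_i}$, $\gamma=\Gamma\prod_i T_i^{\gamma_i}$ with $A,B,C,\Gamma$ nowhere vanishing on $S$ and all exponents $\ge 0$; and recall, from Lemma~\ref{lemma:construct-tubular-1} together with the replacement of $\varphi$ by $\varphi\circ\eta$ made after Lemma~\ref{lemma:construct-tubular-2}, that $\varphi^{-1}$ is the map $(x,y,z,s)\mapsto\big(\tfrac{x}{T^{p}a(s)},\,\tfrac{y}{T^{q}b(s)},\,\tfrac{\gamma(s)z+u(y,s)\,y}{T^{r}c(s)},\,s\big)$. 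Substituting $x=T'x'$, $y=T'y'$, $z=T'z'$ gives
\[
\overline{x}=\tfrac{1}{A(s)}\Big(\prod_i T_i^{\,m_i-a_i-p}\Big)x',\qquad
\overline{y}=\tfrac{1}{B(s)}\Big(\prod_i T_i^{\,m_i-b_i-q}\Big)y',
\]
\[
\overline{z}=\tfrac{\Gamma(s)}{C(s)}\Big(\prod_i T_i^{\,m_i+\gamma_i-c_i-r}\Big)z'+\tfrac{1}{C(s)}\Big(\prod_i T_i^{\,m_i-c_i-r}\Big)\,u(T'y',s)\,y'.
\]
This is the point of the blowups: the new monomial factor $T'$ exactly absorbs the vanishing of $a,b,c$ (and of $T^{p},T^{q},T^{r}$) along $H$. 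Choosing $N\ge\max_{1\le i\le k}\max(a_i+p,\ b_i+q,\ c_i+r)$ makes every exponent of $T_i$ above $\ge 0$; since $S$ lies in a polydisc of radius $<1$ we have $|T_i|\le 1$, so each monomial has modulus $\le 1$, and after shrinking $S$ around $o_S$ (and the polydisc $\mathbb{D}^3$) the functions $A^{-1},B^{-1},C^{-1},\Gamma$ and $u$ are bounded. Hence $|\overline{x}|+|\overline{y}|+|\overline{z}|\le C_1\big(|x'|+|y'|+|z'|\big)$ on $U'\cap\widehat{X}$ for a constant $C_1$ independent of $\lambda$. Because Lemma~\ref{lemma:construct-tubular-1} delivers $\varphi$ in precisely this shape for each of the four defining equations of Setup~\ref{setup-local-cover}, all cases are covered at once.

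Granting this, the sets $W_\lambda$ are constructed directly. For each $\lambda$ pick $\delta_\lambda>0$, decreasing, with $Y\cap\{\,|(\overline{x},\overline{y},\overline{z})|<\delta_\lambda\,\}\subseteq U_\lambda$ (possible since $U_\lambda$ is an open neighborhood of $\mathbf{0}_3$ in $Y$), and set $W_\lambda'=\{(x',y',z',s)\in U'\cap\widehat{X}:\ |x'|+|y'|+|z'|<\delta_\lambda/C_1\}$, an open neighborhood of $\widehat{S}$ in $\widehat{X}$. If $(x',y',z',s)\in W_\lambda'$ with $s\in S^\circ$, then $\rho(x',y',z',s)\in X^\circ\subseteq M^\circ$, so $\varphi^{-1}$ applies, $(\overline{x},\overline{y},\overline{z})\in Y$, and $|(\overline{x},\overline{y},\overline{z})|<\delta_\lambda$, whence $(\overline{x},\overline{y},\overline{z})\in U_\lambda$ and $(x',y',z',s)\in\widehat{V}_{\lambda}$; that is, $W_\lambda'\cap\rho^{-1}(X^\circ)\subseteq\widehat{V}_{\lambda}$. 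Since $V_\lambda$ was built from the non-canonical $\varphi$ it need not be $G$-invariant, but only $W_\lambda$ must be, so I take $W_\lambda=\bigcap_{g\in G}g\,W_\lambda'$: a finite intersection of open neighborhoods of $\widehat{S}=g\widehat{S}$, hence a $G$-invariant open neighborhood of $\widehat{S}$ contained in $W_\lambda'$, and therefore still satisfying $W_\lambda\cap\rho^{-1}(X^\circ)\subseteq\widehat{V}_{\lambda}$; the sequence $(W_\lambda)_{\lambda\in\mathbb{N}}$ is decreasing. The only genuine work is the monomial bookkeeping in the two displayed formulas — keeping track of which powers of $T_1,\dots,T_k$ are contributed by $\eta$, by $\rho$, and by the factorizations of $a,b,c,\gamma$ — together with the observation that the constant $C_1$ does not depend on $\lambda$, which is exactly what allows a single $N$ to serve every $W_\lambda$.
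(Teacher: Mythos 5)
Your proof is correct and follows essentially the same route as the paper's: express $\rho$ in the chart near $\widehat{S}$ as multiplication by the monomial $T'$, use that the zero loci of $a,b,c$ (and $\gamma$) lie in $H$ to choose $N$ large enough that the pulled-back coordinates $\overline{x},\overline{y},\overline{z}$ become holomorphic and vanish along $\widehat{S}$, then cut out $W_\lambda$ by a small polydisc condition and intersect over $G$ for invariance. The only cosmetic difference is that you track the exponents of each $T_i$ separately and extract an explicit $\lambda$-independent constant $C_1$, where the paper works with $T^N$ and a continuity/vanishing argument; both yield the same uniform $N$.
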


\begin{proof}
There is some $\varepsilon >0$ such that the intersection  $Y \cap  (\mathbb{D}_{\varepsilon})^3$ is contained in $U$.   
Then $ \varphi   ( (Y \cap (\mathbb{D}_{\varepsilon})^3) \times S^\circ ) \subseteq V$. 
The  application $\varphi^{-1}\colon M^\circ \to Y\times S^\circ$ can be written with coordinates as 
\[
\varphi^{-1}\colon (x,y,z,s) \mapsto  (a(s)^{-1}x, b(s)^{-1}y, \xi(y,z,s),s),  
\] 
where $\xi(y,z,s) =  c(s)^{-1}(\gamma(s)z + u(y,s)y)$.   
Hence,  a point $(x,y,z,s) \in M^\circ $ with $s\in S^\circ$ is contained in 
\[ \varphi  \big( (Y \cap (\mathbb{D}_{\varepsilon})^3) \times S^\circ \big)\]
if and only if  
\begin{equation}\label{eqn:condition-tubular}
|   a(s)^{-1}x| < \varepsilon, \ | b(s)^{-1}y|<\varepsilon  \mbox{ and } | c(s)^{-1}(\gamma(s)z + u(y,s)y)|<\varepsilon. 
\end{equation}
Since the zero loci of $a,b,c$ are contained in $H$, 
there is some   positive integer $N$ such that 
\[
T^N\cdot   a(s)^{-1}, \  T^N\cdot    b(s)^{-1}    \mbox{ and }  T^N\cdot   c(s)^{-1} 
\]
are holomorphic functions on $\check{S}$. 
We will show that this number $N$ satisfies the assertion of the lemma.

As shown in Subsection \ref{subsection:blowup-divisor},  
there is a coordinates system $(\hat{x}, \hat{y}, \hat{z},s)$ on  a neighborhood of the closure of $\widehat{S}$ in $\widehat{M}$, 
such that 
$\rho$ can be expressed as 
\[
\rho\colon (\hat{x}, \hat{y}, \hat{z},s) \mapsto (\hat{T} \hat{x}, \hat{T} \hat{y}, \hat{T} \hat{z}, s), 
\]
where $\hat{T} = T_1^{d_1}\cdots T_k^{d_k}$ and $d_i$ is the number of blowups in $\rho$ whose centers are $H_i$.  
By pulling back the conditions of \eqref{eqn:condition-tubular} to $\widehat{M}$, we obtain the following conditions 
\begin{equation*}%\label{eqn:condition-tubular}
|\hat{T}\cdot    a(s)^{-1} \hat{x}|<\varepsilon,  \ |\hat{T}\cdot  b(s)^{-1} \hat{y}|< \varepsilon \mbox{ and } |\hat{T} \cdot c(s)^{-1}(\gamma(s)\hat{z} + u(\hat{T} \hat{y},s)\hat{y})|<\varepsilon. 
\end{equation*}
Since $d_1,...,d_k \ge N$, the functions 
\[
\hat{T}\cdot    a(s)^{-1}, \  \hat{T}\cdot  b(s)^{-1}  \mbox{ and } \hat{T} \cdot c(s)^{-1}
\]
are holomorphic functions on $\check{S}$. 
Hence they are bounded on $S$. 
Therefore there is some $\delta>0$ such that if  
 $\widehat{M}^\circ  := \rho^{-1}(M^\circ)$  and 
if $\hat{o}= (\hat{x}, \hat{y}, \hat{z},s)$ is contained in  
$\widehat{M}^\circ  \cap ( (\mathbb{D}_\delta)^3 \times \widehat{S})$, 
then 
\[\rho(\hat{o}) \in  \varphi  \big( (Y \cap (\mathbb{D}_{\varepsilon})^3) \times S^\circ \big). \]
Let $\widehat{W} =  ( (\mathbb{D}_\delta)^3 \times \widehat{S} )\cap \widehat{M}$.   
It follows  that 
\[\rho(\widehat{W} \cap \widehat{M}^\circ) \subseteq   V.  \]
Thus we get $\widehat{W} \cap \widehat{M}^\circ \subseteq \widehat{V} \subseteq \widehat{X}$.

Assume that $\hat{o} \in \widehat{W} \cap \rho^{-1}(\Delta) = \widehat{W} \setminus \widehat{M}^\circ$ is a point with coordinate $(\hat{x}, \hat{y}, \hat{z},s)$ in $(\mathbb{D}_\delta)^3 \times \widehat{S}$.  
Then $\hat{T}(s) = 0$ and  $\rho(\hat{o})$ has coordinates $(0,0,0,s)$ in $M$. 
This implies that  $\rho(\hat{o}) \in H\cap S \subseteq X$. 
In particular, we see that $\widehat{W} \subseteq \widehat{X}$.  
Hence we can define $W= \bigcap_{g\in G} g(\widehat{W})$.  
Then $W$ is a $G$-invariant neighborhood of $\widehat{S}$ in $\widehat{X}$.
This completes the proof of  the lemma.  
\end{proof}

The following lemma is the key to the item (4) of Proposition \ref{prop:construct-local-cover}.

\begin{lemma}
\label{lemma:construct-local-cover-1}
With the notation in Lemma \ref{lemma:construct-tubular-3},   
we fix  such a bimeromorphic morphism $\rho \colon \widehat{X} \to X$  such that there are  $6N'$ blowups whose centers are $H_i$ for all $i$, where $N' \ge N$ is an integer.    
Then there is a sequence of morphisms of fundamental groups  
\begin{equation*}
\xymatrix{ 
	\pi_1(W^\circ_{\sm} ) \ar[r]^-{\rho_*}    &    \pi_1( V_{\sm} )  \ar[r]^-{\varphi^{-1}_*}  &   \pi_1( Y_{\sm} \times \check{S}^\circ ).
}
\end{equation*}
We denote by $\iota \colon  \pi_1(W^\circ_{\sm} )  \to \pi_1( Y_{\sm} \times \check{S}^\circ )$ the composition of the previous sequence.  
Then  $\iota$ is surjective. 

Furthermore, if  $\psi \colon X^\circ  \to Y\times \check{S}^\circ$ is an open embedding, 
whose restriction on  $\check{S}^\circ \subseteq X^\circ$ is equal to the identity map from $\check{S}^\circ$ to  $\{\mathbf{0}_3\} \times \check{S}^\circ$. 
Then   the morphism of fundamental groups 
\[
(\psi \circ \rho)_*  \colon \pi_1(W^\circ_{\sm} )  \to \pi_1( Y_{\sm} \times \check{S}^\circ )
\]
is surjective and its kernel is equal to the kernel of $\iota$. 
%In particular, this kernel is $G$-invariant.  
\end{lemma} 

\begin{proof}
We notice that  $\widehat{S} \cong S$. 
From the construction of $W$ in Lemma \ref{lemma:construct-tubular-3}, 
we see that if $\widehat{H}$ is the preimage of $H$ in $\widehat{S}$, 
then  the data $W$, $\rho^{-1}(\Delta)|_W$, $\widehat{S}$, $\widehat{H}$ and $G$  satisfy the items (1) and (2) of Setup \ref{setup-local-cover}.   
If $o_{\widehat{S}}$ is the preimage of $o_S$ in $\widehat{S}$, 
then there is an open neighborhood $S_1$ of $o_{\widehat{S}}$ in $\widehat{S}$, 
which plays the same role for $o_{\widehat{S}}$ as $S$ for $o_S$ in the item (4) of Setup \ref{setup-local-cover}. 
By applying Lemma \ref{lemma:construct-tubular-2}  to $W$, 
there is an open embedding $\hat{\varphi}\colon U\times S_1^\circ  \to W$ whose restriction on $\{\mathbf{0}_3\}\times S_1^\circ$ is the identity map to $S_1^\circ$,   
where $S_1^\circ = S_1\setminus \widehat{H}$.    
Since there are natural isomorphisms 
\[\pi_1(S_1^\circ) \cong \pi_1(S^\circ) \cong \pi_1(\check{S}^\circ) \cong \mathbb{Z}^k,\]
by Lemma \ref{lemma:tubular-fund-group-iso}, 
the composition of the following  sequence 
\begin{equation*}
\xymatrix{ 
			  \pi_1(U_{\sm}\times S_1^\circ) \ar[r]^-{\hat{\varphi}_*}    &    \pi_1( W^\circ_{\sm} )  \ar[r]^-{\iota }  &   \pi_1( Y_{\sm} \times \check{S}^\circ ) 
}
\end{equation*} 
is an isomorphism.  
Thus $\iota$ is surjective.

We recall that $  \varphi   \colon U\times S^\circ \to V$ is an isomorphism. 
Since there is a natural isomorphism $\pi_1(U_{\sm}) \cong \pi_1(Y_{\sm})$, we deduce that
\[\varphi^{-1}_*\colon \pi_1(V_{\sm}) \to  \pi_1( Y_{\sm} \times \check{S}^\circ )\] 
is an isomorphism,   
and we obtain that $\rho_*\colon   \pi_1(W^\circ_{\sm} ) \to     \pi_1( V_{\sm} )$ is surjective. 
Moreover, the kernel of $\iota$ is equal to the one of $\rho_*$. 

Assume that we have a morphism $\psi\colon X^\circ \to Y\times \check{S}^\circ$ as in the second part of the lemma. 
Then the morphism 
\[
\psi\circ  \varphi \colon   U\times S^\circ  \to Y\times \check{S}^\circ
\]
is an open embedding whose restriction on $\{\mathbf{0}_3\}\times S^\circ$ is the identity map. 
Thus it induces an isomorphism   from $\pi_1( U_{\sm}\times S^\circ)$ to $ \pi_1(Y_{\sm} \times \check{S}^\circ)$ by Lemma \ref{lemma:tubular-fund-group-iso}. 
Hence
\[
\psi_* \colon \pi_1(V_{\sm} )  \to  \pi_1(Y_{\sm} \times \check{S}^\circ) 
\]
is an isomorphism.  
Since $\rho_*\colon   \pi_1(W^\circ_{\sm} ) \to     \pi_1( V_{\sm} )$ is surjective, 
we deduce that   $(\psi\circ \rho)_*$ is surjective. 
Furthermore, the kernel of $(\psi\circ \rho)_*$ is the one of  $\rho_*$, 
which is also  the kernel of $\iota$. 
This completes the proof of the lemma. 
\end{proof}

The item (5) of Proposition \ref{prop:construct-local-cover} follows essentially from the next lemma.

\begin{lemma}
\label{lemma:construct-local-cover-2}
With the notation in Lemma \ref{lemma:construct-local-cover-1}, 
let $\hat{o}\in W\setminus \widehat{X}^\circ_{\sm}$ be a point and let $o=\rho(\hat{o}) \in X$. 
Let $\Sigma \subseteq  X$ be an open neighborhood  of $o$, and let $\Sigma^\circ = \Sigma\cap X^\circ$. 
Assume that for $i=1,2$, there is an open embedding  
$\psi_i \colon \Sigma^\circ \to Y\times (\mathbb{C}^n\setminus H^i)$
satisfying the following properties. 
Each $ H^i$  is the union of some coordinate hyperplanes in $\mathbb{C}^n$, 
the restriction $\psi_i|_{\check{S}\cap \Sigma^\circ}$ has images contained in $\{\mathbf{0}_3\}\times \mathbb{C}^n$, 
and it extends to an open embedding from $\check{S}\cap \Sigma$ to $\mathbb{C}^n$ such that $\psi_i(\check{S}\cap \Sigma^\circ) = \psi_i(\check{S}\cap \Sigma)\setminus  H^i$.  

Then  there is an open neighborhood  $\widehat{\Sigma} \subseteq W$ of $\hat{o}$, such that 
the following properties hold. 
\begin{enumerate}
\item The morphisms of fundamental groups  from  $ \pi_1(\widehat{\Sigma}_{\sm} \cap  \widehat{X}^\circ)  $ to $ \pi_1( Y_{\sm}\times  (\mathbb{C}^n\setminus H^i) )$ 
induced by $\psi_i\circ \rho $, with $i=1,2$,    have the same kernel. 
\item Let $k^1$ and $k^2$ be the numbers of the irreducible components of  $H^1$ and $H^2$ respectively, 
and let  $k'$ be the number of the irreducible components of $H$ passing through $o$. 
For $i=1,2$,   the image of $(\psi_i\circ \rho)_*$ is the conjugacy class of a normal subgroup and 
\[
\pi_1( Y_{\sm}\times  (\mathbb{C}^n\setminus H^i) )  /  (\psi_i\circ \rho)_* (\pi_1( \widehat{\Sigma}_{\sm} \cap  \widehat{X}^\circ ) )
\cong 
\mathbb{Z}^{k^i - k'}. 
\] 
\end{enumerate} 
\end{lemma}

\begin{proof} 
We first assume that the point $\hat{o}$ is in $W^\circ$. 
Then   $\hat{o}  \in  W^\circ_{\sing}$. 
Since $\rho$ is an isomorphism on $W^\circ$, 
we deduce that $o \in S^\circ \subseteq X^\circ$.  
It follows that $k'=0$. 
We note that $X$ has the same type of singularities at points of $S^\circ$. 
Hence there is a contractible open neighborhood $S'$ of $o$ in $S^\circ$, 
an open neighborhood $V'$ of $o$ in $\Sigma^\circ$, 
and an open neighborhood $Y'$ of $\mathbf{0}_3$ in $Y$,  
such that $V'\cong  Y' \times  S'$. 
We may assume further that the natural morphism $\pi_1(Y'_{\sm}) \to \pi_1(Y_{\sm})$ is isomorphic.  
Let $\widehat{\Sigma}$ be the preimage of $V'$ in $\widehat{X}$. 
Then we have  $\hat{o} \in \widehat{\Sigma} \subseteq \widehat{X}^\circ$ and $\rho$ induces an isomorphism from $\widehat{\Sigma}$ to $V'$. 
In particular,  $\rho(\widehat{\Sigma}) \cap \check{S}^\circ = S'$, and there are natural isomorphisms 
\[
\pi_1(\widehat{\Sigma}_{\sm}) \cong  \pi_1(V'_{\sm}) \cong \pi_1(Y_{\sm}). 
\]

Since   $\psi_i$  induces open embeddings from $V' \cong Y'\times S'$  to $Y\times (\mathbb{C}^{n}\setminus H^i)$,  whose restriction on $S'=V'_{\sing}$ has images contained in $\{\mathbf{0}_3\} \times (\mathbb{C}^{n}\setminus H^i)$, 
by Lemma \ref{lemma:tubular-fund-group-iso}, it induces an injective morphism  from $\pi_1(V'_{\sm})$
to $\pi_1(Y_{\sm}\times (\mathbb{C}^{n}\setminus H^i))$. 
Therefore, the morphism  
from $ \pi_1(\widehat{\Sigma}_{\sm} \cap \widehat{X}^\circ) $ to $\pi_1( Y_{\sm}\times  (\mathbb{C}^{n}\setminus H^i))$   
induced by $\psi_i\circ \rho $ is injective. 
This shows the item (1). 
Lemma \ref{lemma:tubular-fund-group-iso} also implies that  
the image of $(\psi_i\circ \rho)_*$ is the image of $\pi_1(Y_{\sm}\times \psi_i(S'))$ induced by the natural inclusion.  
This proves the item (2).

It remains to treat the case when $\hat{o} \in W \setminus W^\circ = W\cap \rho^{-1}(\Delta)$.  
This assumption implies that $o\in H \cap S$  by Lemma \ref{lemma:construct-tubular-3}. 
We recall that $\theta\colon M\to \check{S}$ is the projection sending a point $(x,y,z,s)\in M$ to $s\in \check{S}$.   
Let $\hat{\theta}\colon  \widehat{M} \to \check{S}$  be the  composition $\theta\circ \rho$.  
By Lemma \ref{lemma:construct-tubular-2}, there is an open neighborhood $S' $ of $o$ in $S$, such that 
\[
 V' := \varphi   \big(U \times (S'\cap S^\circ) \big)  = V \cap \theta^{-1}(S')
\]
is contained in $\Sigma^\circ$.   
Up to shrinking $S'$, we can assume that it is isomorphic to a polydisc whose origin is $o$,  
and  $H|_{S'}$ is isomorphic to the union of coordinates hyperplanes.  
We define $\widehat{\Sigma} = W  \cap \hat{\theta}^{-1}(S')$.  
Then $\hat{o}\in \widehat{\Sigma}$,     
\[
\rho(\widehat{\Sigma})\cap \check{S} = S', \ 
\rho(\widehat{\Sigma} \cap \widehat{X}^\circ) \subseteq V'  
\mbox{ and }   
\rho(\widehat{\Sigma}_{\sm} \cap \widehat{X}^\circ) \subseteq V'_{\sm}. 
\]
We notice that $V' \cong U \times (S'\cap  \check{S}^\circ)$.  
By assumption, for $i=1,2$,   
$\psi_i$ induces open an  embedding 
from $V' $ to $Y\times  (\mathbb{C}^{n}\setminus H^i)$,  
whose restriction on $S'\cap \check{S}^\circ=V'_{\sing}$  has image contained in $\{\mathbf{0}_3\}\times \mathbb{C}^n$,  
and extends to an open embedding from $S'$ to $\mathbb{C}^n$,  
such that $\psi_i(S'\cap \check{S}^\circ) = \psi_i(S') \setminus H^i$. 
Hence by Lemma \ref{lemma:tubular-fund-group-iso},   
the morphism 
\[\pi_1(V'_{\sm})  \to \pi_1( Y_{\sm}\times  (\mathbb{C}^{n}\setminus H^i)) \] 
induced by $\psi_i$ is injective.   
We then deduce that  the  kernel  of the morphism  
from $ \pi_1(\widehat{\Sigma}_{\sm} \cap \widehat{X}^\circ) $  to  $\pi_1( Y_{\sm}\times  (\mathbb{C}^{n}\setminus H^i))$ 
induced by $\psi_i\circ \rho $ is the same as the one of 
$\rho_*\colon   \pi_1(\widehat{\Sigma}_{\sm} \cap  \widehat{X}^\circ)  \to \pi_1( V'_{\sm} )$, 
which is independent of $i$. 
Hence we obtain the item (1). 

Lemma \ref{lemma:tubular-fund-group-iso} also implies that the image of $(\psi_i)_*(\pi_1(V'_{\sm}))$ is the same as the image of $\pi_1(Y_{\sm}\times \psi_i(S'\cap \check{S}^\circ))$ induced by the natural inclusion.  
Furthermore, by  the same argument as in the first two paragraph of the proof of Lemma \ref{lemma:construct-local-cover-1}, we see that the morphism $\rho_*\colon   \pi_1(\widehat{\Sigma}_{\sm} \cap  \widehat{X}^\circ)  \to \pi_1( V'_{\sm} )$ is surjective.  
Hence   the item (2) holds as well. 
This   completes the proof of the lemma.
\end{proof}

Now we can complete the proof of Proposition \ref{prop:construct-local-cover}. 

\begin{proof}[{Proof of Proposition \ref{prop:construct-local-cover}}] 
We let $N$ be the integer in Lemma \ref{lemma:construct-tubular-3} and  we fix an integer  $N'\ge N$. 
With the notation in Lemma \ref{lemma:construct-local-cover-1}, there is a morphism of fundamental groups\[
\iota = (\varphi^{-1}\circ \rho)_* \colon \pi_1(W^\circ_{\sm}) \to \pi_1 (Y_{\sm} \times \check{S}^\circ). 
\]
We note that $\pi_1 (Y_{\sm} \times \check{S}^\circ) \cong G_Y \times  \mathbb{Z}^k$, where $G_Y$ is the fundamental group of $Y_{\sm}$, which is finite.  
Let $\mathcal{N}\subseteq \pi_1 (Y_{\sm} \times \check{S}^\circ)$ be the normal subgroup defined in Lemma \ref{lemma:standard-subgroup},  
and let $\mathcal{H} = \iota^{-1}(\mathcal{N})$. 
Then $\mathcal{H}$ is a normal subgroup of $\pi_1(W^\circ_{\sm})$, and it determines a finite Galois  \'etale cover of $W^\circ_{\sm}$.      
Let $\nu\colon \overline{W} \to W$ be the induced finite   cover with $\overline{W}$ normal, see Theorem \ref{thm:GR-cover}. 
Then   $\nu$    is Galois and  quasi-\'etale over $W^\circ$.   
We have hence proved the items (1) and (2).

We will show that $\overline{W}^\circ:=  \nu^{-1}(W^\circ)$ is smooth.
Let $Z\to Y\times \check{S}^\circ$ be the finite quasi-\'etale morphism induced by the subgroup $\mathcal{N} \subseteq \pi_1 (Y_{\sm} \times \check{S}^\circ)$, see Theorem \ref{thm:GR-cover}.  
Then it factors through $\overline{Y}\times \check{S}^\circ \to Y\times \check{S}^\circ$, where $\overline{Y} \to Y$ is the quasi-\'etale cover induced by the universal cover of $Y_{\sm}$. 
In particular, $\overline{Y}$ is smooth.  
The Zariski's purity theorem then implies that $Z$ is smooth. 
We note that 
$\nu|_{\overline{W}^\circ}$ is the basechange of $ \varphi ^{-1}\circ \rho \colon  W^\circ \to Y\times \check{S}^\circ$ 
over $Z\to Y\times \check{S}^\circ$. 
\begin{equation*}
\xymatrix{ 
  \overline{W}^\circ \ar[d]	\ar[rr]^{\nu}  && W^\circ     \ar[d] \\ 
  Z \ar[r] & \overline{Y}\times \check{S}^\circ  \ar[r]  & Y\times \check{S}^\circ   
}
\end{equation*}
Since $\rho$ is an isomorphism on $W^\circ$, we see that 
$\varphi^{-1}\circ \rho$ is an open embedding on $W^\circ$. 
Hence  $\overline{W}^\circ$ is  embedded into $Z$ as an open subset, and is  smooth. 
This proves the item (3).

To  prove the item (4), it is enough to show that $\mathcal{H}$ is $G$-invariant.  
Let $g\in G$ be an element. 
Then $g^{-1} (\mathcal{H})$ is the preiamge of $\mathcal{H}$ under the morphism of fundamental groups induced by $g\colon W\to W$.
Let 
\[
\iota' :=  ( g^{-1}\circ \varphi^{-1} \circ \rho \circ g)_* \colon \pi_1(W^\circ_{\sm})  \to \pi_1 (Y_{\sm} \times \check{S}^\circ), 
\]
where the morphism $g^{-1}$ above is the automorphism of $Y\times \check{S}^\circ$ defined by the action of $G$ on $\check{S}^\circ$.  
Then we have 
\[ 
g^{-1} (\mathcal{H}) =    ((\varphi^{-1} \circ \rho \circ g)_*)^{-1}(\mathcal{N})    =  (\iota')^{-1} (\mathcal{N}). 
\]
We note that $\rho\circ g  = g\circ \rho$ as morphisms from $W$ to $X$.  
By  applying the second part of  Lemma \ref{lemma:construct-local-cover-1} with 
\[\psi = g^{-1}\circ \varphi^{-1} \circ g  \colon X^\circ \to Y\times \check{S}^\circ ,  \]
we deduce that the two morphisms  $\iota$ and $\iota'$ are surjective of the same kernel.  
Hence, there is an automorphism  $\tau$ of $\pi_1( Y_{\sm} \times \check{S}^\circ ) $  such that 
\[
\iota = \tau \circ \iota'. 
\]
By Lemma \ref{lemma:standard-subgroup}, we have $\tau^{-1}(\mathcal{N}) = \mathcal{N}$. 
It follows that   $g^{-1} (\mathcal{H}) = \mathcal{H}$. 
This shows  the item (4). 

It remains to prove the item (5).    
If $\hat{o}\in \widehat{X}^{\circ}_{\sm}$, 
then by construction, $\nu$ an $\nu'$ are both \'etale over $\hat{o}$. 
We can let $W''$ be a smooth contractible  neighborhood of $\hat{o}$ in $W'$ in this case.

Next we assume that $\hat{o}$ is not contained in $\widehat{X}^{\circ}_{\sm}$.  
Then $o=\rho(\hat{o}) \in S$, see Lemma \ref{lemma:construct-tubular-3}.  
As explained in the paragraph after Proposition \ref{prop:construct-local-cover}, 
by assumption, 
there is an open neighborhood $X'$ of $o=\rho(\hat{o})$ containing $\rho(W')$, 
there is an open embedding $ \psi \colon (X' \cap X^\circ) \to Y\times (\mathbb{C}^n\setminus H')$, where $H'$ is the union of some coordinate hyperplanes in $\mathbb{C}^n$,  such that the following properties hold. 
The restriction of  $\psi$  on $X'\cap \check{S}^\circ$ 
has images contained in $\{\mathbf{0}_3\} \times \mathbb{C}^n$. 
It extends to an open embedding from $X'\cap \check{S}$ to $\mathbb{C}^n$, 
such that $\psi(X'\cap \check{S}^\circ) = \psi(X'\cap \check{S})\setminus H'$. 
Furthermore,  
$\nu'$ is induced by the kernel of the composition  of the following (conjugacy classes of) morphisms of groups  
\begin{equation*}
\xymatrix{ 
			&   \pi_1( (W' \cap \widehat{X}^\circ)_{\sm} )  \ar[r]^{\rho_*}    &    \pi_1( (X'\cap X^\circ)_{\sm} ) \\ 
            \ar[r]^-{\psi_*}  &   \pi_1( Y_{\sm} \times (\mathbb{C}^n\setminus H') ) \ar[r] &  \pi_1( Y_{\sm} \times (\mathbb{C}^n\setminus H') ) / \mathcal{N}', 
}
\end{equation*}
where $\mathcal{N}'$ is the canonical normal subgroup of $\pi_1( Y_{\sm} \times (\mathbb{C}^n\setminus H') )$ defined in Lemma \ref{lemma:standard-subgroup}.

By Lemma \ref{lemma:construct-local-cover-2} and its proof,  
there is a contractible open neighborhood $S''$ of $o$ in  $S$,  
and an open neighborhood $W''$ of $\hat{o}$ in $W'$,   
such  that $\rho(W'')\cap \check{S} = S''$, 
and that the two   morphisms 
\[
 \sigma_1 =(\varphi^{-1} \circ \rho)_* \colon \pi_1( (W'' \cap \widehat{X}^\circ)_{\sm} )  \to  \pi_1( Y_{\sm} \times   \check{S}^\circ  ) 
\]
and 
\[
\sigma_2 =  (\psi \circ \rho)_*  \colon \pi_1( (W'' \cap \widehat{X}^\circ)_{\sm} ) \to \pi_1( Y_{\sm} \times   (\mathbb{C}^n\setminus H')  )
\]
have the same kernel.  
Their images are both normal subgroups and both isomorphic to $G_Y \times \mathbb{Z}^{k''}$, 
where  $k''$ is the number of the components  of $H$ which pass through $o$.   
Hence there is a surjective morphism $\mu \colon \pi_1( (W'' \cap \widehat{X}^\circ)_{\sm} ) \to G_Y \times \mathbb{Z}^{k''}$, such that  $\sigma_i = \chi_i\circ \mu$ for $i=1,2$,  where 
\[
 \chi_1  \colon  G_Y \times \mathbb{Z}^{k''}  \to  \pi_1( Y_{\sm} \times   \check{S}^\circ  ) 
\] 
and 
\[
 \chi_2  \colon  G_Y \times \mathbb{Z}^{k''}  \to  \pi_1( Y_{\sm} \times   (\mathbb{C}^n\setminus H')  )
\] 
are injective morphisms. 
Furthermore, the cokernel of $\chi_1$ is isomorphic to $\mathbb{Z}^{k-k''}$ and the one of $\chi_2$ is isomorphic to  $\mathbb{Z}^{k'-k''}$, where $k'$ is the number of irreducible components of $H'$.

We notice that, if  ${\nu^{-1}(W'')}_{\circ} $ is a connected component of ${\nu^{-1}(W'')}$, 
then $\nu|_{\nu^{-1}(W'')_\circ}\colon {\nu^{-1}(W'')}_{\circ} \to W''$ is induced by the kernel of the  composition of the following sequence of morphisms of groups
\begin{equation*}
\xymatrix{ 
 &	\pi_1( (W'' \cap \widehat{X}^\circ)_{\sm} ) \ar[r]^-{\mu}    
 &   G_Y\times \mathbb{Z}^{k''} \\ 
     \ar[r]^-{\chi_1} &   \pi_1( Y_{\sm} \times \check{S}^\circ )         \ar[r]  
 &     \pi_1( Y_{\sm} \times \check{S}^\circ ) / \mathcal{N},    
}  
\end{equation*}
which is equal to $\mu^{-1}(\chi_1^{-1}(\mathcal{N}))$. 
Since  the cokernel of $\chi_1$  is isomorphic to $\mathbb{Z}^{k-k'}$, 
the subgroup $\mathcal{N}'' := \chi_1^{-1}(\mathcal{N})$ of  $ G_Y\times \mathbb{Z}^{k''} $ is the canonical subgroup defined in Lemma \ref{lemma:standard-subgroup}. 
Then  $\nu|_{\nu^{-1}(W'')_{\circ}}$ is induced by the subgroup $\mu^{-1}(\mathcal{N''})$. 
Similarly,  if  ${(\nu')^{-1}(W'')}_{\circ} $ is a connected component of ${(\nu')^{-1}(W'')}$,
then the finite morphism  $\nu'|_{(\nu')^{-1}(W'')_{\circ}} \colon  {(\nu')^{-1}(W'')}_{\circ} \to W''$ is induced by the subgroup $\mu^{-1}( \chi_2^{-1}(\mathcal{N'}) )$. 
The same argument  implies that $ \chi_2^{-1}(\mathcal{N'})$ is the canonical subgroup of $ G_Y\times \mathbb{Z}^{k''}  $ in Lemma \ref{lemma:standard-subgroup}. 
Hence $\chi_2^{-1}(\mathcal{N}) = \mathcal{N}''$, and we deduce that $\nu|_{\nu^{-1}(W'')_{\circ}}$ is isomorphic to $\nu'|_{(\nu')^{-1}(W'')_{\circ}}$.   
This completes the proof of the proposition. 
\end{proof}

To end this section, we observe that the item (2) of Setup \ref{setup-local-cover} on the divisibility of vanishing orders can be achieved by taking cyclic covers. 

\begin{lemma}
\label{lemma:div-reduction}
With the notation in the item (1) of Setup \ref{setup-local-cover}, we let 
$  \overline{S} = \check{S}[\sqrt[6]{T_1},..., \sqrt[6]{T_n}] $ and let 
$\overline{M}  = M\times_{\check{S}} \overline{S}.$   
We denote by $\overline{H}_i$ the preimage of $H_i$ in $  \overline{S}$. 
Then $(\overline{S}_i, \sum_{i=1}^n \overline{H_i})$ is snc and 
$M$ is a hypersurface in $\mathbb{D}^3\times \overline{S}$ defined by a function of the shape of Proposition \ref{prop:good-shape} with respect to $(\overline{S}_i, \sum_{i=1}^n \overline{H_i})$, such that the vanishing orders of the functions $\alpha$ and $\beta$ on $\overline{S}$ along each $\overline{H}_i$ are divisible by 6.
\end{lemma}

\begin{proof}
In order to fix the notation, we assume without loss of the generality that the defining function of $M$ is
\[
 F(x,y,z,s) = x^2 + \alpha(s)y^3 +  \beta(s) \cdot y(\gamma(s)z +u(y,s) y)^3.
\]
By Lemma \ref{lemma:cyclic-cover-Cartier}, the pair $(\overline{S}, \sum_{i=1}^n \overline{H_i})$ is snc. 
Indeed, $\overline{S}$ is also isomorphic to a neighborhood of the origin in $\mathbb{C}^n$, 
and $\overline{H}_i$ is isomorphic to a coordinate hyperplane in $\overline{S} \subseteq \mathbb{C}^n$.  
Then $\overline{M}$ is the hypersurface in $\mathbb{D}^3\times \overline{S}$ define by the function  
\[
\overline{F}(x,y,z,\overline{s}) = x^2 + \alpha(p(\overline{s}))y^3 +  \beta(p(\overline{s})) \cdot y(\gamma(p(\overline{s}))z +u(y,p(\overline{s})) y)^3,
\] 
where $\overline{s}\in \overline{S}$ is a point and $p\colon \overline{S} \to \check{S}$ is the natural projection. 
From the construction, we deduce directly that the vanishing orders of $\alpha\circ p$ and $\beta \circ p$ along each $\overline{H}_i$ are divisible by 6.  
This completes the proof of the lemma. 
\end{proof}

\section{Modification around an irreducible singular  locus} 
\label{section:local-proof}

The objective of this section is to prove the following proposition, it combines the steps from Section \ref{section:double-point} to Section \ref{section:local-cover}.

\begin{prop}
\label{prop:local-proof-1}
Let $X$ be a complex analytic variety with klt singularities.  
Assume that $S\subseteq X_{\sing}$ is a compact irreducible component of codimension 2 in $X$.  
Then there is a projective bimeromorphic morphism  $f\colon Y\to X$,   with $Y$ normal, such that  the following properties hold. 
Let $\Delta$ be the whole reduced exceptional divisor of $f$. 
\begin{enumerate}
    \item There is a $f$-ample and $f$-exceptional divisor. 
    \item The indeterminacy locus of $f^{-1}$   is a proper closed subset of  $S$.  
          In particular, we can define  the strict transform $S_Y$ of $S$ in $Y$. 
    \item There is a normal complex analytic orbispace $\mathfrak{W} = (W_i, G_i, \psi_i)_{i\in I}$,  such that the quotient space $W$ is an open neighborhood of $S_Y$ in $Y$. 
    \item The divisorial  critical locus of $\psi_i\colon W_i \to W$ is contained in $\Delta$.  
    \item $W_i$ is smooth on $\psi_i^{-1} (W\setminus \Delta)$. 
\end{enumerate}
\end{prop}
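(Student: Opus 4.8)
The plan is to carry out all the blow-ups and finite covers of Sections~\ref{section:double-point}--\ref{section:local-cover} not on $X$ itself, but on an orbi-variety covering a neighbourhood of $S$; passing to that orbi-variety is exactly what trades the klt singularities of $X$ for canonical, and eventually Du~Val-type, singularities on the charts, while the functoriality built into each of these constructions is what will let the chart-wise outputs be glued.

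First I would build the initial orbi-variety. Since $X$ is klt and $S$ is compact, Example~\ref{example:orbi-variety} lets me cover an open neighbourhood of $S$ by finitely many orbifold charts $(X_i,G_i,\pi_i)_{i\in I}$ arising from index-one covers of $\omega_X$, with each $X_i$ having canonical singularities, assembling into an orbi-variety $\mathfrak X$; because the index-one cover is quasi-\'etale, each $\pi_i$ has empty divisorial branched locus. Let $\widetilde S_i\subseteq X_i$ be the unique irreducible component of $\pi_i^{-1}(S)$ dominating $S$; it is $G_i$-invariant, a codimension-$2$ component of $(X_i)_{\sing}$, and by Lemma~\ref{lemma:singular-type-well-defined} the Du~Val type of $X_i$ along $\widetilde S_i$ away from a proper closed subset is one and the same $A$-, $D$- or $E$-type, independent of $i$. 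Applying Proposition~\ref{prop-first-local-reduction} to each $(X_i,\widetilde S_i)$, functoriality makes the projective bimeromorphic morphisms $Y_i\to X_i$ equivariant and compatible along overlaps, so (via Definition~\ref{def:orbifold} and Lemma~\ref{lemma:snc-orbidivisor}) they assemble into an orbi-variety $\mathfrak Y$ over a neighbourhood of the strict transform of $S$ in a modification $X^{(1)}\to X$, the $H_{Y_i}$ forming an snc orbi-divisor. Then I would apply Proposition~\ref{prop:good-shape} chart by chart, branching on the common type: for $A$- and $E$-types this is a further functorial blow-up along components of the snc orbi-divisor, while the $D$-type case also inserts the degree-$6$ cyclic base change $S[\sqrt[6]{T_1},\dots,\sqrt[6]{T_n}]$ — by Remark~\ref{rmk:finite-blowup} and Lemma~\ref{lemma:cyclic-functorial} this base change depends only on the orbi-divisor, not on the chosen local coordinates, so the (now enlarged) charts still satisfy the hypothesis of Lemma~\ref{lemma:orbi-variety-morphism} and form an orbi-variety. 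After this step every chart is, near the strict transform of $S$, a hypersurface with defining equation in one of the normal forms of Proposition~\ref{prop:good-shape}, and the exceptional divisor of the morphism to $X$ is locally cut out by $T_1\cdots T_n$.

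Next I would apply Proposition~\ref{prop:construct-local-cover} to each chart, taking $\Delta$ to be the exceptional divisor and the acting group to be $G_i$ (enlarged as above). It yields, first, a further equivariant blow-up $\widehat X_i\to X_i$ along the $H_j$'s — again functorial along components of an snc orbi-divisor — so these descend to one projective bimeromorphic $f\colon X'\to X$ over a neighbourhood of $S$; this $f$ extends to all of $X$ by Lemma~\ref{lemma:extend-morphism-1}, since the indeterminacy locus of $f^{-1}$ lies over proper closed subsets of the compact set $S$, all of codimension $\ge 3$ in $X$ (this also gives~(2)), and since the blow-up structure provides an $f$-ample $f$-exceptional divisor, giving~(1). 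Second, on a neighbourhood of the strict transform of $S$ in $\widehat X_i$ it yields a finite cover $W_i$, normal, which over the complement of $\Delta$ is quasi-\'etale, Galois and smooth, with the $G_i$-action lifting. Its item~(5) is precisely the hypothesis of Lemma~\ref{lemma:orbi-variety-morphism}, so the $W_i$ — with groups generated by the lifted $G_i$-actions and the deck transformations, all finite since the fundamental group of a Du~Val smooth locus is finite and the canonical subgroup of Lemma~\ref{lemma:standard-subgroup} has finite index — assemble into an orbi-variety $\mathfrak W=(W_i,G_i,\psi_i)$ with quotient space an open neighbourhood $W$ of $S'=f^{-1}_*S$, giving~(3). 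Finally,~(4) holds because the divisorial branched locus of each $\psi_i$ is the union of the divisorial branched locus of the Du~Val cover and that of the chart map of the blown-up $\mathfrak Y$, the former contained in $\Delta$ by Proposition~\ref{prop:construct-local-cover}(1) and the latter contained in $\Delta$ since it is propagated through blow-ups from the quasi-\'etale chart maps of $\mathfrak X$; and~(5) is the smoothness assertion of Proposition~\ref{prop:construct-local-cover}(3).

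The hard part will not be any single construction but the gluing: one must check that the chart-wise outputs of Sections~\ref{section:double-point}--\ref{section:local-cover} really are compatible along overlaps. For Proposition~\ref{prop-first-local-reduction} and the resolution theorems this is their stated functoriality; for the $D$-type base change in Proposition~\ref{prop:good-shape} it is the coordinate-independence established in Remark~\ref{rmk:finite-blowup}; and for the Du~Val covering it is Proposition~\ref{prop:construct-local-cover}(5), which has to be fed carefully into the overlap condition of Lemma~\ref{lemma:orbi-variety-morphism}. A secondary, purely bookkeeping, hazard is tracking how the orbifold group enlarges at each stage — from $G_i$ after the index-one cover, to an extension of $G_i$ by $(\mathbb{Z}/6)^n$ in the $D$-type case, and then by the Du~Val deck group — and checking at each stage that $S$, $\Delta$ and the relevant divisors stay invariant so that the next construction applies.
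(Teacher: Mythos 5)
Your proposal is correct and follows essentially the same route as the paper: the index-one-cover orbi-variety of Example \ref{example:orbi-variety}, chart-wise application of Proposition \ref{prop-first-local-reduction}, Proposition \ref{prop:good-shape} (with the degree-$6$ base change and Remark \ref{rmk:finite-blowup} in the $D$-type case), and Proposition \ref{prop:construct-local-cover}, glued via Lemma \ref{lemma:orbi-variety-morphism} and extended by Lemma \ref{lemma:extend-morphism-1} — exactly the content of Lemmas \ref{1-construction}--\ref{4-construction}. The compatibility and group-bookkeeping issues you flag as the "hard part" are precisely the ones the paper isolates in those lemmas.
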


We first fix the following setup for the section. 

 \begin{setup}
\label{setup-local-proof} 
We consider the data $X$, $\mathfrak{X}$, $S$ and $H$ as follows. 
Let $X$ be a  complex analytic variety. 
Assume that $S\subseteq X_{\sing}$ is a compact irreducible component of codimension 2.  
We are only interested in the behavior of some neighborhood of $S$ in $X$.   
Let $H\subseteq S$ be a proper closed subset, 
and let $\mathfrak{X} = (X_i,G_i,\pi_i)_{i\in I}$ be a complex analytic orbispace,  
with a finite family $I$  of orbifold charts, 
whose quotient space is $X$.  
We set  $S_i = \pi_i^{-1} (S)$ and $H_i = \pi_i^{-1}(H)$.  
Assume that  $S_i$ is an irreducible component of the singular locus of $X_i$, 
and that $X_i$ has the same type of canonical singularities at points of $S_i\setminus H_i$.  
Furthermore, the intersection of the divisorial critical locus of $X_i\to X$ and $S$ is contained in $H$. 
\end{setup}

We will prove Proposition \ref{prop:local-proof-1} in several steps, and the sketch is as follows. 
Since $X$ has klt singularities, there is a complex analytic orbispace   $(X_i,G_i)$ induced by the local index-one covers of $\omega_X$, see Example \ref{example: complex analytic orbispace}.  
The difficult case is when $X_i$ is singular around the preimage of $S$. 
To study this case, we may assume the situation of Setup \ref{setup-local-proof}. 
We first apply Proposition \ref{prop-first-local-reduction} on each orbifold chart $X_i$. 
Then we can obtain some projective bimeromorphic morphism $f\colon Y \to X$. 
We then locate on a neighborhood $U$ of the strict transform $S_Y$ of $S$ in $Y$.
Replacing $X$ by $U$ and $S$ by $S_Y$, we can assume that each $X_i$ satisfies 
the conclusion of Proposition \ref{prop-first-local-reduction}. 
As a result,  we are able to  apply Proposition \ref{prop:good-shape} on each $X_i$. 
Afterward, we may  assume that each $X_i$ satisfies the conclusion of Proposition \ref{prop:good-shape}.
Now we can apply Proposition \ref{prop:construct-local-cover} on each $X_i$. 
This will complete the proof of Proposition \ref{prop:local-proof-1}.

\begin{lemma}
\label{1-construction}
With the notation of Setup \ref{setup-local-proof},   there is a projective bimeromorphic morphism $f\colon Y\to X$ such that the following properties hold. 
\begin{enumerate}
\item There is a $f$-exceptional and $f$-ample divisor. 
\item The indeterminacy locus of $f^{-1}$ is a proper closed subset of  $S$.  
In particular, we can define the strict transform $S_Y$ of $S$ in $Y$
\item The union of the intersection of $S_Y$ with the  $f$-exceptional locus  and the preimage  $(f|_{S_Y})^{-1}(H)$ is pure of codimension 1 in $S_Y$. 
We denote it by $H_Y$. 
\item There is  a complex analytic orbispace $\mathfrak{Y} = (Y_i,G_i)_{i\in I}$ whose quotient space is  $Y$. 
\item Let $(S_Y)_i$ and $(H_Y)_i$ be the preimage of $S_Y$ and $H_Y$ respectively in $Y_i$. 
Then $(S_Y)_i$ is smooth and $H_Y$ induces a snc orbi-divisor on the orbifold $((S_Y)_i,G_i)_{i\in I}$. 
\item For every point $o\in (S_Y)_i$, there is an open neighborhood $U$ of $o$ in $Y_i$, such that $U$ is isomorphic to the hypersurface in  $\mathbb{D}^3 \times ((S_Y)_i \cap U)$ defined by a function of standard form with respect to $((S_Y)_i\cap U,(H_Y)_i\cap U)$, see Section \ref{section:double-point}. 
\item $Y_i$ has the same type of canonical singularities at points of $(S_Y)_i\setminus (H_Y)_i$. 
\end{enumerate}
\end{lemma}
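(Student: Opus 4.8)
The plan is to run the three local reduction results established earlier — Proposition \ref{prop-first-local-reduction}, Proposition \ref{prop:good-shape}, and (later) Proposition \ref{prop:construct-local-cover} — chart by chart, and to glue them using functoriality. Since the statement of Lemma \ref{1-construction} only asks for the first two of these, the argument proceeds in two stages. First, on each orbifold chart $\pi_i\colon X_i\to X$ the space $X_i$ has canonical singularities (this is the point of passing to index-one covers; cf. Example \ref{example:orbi-variety}), so $S_i$ is a codimension-$2$ component of $(X_i)_{\sing}$ at whose general points $X_i$ has the same type of canonical singularities — here one uses Lemma \ref{lemma:singular-type-well-defined} to see that the singularity type is constant along $S_i\setminus H_i$ and is intrinsic, hence $G_i$-invariant. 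We may then apply Proposition \ref{prop-first-local-reduction} to $(X_i, S_i)$ with the proper closed subset $C=H_i$: it produces a projective bimeromorphic $f_i\colon Y_i\to X_i$ which is a composition of blowups of $S_i$ at ideal sheaves, with the strict transform $(S_Y)_i$ smooth, with a divisor $(H_Y)_i$ making $((S_Y)_i,(H_Y)_i)$ an snc pair, and with the local defining equations of $Y_i$ near $(S_Y)_i$ in standard form with respect to $((S_Y)_i,(H_Y)_i)$, $Y_i$ having the same canonical type along $(S_Y)_i\setminus(H_Y)_i$.

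The key point is that all of this descends. Proposition \ref{prop-first-local-reduction}(5) asserts the construction $f_i$ is functorial, i.e. commutes with local analytic isomorphisms; in particular the $G_i$-action lifts to $Y_i$, the $f_i$ patch along the overlaps of the orbifold atlas (using the injections of charts in Definition \ref{def:orbifold} and the functoriality), and hence assemble into an orbi-variety $\mathfrak{Y}=(Y_i,G_i)_{i\in I}$ with a projective bimeromorphic morphism $f\colon Y\to X$ on quotient spaces. Since each $f_i$ blows up only centers strictly contained in $S_i$ (ideal sheaves with cosupport in $S_i$, after first a functorial desingularization of $S_i$), the morphism $f$ is an isomorphism over $X\setminus(\text{image of }S)$, and more precisely the indeterminacy locus of $f^{-1}$ lies inside the locus where the blown-up ideals are non-trivial, which is contained in $H$; this gives item (2), and lets us define $S_Y$ and $H_Y$ downstairs. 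For item (1), the centers being blown up are (after an initial resolution) honest ideal sheaves on $S_i$ whose cosupports are proper subsets, and principalizing such an ideal yields a relatively ample exceptional divisor; combining with Lemma \ref{lemma:extend-morphism-1} (or its proof), one extracts an $f$-ample $f$-exceptional divisor. Item (4) — that $(S_Y)_i$ is smooth and $(H_Y)_i$ defines an snc orbi-divisor on $((S_Y)_i,G_i)$ — follows from Proposition \ref{prop-first-local-reduction}(2) together with Lemma \ref{lemma:snc-orbidivisor} applied to the orbi-divisor $\mathfrak{D}$ with components $(H_Y)_i$: one may first blow up further, $G_i$-equivariantly and functorially, to arrange that the preimage of each stratum of $H_Y$ is smooth. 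By the remark following Proposition \ref{prop-first-local-reduction}, items (1)–(4) of that proposition survive further blowups along strata of $(H_Y)_i$, so items (5) and (6) of the present lemma (standard form and constant canonical type off $(H_Y)_i$) are preserved. Then items (5)–(6) are exactly the conclusions of Proposition \ref{prop-first-local-reduction}(3)–(4) transported through the overlaps; their compatibility along overlaps is, once again, guaranteed by functoriality.

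The main obstacle is purely bookkeeping rather than mathematical: ensuring that everything is simultaneously functorial so that the chart-wise constructions glue, and that the several layers of auxiliary blowups (the initial desingularization of $S_i$, the ideal-sheaf blowups of Proposition \ref{prop-first-local-reduction}, and the stratum blowups needed to make the orbi-divisor snc via Lemma \ref{lemma:snc-orbidivisor}) can be interleaved consistently. Concretely, after applying Proposition \ref{prop-first-local-reduction} one must restrict to a $G_i$-invariant neighborhood $U_i$ of $(S_Y)_i$, replace $X_i$ by $U_i$, and check this neighborhood is large enough that the quotient $U$ is a neighborhood of $S_Y$ in $Y$; this uses compactness of $S$ (hence of $S_Y$) from Setup \ref{setup-local-proof}. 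The remaining verifications — that the lifted $G_i$-action is faithful, that the new charts still satisfy the compatibility in Definition \ref{def:orbifold}(3), and that "standard form'' is preserved under the $G_i$-equivariant stratum blowups — are each routine given functoriality, so I would state them and cite the relevant functoriality claims rather than expand the computations.
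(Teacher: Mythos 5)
Your proposal is correct and follows essentially the same route as the paper: apply Proposition \ref{prop-first-local-reduction} on each chart $X_i$ with $C=H_i$, use its functoriality to lift the $G_i$-actions and glue the $f_i$ into an orbi-variety $\mathfrak{Y}$ with a projective bimeromorphic $f\colon Y\to X$ carrying a compatible $f$-ample exceptional divisor, and then invoke Lemma \ref{lemma:snc-orbidivisor} (together with the remark after Proposition \ref{prop-first-local-reduction} that standard form survives further blowups along strata of $H_Y$) to make $H_Y\subseteq S_Y$ an snc orbi-divisor. The extra ingredients you mention — Lemma \ref{lemma:extend-morphism-1} and the compactness/neighborhood-shrinking bookkeeping — are not needed at this stage (they enter in the subsequent lemmas of Section \ref{section:local-proof}), but including them does no harm.
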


\begin{proof} 
For each $i$, we let $f_i \colon Y_i \to X_i$ be the construction of Proposition \ref{prop-first-local-reduction}, with respect to $H_i\subseteq S_i \subseteq X_i$.  
Since the construction of Proposition \ref{prop-first-local-reduction} is functorial, we see that there is a natural action of  $G_i$ on $Y_i$, and the family $(Y_i,G_i)_{i\in I}$ defines a complex analytic orbispace $\mathfrak{Y}$. 
Let $Y$ be its quotient space. 
Then there is an induced bimeromorphic morphism $f\colon Y\to X$.
Since each step in the construction of $f_i$ is the blowup of some ideal sheaf, which is compatible along the overlaps, 
there is a $f_i$-exceptional and $f_i$-ample divisor $E_i$ which is compatible along the overlaps.  
The collection  $(E_i)_{i\in I}$ induces a $f$-exceptional and $f$-ample divisor in $Y$. 
This proves the items (1), (2) and (4).

We note that   $(S_Y)_i$  is the strict transform of $S_i$ in $Y_i$, hence is smooth. 
We also note that  $(H_Y)_i$ is the union of the preimage of $H_i$ in $(S_Y)_i$ and the intersection of $(S_Y)_i$ with the $f_i$-exceptional locus. 
By construction,   every $(H_Y)_i$ is a snc divisor in $(S_Y)_i$, and $Y_i$ has the same type of canonical singularities around  $(S_Y)_i\setminus (H_Y)_i$.  
This proves the item (3)  and (7). 
The item (6) also follows from the construction of Proposition \ref{prop-first-local-reduction}.

It remains to prove the item (5). 
To this end, we can apply Lemma \ref{lemma:snc-orbidivisor} to the orbi-divisor $((H_Y)_i)_{i\in I}$ in the complex orbifold $((S_Y)_i,G_i)_{i\in I}$.  
Since $(H_Y)_i$ is a snc divisor in $(S_Y)_i$, 
the construction of Lemma \ref{lemma:snc-orbidivisor} is to successively blowing up certain strata of $(H_Y)_i$, which are compatible along the overlaps. 
Hence we can blow up $Y_i$  at the same centers, see Subsection \ref{subsection:blowup}. 
These blowups are again compatible along the overlaps. 
Let $Y'_i \to Y_i$ be this blowup. 
Then we obtain a complex analytic orbi-space $(Y_i', G_i)_{i\in I}$ with quotient space $Y'$. 
Replacing $Y$ by $Y'$,   we can assume that $H_Y\subseteq S_Y$ induces a snc orbi-divisor on the orbifold $((S_Y)_i,G_i)_{i\in I}$.  
This proves the item (5) and completes the proof of the lemma. 
\end{proof}

We recall that the constructions in Section \ref{section:equation} and Section \ref{section:local-cover} are local around each point of the singular locus. 
In the following lemma, we   replace the family $(X_i,G_i, \pi_i)_{i\in I}$ of orbifold charts by a refinement, so that we can apply a local construction on each  $X_i$ entirely.

\begin{lemma}
\label{2-construction}
With the notation of Setup \ref{setup-local-proof}, 
we assume that $\mathfrak{X} = (X_i,G_i, \pi_i)_{i\in I}$,  $S$ and $H$ satisfies the properties of 
 $\mathfrak{Y} = (Y_i,G_i)_{i\in I}$,  $S_Y$ and $H_Y$ in Lemma \ref{1-construction}. 
Then, up to shrinking $X$ around $S$, and  up to replacing  $(X_i,G_i)_{i\in I}$ by an equivalent finite family of orbifold charts, 
we can assume in addition the following properties. 
\begin{enumerate} 
\item If $n=\dim S$, then  $S_i$ is isomorphic to a contractible neighborhood of the origin in  $\mathbb{C}^n$. 
Moreover,  $H_i$ is isomorphic to the union of certain coordinate hyperplanes, and the origin is fixed by the action of $G_i$.
\item $X_i$ is  isomorphic to an neighborhood of  $\{\mathbf{0}_3\}\times S_i$ in the hypersurface in  $\mathbb{D}^3 \times S_i$ defined by an equation of standard form with respect to $(S_i,H_i)$. 
In addition, the composite inclusion $S_i\subseteq X_i \subseteq \mathbb{D}^3 \times S_i$ is identified with  $\{\mathbf{0}_3\}\times S_i \subseteq \mathbb{D}^3 \times S_i$.   
\item There is an integer $N>0$, such that  Proposition \ref{prop:good-shape} holds for each $H_i\subseteq S_i\subseteq X_i$ with $N$, without shrinking $S_i$.  
\end{enumerate} 
\end{lemma}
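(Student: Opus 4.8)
\textbf{Proof plan for Lemma \ref{2-construction}.}
The plan is to localize the assertions of Lemma \ref{1-construction} and then package the local data into an equivalent finite orbifold atlas. First I would use compactness of $S$: since $S_Y$ is compact and the conclusion of Lemma \ref{1-construction} gives, for every point $o\in (S_Y)_i$, a neighborhood $U_o\subseteq Y_i$ isomorphic to a hypersurface in $\mathbb{D}^3\times (S_Y)_i$ defined by an equation of standard form with respect to $((S_Y)_i,(H_Y)_i)$, the preimages $\pi_i(U_o)$ in $Y$ cover the compact set $S_Y$. Extracting a finite subcover, and then shrinking $X$ (that is, $Y$) to a neighborhood of $S_Y$ contained in the union of the chosen $\pi_i(U_o)$, we may replace the atlas by the finite collection of these charts $X_i = U_o$, each equipped with the stabilizer subgroup of $G_i$ that preserves $U_o$; by Definition \ref{def:orbifold} this is again an orbi-variety, and it is equivalent to the original one on the shrunk quotient space. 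This arranges items (1) and (2): $I$ is finite and each $X_i$ is a standard-form hypersurface in $\mathbb{D}^3\times S_i$ with $S_i$ identified with $\{\mathbf{0}_3\}\times S_i$.

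For item (3) I would invoke Proposition \ref{prop:good-shape}. That proposition produces, for each chart, an integer $N_i>0$ (depending on the singularity type of $Y_i$ along $S_i\setminus H_i$, which by Lemma \ref{1-construction}(6) and Lemma \ref{lemma:singular-type-well-defined} is constant along the connected locus $(S_Y)_i\setminus (H_Y)_i$) such that the blowup construction at the $H_i$'s (together with the auxiliary $\sqrt[6]{T_j}$-basechange in the $D$-type case) can be carried out provided there are at least $N_i$ blowups whose centers are each component $H_{i,j}$ of $H_i$. Since $I$ is now finite, I set $N = \max_i N_i$. The key point is that the construction of Proposition \ref{prop:good-shape} is built from blowups at the (reduced) divisor components $H_{i,j}$ and from the functorial cyclic covers of Lemma \ref{lemma:cyclic-functorial}–\ref{lemma:cyclic-cover-Cartier}, all of which commute with local analytic isomorphisms; hence these local blowups glue along the overlaps of the finite atlas and are $G_i$-equivariant, so performing $N$ blowups along each $H_{i,j}$ simultaneously on all charts is well defined. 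This gives exactly property (3).

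The main subtlety — and the step I expect to require the most care — is verifying that the local pictures are genuinely compatible along overlaps after the reduction, so that the finite collection $(X_i,G_i)$ really is an orbi-variety equivalent to the original and not merely a disjoint union of charts. Concretely, for two charts $X_i,X_j$ meeting over a point $x\in \pi_i(X_i)\cap\pi_j(X_j)$ one must exhibit a common refining chart $(\widetilde V,H,\rho)$ with injections into both, and check that the standard-form presentations and the divisors $H_i$, $H_j$ agree on it; this is where Remark \ref{rmk:finite-blowup} is used to see that the Cartier divisors cut out by the coordinate functions $T_{i,j}=0$ are intrinsic (they are the exceptional divisors over the ideal sheaves of the components of $H$), so the $\sqrt[6]{T}$-basechange in the $D$-type case does not depend on the chart-dependent coordinates. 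Once this compatibility is in hand, items (1)–(3) follow, and I would close the argument by noting that shrinking $X$ around $S$ and replacing the atlas by an equivalent one preserves every hypothesis imported from Lemma \ref{1-construction}, completing the proof.
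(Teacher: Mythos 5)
Your proposal is correct and takes essentially the same approach as the paper: localize to neighborhoods on which the constructions of Proposition \ref{prop:good-shape} can be performed, pass to stabilizer-invariant sub-charts, use compactness of $S$ to extract a finite subfamily, and take $N$ to be the maximum over that finite family. The only differences are cosmetic: the paper spells out the construction of the invariant sub-chart (shrinking $U_i$ so that each $g(U_i)$ either contains $x_i$ or is disjoint from $U_i$, then setting $U'_i=\bigcap_{g\in G_{x_i}} g(U_i)$) where you merely invoke Definition \ref{def:orbifold}, and the overlap-compatibility and $\sqrt[6]{T}$-basechange issues you raise in your last paragraph are not needed for this lemma but are handled in Lemma \ref{3-construction} via Remark \ref{rmk:finite-blowup}.
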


\begin{proof} 
From the item (6) of Lemma \ref{1-construction}, 
for each point $x\in S_i \subseteq  X_i$, 
we can take a   neighborhood $U_{x}\subseteq X_i$ of it, 
so that the property (2) of the present lemma holds on $U_{x}$.  
Furthermore, we can assume that  Proposition \ref{prop:good-shape} holds on $U_{x}$ entirely,  
with respect to $H_i\cap U_{x}\subseteq S_i \cap U_{x} \subseteq U_{x}$.  
Let $G_{x}$ be the subgroup of $G_i$ fixing $x$. 
By shrinking $U_{x}$, we can assume that $(U_{x},G_{x})$ is an orbifold chart
compatible with   $\mathfrak{X}$.   
Since $S_i$ is smooth, the action of $G_{x}$   on the germ $(x\in S_i)$ is isomorphic to a linear action (see \cite[Lemme 1]{Cartan1954} or \cite[Lemma 9.9]{Kaw88}).
Thus, up to shrinking $U_{x}$, we can assume that $S_i\cap U_{x}$ is   isomorphic to a  small enough contractible neighborhood of the origin in  $\mathbb{C}^{n}$. 
Furthermore,  $H_i\cap U_{x}$ is isomorphic to the union of certain coordinate hyperplanes.  
It follows that the property (1) of the lemma holds on $(U_{x},G_{x})$.

We hence obtain a family of orbifold charts $\{(U_{x},G_{x})\}$ by considering all points $x\in X_i$ and  all $i\in I$. 
Since $S$ is compact, we can extract a finite subset of this family, so that the images of the orbifold charts cover $S$. 
Replacing $(X_i,G_i)_{i\in I}$ by this finite family, and replacing  $X$ by the union of the images of these charts, 
we may assume that the following property  holds for the  complex analytic orbispace $\mathfrak{X} = (X_i,G_i)_{i\in I}$. 
There is an integer $N>0$, such that  Proposition \ref{prop:good-shape} holds for each $H_i\subseteq S_i\subseteq X_i$,  with $N$ and without shrinking $S_i$.  
This completes the proof of the lemma. 
\end{proof}

In the next lemma, we   apply Proposition \ref{prop:good-shape} on each orbifold chart $X_i$ above. 

\begin{lemma}
\label{3-construction} 
With the notation of  Setup \ref{setup-local-proof}, 
we  assume the properties of Lemma \ref{2-construction}  hold. 
Then there is a projective bimeromorphic morphism $f\colon Y\to X$ such that the following properties hold. 
\begin{enumerate}
\item There is a $f$-exceptional and $f$-ample divisor. 
\item The indeterminacy locus of $f^{-1}$ is a proper closed subset of  $S$.   
In particular, we can define the strict transform $S_Y$ of $S$ in $Y$, and the preiamge $H_Y$ of $H$ in $S_Y$ under the morphism $f|_{S_Y}\colon S_Y \to S$. 
\item Up to refining $I$, there is a complex analytic orbispace $ \mathfrak{Y} = (Y_i,\overline{G}_i)_{i\in I}$ whose quotient space   is an open neighborhood of $S_Y$ in $Y$. 
\item Let $(S_Y)_i$ and $ (H_Y)_i$  be the preimages of $S_Y$ and $H_Y$ respectively in $Y_i$.   
Then $H_Y$ induces a snc orbi-divisor on the  complex orbifold $( (S_Y)_i, \overline{G}_i )$. 
There is some open neighborhood $U_i$ of $(S_Y)_i$ in $Y_i$, 
such that $U_i$ is isomorphic to an open neighborhood of $\{\mathbf{0}_3\} \times (S_Y)_i$ 
in the hypersurface in $\mathbb{\mathbb{D}}^3 \times (S_Y)_i$  defined by a function as in Proposition \ref{prop:good-shape} with respect to the pair $((S_Y)_i, (H_Y)_i)$.  
\item Let $\Delta_Y$ be the exceptional locus  of $f$,   let $(\Delta_Y)_i$ be its preimage in $Y_i$ and  
let $U'_i=\cap_{g\in \overline{G}_i} g(U_i)$. 
Then $U'_i$, $(\Delta_Y)_i$, $(S_Y)_i$,   $(H_Y)_i$ and  $\overline{G}_i$ satisfy the items (1) and (2) of Setup \ref{setup-local-cover}.  
Moreover, the collection   $(U'_i,\overline{G}_i)_{i\in I}$  induces a complex analytic orbispace,  
whose quotient space $U'$  is an open neighborhood of $S_Y$.  
\item  The divisorial critical  locus of $Y_i \to Y$ is contained in  $\Delta_Y$. 
\end{enumerate}
\end{lemma}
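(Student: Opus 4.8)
The plan is to run the constructions of Proposition \ref{prop:good-shape} chart-by-chart and glue them using the functoriality that is built into the operation. Recall that by Lemma \ref{2-construction} we may assume $I$ is finite, that each $X_i$ is (a neighborhood of $\{\mathbf{0}_3\}\times S_i$ in) a hypersurface in $\mathbb{D}^3\times S_i$ in standard form with respect to $(S_i,H_i)$, and that there is a uniform $N>0$ which works for the blowup constructions of Proposition \ref{prop:good-shape} on every $H_i\subseteq S_i\subseteq X_i$. First I would split according to the singularity type of $X$ along $S\setminus H$, which is well-defined and constant by Lemma \ref{lemma:singular-type-well-defined}; this type is the same for every chart $X_i$ since the charts are related by local isomorphisms.

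For the $A$- and $E$-type cases, apply on each $X_i$ the blowup-at-the-$H_i$'s operation of Subsection \ref{subsection:blowup-divisor} with exactly $N$ blowups whose center is each component of $H_i$ (say, in the canonical order induced by an enumeration of the components, or — better — recalling Remark \ref{rmk:blowup-order}, the order is irrelevant, so any prescription glues). By Remark \ref{rmk:blowup-order} and the functoriality discussion of Subsection \ref{subsection:blowup}, this operation commutes with the local analytic isomorphisms that patch the orbifold charts, so there is a natural action of $G_i$ on the resulting $Y_i$, the collection $(Y_i,\overline G_i)$ assembles into an orbi-variety $\mathfrak Y$, and there is an induced projective bimeromorphic morphism $f\colon Y\to X$. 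The $f_i$-exceptional divisors produced by the blowups are canonical, hence compatible along overlaps and carry a compatible relative ample class, giving an $f$-ample $f$-exceptional divisor; this is item (1). By construction every prime $f$-exceptional divisor has center a component of $H$ (item (2)), so $S_Y=(f^{-1})_*S$ and $H_Y$ are well-defined; item (4) is then exactly the conclusion of Proposition \ref{prop:good-shape}(1) applied in each chart. For the $D$-type case the same scheme works but with the three-stage construction of Proposition \ref{prop:good-shape}(2): first one blowup at each $H_i$, then the base change over $S[\sqrt[6]{T_1},\dots,\sqrt[6]{T_n}]$, then $\ge N$ further blowups at the $\overline H_i$'s. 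Here the key point is that the intermediate finite cover is functorial: this is precisely the content of Remark \ref{rmk:finite-blowup}, which shows that after the first blowup the Cartier divisor $\{T_i=0\}$ is canonically $f_1^*\mathcal I_{H_i}$ and hence independent of the choice of Setup \ref{setup-local} coordinates, so the degree-$6$ cyclic cover glues across charts. After this we are again in the situation of the $A$/$E$ case: the final blowups are functorial and produce $(Y_i,\overline G_i)$, $\mathfrak Y$, $f$, and the $f$-ample $f$-exceptional divisor as before.

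Finally I would verify item (5). Pass to a neighborhood $U_i$ of $(S_Y)_i$ on which the defining equation has the shape listed in Proposition \ref{prop:good-shape}, set $U_i'=\bigcap_{g\in\overline G_i}g(U_i)$ so that $U_i'$ is $\overline G_i$-invariant and is a connected component of the preimage of its image in $U':=Y\setminus(\text{stuff away from }S_Y)$; then $(U_i',\overline G_i)$ is an orbifold chart and these charts cover $S_Y$ by finiteness of $I$, so they assemble to an orbi-variety with quotient space an open neighborhood $U'$ of $S_Y$. On $U_i'$ the data $(\Delta_Y)_i$ (preimage of the exceptional locus), $(S_Y)_i$, $(H_Y)_i$ and $\overline G_i$ meet all the hypotheses of Setup \ref{setup-local-cover}: $S_i$ is a simply connected polydisc after shrinking, $H_i$ is a union of coordinate hyperplanes which is $\overline G_i$-invariant (since $H$ is intrinsic, being the degeneracy locus of the singularity type), $\Delta_Y=\theta^{-1}(H_Y)$ by the last sentence of Proposition \ref{prop:good-shape}, and the equation is one of the four required shapes. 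I expect the only delicate point of the whole argument to be this bookkeeping: making sure that the number of blowups per component of $H$ is chosen in a way that is simultaneously uniform across all finitely many charts (so that the charts after blowup still glue) and at least the $N$ demanded later by Proposition \ref{prop:construct-local-cover}. Since only finitely many charts and finitely many components are involved, one takes the maximum of the finitely many required thresholds; the functoriality of each individual operation then does the gluing automatically, and no further computation is needed.
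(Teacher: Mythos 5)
Your proposal follows essentially the same route as the paper's proof: blow up each component of $H$ in each chart $N$ times, using Remark \ref{rmk:blowup-order} and the functoriality of the blowup centers to glue the charts and the relative ample exceptional divisors, and in the $D$-type case insert the degree-$6$ cyclic cover whose chart-independence is exactly Remark \ref{rmk:finite-blowup}, then finish with the $U_i'=\bigcap_{g}g(U_i)$ shrinking for item (5). The only details the paper supplies that you elide are the lifting of the $G_i$-action to the cyclic cover (one needs $6$-th roots of the unit cocycle $g^*\sigma^j=u^j_g\sigma^j$, available after shrinking to a contractible neighborhood of the strict transform of $S$) and the use of Lemma \ref{lemma:extend-morphism-1} to extend the resulting morphism from that neighborhood to the whole space; both are routine.
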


\begin{proof}
We will first perform the construction of Proposition \ref{prop:good-shape} on $X_i$. 
Let $D^1,...,D^k$ be the irreducible components of $H$.
By the item (3) of Lemma \ref{1-construction},  $H\subseteq S$ induces a snc orbi-divisor on the  complex orbifold $( S_i, G_i )_{i\in I}$.  
Hence for each irreducible component $D^j$, its preimage $D_i^j$ in $X_i$ is smooth. 
Since any two irreducible components of $H_i$ intersect by the item (1) of Lemma \ref{2-construction}, we deduce that if  $D^j_i$ is not empty, then it is an irreducible component of $H_i$. 

\textit{Case 1.}  
We   assume that  the singularities of $X_i$ at $S_i\setminus H_i$ are of   $A$-type or of $E$-type.  
Then for each $i\in I$, we perform the following sequence of blowups on $X_i$,  
\[
Z_i = X_{i,k} \to \cdots \to X_{i,1} \to  X_{i,0}  = X_i 
\]
where $ X_{i,j+1}  \to  X_{i,j} $ is the blowup of $X_{i,j}$ at $D^{j+1}_i$ for $N$ times,  
as in Subsection \ref{subsection:blowup-divisor}. 
Then there is a natural action of $G_i$ on $Y_i$. 
Moreover, the family $(Z_i,G_i)_{i\in I}$ induces a complex analytic orbispace $\mathfrak{Z}$ with quotient space $Y$. 
If $\psi_i\colon Z_i \to X_i$ is the composition of the sequence above, then the collection $(\psi_i)_{i\in I}$
induces a proper bimeromorphic morphism $f\colon Y \to X$. 
In addition, since $\psi_i$ is the composition of blowups whose centers are compatible along the overlaps, 
we deduce that there is some $\psi_i$-ample and $\psi_i$-exceptional divisor which is compatible along the overlaps. 
Hence   there is a $f$-exceptional and $f$-ample divisor.  
Let $(S_Z)_i$ be the strict transform of $S_i$ in $Z_i$.  
By construction, there is some open neighborhood $U_i$ of $(S_Z)_i$, such that $U_i$ is isomorphic to an open neighborhood of $\{\mathbf{0}_3\} \times (S_Z)_i$ in the hypersurface in $\mathbb{\mathbb{D}}^3 \times (S_Z)_i$  defined by a function as in Proposition \ref{prop:good-shape}.  
Since the intersection of the divisorial critical locus of $X_i \to X$ and $S$ is contained in $H$, up to enlarging $N$, we can assume that the  divisorial critical locus of $Z_i \to Y$  is contained in $\Delta_Y$. 
We set $\widehat{G}_i = G_i$  and $\widehat{V}_i = X_i$ in this case. 

\textit{Case 2.} 
We assume that  the singularities of $X_i$ at $S_i\setminus H_i$ are of  $D$-type. 
We will proceed as in Lemma \ref{lemma:blowup-equation-D} on each $X_i$. 
Let $\varphi_i\colon W_i\to X_i$ be the composition of blowups at the centers $D^1_i,..., D^k_i$ successively.  
Then there is a natural action of $G_i$ on $W_i$ and $(W_i,G_i)_{i\in I}$ induces a complex analytic orbispace $\mathfrak{W}$ with quotient space $W$.   
Furthermore, there is an induced projective bimeromorphic morphism $\varphi\colon W\to X$ with a 
$\varphi$-ample and $\varphi$-exceptional divisor. 
Let $S_W$ be the strict transform of $S$ in $W$, 
and let  $(S_W)_i$ be the strict transforms of $S_i$ in $W_i$.  
Let  $H_W$ be the preimage of $H$ in $S_W$, and let $(H_W)_i$ the preimage of $H_W$ in $W_i$.  
Let $\mathcal{I}(D^j_i)$ be the ideal sheaf of $D^j_i$ in $X_i$.   
There is an open  neighborhood $V_i$ of $(S_W)_i$ such that $\varphi_i^{-1} \mathcal{I}(D^j_i) \cdot \mathcal{O}_{V_i}$ is generated  by a holomorphic function  $\sigma^j$ on $V_i$, for $j=1,...,k$, see Remark \ref{rmk:function-exc-div} and Remark \ref{rmk:finite-equation}. 
Shrinking $V_i$ we may assume that it is $G_i$-invariant.
We note that  for every $j=1,...,k$, the collection of subspaces $(D^j_i)_{i\in I}$ is compatible along the overlaps. 
By applying Lemma \ref{2-construction-2} for $k$ times,   
up to shrinking $V_i$ and up to refining the family $I$, 
we can assume the following properties.  
Let $\widehat{V}_i = V_i[\sqrt[6]{\sigma_1},...,\sqrt[6]{\sigma_k}]$.  
Then  the  morphism $\widehat{V}_i \to Z$ is finite and Galois over its image.  
Let $\widehat{G}_i$ be its Galois group. 
The collection $(\widehat{V_i}, \widehat{G_i})_{i\in I}$ induces a complex analytic orbispace 
$\widehat{\mathfrak{V}}$, 
whose  quotient space $\widehat{V}$ is  an open neighborhood of $S_W$ in $W$. 
Let $(S_{\widehat{V}})_i$ be the preimage of $(S_W)_i$ in $\widehat{V}_i$. 
Then the finite morphism $(S_{\widehat{V}})_i \to  (S_W)_i$ is a cyclic cover, 
branched over the components of $(H_W)_i$.   
Indeed, $(S_{\widehat{V}})_i$ is also isomorphic to a contractible neighborhood of the origin of $\mathbb{C}^n$, and the preimage $(H_{\widehat{V}})_i$ of $(H_W)_i$ in 
$(S_{\widehat{V}})_i$ is isomorphic to the union of some coordinate hyperplanes. 
In particular,  $H_W$  induces a snc orbi-divisor on the complex orbifold $((S_{\widehat{V}})_i, \widehat{G}_i)_{i\in I}$.

Now, as in the Case 1, we  blow up $\widehat{V_i}$ at the  preimage of each irreducible component of $H_W$ for $N$ times as in Subsection \ref{subsection:blowup-divisor}. 
We obtain a complex analytic orbispace $ \mathfrak{Z} = (Z_i,\widehat{G}_i)$ with quotient space $Y'$. 
Let $(S_Z)_i$ be the strict transform of $(S_{\widehat{V}})_i$ in $Z_i$.
By  the construction of Lemma \ref{lemma:blowup-equation-D}, 
there is some open neighborhood $U_i$ of $(S_Z)_i$ in $Z_i$, such that
$U_i$ is isomorphic to an open neighborhood of $\{\mathbf{0}_3\} \times (S_Z)_i$ 
in the hypersurface in $\mathbb{\mathbb{D}}^3 \times (S_Z)_i$  defined by a function as in Proposition \ref{prop:good-shape} with respect to the pair $((S_Z)_i, (H_Z)_i)$. 
Furthermore, there is a proper bimeromorphic morphism $h\colon  Y'\to \widehat{V}$ with some $h$-ample and $h$-exceptional divisor. 
Since the indeterminacy locus of $h^{-1}$ is contained in the compact set $S_{W}$, we deduce that $h$ extends to a projective bimeromorphic morphism $h\colon Y \to W$ by Lemma \ref{lemma:extend-morphism-1}. 
Hence we obtain a projective bimeromorphic morphism $f\colon Y \to X$, with a $f$-exceptional and $f$-ample divisor.  
Furthermore, up to enlarging $N$,  
we can assume that the   divisorial critical locus of $Z_i \to Y$  is contained in   $\Delta_Y$.
\\

In both cases, we have constructed a projective bimeromorphic morphism $f\colon Y\to X$, and a complex analytic orbispace $\mathfrak{Z}= (Z_i,\widehat{G}_i)$, 
whose quotient space, denoted by $Z$, is an open neighborhood of the strict transform $S_Y$ of $S$ in $Y$. 
By construction, the items (1) and (2) of the lemma hold  for $f$. 
Furthermore, the items (3) and (4) hold for   $\mathfrak{Z}= (Z_i,\widehat{G}_i)$.  
However, in the item (5), there is an issue which is the item (2) of Setup \ref{setup-local-cover}. 
To handle it, we apply the method of cyclic covers as in Lemma \ref{lemma:div-reduction}. 
Since $\psi_i\colon Z_i \to \widehat{V}_i$ is obtained by blowing up the components $(D_{\widehat{V}})_i^j$ of $(H_{\widehat{V}})_i$, 
we can proceed as in the first paragraph of the Case 2 above. 
By Lemma \ref{2-construction-2}, up to refining the family $I$, 
there is a  $\widehat{G}_i$-invariant  neighborhood $V'_i$ of $(S_Z)_i$ in $Z_i$, and a cyclic covering space  
$Y_i =  V'_i[ \sqrt[6]{\delta_1},..., \sqrt[6]{\delta_k}]$, 
such that $\delta_j$ is a generator of $\psi_i^{-1}\mathcal{I}((D_{\widehat{V}})_i^j) \cdot \mathcal{O}_{V'_i}$, 
and that $Y_i \to V'_i/\widehat{G}_i$ is Galois of group $\overline{G}_i$.   
Then the collection $(Y_i, \overline{G}_i)_{i\in I}$ defines a complex analytic orbispace $\mathfrak{Y}$,  
whose quotient space is an open neighborhood of $S_Y$. 
Moreover, by  Lemma \ref{lemma:div-reduction} and  Remark \ref{rmk:function-exc-div}, 
the item (4) holds on $Y_i$,  
and the vanishing orders of the functions $\alpha$ and $\beta$ in the defining function of $U_i$ in $\mathbb{D}^3\times (S_Y)_i$  are divisible by 6 along each component of $(H_Y)_i$. 
We notice that the item (1) of Setup \ref{setup-local-cover} is satisfied by the construction of $U_i'$. 
Hence we obtain the item (5) of the present lemma.

The  divisorial critical locus of $Y_i \to V'_i$ is contained in the $\psi_i$-exceptional locus, hence is contained in the preimages of $\Delta_Y$ in $V'_i$. 
Since the  divisorial critical locus of $V'_i \to Y$ is contained in $\Delta_Y$, 
we deduce the item (6). 
This completes the proof of the lemma.  
\end{proof}

The following lemma will be applied on the  neighborhood $U'$ of $S_Y$ in the item (5) of   Lemma \ref{3-construction}.

\begin{lemma}
\label{4-construction}
With the notation of Setup \ref{setup-local-proof}, 
let  $\Delta$ be a reduced divisor in $X$ and let $\Delta_i$ be its preimage in $X_i$.  
Suppose that $\Delta$ contains the divisorial critical locus of $X_i\to X$, 
and that $H$ induces a snc orbi-divisor on the  complex orbifold $(S_i,G_i)_{i\in I}$. 
Assume that the data $X_i$, $\Delta_i$, $S_i$, $H_i$ and $G_i$ satisfy  the items (1) and (2) of Setup \ref{setup-local-cover}. 
Then, there is a projective bimeromorphic morphism $f\colon Y\to X$ with $Y$ normal such that the following properties hold. 
\begin{enumerate}
\item There is a $f$-exceptional and $f$-ample divisor. 
\item The indeterminacy locus of  $f^{-1}$ is a proper closed subset of $S$. 
In particular, we can define the strict transform $S_Y$ of $S$ in $Y$, and the preiamge $H_Y$ of $H$ in $S_Y$ under the morphism $f|_{S_Y}\colon S_Y \to S$.  
\item There is  a complex analytic orbispace $\overline{\mathfrak{W}}  = (\overline{W}_k, \overline{G}_k, \psi_k)_{k\in K }$, 
%with a finite family $K$ of orbifold charts,  
whose quotient space $W$ is an open neighborhood of $S_Y$ in $Y$. 
\item The divisorial critical locus of $\psi_k\colon \overline{W}_k \to W$ is contained in $\Delta_Y$, where $\Delta_Y = f^{-1}(\Delta)$.
\item $\overline{W}_k$ is smooth on $\psi_k^{-1} (W\setminus \Delta_Y)$.  
\end{enumerate} 
\end{lemma}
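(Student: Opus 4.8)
The plan is to apply Proposition \ref{prop:construct-local-cover} chart by chart and then glue the resulting finite covers into an orbi-variety by checking the compatibility condition of Lemma \ref{lemma:orbi-variety-morphism}. First I would invoke the hypothesis: each $X_i$, together with $\Delta_i$, $S_i$, $H_i$ and $G_i$, satisfies Setup \ref{setup-local-cover}, so for every $i$ there is an integer $N_i\ge 0$ as in Proposition \ref{prop:construct-local-cover}. Since the index set is finite (this is part of the running Setup \ref{setup-local-proof}, together with the reduction in Lemma \ref{2-construction}), I can take $N=\max_i N_i$ and perform, on each $X_i$, the blowup $\rho_i\colon\widehat{X}_i\to X_i$ at the $H_i$'s with at least $N$ blowups along each component. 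Because the blowup at the $H_i$'s is functorial (it is the blowup of the ideal sheaves of the $\pi_i$-preimages of the components of $H$, which are compatible along the overlaps by the snc-orbi-divisor hypothesis), the family $(\widehat{X}_i,G_i)$ again forms an orbi-variety $\widehat{\mathfrak{X}}$, its quotient space carries a projective bimeromorphic morphism $f\colon Y\to X$, and there is an $f$-exceptional $f$-ample divisor; moreover the center of every prime $f$-exceptional divisor is a component of $H$, giving items (1) and (2) and letting us define $S_Y$, $H_Y$. I would then restrict to the open neighborhood $W$ of $S_Y$ produced by the $W_i$'s of Proposition \ref{prop:construct-local-cover}.

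Next I would apply Proposition \ref{prop:construct-local-cover} on each $\widehat{X}_i$ to obtain $\nu_i\colon\overline{W}_i\to W_i$, where $W_i$ is a $G_i$-invariant open neighborhood of $\widehat{S}_i$. Items (1)–(4) of that proposition give: $\nu_i$ quasi-\'etale and Galois over $W_i^\circ$, $\overline{W}_i$ normal and smooth over $W_i^\circ$, $W_i$ is $G_i$-invariant and the $G_i$-action lifts to $\overline{W}_i$. Letting $\overline{G}_i$ be the Galois group of $\overline{W}_i$ over its image in $W$, I claim the collection $(\overline{W}_i,\overline{G}_i)$ induces an orbi-variety $\overline{\mathfrak{W}}$ with quotient space $W$. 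This is exactly where Lemma \ref{lemma:orbi-variety-morphism} enters: I need, for each point $x$ in an overlap $\psi_i(W_i)\cap\psi_j(W_j)$, a common chart $(V,G)$ with injections into $(\widehat{X}_i,G_i)$ and $(\widehat{X}_j,G_j)$ such that the connected components of $V\times_{\widehat{X}_i}\overline{W}_i$ and $V\times_{\widehat{X}_j}\overline{W}_j$ agree over $V$. Item (5) of Proposition \ref{prop:construct-local-cover} is tailored for this: it says precisely that the covers constructed ``by the same method'' on a small enough neighborhood are isomorphic up to connected components. Concretely, given $x$ lying over $\hat o$, I pick a chart $(V,G)=(W'',G_{\hat o})$ small enough to inject into both $(\widehat{X}_i,G_i)$ and $(\widehat{X}_j,G_j)$; pulling back the constructions of $\nu_i$ and $\nu_j$ to $V$ gives two finite covers ``constructed by the same method'' on $V$, hence by item (5) their connected components are isomorphic. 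That verifies the hypothesis of Lemma \ref{lemma:orbi-variety-morphism}, so $\overline{\mathfrak{W}}$ is an orbi-variety with quotient space $W$, which is an open neighborhood of $S_Y$ in $Y$ — this is item (3).

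For item (4), the divisorial branched locus of each $\psi_i\colon\overline{W}_i\to W$: pulling back along $\psi_i$, the branched locus of $\overline{W}_i\to W_i$ over the chart is, by item (1) of Proposition \ref{prop:construct-local-cover}, contained in the complement of $W_i^\circ=W_i\cap\widehat{X}_i^\circ$, i.e. in $\rho_i^{-1}(\Delta_i)$, whose image in $Y$ is contained in $\Delta'=f^{-1}(\Delta)$ since $\Delta'$ is the preimage of $\Delta$ and $\Delta_i=\pi_i^{-1}(\Delta)$. (Here I use that the extra branching introduced by passing from the chart to its $G_i$-quotient is supported on the images of the fixed-point loci, which lie over $H$ hence over $\Delta'$ — this is where I should be slightly careful, distinguishing the divisorial branched locus of $\nu_i$ from that of $\psi_i$; but both are contained in the preimage of $H$, and by hypothesis $H\subseteq\Delta$ in the relevant charts since $\Delta_i=\theta^{-1}(H)\cap X_i$ in Setup \ref{setup-local-cover}.) Item (5) follows from item (3) of Proposition \ref{prop:construct-local-cover}: $\overline{W}_i$ is smooth over $W_i^\circ=\psi_i^{-1}(W\setminus\Delta')\cap\widehat{X}_i$.

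The main obstacle I expect is the gluing in item (3): verifying the compatibility condition of Lemma \ref{lemma:orbi-variety-morphism} requires that the local covers of Proposition \ref{prop:construct-local-cover}, constructed on overlapping charts using auxiliary choices (the embedding $\varphi$ into $Y\times S^\circ$, the contracting endomorphism $\eta$, the basis $(U_\lambda)$, etc.), nonetheless agree up to connected components on a small enough common refinement. This is precisely the content of the functoriality assertion, item (5) of Proposition \ref{prop:construct-local-cover}, whose proof occupies most of Section \ref{section:local-cover}; so here I simply need to organize the bookkeeping — matching the chart $(V,G)$ of Lemma \ref{lemma:orbi-variety-morphism} with the neighborhood $W''$ of item (5), and checking $G$-invariance so that the isomorphisms descend — rather than redo any analysis. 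A secondary point is making sure that all of the modifications $\rho_i$ and the covers $\nu_i$ are defined on a neighborhood of $S_Y$ large enough that their images cover $S_Y$; since $S_Y$ is compact (it lies over the compact $S$) and $I$ is finite, this is routine, and the resulting $f\colon Y\to X$ is projective with an $f$-ample $f$-exceptional divisor by the same argument as in Lemma \ref{3-construction}, using Lemma \ref{lemma:extend-morphism-1} to extend from the neighborhood to all of $X$ if needed.
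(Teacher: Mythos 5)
Your proposal follows essentially the same route as the paper's proof: refine/finitize the charts so Proposition \ref{prop:construct-local-cover} applies uniformly with a single $N$, blow up the components of $H$ compatibly (using that $H$ is a snc orbi-divisor) to get $f\colon Y\to X$ with an $f$-ample $f$-exceptional divisor, then apply Proposition \ref{prop:construct-local-cover} chart by chart and glue the covers $\nu_i$ into an orbi-variety via its items (4) and (5) together with Lemma \ref{lemma:orbi-variety-morphism}. The argument is correct; your extra care about distinguishing the branched locus of $\nu_i$ from that of the composite $\psi_i$ is a reasonable point the paper leaves implicit, but it does not change the proof.
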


\begin{proof} 
We will apply Proposition \ref{prop:construct-local-cover} on each $X_i$. 
Since the construction is  local around a  point, 
there may be infinitely many constructions to consider. 
However, since $S$ is compact,  
we can reduce to applying the construction for only finitely times. 

More concretely,  for each point $x \in S_i$, we let $S_{x}$ be an open neighborhood of it as the one in the item (4) of Setup \ref{setup-local-cover}.   
In particular, if $(G_i)_x\subseteq G_i$ is the subgroup which fixes $x$,  
then $(S_x,(G_i)_x)$ is an orbifold chart of $\pi_i(S_x)$, compatible with the orbifold $(S_i,G_i)_{i\in I}$.  
There is a positive integer $N_x$, 
such that  Proposition \ref{prop:construct-local-cover} holds  on $X_i$ with respect to $S_{x}$ and  $N_x$.
Since $S$ is compact, by considering   $S_{x}$  for all $x \in S_i$ and all $i\in I$, 
we can  obtain a finite family  $(x_k)_{k\in K}$ of points  and an application $\sigma\colon K \to I$ such that the following properties hold. 
Each point $x_k$ belongs to $S_{\sigma(k)}$ and  
\[\bigcup_{k\in K} \pi_{\sigma(k)} (S_{x_k}) =S.\]
Since $K$ is finite, we can define the positive integer $N:= \max\{ N_{x_k} \ | \ k\in K\}$.

Let $D^1,...,D^e$ be the irreducible components of $H$.
Since $H$ induces a snc orbi-divisor on the  complex analytic orbispace $(S_i,G_i)_{i\in I}$, if the preimage $D^j_i$  of $D^j$ in $X_i$  is not empty, 
then it is smooth and hence irreducible. 
Let $f_i \colon Y_i \to X_i$ be the morphism obtained by
blowing up of $X_i$ at  $D^j_i$  for $6N$ times,  successively for $j=1,...,e$, as in  Subsection \ref{subsection:blowup-divisor}. 
Then there is a natural action of $G_i$ on $Y_i$ and the collection $(Y_i,G_i)_{i\in I}$  induces a complex analytic orbispace $\mathfrak{Y}$ with quotient space $Y$.  
Furthermore, there is an induced projective bimeromorphic morphism $f\colon Y\to X$ with a $f$-ample and $f$-exceptional divisor.  
Let $S_Y$ be the strict transform of $S$ in $Y$. 

From the choice of $N$,  for each $k\in K$, there is an open neighborhood $W_k$ of the strict transform of $S_{x_k}$ in $Y_{\sigma(k)}$, which is invariant under $G_k:=(G_{\sigma(k)})_{x_k}$, 
such that there is a finite cover $\nu_k\colon \overline{W}_k \to W_k$ satisfying the conclusions of Proposition \ref{prop:construct-local-cover}.   
The property (4) of Proposition \ref{prop:construct-local-cover} implies that the natural  morphism $ \psi_k\colon  \overline{W}_k  \to Y$
is finite and Galois of group $\overline{G}_k$ over the normalization of its image.  
We remark that, up to shrinking $W_k$ around $S_{x_k}$, we can assume that $(W_k, G_k)$ is an orbifold chart over $Y$ compatible with $\mathfrak{Y}$.

Let $W$ be the union of the images of the $W_k$'s in $Y$. 
Then $W$ is the quotient space of the  complex analytic orbispace  $(W_k, G_k)_{k\in K}$ and is an open neighborhood of $S_Y$.  
By Lemma \ref{lemma:local-irr}, up to shrinking $W_k$, we can assume that  $W_k$ is locally irreducible. 
Hence $W$ is locally irreducible as well. 
Thus the normalization morphism   $W' \to  W$ is a homeomorphism, and we may identify $W'$ and $W$ as the same topological space. 
We will show that $(\overline{W}_k, \overline{G}_k)_{k\in K}$ induces a complex analytic orbispace, whose quotient space is  $W'$.  

Let $w\in  \psi_k(\overline{W}_k ) \cap \psi_l(\overline{W}_l ) \subseteq W$ for some $k,l\in K$. 
Then 
\[f(w)\in \pi_{\sigma(k)}(X_{\sigma(k)}) \cap \pi_{\sigma(l)}(X_{\sigma(l)}) \subseteq X.   \]
Hence there is an open neighborhood $X'$ of $f(w)$ in $X$, and an orbifold chart $(Z,G')$ of $X'$, such that there are  inclusions of orbifold charts $\zeta_k\colon Z\to X_{\sigma(k)}$  and $\zeta_l\colon Z\to X_{\sigma(l)}$. 
Let $(Z_Y,G')$ be the orbifold chart on $f^{-1}(X')$ induced by $(Z,G')$, 
which is compatible with $\mathfrak{Y}$. 
Then there is  an open neighborhood $U$ of $w$ in $\psi_k(\overline{W}_k ) \cap \psi_l(\overline{W}_l )$, and  an orbifold chart $(V,G)$ over $U$, such that there are inclusions of orbifold charts 
\[\iota_k \colon  V \to W_k, \  \iota_l \colon   V \to W_l  \mbox{ and }  V \to Z_Y. \]
Up to shrinking $U$, we may assume that $w$ has a unique preiamge $o$ in $V$.    
We set  \[\overline{V}_k = \nu_k^{-1}(\iota_k(V)) \subseteq \overline{W}_k \mbox{ and } \overline{V}_l = \nu_l^{-1}(\iota_l (V)) \subseteq \overline{W}_l.\] 
Then the finite morphism $\overline{V}_k \to \iota_k(V)$ is constructed by applying  Proposition \ref{prop:construct-local-cover} on  $ \zeta_k(Z) \subseteq  X_{\sigma(k)}$, 
and $\overline{V}_l \to \iota_l (V_l)$ is constructed by applying   Proposition \ref{prop:construct-local-cover} on $ \zeta_l(Z) \subseteq X_{\sigma(l)}$.  
By the  property (5) of Proposition \ref{prop:construct-local-cover}, we deduce that, up to shrinking $U$, 
every connected component of $\overline{V}_k$ is isomorphic to every connected component of $\overline{V}_l$. 
Lemma \ref{lemma:orbispace-def} then 
implies that $(\overline{W}_k, \overline{G}_k)_{k\in K}$ induces a complex analytic orbispace $\overline{\mathfrak{W}}$. 
Its  quotient space is exactly $W'$ since $W'$ and every $\overline{W}_k$ are  normal.  
Replacing $Y$ by its normalization, we can assume that   $W'= W$,  which is  an open neighborhood of $S_Y$ in $Y$.

It remains to prove the items (4) and (5).   
We notice that the divisorial critical locus of $W_k\to W$ is contained in $\Delta_Y$,  as the divisorial critical locus of $X_{\sigma(k)}\to X$ is contained in $\Delta$. 
Since the divisorial locus of $\overline{W}_k \to W_k$ is contained in the preimage of $\Delta_Y$ by construction, we deduce the item (4).  
The item (5) follows from the item (3) of Proposition \ref{prop:construct-local-cover}.
This completes the proof of the lemma. 
\end{proof}

Now we can complete the proof of Proposition \ref{prop:local-proof-1}. 

\begin{proof}[{Proof of Proposition \ref{prop:local-proof-1}}]
Let  $\mathfrak{X}=(X_k,G_k)_{k\in K}$ be the  complex analytic orbispace induced by the index-one covers of $\omega_X$.  
Then $X_k \to X$ is quasi-\'etale onto its image, and $X_k$ has at most canonical singularities.  
Let $S_k$ be the preimage of $S$ in $X_k$.  
We note that $S_k$ may not be irreducible. 
Let $h_k\colon X'_k \to X_k$ be the bimeromorphic projective morphism induced by the functorial  desingularization of $S_k$, see the beginning of Subsection \ref{subsection:blowup}. 
By construction, the strict trasnform $S'_k = (h_k^{-1})_* S_k$ is smooth.  
There is a natural action of  $G_k$  on $X'_k$ and we obtain a complex analytic orbispace
$\mathfrak{X}'=(X'_k,G_k,\mu_k)_{k\in K}$  with quotient space $X'$. 
Furthermore, there is a natural projective bimeromorphic morphism $h\colon X'\to X$, with a $h$-ample and $h$-exceptional divisor.   
We remark that the indeterminacy locus of $h^{-1}$ is a proper closed subset of $S$. 
Moreover, the divisorial critical locus of $\mu_k$ is contained in the $h$-exceptional locus.

First we assume that  $X_k$ is smooth at general points of the components of $S_k$.  
Then  $X'_k$ is smooth around general points of the components of $S'_k$. 
Let $Y_k\to X'_k$ be the projective bimeromorphic morphism as in Corollary \ref{cor:blowup-hypersurface-sing}, with respect to $S'_k$. 
If $(S_Y)_k$ is the strict transform of $S'_k$ in $Y_k$, then $Y_k$ is smooth around $(S_Y)_k$. 
Up to taking the normalization, we can assume that every $Y_k$ is normal.
Since the  constructions are functorial, there is a natural action of $G_k$ on $Y_k$, 
and  we have a complex  analytic orbispace $\mathfrak{Y} = (Y_k,G_k,\psi_k)_{k\in K}$ with quotient space $Y$, which is normal.  
Furthermore, there is a natural projective bimeromorphic morphism $f\colon Y\to X$.  
%Since the strict transform $S_Y$ of $S$ in $Y$ is compact, 
%there is  a finite subset $I$ of $K$, such that $S_Y\subseteq \bigcup_{i\in I}\psi_i(Y_i)$. 
Since $Y_k$ is smooth around $(S_Y)_k$, there is a smooth $G_k$-invariant open neighborhood $W_k$ of  $(S_Y)_k$.   
Then  $(W_k, G_k, \psi_k)_{k\in K}$ is a  complex orbifold, whose quotient space is an open neighborhood $W$ of $S_Y$. 
Hence $f$ satisfies the properties of the proposition.

In the remainder of the proof, we assume that the irreducible components of $S_k$ are irreducible components of the singular locus of $X_k$.  
Let $S'$ be the strict transform of $S$ in  $X'$. 
Then $\mu_k^{-1}(S') = S_k'$ is smooth.  
Therefore, for a point  $o\in S'$, there is an orbifold chart $(U_o,G_o)$ over a  neighborhood of $o$ in $X'$, compatible with the complex analytic orbispace $\mathfrak{X}'$, such that the preiamge of $S'$ in $U_o$ is an irreducible component of the singular locus of $U_o$. 
Since $S'$ is compact, by considering  the $(U_o,G_o)$'s for all points $o\in S'$, we obtain  a complex analytic orbispace $\mathfrak{Z} =  (Z_j, G'_j, \nu_j)_{j\in J}$ with a finite family $J$, whose quotient space is an open neighborhood $Z$ of $S'$ in $X'$. 
Moreover, each orbifold chart $ (Z_j, G'_j)$ is compatible with   $\mathfrak{X}'$.
There is a proper closed subset $H'\subseteq S'$, such that  $Z_j$ has the same type of canonical singularities at points of $\nu_j^{-1}(S') \setminus \nu_j^{-1}(H')$.  
We can pick $H'$ so that it contains the intersection of $S'$  and the $h$-exceptional locus. 
It follows that the intersection of the divisorial critical locus of $\nu_j$  and $S'$ is contained in   $H'$. 
Therefore the data $Z$, $\mathfrak{Z}$, $S'$ and $H'$ satisfy the conditions of Setup \ref{setup-local-proof}.

We apply Lemma \ref{1-construction} on $Z$ and obtain a projective bimeromorphic morphism 
$h_1\colon Z_1\to Z$. 
Since the indeterminacy locus of $h_1^{-1}$ is contained in $S'$, which is compact, 
we deduce from Lemma \ref{lemma:extend-morphism-1}  that  $h_1$ extends to a proper bimeromorphic morphism $Y_1\to X'$.  
Let $f_1\colon Y_1 \to X$ be the natural morphism. 
Then  there is a $f_1$-ample and $f_1$-exceptional divisor. 
Let $S_{Y_1}$ be the strict transform of $S'$ in $Y_1$, and let $H_{Y_1}$ be the reduced divisor in $S_{Y_1}$ as in the item (3) of Lemma \ref{1-construction}.   
By applying  Lemma \ref{2-construction}  around $S_{Y_1}$, we can  obtain an open neighborhood $Q_1$ of it, so that we can   apply   Lemma \ref{3-construction} to $Q_1$. 
Then we   obtain a projective bimeromorphic morphism 
$g_2\colon P_2 \to Q_1$. 
By Lemma \ref{lemma:extend-morphism-1} again,  $g_2$ extends to a  projective bimeromorphic  morphism $g_2\colon Y_2 \to Y_1$. 
Let  $f_2\colon Y_2 \to X$ be the induced morphism.

Let $S_{Y_2}$ be the strict transform of $S_{Y_1}$ in $Y_2$,   
let $Q_2$ be the open neighborhood of $S_{Y_2}$ as in the item (5) of 
Lemma \ref{3-construction}, and let $\Delta_2$ be the whole reduced $g_2$-exceptional divisor.  
We remark that the intersection of the $f_1$-exceptional set  and $S_{Y_1}$ is contained in $H_{Y_1}$. 
Thus, up to blowing up more times in the construction of Lemma \ref{3-construction}, 
we can assume that the strict transforms of the $f_1$-exceptional divisors in $Y_2$ are disjoint from $Q_2$.   
In particular, if $\Gamma_2$ is the whole reduced $f_2$-exceptional divisor, then $\Gamma_2\cap Q_2= \Delta_2 \cap Q_2$.

We can apply Lemma \ref{4-construction} to $Q_2$ with the divisor $\Gamma_2\cap Q_2= \Delta_2 \cap Q_2$, and  obtain a projective bimeromorphic morphism 
$g_3\colon P_3 \to Q_2$.    
By Lemma \ref{lemma:extend-morphism-1}, we deduce that $g_3$ extends to a projective bimeromorphic morphism $g_3\colon Y_3 \to Y_2$.  
Let $Y=Y_3$ and let $f\colon Y \to X$ be the natural morphism.  
Then $f$ satisfies all the properties of the proposition. 
\end{proof}

\section{Proof of the modification theorem}
\label{section:modification}

We will complete the proof of Theorem \ref{thm:main-thm} in this section.

\begin{lemma}
\label{lemma:dlt-case-finial-step}
Let $X$ be a normal complex analytic variety, let $S\subseteq X_{\sing}$ be a compact irreducible component of codimension $2$, and let  $\Delta$ be a reduced divisor in $X$. 
Suppose  that  $\Delta$ does not contain $S$. 
Assume  that there is a normal complex analytic orbispace $\mathfrak{W} = (W_i, G_i, \psi_i)_{i\in I}$ 
satisfying the following properties. 
\begin{enumerate}
%\item The family $I$ of orbifold charts  is finite. 
\item The quotient space $W$ of $\mathfrak{W}$ is an open neighborhood of $S$ in $X$. 
\item The divisorial critical locus of $\psi_i\colon W_i \to W$ is contained in $\Delta$.  
\item $W_i$ is smooth on $\psi_i^{-1} (W\setminus \Delta)$. 
\end{enumerate}

Assume that there is a projective bimeromorphic morphism  $h\colon (Z,\Gamma) \to (X,\Delta)$  satisfying the following properties.  
The divisor $\Gamma$ is the sum of  $h^{-1}_*\Delta$ and the whole reduced $h$-exceptional divisor.  
The $h$-exceptional locus is contained in $\Gamma$.  
The pair $(Z,\Gamma)$ is  dlt and each irreducible component of $\Gamma$ is $\mathbb{Q}$-Cartier. 
The morphism $h$ is an isomorphism 
%over the  snc locus of $(X,\Delta)$ and 
over general points of $S$.  
The pair  $(Z,\Gamma)$ is  snc on $h^{-1}(X\setminus S)$.  

Then there is a projective bimeromorphic morphism $\varphi\colon Y \to Z$ with a $\varphi$-ample and $\varphi$-exceptional divisor, such that   $Y$ has quotient singularities. Furthermore,    $\varphi$  is an isomorphism over  $h^{-1}(X\setminus S)$ and over general points of  the strict transform of $S$ in $Z$. 
\end{lemma}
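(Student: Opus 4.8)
The overall strategy is to transport the orbifold data $\mathfrak{W}$ from $X$ onto a suitable model of $Z$, to absorb the divisorial branched locus of the orbifold charts into a cyclic cover by the argument of Lemma \ref{lemma:local-dlt-cover}, and then to remove the residual singularities --- which by construction sit in codimension $\geqslant 3$ --- by an equivariant (functorial) desingularization of the charts. Everything is carried out over a neighbourhood of the strict transform $S_Z$ of $S$ in $Z$, which is compact because $S$ is compact and $h$ is an isomorphism over general points of $S$; the result is then propagated to $Z$ via Lemma \ref{lemma:extend-morphism-1}, using that each step carries a relatively ample, relatively exceptional divisor and that $Z$ is already smooth outside $h^{-1}(S)$.

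First I would improve $Z$ over $S_Z$. Applying Proposition \ref{prop:partial-resolution} to $Z$, with chosen codimension-$2$ component $S_Z$ and boundary $\Gamma$, yields a projective bimeromorphic morphism $Z_1\to Z$, an isomorphism over general points of $S_Z$ and over the snc locus of $(Z,\Gamma)$ (hence over $h^{-1}(X\setminus S)$), a composition of blowups whose exceptional locus is a divisor, and such that, writing $\Gamma_1$ for the strict transform of $\Gamma$ plus the whole reduced exceptional divisor, one has $(Z_1)_{\sing}\subseteq S_{Z_1}\cup\Gamma_1$. Then I apply Lemma \ref{lemma:dlt-modification} to $(Z_1,\Gamma_1)$: since the only codimension-$2$ singular component of $Z_1$ not contained in $\Gamma_1$ is $S_{Z_1}$, the resulting $Z_2\to Z_1$ is an isomorphism over general points of $S_{Z_1}$ and over the snc locus of $(Z_1,\Gamma_1)$, and $(Z_2,\Gamma_2)$ is a $\mathbb{Q}$-factorial dlt pair with $(Z_2)_{\sing}$ still contained in $S_{Z_2}\cup\Gamma_2$. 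Renaming $Z_2,\Gamma_2$ as $Z,\Gamma$, I pull $\mathfrak{W}$ back along the (still valid) morphism $Z\to X$: taking $Z_i$ to be the normalization of $W_i\times_X Z$ over a neighbourhood of $S_Z$, the functoriality of fibre products and normalization together with Lemma \ref{lemma:orbi-variety-morphism} make $(Z_i,G_i)$ an orbi-variety $\mathfrak{Z}$ on a neighbourhood of $S_Z$ in $Z$. Its divisorial branched loci lie in $\Gamma$ (every $h$-exceptional divisor lies in $\Gamma$, and the branched loci of the $\psi_i$ lie in $\Delta$); and since the $W_i$ are smooth off $\psi_i^{-1}(\Delta)$, since $Z$ is smooth off $\Gamma\cup S_Z$, and since each $Z_i$ coincides with $W_i$ over the general points of $S_Z$, the chart $Z_i$ is smooth in codimension $2$ away from $\psi^{-1}(\Gamma)$.

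Now I would run the cyclic-cover construction. For $n$ divisible by the lcm appearing in Lemma \ref{lemma:local-dlt-cover}, let $p\colon V\to Z$ be the cyclic cover along the components of $\Gamma$ (formed locally and patched via Lemma \ref{lemma:cyclic-functorial}), and let $\widetilde{W}_i$ be the normalization of $Z_i\times_Z V$. Then $\widetilde{W}_i\to V$ is quasi-\'etale, and since $(V,p^{-1}\Gamma)$ is dlt with $p^{-1}\Gamma$ Cartier (Lemma \ref{lemma:cyclic-cover-Cartier}), the proof of Lemma \ref{lemma:local-dlt-cover} --- which invokes smoothness of the chart off the boundary only in codimension $2$, precisely what we have --- shows that $\widetilde{W}_i$ is smooth in codimension $2$; by Zariski purity its singularities lie in codimension $\geqslant 3$ over $h^{-1}(S)$, away from the general points of $S_Z$ and from $h^{-1}(X\setminus S)$, and over the general points of $S_Z$ one has $\widetilde{W}_i=Z_i\times_Z V$ smooth. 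Letting the Galois group of $\widetilde{W}_i$ over its image in $Z$ act, Lemma \ref{lemma:orbi-variety-morphism} turns these charts into an orbi-variety $\widetilde{\mathfrak{W}}$ with quotient space a neighbourhood of $S_Z$ in $Z$, and the $G$-action lifts. Finally I apply the functorial desingularization of Theorem \ref{thm:func-resol} to each $\widetilde{W}_i$, obtaining $G$-equivariant projective bimeromorphic morphisms $\widehat{W}_i\to\widetilde{W}_i$ that are isomorphisms over the smooth locus of $\widetilde{W}_i$ --- hence over every codimension-$\leqslant 2$ point, over $h^{-1}(X\setminus S)$, and over the general points of $S_Z$; functoriality again makes these charts an orbi-variety $\widehat{\mathfrak{W}}$, whose quotient space $Y$ is a projective bimeromorphic modification of a neighbourhood of $S_Z$ in $Z$ with smooth charts, so $Y$ has quotient singularities, and $\varphi\colon Y\to Z$ is an isomorphism over $h^{-1}(X\setminus S)$ and over the general points of $S_Z$. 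Keeping track of the relatively ample, relatively exceptional divisors coming from the partial resolution, the dlt modification, and the chart desingularizations, and using that $Z$ is smooth outside $h^{-1}(S)$, Lemma \ref{lemma:extend-morphism-1} extends $\varphi$ to a projective bimeromorphic morphism $Y\to Z$ with all the asserted properties.

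The main obstacle is making the cyclic-cover step succeed: once the divisorial branched locus has been absorbed, the charts must retain only codimension-$\geqslant 3$ singularities, and for this one needs \emph{both} that $X$ carries honest quotient singularities away from $\Delta$ near $S$ (so the $W_i$ are smooth there in every codimension) \emph{and} that the preliminary partial resolution and dlt modification leave $S_Z$ as the unique codimension-$2$ singular locus of $Z$ off the boundary --- over whose general points the pulled-back charts are already smooth. Only then does the codimension-$2$ smoothness in the proof of Lemma \ref{lemma:local-dlt-cover} survive the base change, so that a single functorial resolution in codimension $\geqslant 3$ finishes the job. A secondary, essentially bookkeeping, difficulty is to produce a relatively ample, relatively exceptional divisor through the dlt modification and to verify the orbi-variety compatibilities at each stage.
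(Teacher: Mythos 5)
Your proposal follows essentially the same route as the paper: pull the orbifold charts back over $h$, absorb the divisorial branched locus by base-changing over local cyclic covers along the components of $\Gamma$ (Lemma \ref{lemma:local-dlt-cover}), observe that the resulting charts are smooth in codimension $2$ and singular only over $h^{-1}(S)$, apply functorial desingularization chart-by-chart, and extend via Lemma \ref{lemma:extend-morphism-1}. The only substantive differences are cosmetic: your preliminary partial resolution and dlt modification of $Z$ are already built into the hypotheses on $(Z,\Gamma)$ (though they do supply the normality of the components of $\Gamma$ that the paper uses tacitly), and your ``global'' cyclic cover $V\to Z$ should be read, as in the paper, as a finite system of local covers $\overline{U}_k\to U_k$ over a covering of $h^{-1}(S)$ trivializing the $r^j\Gamma^j$, glued only as orbifold charts rather than as an actual finite morphism onto $Z$.
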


\begin{proof}  
Let $V =h^{-1}(W)$  and let $V_i$ be the normalization of the main component of  $W_i \times_W  V$.  
Then there is a natural action of $G_i$ on $V_i$. 
\begin{equation*}
\xymatrix{ 
		V_i \ar[d]	\ar[r]^{\mu_i}  & V    \ar[d]^h  \\ 
        W_i  \ar[r]^{\psi_i}  & W   
}
\end{equation*}
Since $V$ is normal,  the family $(V_i,G_i)_{i\in I}$ induces a complex analytic orbispace whose quotient space is $V$. 
Since $Z\setminus \Gamma $ is isomorphic to an open subset of  $ X\setminus \Delta$, 
we deduce from the condition (2) that, 
if $\mu_i\colon V_i \to V$ is the natural   morphism, 
then its divisorial  critical locus  is contained in $\Gamma$. 
In addition, the condition (3) implies that  
$V_i$ is smooth on $ \mu_i^{-1} (V\setminus \Gamma)$.

Let $S_Z$ be the strict transform of $S$ in $Z$.  
Let $\Gamma^j$ with $j=1,...,l$ be the irreducible components of $\Gamma$, and let $r^j$ be the Cartier index of $\Gamma^j$. 
For every point $z\in h^{-1}(S)$, there is an open neighborhood $T$ of $z$ in $V$ such that each 
$r^j\Gamma^j|_{T}$ is  defined by some holomorphic function $\sigma^j$. 
Thus 
%, by  the compactness of $h^{-1}(S)$, 
we obtain a  family $(U_k)_{k\in K}$ of   open subsets of $V$, 
whose union contains $h^{-1}(S)$, 
such that $r^j\Gamma^j|_{U_k}$ is connected and  defined by some holomorphic function $\sigma^j_k$. 
Furthermore,  there is an application $\iota\colon K\to I$ such that  
$U_k$ is contained in  $\mu_{\iota(k)}(V_{\iota(k)})$.   
By convention, we set $\sigma_k^j=1$ if $\Gamma^j|_{U_k}$ is empty. 
Let $U=\bigcup_{k\in K} U_k$. 
Then it is a neighborhood of $h^{-1}(S)$  in $V$.

We notice that, since $(Z,\Gamma)$ is dlt, each $\Gamma^j$ is normal. 
It follows that if  $\Gamma^j|_{U_k}$ is not empty, then it is normal and hence irreducible. 
In particular, we can apply  the result from Lemma \ref{lemma:cyclic-functorial} to Lemma \ref{lemma:local-dlt-cover} on each pair $(U_k,\Gamma|_{U_k})$. 
Let $M>0$ be an integer which is divisible by the degrees of all the $\mu_i$'s. 
We define  $ \overline{U}_k$ as a connected component of 
\[
U_k[\sqrt[Mr^1]{\sigma_k^1},..., \sqrt[Mr^l]{\sigma_k^l}]^{nor}
\]
and we let $\xi_k\colon \overline{U}_k \to U_k$ be the natural morphism.  
Then the $\overline{U}_k$'s define a normal complex analytic orbispace with quotient space $U$, by  Lemma \ref{lemma:cyclic-functorial} and Lemma \ref{lemma:orbispace-def}.   
Let $Q_k$ be a connected component of the normalization of  $\overline{U}_k \times_{Z}  V_{\iota(k)}$.  
It follows that the natural finite morphism $Q_k\to U_k$ is Galois of Galois group $H_k$, and the collection $(Q_k,H_k)_{k\in K}$ induces a normal complex analytic orbispace $\mathfrak{Q}$ with quotient space $U$.  
\begin{equation*}
\xymatrix{ 
		Q_k \ar[d]_{\eta_k}	\ar[rr]^{ }  & & V_{\iota(k)}    \ar[d]  \\ 
        \overline{U}_k  \ar[r]^{\xi_k}  & U_k \ar[r]& Z  
}
\end{equation*}
We denote by $\eta_k \colon Q_k \to \overline{U}_k$  the natural morphism.   
Then by Lemma \ref{lemma:local-dlt-cover}, we obtain that  $\eta_k$ is quasi-\'etale;   
furthermore,  $Q_k$ is smooth in codimension 2 since $V_{\iota(k)}$ is smooth over $U_k\setminus \Gamma$.

The  Zariski's  purity theorem  implies that \[(Q_k)_{\sing}  \subseteq (\eta_k)^{-1}((\overline{U}_k)_{\sing}).\] 
Since $(Z,\Gamma)$  is  snc on $Z\setminus h^{-1}(S)$,  we deduce that  $\overline{U}_k$ is smooth over $Z\setminus h^{-1}(S)$ by Lemma \ref{lemma:cyclic-cover-Cartier}.  
It follows that 
\[(Q_k)_{\sing}  \subseteq  (\xi_k \circ \eta_k)^{-1}(h^{-1}(S)).  \]

Let $\widetilde{Q}_k \to Q_k$ be the functorial desingularization, which is an isomorphism over the smooth locus of $Q_k$. 
Then there is a natural action of $H_k$ on $\widetilde{Q}_k$, and we obtain a complex orbifold  
$\widetilde{\mathfrak{Q}} = (\widetilde{Q}_k , H_k)_{k\in K}$ with quotient space $\widetilde{Q}$.  
In particular, $\widetilde{Q}$ has quotient singularities only. 
There is an induced  projective bimeromorphic morphism $\varphi \colon \widetilde{Q} \to U$. 
From the properties of $(Q_k)_{\sing}$ above, we see that  the indeterminacy locus of $\varphi^{-1}$ is contained in $h^{-1}(S)$ and is of codimension at least 3 in $Z$. 
In particular, $\varphi $ is an isomorphism over general points of $S_Z$. 
Furthermore, there is an $\varphi $-exceptional and $\varphi $-ample divisor.  
Thus by Lemma \ref{lemma:extend-morphism-1}, $\varphi $ extends to a projective bimeromorphic morphism  
$\varphi\colon Y\to Z$. 
Since $Z$ is smooth outside $h^{-1}(S)$, we deduce that $Y$ has quotient singularities only.  
This completes the proof of the lemma.
\end{proof}

Now we can deduce Theorem \ref{thm:main-thm}.

\begin{proof}[{ Proof of Theorem \ref{thm:main-thm} }]
We will divide the proof into several steps. 
Let $S_e$ with $e=1,...,r$ be all the irreducible components of $X_{\sing}$ which have codimension 2 in $X$. 

\textit{Step 1.} In this step, we will reduce  to the case when $S_1,..., S_r$ are pairwise disjoint. 
By  blowing up at centers contained in at least two of the $S_e$'s, 
we obtain a projective bimeromorphic morphism $h\colon Z \to X$ such that the strict transforms $h^{-1}_* S_1,...,h^{-1}_* S_r$ are pairwise disjoint.  
Let $X' \to Z$ be the projective bimeromorphic morphism constructed in Lemma \ref{lemma:QcDV-modification}, with respect to the collection $\{h^{-1}_*S_1,...,h^{-1}_*S_r\}$. 
We observe that the indeterminacy locus of $X\dashrightarrow X'$ has codimension at least 3. 
%Note that by remark \ref{rmk:exceptional-ample-divisor}, we may assume further that the morphism $p\colon X'\to X$ admits a $p$-exceptional and $p$-ample divisor. 
Up to replacing $X$ by $X'$, we may assume that $S_1,..., S_r$  are pairwise disjoint.
Furthermore, $X$ has klt singularities.

\textit{Step 2.}   In this step, we will reduce  to the situation of Lemma \ref{lemma:dlt-case-finial-step}. 
Let $X_e$ be an open neighborhood of $S_e$ in $X$ such that the $X_e \cap X_{e'} = \emptyset$ if $e\neq e'$.  
We apply Proposition \ref{prop:local-proof-1} to each $X_e$. 
Then we have projective bimeromorphic morphisms $p_e\colon X'_e\to X_e$, which satisfies the properties of Proposition \ref{prop:local-proof-1}.  
Since for each $e$, there is a $p_e$-ample and $p_e$-exceptional divisor, and since the indeterminacy locus of $(p_e)^{-1}$ is contained in $S_e$ and hence compact,  
by applying Lemma \ref{lemma:extend-morphism-1} for several times, there is a projective bimeromorphic morphism $p\colon X' \to X$ such that
$p=p_e$ over $X_e$.  
Let $\Delta $ be the whole reduced $p$-exceptional divisor. 
Then  $p(\Delta)$ has codimension at least 3 in $X$.   

In the remainder of the proof, by abuse of notation, we denote $X'$ by $X$, and $X_e'$ by $X_e$. 
If $\Delta_e= \Delta|_{X_e}$, then $X_e$, $\Delta_e$ and $S_e$ satisfy the conditions (1)-(3) of Lemma \ref{lemma:dlt-case-finial-step}.  
Furthermore, from now on, it is enough to construct $f\colon Y\to X$, such that $Y$ has quotient singularities, 
and $f$ is an isomorphism over general points of each $S_e$ and over $X_{\sm}\setminus \Delta$.  
We remark that $X_{\sm}\setminus \Delta$ is contained in the snc locus of $(X,\Delta)$.

\textit{Step 3.} In this step, we will construct a bimeromorphic morphism $h\colon Z\to X$, 
which plays the same role  for $X_e$    
as $Z$ for $X$ in   Lemma \ref{lemma:dlt-case-finial-step}.  
We first note that, by the property (3) of Lemma \ref{lemma:dlt-case-finial-step}, $X$ has quotient singularities around $S\setminus \Delta$, 
where $S= \bigcup_{e=1}^r S_e$. 
Thus, we  can apply Corollary \ref{cor:partial-resolution} to $(X,\Delta)$, with respect to $S$,  
and  obtain  a projective  bimeromorphic morphism $\rho \colon X'\to X$ with $X'$ normal.  
By construction, $\rho$ is an isomorphism over the snc locus of $(X,\Delta)$ and over general points of each $S_e$; 
if $\Delta'$ is the  sum of  $\rho^{-1}_*\Delta$  and the whole reduced  $\rho$-exceptional divisor,   
then $(X',\Delta')$ is  snc on $\rho^{-1}(X\setminus S)$;  
furthermore, $\Delta'$ is the support of some Cartier divisor, and $X'\setminus \Delta'$ has quotient singularities. 
Let $S'$  be  the strict transform of $S$ in $X'$.    
Let $(Z,\Gamma) \to (X', \Delta')$ be a $\mathbb{Q}$-factorial dlt modification as in Lemma \ref{lemma:dlt-modification}, 
which is an isomorphism over the  snc locus of $(X', \Delta')$,  
and over general points of each component of $S'$.
We denote by $h\colon Z\to X$ the natural morphism. 
Then we have the following properties.  
The divisor $\Gamma$ is the sum of  $h^{-1}_*\Delta $ and the whole reduced $h$-exceptional divisor.  
The $h$-exceptional locus  is  contained in $\Gamma$. 
Each irreducible component of $\Gamma$ is $\mathbb{Q}$-Cartier. 
The morphism $h$ is an isomorphism over the  snc locus of $(X,\Delta)$ and over general points of each component of  $S$.  
The pair  $(Z,\Gamma)$ is  snc on $h^{-1}(X\setminus S)$.  

\textit{Step 4.} We complete the proof in this final step. 
Let $Z_e=h^{-1}(X_e)$ and $\Gamma_e=\Gamma|_{Z_e}$.  
Then  
\[Z_{\sing} \subseteq h^{-1}(S)  \subseteq \bigcup_{e=1}^r Z_e. \]
We can  apply  Lemma \ref{lemma:dlt-case-finial-step}  on each $X_e$, and obtain a projective bimeromorphic morphism  $\varphi_e\colon Y_e \to Z_e$ as in the Lemma \ref{lemma:dlt-case-finial-step}.   
The indeterminacy locus of $(\varphi_e)^{-1}$  is contained in $h^{-1}(S_e)$, hence is compact.
Since there is a $\varphi_e$-ample and $\varphi_e$-exceptional divisor, by applying Lemma \ref{lemma:extend-morphism-1} for several times,
we deduce that there is a projective bimeromorphic morphism $\varphi\colon Y \to  Z$ such that its restriction over $Z_e$ is equal to $\varphi_e$, 
and that it is an isomorphism elsewhere. 
In particular, $Y$ has quotient singularities. 
Let $f\colon Y \to X$ be natural morphism. 
Then $f$ is an isomorphism over the  snc locus of $(X,\Delta)$ and over general points of  $S_1,...,S_r$.  
This completes the proof of the theorem. 
\end{proof}

\begin{remark}\label{rmk:exceptional-ample-divisor}
We may improve the construction above so that there is some $f$-exceptional and $f$-ample divisor, by the method of \cite{KollarWitaszek}.   
Let $f_1\colon Y_1 \to X$ be a morphism constructed in Theorem \ref{thm:main-thm},  and let $\mathcal{H}$ be a $f_1$-ample line bundle.   
We set  $\mathcal{L} = ((f_1)_*\mathcal{H})^*$ and $Y_2 = \mathrm{Proj} \bigoplus_{m\ge 0} \mathcal{L}^{\otimes m}$. 
Let $W$ be the normalization of the main component of $Y_1\times_X Y_2$.   
Then the indeterminacy locus of the natural map $X\dashrightarrow W$ is contained to the one of $(f_1)^{-1}$.  
We recall that $S_1,...,S_r$ are the irreducible components of $X_{\sing}$ which have codimension $2$. 
We apply  Lemma \ref{lemma:QcDV-modification}  to $W$ with respect to the strict transforms of $S_1,...,S_r$, and obtain a morphism $V\to W$.  
Afterwards, we apply Theorem \ref{thm:main-thm} to $V $ to obtain a modification $Y\to V$. 
Then we claim that the  natural morphism  $f\colon Y\to X$ satisfies the properties of Theorem \ref{thm:main-thm}.  
In fact, assume by contradiction that there is   a component $Z$ of the indeterminacy locus of $f^{-1}$, which has codimension $2$. 
Then the strict transform $Z_W$ of $Z$ in $W$ is well-defined. 
Since the indeterminacy locus of $V\dashrightarrow Y$ has codimension at least 3, we see that $Z_W$ is contained in the indeterminacy locus of $W\dashrightarrow V$. 
By the construction of $V$, see Lemma \ref{lemma:QcDV-modification} (2), 
 $Z$ is not equal to anyone of the subvarieties $S_1,...,S_r$.  
Therefore, $Z$ meets the smooth locus of $X$, and hence $Z_W$ meets $W_{\sm}$. 
This contradicts Lemma \ref{lemma:QcDV-modification} (1).  
In the end, by using Lemma \ref{lemma:excep-ample-divisor} below, we can adapt the  argument of  \cite[Lemma 8]{KollarWitaszek} to show  that,  
there is some $f$-exceptional and $f$-ample divisor.   
\end{remark}

\begin{lemma}
\label{lemma:excep-ample-divisor}  
Let $f\colon Y\to X$ be a projective bimeromorphic morphism between compact normal complex analytic singularities.  
Assume that $\mathcal{L}$ is a line bundle on $Y$,  
such that $\mathcal{L}|_{Y\setminus \Ex(f)}$ is trivial, 
where $\Ex(f)$ is the $f$-exceptional locus.  
Then there is some $f$-exceptional Cartier divisor $D$ such that $\mathcal{L}\cong \mathcal{O}_Y(D)$. 
\end{lemma}

\begin{proof}
We set $\mathcal{I} = f_*\mathcal{L}$. 
Then $\mathcal{I}$ is a torsion-free coherent sheaf on $X$  and $\mathcal{I}^{**}\cong  \mathcal{O}_X$. 
Hence we may assume that $\mathcal{I}$ is an ideal sheaf on $X$, whose cosupport is contained in $f(\Ex(f))$.  
There is a natural morphism $f^*\mathcal{I} \to \mathcal{L}$, which factors through $\mathcal{J}:= f^*\mathcal{I}/(\mathrm{torsion})$. 
We note that $\mathcal{J}$ is indeed equal to the ideal sheaf $f^{-1}\mathcal{I}\cdot \mathcal{O}_Y$, whose cosupport is contained in $\Ex(f)$.

We first assume that there is some effective Cartier divisor $\Gamma$, 
whose support is equal to $\Ex(f)$. 
Then there is some integer $j>0$, such that $\mathcal{O}_Y(-j\Gamma)$ is contained in $\mathcal{J}$. 
We hence obtain an injective morphism $ \mathcal{O}_Y\to  \mathcal{L} \otimes \mathcal{O}_Y( j\Gamma)$. 
By taking its dual, we obtain an injective morphism $ \mathcal{L}^* \otimes \mathcal{O}_Y( -j\Gamma) \to  \mathcal{O}_Y$. 
It follows that $\mathcal{L}^* \otimes \mathcal{O}_Y( -j\Gamma) \cong \mathcal{O}_Y(-E)$ for some effective $f$-exceptional divisor $E$ supported inside $\Ex(f)$. 
Thus  $\mathcal{L}\cong \mathcal{O}_Y(E-j\Gamma )$.

In general, let $g\colon Z\to Y$ be a desingularization, let $h=f\circ g$ and let $\mathcal{M}=g^*\mathcal{L}$.   
We may assume that   $\Ex(h)$ is pure of codimension 1.    
From the previous paragraph, there is some $h$-exceptional divisor $D_1$ on $Z$, such that $\mathcal{M}\cong \mathcal{O}_Z(D_1)$. 
Let $D= g_*D_1$. 
Then $D$ is supported in $\Ex(f)$, and  the reflexive sheaf $\mathcal{O}_Y(D)\otimes \mathcal{L}^*$ is isomorphic to $\mathcal{O}_Y$ on $Y\setminus g(\Ex(g))$. 
This implies that $\mathcal{L}\cong \mathcal{O}_Y(D)$, 
and completes the proof of the lemma.
\end{proof}

\bibliographystyle{alpha}
\bibliography{reference}

\end{document}